\tikzset{bullet/.style={
shape = circle,fill = black, inner sep = 0pt, outer sep = 0pt, minimum size = 0.35em, line width = 0pt, draw=black!100}}
\tikzset{circle/.style={
shape = circle,fill = none, inner sep = 0pt, outer sep = 0pt, minimum size = 0.35em, line width = 1pt, draw=black!100}}
\tikzset{rectangle/.style={
shape = rectangle,fill = white, inner sep = 0pt, outer sep = 0pt, minimum size = 0.35em, line width = 0pt, draw=black!100}}
\tikzset{empty/.style={
shape = circle,fill = white, inner sep = 0pt, outer sep = 0pt, minimum size = 0.35em, line width = 0pt, draw=white!100}}
\tikzset{xmark/.style={
shape = x,fill = white, inner sep = 0pt, outer sep = 0pt, minimum size = 0em, line width = 0pt, draw=white!100}}
\tikzset{longrectangle/.style={
inner sep = 1em,
rectangle,
minimum size=1em,
very thick,
draw=black!100, 
}}
\tikzset{label distance=-0.15em}
\tikzset{font=\scriptsize}
\newtheorem{theorem}{Theorem}[section]
\newtheorem{lemma}[theorem]{Lemma}
\newtheorem{proposition}[theorem]{Proposition}
\newtheorem{corollary}[theorem]{Corollary}
\newtheorem{maintheorem}{Main Theorem}
\theoremstyle{definition}
\newtheorem{definition}[theorem]{Definition}
\newtheorem{remark}[theorem]{Remark}
\newtheorem{example}[theorem]{Example}
\newtheorem*{convention}{Convention}
\numberwithin{equation}{section}
\def\sheaf#1{\ensuremath \mathcal#1}
\newcommand{\abs}[1]{\ensuremath \left\lvert #1 \right\rvert}
\newcommand{\SpinC}{\ensuremath \mathrm{Spin}^{c}}
\DeclarePairedDelimiter{\ceil}{\lceil}{\rceil}
\DeclareMathOperator{\Spec}{Spec}
\DeclareMathOperator{\Proj}{Proj}
\DeclareMathOperator{\Def}{Def}
\DeclareMathOperator{\DefQG}{Def^{\mathrm{QG}}}
\DeclareMathOperator{\adm}{adm}
\begin{document}

\title[Milnor fibers and symplectic fillings]{Milnor fibers and symplectic fillings of quotient surface singularities}

\author[H. Park]{Heesang Park}

\address{Department of Mathematics, Konkuk University, Seoul 05029, Korea}

\email{HeesangPark@konkuk.ac.kr}

\author[J. Park]{Jongil Park}

\address{Department of Mathematical Sciences, Seoul National University, Seoul 08826, Korea \& Korea Institute for Advanced Study, Seoul 130-722, Korea}

\email{jipark@snu.ac.kr}

\author[D. Shin]{Dongsoo Shin}

\address{Department of Mathematics, Chungnam National University, Daejeon 34134, Korea}

\email{dsshin@cnu.ac.kr}

\author[G. Urz\'ua]{Giancarlo Urz\'ua}

\address{Facultad de Matem\'aticas, Pontificia Universidad Cat\'olica de Chile, Santiago, Chile}

\email{urzua@mat.puc.cl}

\subjclass[2010]{14B07, 53D35}

\keywords{Milnor fiber, quotient surface singularity, symplectic filling}

\date{July 24, 2015; revised at May 16, 2016, at Feb 12, 2018}

\begin{abstract}
We determine a one-to-one correspondence between Milnor fibers and minimal symplectic fillings of a quotient surface singularity (up to diffeomorphism type) by giving an explicit algorithm to compare them mainly via techniques from the minimal model program for 3-folds and Pinkham's negative weight smoothing. As by-products, we show that:

-- Milnor fibers associated to irreducible components of the reduced versal deformation space of a quotient surface singularity are not diffeomorphic to each other with a few obvious exceptions. For this, we classify minimal symplectic fillings of a quotient surface singularity up to diffeomorphism.

-- Any symplectic filling of a quotient surface singularity is obtained by a sequence of rational blow-downs from a special resolution (so-called the maximal resolution) of the singularity, which is an analogue of the one-to-one correspondence between the irreducible components of the reduced versal deformation space and the so-called $P$-resolutions of a quotient surface singularity.
\end{abstract}

\maketitle

\tableofcontents

\section{Introduction}

We show that two ``smooth'' objects --- Milnor fibers and (minimal) symplectic fillings --- associated to a quotient surface singularity $(X,0)$ coming from algebraic geometry and symplectic topology, respectively, are essentially the same (up to diffeomorphism type). For this, we provide an explicit algorithm to identify Milnor fibers of a quotient surface singularity with minimal symplectic fillings of its link using special partial resolutions (called $P$-resolutions) and the minimal model program for 3-folds. Minimal symplectic fillings are realized as complements of certain divisors embedded in rational surfaces. So the algorithm determines how such divisors are embedded in rational surfaces for given Milnor fibers; See Section~\ref{section:identifying-Milnor-fibers}. Conversely, we prove that every minimal symplectic filling of a quotient surface singularity is diffeomorphic to the Milnor fiber of a smoothing of the singularity by constructing a smoothing whose Milnor fiber is diffeomorphic to the given minimal symplectic filling; Section~\ref{section:minimal-symplectic-fillings-are-Milnor-fibers}. For this, we apply Pinkham's theory of smoothings of negative weight.

As an explicit one-to-one correspondence (up to diffeomorphism type) is established between Milnor fibers and minimal symplectic fillings, one may apply results regarding Milnor fibers to minimal symplectic fillings, and vice versa.

For instance, we classify minimal symplectic fillings of a non-cyclic quotient surface singularity given in Bhupal--Ono~\cite{Bhupal-Ono-2015} up to diffeomorphism in Section~\ref{section:Diffeomorphism-type}, from which we conclude that Milnor fibers associated to irreducible components of the reduced versal deformation space of a non-cyclic quotient surface singularity are non-diffeomorphic to each other except for obviously diffeomorphic pairs because of the symmetry of the minimal resolutions of the corresponding singularities (cf. Proposition~\ref{proposition:reduced-BO-list}); Theorem~\ref{theorem:components-Def(X)}. On the other hand, it has been known that every Milnor fiber of a quotient surface singularity is given as a smoothing of a certain partial resolution (so-called \emph{$P$-resolution}) and every $P$-resolution is dominated by a special resolution (so-called the \emph{maximal resolution}) of the singularity; cf.~KSB~\cite{Kollar-Shepherd-Barron-1988}. Then we show that every minimal symplectic filling of a quotient surface singularity can be constructed from the maximal resolution via rational blow-down surgery; Theorem~\ref{theorem:minimal-symplectic-filling-maximal-resolution}.

\subsection{Milnor fibers and symplectic fillings}

We recall briefly some relevant notions. Let $(X,0)=(\mathbb{C}^2/G,0)$ be a germ of a quotient surface singularity, where $G$ is a small finite subgroup of $GL(2, \mathbb{C})$. A \emph{smoothing} of $(X,0)$ is a proper flat map $\pi \colon \mathcal{X} \to \Delta$, where $\Delta=\{ t \in \mathbb{C} : \abs{t} < \epsilon\}$, from a threefold isolated singularity $(\mathcal{X}, 0)$ such that $(\pi^{-1}(0), 0) \cong (X, 0)$ and $\pi^{-1}(t)$ is smooth for every $t \neq 0$. The \emph{Milnor fiber} $M$ of a smoothing $\pi$ of $(X,0)$ is a general fiber $\pi^{-1}(t)$ ($0 < t \ll \epsilon$).

Assume that $(X, 0) \subset (\mathbb{C}^N, 0)$, which is always possible. If $B \subset \mathbb{C}^N$ is a small ball centered at the origin, then the small neighborhood $X \cap B$ of the singularity is contractible and homeomorphic to the cone over its boundary $L := X \cap \partial B$. The smooth compact 3-manifold $L$ is called the \emph{link} of the singularity. The topology of the germ $(X, 0)$ is completely determined by its link $L$. The link $L$ admits a natural contact structure $\xi_{\textrm{st}}$, so-called \emph{Milnor fillable contact structure}, where a \emph{contact structure} on a 3-manifold is a two-dimensional distribution $\xi$ given as the kernel of a one-form $\alpha$ such that $\alpha \wedge d\alpha$ is a volume form. The Milnor fillable contact structure $\xi_{\textrm{st}}$ on $L$ is defined by complex tangency of the complex structure $J$ along $L$, that is, those tangent planes to $L$ that are complex with respect to the complex structure near $L$; i.e., $\xi_{\textrm{st}}=\ker{\alpha_{\textrm{st}}}=TL \cap JTL$. A \emph{(strong) symplectic filling} of $(X, 0)$ is a  symplectic 4-manifold $(W, \omega)$ with the boundary $\partial W=L$ satisfying the compatibility condition $\omega=d\alpha_{\textrm{st}}$. One may also define a so-called \emph{weak} symplectic filling. But it is known that two notions of symplectic fillings coincide in our case because the link $L$ is a rational homology sphere. So we simply call them \emph{symplectic fillings}. Finally a \emph{Stein filling} of $(X,0)$ is a Stein manifold $W$ with $L$ as its strictly pseudoconvex boundary and $\xi_{\textrm{st}}$ is the set of complex tangencies to $L$. It is clear that Stein fillings are minimal symplectic fillings.

Minimal symplectic fillings are classified by Lisca~\cite{Lisca-2008} for cyclic quotient singularities and by Bhupal--Ono~\cite{Bhupal-Ono-2012} for non-cyclic quotient singularities. According their results, any minimal symplectic fillings of quotient surface singularities are given as complements $Z - E_{\infty}$ of the so-called \emph{compactifying divisor} $E_{\infty}$ embedded in a smooth rational $4$-manifold $Z$, where $E_{\infty}$ is a collection of symplectic 2-spheres depending only on the singularity $X$ itself, not on symplectic fillings; See Definitions~\ref{definition:natural-compactification-cyclic}, \ref{definition:natural-compactification-non-cyclic}.

On the other hand, according to the general theory of Milnor fibrations (see Looijenga~\cite{Looijenga-1984} for example), the Milnor fiber $M$ is a compact 4-manifold with the link $L$ as its boundary and the diffeomorphism type of $M$ depends only on the smoothing $\pi$, and, indeed, only on the irreducible component of $\Def(X,0)$ that contains the smoothing $\pi$. Furthermore $M$ has a natural Stein (and hence symplectic) structure, and so it provides a natural example of a Stein (hence minimal symplectic) filling of $L$.

Therefore it would be an intriguing problem to compare Milnor fibers and minimal symplectic fillings. In particular there are two basic questions: How can one identify Milnor fibers as minimal symplectic fillings, that is, as complements $Z - E_{\infty}$ in the lists of Lisca~\cite{Lisca-2008} and Bhupal--Ono~\cite{Bhupal-Ono-2012}? And is every minimal symplectic filling obtained from a Milnor fiber? that is, for any given minimal symplectic filling, is there a Milnor fiber which is diffeomorphic to the minimal symplectic filling?

\subsection{Classification of symplectic fillings}

First of all we classify symplectic fillings up to diffeomorphism, which is also one of the fundamental problems in contact and symplectic geometry.

For a \emph{cyclic quotient surface singularity} (that is, $G$ is a finite cyclic group), McDuff~\cite{McDuff-1990} classifies symplectic deformation classes of minimal symplectic fillings of cyclic quotient surface singularities of type $\frac{1}{n}(1,1)$. Ohta--Ono~\cite{Ohta-Ono-2005} investigates symplectic fillings of $A_n$-singularities. Then Lisca~\cite{Lisca-2008} presents a complete classification of symplectic fillings of any cyclic quotient surface singularities up to orientation-preserving diffeomorphism. Indeed his classification is up to orientation-preserving homeomorphism.

We briefly review Lisca's classification. For details, see Section~\ref{section:minimal-symplectic-fillings}. Let $(X,0)$ be a cyclic quotient surface singularity of type $\frac{1}{n}(1,a)$ with $(n,a)=1$. The link of $X$ is the Lens space $L(n,a)$. Lisca~\cite{Lisca-2008} parametrizes minimal symplectic fillings of $(X,0)$ by a set $K_e(n/n-a)$ of certain sequences of integers $\underline{k}=(k_1, \dotsc, k_e) \in \mathbb{N}^e$ (see Definition~\ref{definition:K_e(n/n-a)}). That is, by surgery diagrams, he constructs compact oriented 4-manifolds $W_{n,a}(\underline{k})$ with boundary $L(n,a)$ which are parametrized by $\underline{k} \in K_e(n/n-a)$ and shows that $W_{n,a}(\underline{k})$ is a Stein filling of $L(n,a)$. Finally he proves that any symplectic filling of $L(n,a)$ is orientation-preserving diffeomorphic to a manifold obtained from one of the $W_{n,a}(\underline{k})$ by a composition of blow-ups; so every minimal symplectic filling is diffeomorphic to a $W_{n,a}(\underline{k})$.

For  \emph{non-cyclic quotient surface singularities}, Ohta--Ono~\cite{Ohta-Ono-2005} classifies symplectic fillings of non-cyclic ADE singularities, that is, $D_n, E_6, E_7, E_8$ singularities. They show that there is only one diffeomorphism type of symplectic fillings for each non-cyclic ADE singularities. Then Bhupal--Ono~\cite{Bhupal-Ono-2012} provides a list of all possible minimal symplectic fillings for all non-cyclic quotient surface singularities.

As mentioned above, according to Bhupal--Ono~\cite{Bhupal-Ono-2012}, any minimal symplectic filling $W$ of a non-cyclic quotient surface singularity $X$ is orientation-preserving diffeomorphic to the complement $Z - \nu(E_{\infty})$ of a regular neighborhood $\nu(E_{\infty})$ of the compactifying divisor $E_{\infty}$ of $X$ embedded in a certain rational symplectic 4-manifold $Z$. The rational symplectic 4-manifold $Z$ is called the \emph{compactification} of $W$. Bhupal--Ono~\cite{Bhupal-Ono-2012} presents a complete list of ways of constructing $(Z \supset E_{\infty})$ up to symplectic deformation equivalence from $\mathbb{CP}^2$ or $\mathbb{CP}^1 \times \mathbb{CP}^1$ by successive blow-ups.

On the other hand, there are duplicate entries in the list of Bhupal--Ono~\cite{Bhupal-Ono-2012} coming from the same minimal symplectic fillings, we first remove the duplications in Proposition~\ref{proposition:reduced-BO-list}. Then, in Section~\ref{section:Diffeomorphism-type}, we classify minimal symplectic fillings of non-cyclic quotient surface singularities up to orientation-preserving diffeomorphism.

\begin{maintheorem}[Theorem~\ref{theorem:diffeomorphism-type}, Corollary~\ref{corollary:no-exotic-fillings}]
Any two minimal symplectic fillings of a non-cyclic quotient surface singularity in the reduced list of Bhupal-Ono (cf. Proposition~\ref{proposition:reduced-BO-list}) are not orientation-preserving diffeomorphic to each other. Furthermore there are no exotic symplectic fillings for a non-cyclic quotient surface singularity.
\end{maintheorem}

Combined with the classification of Lisca~\cite{Lisca-2008} for symplectic fillings of cyclic quotient surface singularities, one can conclude that there are no exotic symplectic fillings for any quotient surface singularity.

We use a similar strategy of Lisca~\cite{Lisca-2008}. Let $W_1$ and $W_2$ be two minimal symplectic fillings of a non-cyclic quotient surface singularity $X$. Suppose that $W_i$ is orientation-preserving diffeomorphic to $Z_i - \nu(E_{\infty})$, where $Z_i$ is a rational 4-manifold. We show that any diffeomorphism $\phi \colon W_1 \to W_2$ (if any) can be extended to a diffeomorphism $\overline{\phi} \colon Z_1 \to Z_2$ such that $\overline{\phi}$ preserves the compactifying divisor $E_{\infty}$. Since $W_i = Z_i - E_{\infty}$ is assumed to be minimal, every $(-1)$-curve in $Z_i$ should intersect with $E_{\infty}$ in $Z_i$. Since $\overline{\phi}$ also preserves $(-1)$-curves, we show that the diffeomorphism type of a minimal symplectic filling $W$ of a non-cyclic quotient surface singularity is completely determined by the data of the intersections of $(-1)$-curves in $Z$ with $E_{\infty}$. It is easy to check that the positions of $(-1)$-curves are all different for each minimal symplectic filling in the reduced list of Bhupal--Ono~\cite{Bhupal-Ono-2012} (for a few exceptions for which we can easily handle the diffeomorphism type problem), which proves the first part of Main Theorem~1. Furthermore, in Corollary~\ref{corollary:no-exotic-fillings}, we show that the above proof can be easily extended to the case of orientation-preserving homeomorphisms; hence there are no exotic symplectic fillings as asserted.

\subsection{Explicit correspondence: From Milnor fibers to symplectic fillings}

We provide an explicit algorithm for identifying a given Milnor fiber as a minimal symplectic filling, that is, as complements $Z - E_{\infty}$. For this, we apply some techniques from the minimal model program for 3-folds such as divisorial contractions and flips.

We first compactify a given smoothing of $(X,0)$. We briefly sketch the idea: Let $M$ be the Milnor fiber of a smoothing $\pi \colon \mathcal{X} \to \Delta$ of a quotient surface singularity $X$. Let $Y' \to X$ be the $P$-resolution corresponding to $\pi$. According to Behnke--Christophersen~\cite{Behnke-Christophersen-1994}, there is a special partial resolution, so called, \emph{$M$-resolution} $Y \to X$ dominating $Y'$ (See Definition~\ref{definition:$M$-resolution}), and a $\mathbb{Q}$-Gorenstein smoothing $\phi \colon \mathcal{Y} \to \Delta$ such that the smoothing $\phi$ blows down to $\pi$. We have a commutative diagram as described in Figure~\ref{figure:introduction-diagram}. It is easy to show that a general fiber $Y_t = \phi^{-1}(t)$ is isomorphic to $X_t = \pi^{-1}(t)$. Therefore we have
\begin{equation*}
M = Y_t.
\end{equation*}

\begin{figure}
\centering

\subfloat [$M$-resolutions]{\includegraphics{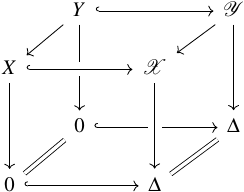}}
\qquad \qquad
\subfloat [{compactifications}]{\includegraphics{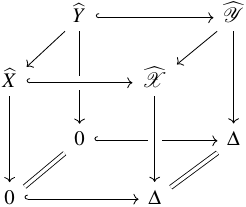}}

\caption{Deformations of partial resolutions and natural compactifications}

\label{figure:introduction-diagram}
\end{figure}

We then compactify $X$ and $Y$ to compact complex surfaces, following Lisca~\cite{Lisca-2008} and Pinkham~\cite{Pinkham-1978}, so that the two smoothings $\mathcal{Y} \to \Delta$ and $\mathcal{X} \to \Delta$ can be extended to the deformations $\widehat{\smash[b]{\mathcal{Y}}} \to \Delta$ and $\widehat{\smash[b]{\mathcal{X}}} \to \Delta$ of the \emph{natural compactifications} $\widehat{X}$ and $\widehat{Y}$ of $X$ and $Y$, where $\widehat{X}$ and $\widehat{Y}$ are obtained, roughly speaking, by pasting a regular neighborhood $\nu(E_{\infty})$ of the compactifying divisor $E_{\infty}$ of $X$ (See Section~\ref{section:Milnor-fiber-as-complements} for the definition of the natural compactification). We again have a commutative diagram, as described in Figure~\ref{figure:introduction-diagram}.

The deformations  $\widehat{\smash[b]{\mathcal{Y}}} \to \Delta$ and $\widehat{\smash[b]{\mathcal{X}}} \to \Delta$ are locally trivial along $E_{\infty}$. So the Milnor fiber $Y_t$ is given as the complement of the compactifying divisor $E_{\infty}$ in a general fiber $\widehat{Y}_t$ (which is called a \emph{compactified Milnor fiber}; See Definition~\ref{definition:compactified-Milnor-fiber}) of the deformation $\widehat{\smash[b]{\mathcal{Y}}} \to \Delta$; See Proposition~\ref{proposition:Milnor-fiber=complement}. So we have
\begin{equation*}
M = Y_t = \widehat{Y}_t - E_{\infty}.
\end{equation*}

We need to recognize $(\widehat{Y}_t, E_{\infty})$ as $(Z, E_{\infty})$ in the lists of Lisca~\cite{Lisca-2008} and Bhupal--Ono~\cite{Bhupal-Ono-2012}. For this, we show that

\begin{maintheorem}[Theorem~\ref{theorem:smoothable-by-flips}]
By applying specific divisorial contractions and flips in a controlled and explicit manner to the deformation $\widehat{\smash[b]{\mathcal{Y}}} \to \Delta$, we obtain a new deformation $\mathcal{W} \to \Delta$ such that all of its fibers are smooth.
\end{maintheorem}

During this minimal model program (in short, \textit{MMP}) process, we can track down special $(-1)$-curves appearing in the general fiber, and at the end, since all fibers of $\mathcal{W} \to \Delta$ are smooth, we can also see $(-1)$-curves in the general fiber $W_t$ coming from $(-1)$-curves in $W_0$. Using this fact, we show that a general fiber $\widehat{Y}_t$ can be obtained from an explicit general fiber $W_t$ by blowing up several times in a specific manner. In this way, we get an explicit data of the intersections of $(-1)$-curves in $\widehat{Y}_t$ with $E_{\infty}$, and so we can identify $(\widehat{Y}_t, D)$ as $(Z,E_{\infty})$ in the lists of Lisca~\cite{Lisca-2008} and Bhupal--Ono~\cite{Bhupal-Ono-2012}.

As a byproduct, this explicit MMP gives another proof of Stevens' result \cite{Stevens-1991} on the one-to-one correspondence between the set of $P$-resolutions of a cyclic quotient surface singularity $(X,0)$ of type $\frac{1}{n}(1,a)$  and the set of zero continued fractions $K_e(n/n-a)$ described in Definition~\ref{definition:K_e(n/n-a)}; see Corollary~\ref{corollary:New-proof-of-Stevens-theorem}. More importantly, this shows a geometric way to connect directly Lisca's \cite{Lisca-2008} and Koll\'ar--Shepherd-Barron's \cite{Kollar-Shepherd-Barron-1988} one-to-one correspondences. By N\'emethi--Popescu-Pampu \cite{Nemethi-PPampu-2010}, it also connects Christophersen--Stevens' \cite{Christophersen-1991,Stevens-1991} correspondence, together with the one induced by the $M$-resolutions of Behnke--Christophersen \cite{Behnke-Christophersen-1994}, and in particular we answer the question raised in N{\'e}methi--Popescu-Pampu~\cite[\S11.2]{Nemethi-PPampu-2010}.

Furthermore, as we classified minimal symplectic fillings of quotient surface singularities up to diffeomorphism, the above explicit identification of Milnor fibers induces the classification of Milnor fibers up to diffeomorphism as follows:

\begin{maintheorem}[Theorem~\ref{theorem:components-Def(X)}]
The Milnor fibers associated to irreducible components of the reduced semi-universal deformation space of a quotient surface singularity are non-diffeomorphic to each other.
\end{maintheorem}

\subsection{Explicit correspondence: From minimal symplectic fillings to Milnor fibers}

The question is that, for a given minimal symplectic filling, there is a Milnor fiber which is diffeomorphic to the minimal symplectic filling.

For cyclic quotient surface singularities, Ohta--Ono~\cite{Ohta-Ono-2005} proves that a minimal symplectic filling of an $A_n$-singularity is diffeomorphic to its Milnor fiber. Then N{\'e}methi--Popescu-Pampu~\cite{Nemethi-PPampu-2010} shows that every minimal symplectic filling of a cyclic quotient surface singularity is diffeomorphic to a Milnor fiber of the singularity, and provides an explicit one-to-one correspondence between Milnor fibers and minimal symplectic fillings.

We briefly review related results. As mentioned above, minimal symplectic fillings of a cyclic quotient surface singularity of type $\frac{1}{n}(1,a)$ are parametrized by Lisca by the set $K_e(n/n-a)$ of certain sequences of integers $\underline{k}=(k_1, \dotsc, k_e) \in \mathbb{N}^e$. On the other hand Christophersen~\cite{Christophersen-1991}, Stevens~\cite{Stevens-1991}, and de Jong and van Straten~\cite{deJong-vanStraten-1998} parametrize by the same set $K_e(n/n-a)$ (but with different methods) the reduced irreducible components of the versal deformation space of $(X,0)$.
Since each component contains a smoothing because $(X,0)$ is a rational singularity, the diffeomorphism types of Milnor fibers are invariants of irreducible components of the versal deformation space of the singularity $(X,0)$. Hence Milnor fibers of $(X,0)$ are also parametrized by the same set $K_e(n/n-a)$. So Lisca~\cite[p.768]{Lisca-2008} raises the following conjecture: The Milnor fiber of the irreducible component of the reduced versal base space of the cyclic quotient surface singularity $(X,0)$ parametrized by $\underline{k} \in K_e(n/n-a)$ is diffeomorphic to $W_{n,a}(\underline{k})$. The conjecture was solved affirmatively by N{\'e}methi--Popescu-Pampu~\cite{Nemethi-PPampu-2010}. In order to identify Milnor fibers with Stein fillings, N{\'e}methi--Popescu-Pampu~\cite{Nemethi-PPampu-2010} uses the explicit equations of reduced versal base space of cyclic quotient surface singularities developed by Riemenschneider~\cite{Riemenschneider-1974} and Arndt~\cite{Arndt-1988}.

For non-cyclic quotient surface singularities, Ohta--Ono~\cite{Ohta-Ono-2005} shows that any minimal symplectic fillings of non-cyclic ADE singularities (i.e., $D_n, E_6, E_7, E_8$ singularities) are diffeomorphic to their Milnor fibers. However, the correspondence between Milnor fibers and minimal symplectic fillings of the other non-cyclic quotient singularities remained widely open.

All possible minimal symplectic fillings of non-cyclic quotient surface singularity were found by Bhupal--Ono~\cite{Bhupal-Ono-2012} as mentioned above. On the other hand, Milnor fibers are invariants of the irreducible components of the reduced versal deformation space of $X$. According to Koll\'ar--Shepherd-Barron~\cite{Kollar-Shepherd-Barron-1988}, there is a one-to-one correspondence between the irreducible components and \emph{$P$-resolutions} of $X$ (which are special partial resolutions of $X$ admitting only singularities of class $T$. See Definition~\ref{definition:$P$-resolution}). Therefore Milnor fibers of irreducible components are in one-to-one correspondence with $P$-resolutions. Stevens~\cite{Stevens-1993} determines all $P$-resolutions of quotient surface singularities.

We found that the number of $P$-resolutions in Stevens~\cite{Stevens-1993} and that of minimal symplectic fillings in Bhupal--Ono~\cite{Bhupal-Ono-2012} are the same (cf.~Remarks~\ref{remark:erratum-Bhypal-Ono}, \ref{remark:erratum-Stevens}). Hence it would be natural to raise the question: Is there a one-to-one correspondence between Milnor fibers and minimal symplectic fillings also for non-cyclic quotient surface singularities? Since a Milnor fiber is a minimal symplectic filling, the question is if every minimal symplectic filling can be realized as a Milnor fiber. We give an affirmative answer in Section~\ref{section:minimal-symplectic-fillings-are-Milnor-fibers}.

\begin{maintheorem}[Theorem~\ref{theorem:fillings-diffeomorphic-to-Milnors}]
Let $(X,0)$ be a quotient surface singularity, and let $(W, \omega)$ be a minimal symplectic filling of $(X,0)$. Then there is a smoothing $\pi \colon \mathcal{X} \to \Delta$ of $(X,0)$ such that $W$ is diffeomorphic to the Milnor fiber of $\pi$.
\end{maintheorem}

That is, every minimal symplectic filling of a given quotient surface singularity can be realized as a Milnor fiber. Therefore we show that there is a one-to-one correspondence between minimal symplectic fillings and Milnor fibers of irreducible components of the reduced versal deformation space for any quotient surface singularities (not only cyclic quotient surface singularities). Main Theorem~2 above is a generalization of the results of N{\'e}methi--Popescu-Pampu~\cite{Nemethi-PPampu-2010} for cyclic quotient surface singularities and Ohta--Ono~\cite{Ohta-Ono-2005} for ADE singularities.

For the proof, we apply Pinkham's negative weight smoothing technique \cite{Pinkham-1978}. By Lisca~\cite{Lisca-2008} and Bhupal--Ono~\cite{Bhupal-Ono-2012}, every minimal symplectic filling of a quotient surface singularity is symplectic deformation equivalent to the complement of the compactifying divisor $E_{\infty}$ in a certain rational complex surface $Z$. There are complete lists in Lisca~\cite{Lisca-2008} and Bhupal--Ono~\cite{Bhupal-Ono-2012} of $(Z, E_{\infty})$ corresponding to minimal symplectic fillings. We show that, for each pair $(Z, E_{\infty})$ in these lists, the divisor $E_{\infty}$ supports an ample divisor $E$ in $Z$. So there is an embedding of $Z$ into a projective space $\mathbb{CP}^n$ by the linear series $\abs{mE}$ for some $m \gg 0$. Taking the affine cone $C(Z)$ over $Z$, it follows by Proposition~\ref{proposition:supporting-ample-divisor} (essentially by Pinkham~\cite[Theorem~6.7]{Pinkham-1978}. See also SSW~\cite[Theorem~8.1]{Stipzicz-Szabo-Wahl-2008} and Fowler~\cite[Theorem 2.2.3]{Fowler-2014}) that a hyperplane section of $C(Z)$ through the origin is just the original singularity $(X,0)$ and, by moving hyperplane sections, we get a smoothing of $(X,0)$ whose Milnor fiber is diffeomorphic to the complement $Z-E_{\infty}$.

On the other hand, according to KSB~\cite{Kollar-Shepherd-Barron-1988}, every Milnor fiber of a quotient surface singularity $(X,0)$ can be obtained topologically by rationally blowing down the corresponding $P$-resolution (cf. Corollary~\ref{corollary:Milnor-fiber=Rational-blow-down}), and every $P$-resolution of a quotient surface singularity is dominated by the so-called \emph{maximal resolution} of the singularity, which can be obtained uniquely by an explicit sequence of blowing-ups from its minimal resolution (See Definition~\ref{definition:maximal-resolution}, Proposition~\ref{proposition:maximal-resolution-dominating}). Therefore Main Theorem 4 above implies that

\begin{maintheorem}[Theorem~\ref{theorem:minimal-symplectic-filling-maximal-resolution}]
Any minimal symplectic filling of a quotient surface singularity is obtained by a sequence of rational blow-downs from its unique maximal resolution.
\end{maintheorem}

Bhupal--Ozbagci~\cite{Bhupal-Ozbagci-2013} also proves a similar result for cyclic quotient surface singularities by using a Lefschetz fibration technique.

\subsection*{Organization}

The paper is organized as follows. After reviewing generalities on quotient surface singularities in Section~\ref{section:generalities}, we explain how to compactify a quotient surface singularity and its smoothing  in Section~\ref{section:Compactifying-divisor} primarily based on Pinkham~\cite{Pinkham-1977, Pinkham-1978}. We introduce the classifications of Lisca~\cite{Lisca-2008} and Bhupal--Ono~\cite{Bhupal-Ono-2012} on minimal symplectic fillings in Section~\ref{section:minimal-symplectic-fillings} and then we classify minimal symplectic fillings of non-cyclic quotient surface singularities up to diffeomorphism in Section~\ref{section:Diffeomorphism-type}.

We apply techniques of the minimal model program for 3-folds (introduced in Section~\ref{section:Semi-stable-MMP}) for identifying Milnor fibers as minimal symplectic fillings in Section~\ref{section:identifying-Milnor-fibers}. Indeed we show that every Milnor fiber is given as a complement of the compactifying divisor in a rational complex surface in Section~\ref{section:Milnor-fiber-as-complements}, where the rational surface is obtained by a general fiber of a smoothing of the corresponding $M$-resolution which is introduced in Section~\ref{section:$P$-resolution}. As applications, in Section~\ref{section:applications}, we provide another proof of Stevens' result \cite{Stevens-1991} on the one-to-one correspondence between the set of $P$-resolutions of a cyclic quotient surface singularity and the set of certain zero continued fractions. Furthermore we classify Milnor fibers up to diffeomorphism associated to the irreducible components of the reduced semi-universal deformation space of quotient surface singularities.

Finally, in Section~\ref{section:minimal-symplectic-fillings-are-Milnor-fibers} we establish an explicit correspondence from minimal symplectic fillings to Milnor fibers by proving that every minimal symplectic filling is diffeomorphic to a Milnor fiber. As a corollary, we prove that every minimal symplectic filling of a quotient surface singularity can be obtained from the maximal resolution by applying a sequence of rational blow-down surgeries.

\subsection*{Acknowledgements}

The authors would like to thank Selman Akbulut, Mohan Bhupal, Kaoru Ono, Kyungbae Park, Patrick Popescu-Pampu, and Jan Stevens for their kind discussion. HP and DS thank Korea Institute for Advanced Study when they were associate members in KIAS, and HP, DS, JP thank National Institute Mathematical Sciences for warm hospitality when they visited NIMS as Research in CAMP Program. GU is grateful to KIAS, particularly to JongHae Keum, for the invitation to visit during January-February 2015, and to DS for inviting him to lecture in the Winter school on Algebraic Surfaces. The involvement of GU in this project happened during that visit.

HP was supported by Samsung Science and Technology Foundation under Project Number SSTF-BA1402-03.
JP was supported by Leaders Research Grant funded by Seoul National University and by the National Research Foundation of Korea Grant (2010-0019516). He also holds a joint appointment at KIAS and in the Research Institute of Mathematics, SNU.
DS was supported by Basic Science Research Program through the National Research Foundation of Korea funded by the Ministry of Education (NRF-2015R1D1A1A01060476).
GU was supported by the FONDECYT regular grant 1150068 funded by the Chilean government.

\section{Generalities on quotient surface singularities}
\label{section:generalities}

To fix notions and notations we recall some basics on quotient surface singularities. Let $(X,0)=(\mathbb{C}^2/G, 0)$ be a germ of quotient surface singularity, where $G$ is a finite subgroup of $GL(2, \mathbb{C})$. One may assume that $G$ does not contain any reflections. It is known that, for two finite subgroups $G_1$ and $G_2$ of $GL(2, \mathbb{C})$ without reflections, $(\mathbb{C}^2/G_1, 0)$ is analytically isomorphic to $(\mathbb{C}^2/G_2, 0)$ if and only if $G_1$ is conjugate to $G_2$. Therefore it is enough to classify finite subgroups of $GL(2,\mathbb{C})$ without reflections up to conjugation for classifying quotient surface singularities $(\mathbb{C}^2/G,0)$. We may assume that $G \subset U(2)$ because $G$ is finite. Then the action of $G$ on $\mathbb{C}^2$ lifts to an action on the blowing-up of $\mathbb{C}^2$ at the origin. So $G$ acts on the exceptional divisor $E \cong \mathbb{CP}^1$, where the action is induced by the double covering $G \subset U(2) \to PU(2) \cong SO(3)$. The image of $G$ in $SO(3)$ is either a (finite) cyclic subgroup, a dihedral group, the tetrahedral group, the octahedral group, or the icosahedral group. Therefore quotient surface singularities are divided into five classes: cyclic quotient surface singularities, dihedral singularities, tetrahedral singularities, octahedral singularities, and icosahedral singularities.

\subsection{Hirzebruch--Jung continued fractions}
\label{subsection:HJ-continued-fraction}

We denote by $[c_1, \dotsc, c_t]$ ($c_i \ge 1$) the \emph{Hirzebruch--Jung continued fraction} defined recursively by $[c_t]=c_t$, and
\begin{equation*}
[c_i, c_{i+1}, \dotsc, c_t] = c_i-\dfrac{1}{[c_{i+1}, \dotsc, c_t]}.
\end{equation*}
A continued fraction $[c_1, \dotsc, c_t]$ of integers often represents a chain of smooth rational curves on a complex surface whose dual graph is given by
\begin{equation*}
\begin{tikzpicture}
\node[bullet] (10) at (1,0) [label=above:{$-c_1$}] {};
\node[bullet] (20) at (2,0) [label=above:{$-c_2$}] {};

\node[empty] (250) at (2.5,0) [] {};
\node[empty] (30) at (3,0) [] {};

\node[bullet] (350) at (3.5,0) [label=above:{$-c_{t-1}$}] {};
\node[bullet] (450) at (4.5,0) [label=above:{$-c_t$}] {};

\draw [-] (10)--(20);
\draw [-] (20)--(250);
\draw [dotted] (20)--(350);
\draw [-] (30)--(350);
\draw [-] (350)--(450);
\end{tikzpicture}
\end{equation*}
So we use the term \emph{blowing up} for the following operations:
\begin{align*}
[c_1, \dotsc, c_{i-1}, c_{i+1}, \dotsc, c_t]
&\mapsto [c_1, \dotsc, c_{i-1}+1, 1, c_{i+1}+1, \dotsc, c_t]\\
[c_2, \dotsc, c_t]
&\mapsto [1, c_2+1, \dotsc, c_t]\\
[c_1, \dotsc, c_t]
&\mapsto [c_1, \dotsc, c_t+1, 1]\\
\end{align*}

Let $n$ and $a$ be positive numbers with $n > a$. Then the continued fraction expansions of $n/a$ and $n/(n-a)$ are \emph{dual}. That is, one finds the one form from the other with Riemenschneider's point diagram \cite{Riemenschneider-1974}: Let
\begin{equation*}
\frac{n}{a}=[b_1, \dotsc, b_r], \quad \frac{n}{n-a} = [a_1, a_2, \dotsc, a_e].
\end{equation*}
Place in the $i$-th row $a_i-1$ dots, the first one under the last one of the $(i-1)$-st row; then, the column $j$ contains $b_j-1$ dots, and vice versa.

\begin{example}\label{example:cyclic-19/7}
For $n=19$, $a=7$, we have $\frac{n}{a}=[3,4,2]$ and $\frac{n}{n-a}=[2,3,2,3]$. Its Riemenschneider's point diagram is
\begin{equation*}
\begin{tikzpicture}
\node[bullet] at (0,4) [] {};
\node[bullet] at (0,3.5) [] {};
\node[bullet] at (0.5,3.5) [] {};
\node[bullet] at (0.5,3) [] {};
\node[bullet] at (0.5,2.5) [] {};
\node[bullet] at (1,2.5) [] {};
\end{tikzpicture}
\end{equation*}
\end{example}

Furthermore we remark that
\begin{equation*}
[b_1, \dotsc, b_r, 1, a_e, \dotsc, a_1]=0,
\end{equation*}
which implies that the linear chain of $\mathbb{CP}^1$'s
\begin{equation*}
\begin{tikzpicture}
\node[bullet] (10) at (1,0) [label=above:{$-b_1$}] {};
\node[bullet] (20) at (2,0) [label=above:{$-b_2$}] {};

\node[empty] (250) at (2.5,0) [] {};
\node[empty] (30) at (3,0) [] {};

\node[bullet] (350) at (3.5,0) [label=above:{$-b_{r-1}$}] {};
\node[bullet] (450) at (4.5,0) [label=above:{$-b_r$}] {};

\node[bullet] (550) at (5.5,0) [label=above:{$-1$}] {};

\node[bullet] (650) at (6.5,0) [label=above:{$-a_e$}] {};
\node[empty] (70) at (7,0) [] {};
\node[empty] (750) at (7.5,0) [] {};
\node[bullet] (80) at (8,0) [label=above:{$-a_2$}] {};
\node[bullet] (90) at (9,0) [label=above:{$-a_1$}] {};

\draw [-] (10)--(20);
\draw [-] (20)--(250);
\draw [dotted] (20)--(350);
\draw [-] (30)--(350);
\draw [-] (350)--(450);
\draw [-] (450)--(550);

\draw [-] (550)--(650);
\draw [-] (650)--(70);
\draw [dotted] (650)--(80);
\draw [-] (750)--(80);
\draw [-] (80)--(90);
\end{tikzpicture}
\end{equation*}
is blown down to $\begin{tikzpicture}
\node[bullet] (00) at (0,0) [label=above:{$0$}] {};
\end{tikzpicture}$.

\subsection{Minimal resolutions}

We recall the dual graph of the minimal resolution of quotient surface singularities.

\subsubsection{Cyclic quotient surface singularities}

Let $G$ be a (multiplicative) cyclic group of order $n$ generated by a $n$-th root of unity $\zeta$. A \emph{cyclic quotient surface singularity $(X,0)$ of type $\frac{1}{n}(1,a)$} with $1 \le a < n$ and $(n,a)=1$ is a quotient surface singularity where $G$ acts by
\begin{equation*}
\zeta \cdot (x,y) = (\zeta x, \zeta^a y).
\end{equation*}
The dual graph of the minimal resolution of $(X,0)$ is given by
\begin{equation*}
\begin{tikzpicture}
\node[bullet] (10) at (1,0) [label=above:{$-b_1$}] {};
\node[bullet] (20) at (2,0) [label=above:{$-b_2$}] {};

\node[empty] (250) at (2.5,0) [] {};
\node[empty] (30) at (3,0) [] {};

\node[bullet] (350) at (3.5,0) [label=above:{$-b_{r-1}$}] {};
\node[bullet] (450) at (4.5,0) [label=above:{$-b_r$}] {};

\draw [-] (10)--(20);
\draw [-] (20)--(250);
\draw [dotted] (20)--(350);
\draw [-] (30)--(350);
\draw [-] (350)--(450);
\end{tikzpicture}
\end{equation*}
where
\begin{equation*}
\frac{n}{a} = [b_1, b_2, \dotsc, b_r]
\end{equation*}
with $b_i \ge 2$ for all $i$.

\subsubsection{Dihedral singularities}

Let $(X,0)$ be a dihedral singularity of type $D_{n,a}$, where $1 < a < n$ and $(n,a)=1$. The dual graph of the minimal resolution of $(X,0)$ is given by Figure~\ref{figure:non-cyclic-minimal-resolution}(A), where
\begin{equation*}
\frac{n}{a} = [b, b_1, b_2, \dotsc, b_r]
\end{equation*}
with $b \ge 2$ and $b_i \ge 2$ for all $i$.

\subsubsection{Tetrahedral, octahedral, icosahedral singularities}

Let $(X,0)$ be a tetrahedral, octahedral, or icosahedral singularity. The minimal resolution has a central curve $C_0$ with $C_0 \cdot C_0 = -b$ ($b \ge 2$) and three arms. See Figures~\ref{figure:non-cyclic-minimal-resolution}(B) and 2(C). One of the arms is always
\begin{tikzpicture}[scale=0.5]
\node[bullet] (00) at (0,0) [label=above:{$-2$}] {};
\end{tikzpicture}.
Another arm is
\begin{tikzpicture}[scale=0.5]
\node[bullet] (00) at (0,0) [label=above:{$-2$}] {};
\node[bullet] (10) at (1,0) [label=above:{$-2$}] {};
\draw [-] (00)--(10);
\end{tikzpicture}
or
\begin{tikzpicture}[scale=0.5]
\node[bullet] (00) at (0,0) [label=above:{$-3$}] {};
\end{tikzpicture}.
Bhupal--Ono~\cite{Bhupal-Ono-2012} divides tetrahedral, octahedral, icosahedral singularities into two types: \emph{type $(3,2)$} if another arm is
\begin{tikzpicture}[scale=0.5]
\node[bullet] (00) at (0,0) [label=above:{$-2$}] {};
\node[bullet] (10) at (1,0) [label=above:{$-2$}] {};
\draw [-] (00)--(10);
\end{tikzpicture}
and \emph{type $(3,1)$} if another arm is
\begin{tikzpicture}[scale=0.5]
\node[bullet] (00) at (0,0) [label=above:{$-3$}] {};
\end{tikzpicture}.
For a complete list of dual graphs, refer Riemenschneider~\cite{Riemenschneider-1981} or Bhupal--Ono~\cite{Bhupal-Ono-2012}.
\begin{figure}[t]
\centering
\subfloat [dihedral] {%
\begin{tikzpicture}
\node[bullet] (00) at (0,0) [label=below:{$-2$}] {};
\node[bullet] (10) at (1,0) [label=below:{$-b$}] {};
\node[bullet] (20) at (2,0) [label=below:{$-b_1$}] {};

\node[empty] (250) at (2.5,0) [] {};
\node[empty] (30) at (3,0) [] {};

\node[bullet] (350) at (3.5,0) [label=below:{$-b_{r}$}] {};

\node[bullet] (11) at (1,1) [label=left:{$-2$}] {};

\draw [-] (00)--(10);
\draw [-] (10)--(20);
\draw [-] (20)--(250);
\draw [dotted] (20)--(350);
\draw [-] (30)--(350);

\draw [-] (10)--(11);
\end{tikzpicture}}
\\
\subfloat [type $(3,2)$] {%
\begin{tikzpicture}
\node[bullet] (-10) at (-1,0) [label=below:{$-2$}] {};
\node[bullet] (00) at (0,0) [label=below:{$-2$}] {};
\node[bullet] (10) at (1,0) [label=below:{$-b$}] {};
\node[bullet] (20) at (2,0) [label=below:{$-b_1$}] {};

\node[empty] (250) at (2.5,0) [] {};
\node[empty] (30) at (3,0) [] {};

\node[bullet] (350) at (3.5,0) [label=below:{$-b_r$}] {};

\node[bullet] (11) at (1,1) [label=left:{$-2$}] {};

\draw [-] (-10)--(00);
\draw [-] (00)--(10);
\draw [-] (10)--(20);
\draw [-] (20)--(250);
\draw [dotted] (20)--(350);
\draw [-] (30)--(350);

\draw [-] (10)--(11);
\end{tikzpicture}}
\qquad \qquad
\subfloat [type $(3,1)$] {%
\begin{tikzpicture}
\node[bullet] (00) at (0,0) [label=below:{$-3$}] {};
\node[bullet] (10) at (1,0) [label=below:{$-b$}] {};
\node[bullet] (20) at (2,0) [label=below:{$-b_1$}] {};

\node[empty] (250) at (2.5,0) [] {};
\node[empty] (30) at (3,0) [] {};

\node[bullet] (350) at (3.5,0) [label=below:{$-b_r$}] {};

\node[bullet] (11) at (1,1) [label=left:{$-2$}] {};

\draw [-] (00)--(10);
\draw [-] (10)--(20);
\draw [-] (20)--(250);
\draw [dotted] (20)--(350);
\draw [-] (30)--(350);

\draw [-] (10)--(11);
\end{tikzpicture}}

\caption{The dual graphs of the minimal resolutions of non-cyclic quotient  singularities}
\label{figure:non-cyclic-minimal-resolution}
\end{figure}
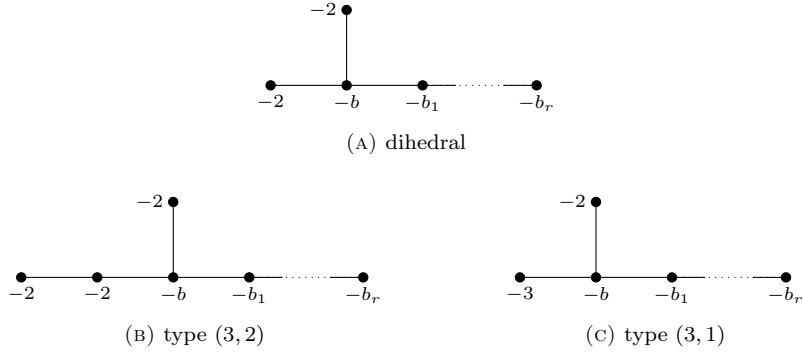

\begin{example}
\label{example:I_30(b-2)+23}
The icosahedral singularity $I_{30(b-2)+23}$ is of type $(3,1)$ and the dual graph of its minimal resolution is given by
\begin{equation*}
\begin{tikzpicture}
\node[bullet] (00) at (0,0) [label=below:{$-3$}] {};
\node[bullet] (10) at (1,0) [label=below:{$-b$}] {};
\node[bullet] (20) at (2,0) [label=below:{$-3$}] {};
\node[bullet] (30) at (3,0) [label=below:{$-2$}] {};
\node[bullet] (11) at (1,1) [label=left:{$-2$}] {};

\draw [-] (00)--(10);
\draw [-] (10)--(20);
\draw [-] (20)--(30);
\draw [-] (10)--(11);
\end{tikzpicture}
\end{equation*}
\end{example}

\section{Natural compactifications and compactifying divisors}
\label{section:Compactifying-divisor}

Let $(X,0)$ be a quotient surface singularity. We introduce a compactification of $X$ ``compatible'' with its smoothings. We refer Lisca~\cite{Lisca-2008} for cyclic quotient surface singularities, and Pinkham~\cite{Pinkham-1977, Pinkham-1978} for non-cyclic quotient surface singularities.

\subsection{Cyclic quotient surface singularities}
\label{subsection:compactifying-divisor-cyclic}

Let $(X,0)$ be a cyclic quotient surface singularity of type $\frac{1}{n}(1,a)$. Let $\mathbb{F}_1$ be a ruled surface over $\mathbb{CP}^1$. Let $S_0$ and $S_{\infty}$ be two sections with $S_0 \cdot S_0=-1$ and $S_{\infty} \cdot S_{\infty}=1$, and let $F$ be a fiber. One can blow up $\mathbb{F}_1$ appropriately at $p \in S_0 \cap F$, and at the infinitely near points over $p$, so that one obtain the following linear chain of $\mathbb{CP}^1$'s starting from the proper transform $\overline{S}_0$ of $S_0$ and ending with the proper transform $\overline{S}_{\infty}$ of $S_{\infty}$:
\begin{equation}\label{equation:blowing-up-F_1}
\begin{tikzpicture}
\node[bullet] (10) at (1,0) [label=above:{$-b_1$},label=below:{$\overline{S}_0$}] {};
\node[bullet] (20) at (2,0) [label=above:{$-b_2$}] {};

\node[empty] (250) at (2.5,0) [] {};
\node[empty] (30) at (3,0) [] {};

\node[bullet] (350) at (3.5,0) [label=above:{$-b_{r-1}$}] {};
\node[bullet] (450) at (4.5,0) [label=above:{$-b_r$}] {};

\node[bullet] (550) at (5.5,0) [label=above:{$-1$}] {};

\node[bullet] (650) at (6.5,0) [label=above:{$-a_e$},label=below:{$D_e$}] {};
\node[empty] (70) at (7,0) [] {};
\node[empty] (750) at (7.5,0) [] {};
\node[bullet] (80) at (8,0) [label=above:{$-a_2$},label=below:{$D_2$}] {};
\node[bullet] (90) at (9,0) [label=above:{$1-a_1$},label=below:{$D_1$}] {};
\node[bullet] (100) at (10,0) [label=above:{$+1$},label=below:{$\overline{S}_{\infty}$}] {};

\draw [-] (10)--(20);
\draw [-] (20)--(250);
\draw [dotted] (20)--(350);
\draw [-] (30)--(350);
\draw [-] (350)--(450);
\draw [-] (450)--(550);

\draw [-] (550)--(650);
\draw [-] (650)--(70);
\draw [dotted] (650)--(80);
\draw [-] (750)--(80);
\draw [-] (80)--(90);
\draw [-] (90)--(100);
\end{tikzpicture}
\end{equation}
where
\begin{equation*}
\frac{n}{a} = [b_1, \dotsc, b_r], \quad \frac{n}{n-a}=[a_1, \dotsc, a_e].
\end{equation*}

\begin{definition}
\label{definition:natural-compactification-cyclic}
Let $\widetilde{X}$ be the blown-up of the Hirzebruch surface $\mathbb{F}_1$ that contains the configuration in Equation~\eqref{equation:blowing-up-F_1} above. Contracting the configuration
\begin{equation}\label{equation:[b]}
\begin{tikzpicture}
\node[bullet] (10) at (1,0) [label=above:{$-b_1$}] {};
\node[bullet] (20) at (2,0) [label=above:{$-b_2$}] {};

\node[empty] (250) at (2.5,0) [] {};
\node[empty] (30) at (3,0) [] {};

\node[bullet] (350) at (3.5,0) [label=above:{$-b_{r-1}$}] {};
\node[bullet] (450) at (4.5,0) [label=above:{$-b_r$}] {};

\draw [-] (10)--(20);
\draw [-] (20)--(250);
\draw [dotted] (20)--(350);
\draw [-] (30)--(350);
\draw [-] (350)--(450);
\end{tikzpicture}
\end{equation}
in $\widetilde{X}$, we get a singular surface $\widehat{X}$ with the cyclic quotient surface singularity $(X,0)$, which is called the \emph{natural compactification} of $X$. We call the effective divisor
\begin{equation*}
E_{\infty} = \overline{S}_{\infty}+\sum_{i=1}^{e} D_i \subset \widehat{X}
\end{equation*}
the \emph{compactifying divisor} of $X$. The \emph{$\mathbb{C}^{\ast}$-compactification} of $X$, which is denoted by $\overline{X}$, is the singular surface obtained by contracting the linear chains
\begin{equation}\label{equation:[a]}
\begin{tikzpicture}
\node[bullet] (550) at (5.5,0) [label=above:{$-a_e$}] {};
\node[bullet] (650) at (6.5,0) [label=above:{$-a_{e-1}$}] {};
\node[empty] (70) at (7,0) [] {};
\node[empty] (750) at (7.5,0) [] {};
\node[bullet] (80) at (8,0) [label=above:{$-a_2$}] {};
\node[bullet] (90) at (9,0) [label=above:{$1-a_1$}] {};

\draw [-] (550)--(650);
\draw [-] (650)--(70);
\draw [dotted] (650)--(80);
\draw [-] (750)--(80);
\draw [-] (80)--(90);
\end{tikzpicture}
\end{equation}
in $\widehat{X}$. The image $\overline{E}_{\infty}$ in $\overline{X}$ of the $(+1)$-curve $\overline{S}_{\infty}$ in $\widehat{X}$ is called the \emph{singular compactifying divisor} of $X$. In summary, we have
\begin{equation*}
X \hookrightarrow (\overline{X}, \overline{E}_{\infty}) \leftarrow (\widehat{X}, E_{\infty}) \leftarrow (\widetilde{X}, E_{\infty}).
\end{equation*}
\end{definition}

\begin{remark}
If $a_i=2$ for all $i$, then there is no singularity on $\overline{E}_{\infty}$. Otherwise there is one cyclic quotient surface singularity on $\overline{E}_{\infty}$.
\end{remark}

\begin{lemma}\label{lemma:H2(-log(D))=0}
Let $\widetilde{X}$ be the resulting blowing up surface of $\mathbb{F}_1$ and let $D$ be the simple normal crossing divisor in $\widetilde{X}$ corresponding to the configuration in Equation~\eqref{equation:blowing-up-F_1}. Then
\begin{equation*}
H^2(\widetilde{X}, T_{\widetilde{X}}(-\log{D}))=0.
\end{equation*}
\end{lemma}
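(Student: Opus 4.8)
The plan is to compute $H^2(\widetilde{X}, T_{\widetilde{X}}(-\log D))$ by relating it, via Serre duality, to the global sections of a twist of $\Omega^1_{\widetilde{X}}(\log D)$, and to control the latter by restricting attention to the open surface $U = \widetilde{X} \setminus D$ together with a careful analysis of the logarithmic residue sequence. Concretely, Serre duality gives
\begin{equation*}
H^2(\widetilde{X}, T_{\widetilde{X}}(-\log D)) \cong H^0\bigl(\widetilde{X}, \Omega^1_{\widetilde{X}}(\log D) \otimes K_{\widetilde{X}}\bigr)^{\vee},
\end{equation*}
so it suffices to show that $\Omega^1_{\widetilde{X}}(\log D) \otimes K_{\widetilde{X}}$ has no nonzero global sections. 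Since $\widetilde{X}$ is a rational surface obtained from the Hirzebruch surface $\mathbb{F}_1$ by a sequence of blow-ups, $K_{\widetilde{X}}$ is not effective and in fact $-K_{\widetilde{X}}$ is ``large'' in the relevant directions; the point is to make this precise against the positivity defect of $\Omega^1_{\widetilde{X}}(\log D)$.

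First I would set up the two standard exact sequences for the log cotangent sheaf: the residue sequence
\begin{equation*}
0 \to \Omega^1_{\widetilde{X}} \to \Omega^1_{\widetilde{X}}(\log D) \to \bigoplus_i \mathcal{O}_{D_i} \to 0,
\end{equation*}
where the sum runs over the irreducible components $D_i$ of $D$ (recall $D$ is simple normal crossing by construction from Equation~\eqref{equation:blowing-up-F_1}), and, tensoring with $K_{\widetilde{X}}$, the resulting long exact sequence in cohomology. Then it is enough to show (i) $H^0(\widetilde{X}, \Omega^1_{\widetilde{X}} \otimes K_{\widetilde{X}}) = 0$, which is immediate since $\Omega^1_{\widetilde{X}} \otimes K_{\widetilde{X}}$ is a subsheaf of $\Omega^2_{\widetilde{X}} \otimes \Omega^1_{\widetilde{X}}{}^{\vee}\cdots$— more simply, $H^0(\Omega^1_{\widetilde{X}} \otimes K_{\widetilde{X}}) = H^0(T_{\widetilde{X}} \otimes K_{\widetilde{X}}^{\otimes 2})^{\cdots}$, but cleanest is to observe $H^0(\Omega^1_{\widetilde{X}} \otimes K_{\widetilde{X}})$ is Serre-dual to $H^2(T_{\widetilde{X}})$, which vanishes for a rational surface because $h^2(T_{\widetilde{X}}) = h^0(\Omega^1 \otimes K) $ and one checks directly that a rational surface has no such forms; and (ii) the connecting maps $H^0(\bigoplus_i K_{D_i}) \to H^1(\Omega^1_{\widetilde{X}} \otimes K_{\widetilde{X}})$ are injective, equivalently $H^0(\widetilde{X},\Omega^1_{\widetilde{X}}(\log D)\otimes K_{\widetilde{X}}) \to H^0(\bigoplus_i K_{D_i})$ is zero. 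Since each $D_i \cong \mathbb{CP}^1$, we have $K_{D_i} = \mathcal{O}_{\mathbb{CP}^1}(-2)$, so $H^0(K_{D_i}) = 0$ for every component; thus the residue term already has no sections, and combined with (i) the claim follows once $H^1$ of the subsheaf is handled, or more directly: the residue sequence twisted by $K_{\widetilde X}$ shows $H^0(\Omega^1_{\widetilde{X}}(\log D)\otimes K_{\widetilde X})$ injects into $H^0(\Omega^1_{\widetilde{X}}\otimes K_{\widetilde X}) \oplus (\text{image in } \bigoplus H^0(K_{D_i}))$, and both summands vanish.

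An alternative and perhaps cleaner route, which I would pursue in parallel, is to use the blow-down structure directly: $T_{\widetilde{X}}(-\log D)$ is, up to the contribution of the exceptional curves, pulled back from $T_{\mathbb{F}_1}(-\log D_0)$ where $D_0$ is the image configuration, and one uses the Leray spectral sequence for the blow-up morphism $\widetilde{X} \to \mathbb{F}_1$ together with the vanishing $R^1\pi_* T_{\widetilde{X}}(-\log D) = 0$ (a local computation on each blown-up point, since blowing up a point on a normal crossing divisor is a log-étale-type operation for which $T(-\log)$ behaves well) to reduce to $H^2(\mathbb{F}_1, T_{\mathbb{F}_1}(-\log D_0))$; this last group vanishes by Bott-type vanishing on the Hirzebruch surface $\mathbb{F}_1$, as $T_{\mathbb{F}_1}(-\log D_0)$ splits or filters into line bundles whose $H^2$ all vanish. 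The main obstacle I anticipate is precisely the bookkeeping in this reduction — verifying that at each blow-up the higher direct image vanishes and that the log structure is correctly transformed under the three blow-up moves listed after Equation~\eqref{equation:[a]}-adjacent discussion (i.e.\ that the total transform of $D$ stays simple normal crossing and that $T(-\log D)$ pulls back to $T(-\log \widetilde D)$ up to a twist by the exceptional divisor that does not create $H^2$). Once that is in place, the Serre-duality computation with the residue sequence provides the short, self-contained verification, and I would present that as the actual proof, using the blow-down picture only as motivation.
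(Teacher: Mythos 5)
Your primary argument has a genuine gap, and it occurs at the step you rely on most. After tensoring the residue sequence with $K_{\widetilde{X}}$, the quotient term is $\bigoplus_i \mathcal{O}_{D_i}\otimes K_{\widetilde{X}} = \bigoplus_i K_{\widetilde{X}}|_{D_i}$, \emph{not} $\bigoplus_i K_{D_i}$. By adjunction, $K_{\widetilde{X}}|_{D_i} \cong K_{D_i}\otimes\mathcal{O}_{D_i}(-D_i) \cong \mathcal{O}_{\mathbb{CP}^1}(-2-D_i^2)$, which has degree $\geq 0$ on every component with $D_i^2 \leq -2$ --- that is, on almost the entire chain of Equation~\eqref{equation:blowing-up-F_1} (the curves of self-intersection $-b_j$ and $-a_j$). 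So the residue term has plenty of global sections, your claim that it ``already has no sections'' is false, and the short exact sequence argument collapses: you would have to prove that the connecting map $H^0\bigl(\bigoplus_i K_{\widetilde{X}}|_{D_i}\bigr)\to H^1(\Omega^1_{\widetilde{X}}\otimes K_{\widetilde{X}})$ is injective (equivalently that the residue map kills all sections of $\Omega^1_{\widetilde{X}}(\log D)\otimes K_{\widetilde{X}}$), and that is exactly where the content of the lemma lies. Note also that no soft version of this argument can succeed: the untwisted group $H^0(\widetilde{X},\Omega^1_{\widetilde{X}}(\log D))$ is in fact nonzero, because the classes of the $r+e+2$ components of $D$ satisfy a linear relation in $H^2(\widetilde{X};\mathbb{Z})$ (coming from $S_\infty\sim S_0+F$ on $\mathbb{F}_1$, all of whose total-transform components lie on $D$), so the kernel of the residue connecting map is nontrivial before twisting; any correct direct computation on $\widetilde{X}$ must use the twist by $K_{\widetilde{X}}$ in an essential, component-by-component way. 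Your part (i), the vanishing $H^0(\Omega^1_{\widetilde{X}}\otimes K_{\widetilde{X}})=H^2(T_{\widetilde{X}})^{\vee}=0$ for a rational surface, is fine but does not carry the argument on its own.

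Your ``alternative route'' is essentially the paper's actual proof, and it is the one you should have carried out: the reduction $H^2(\widetilde{X},T_{\widetilde{X}}(-\log D))\cong H^2(\mathbb{F}_1,T_{\mathbb{F}_1}(-\log(S_0+S_\infty+F)))$ is not a delicate bookkeeping exercise but a quotable result (Flenner--Zaidenberg, Lemma~1.5: blowing up points on the divisor does not change this $H^2$). One then drops $F$ from the log divisor using the sequence with quotient $\mathcal{N}_{F}\cong\mathcal{O}_{\mathbb{CP}^1}$ (here $F^2=0$ is used), and finally applies Serre duality on $\mathbb{F}_1$: since $K_{\mathbb{F}_1}=-2S_0-3F$ is minus an effective divisor, it suffices that $H^0(\mathbb{F}_1,\Omega^1_{\mathbb{F}_1}(\log(S_0+S_\infty)))=0$, which follows from the residue sequence because $H^0(\Omega^1_{\mathbb{F}_1})=0$ and the classes of $S_0$ and $S_\infty$ are numerically independent, making the connecting map injective. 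On $\mathbb{F}_1$ with only these two components the difficulty you ran into disappears; by demoting this route to ``motivation'' and presenting the flawed residue computation on $\widetilde{X}$ as the actual proof, you have not established the lemma.
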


\begin{proof}
Since $\widetilde{X}$ is obtained from $\mathbb{F}_1$ by blowing up at the proper transforms of $S_0+S_{\infty}+F$, it follows by Flenner--Zaidenberg~\cite[Lemma~1.5]{Flenner-Zaidenberg-1994} that
\begin{equation*}
H^2(\widetilde{X}, T_{\widetilde{X}}(-\log{D})) = H^2(\mathbb{F}_1, T_{\mathbb{F}_1}(-\log(S_0 + S_\infty + F))).
\end{equation*}
From the standard exact sequence
\begin{equation*}
0 \to T_S(-\log{E}) \to T_S \to \oplus_i \sheaf{N_{E_i, S}} \to 0,
\end{equation*}
for a smooth surface $S$ and its simple normal crossing divisor $E=\sum_i E_i$, we have
\begin{equation*}
H^2(\mathbb{F}_1, T_{\mathbb{F}_1}(-\log(S_0 + S_\infty + F))) = H^2(\mathbb{F}_1, T_{\mathbb{F}_1}(-\log(S_0 + S_\infty))).
\end{equation*}
because $F \cdot F=0$. Therefore we need to prove that
\begin{equation*}
H^2(\mathbb{F}_1, T_{\mathbb{F}_1}(-\log(S_0 + S_\infty)))=0.
\end{equation*}
On the other hand, by Serre duality,
\begin{equation*}
H^2(\mathbb{F}_1, T_{\mathbb{F}_1}(-\log(S_0 + S_\infty)))=H^0(\mathbb{F}_1, \Omega_{\mathbb{F}_1}(\log(S_0+S_{\infty})) \otimes \Omega_{\mathbb{F}_1}^2)
\end{equation*}
where $\Omega_{\mathbb{F}_1}^2 = \sheaf{O_{\mathbb{F}_1}}(-2S_0 - 3F)$. Using the residue sequence
\begin{equation*}
0 \to \Omega_S \to \Omega_S(\log{E}) \to \oplus_i \sheaf{O_{E_i}} \to 0
\end{equation*}
and the fact that $S_0$ and $S_{\infty}$ are numerically independent, we have that
\begin{equation*}
H^0(\mathbb{F}_1, \Omega_{\mathbb{F}_1}(\log(S_0 + S_{\infty})))=0.
\end{equation*}
Therefore the assertion follows.
\end{proof}

\begin{proposition}\label{proposition:Extendable-cyclic}
Any smoothing of $X$ can be extended to a deformation of $\overline{X}$ which is locally trivial on $\overline{E}_{\infty}$, and hence, to a deformation of $\widehat{X}$ that preserves $E_{\infty}$.
\end{proposition}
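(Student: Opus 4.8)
The plan is to reduce the statement to a vanishing of obstruction-type cohomology by working with the smooth surface $\widetilde{X}$ that dominates both $\widehat{X}$ and $\overline{X}$. Since a smoothing of $X$ is a deformation of $X$, and $X$ sits in $\widehat{X}$ (resp.\ $\overline{X}$) with the complement of a neighborhood of $E_\infty$ (resp.\ $\overline{E}_\infty$) unchanged, the first step is to argue that extending a smoothing of $X$ to a deformation of the compactification amounts to extending it across the configuration at infinity in a way that does not deform that configuration. Concretely, I would pass to $\widetilde{X}$ with its simple normal crossing divisor $D$ of Equation~\eqref{equation:blowing-up-F_1}: deformations of $\widetilde{X}$ that are \emph{locally trivial along $D$} (equivalently, deformations of the pair $(\widetilde{X}, D)$ keeping $D$ fixed as a configuration) are governed by $H^1(\widetilde{X}, T_{\widetilde{X}}(-\log D))$, with obstructions in $H^2(\widetilde{X}, T_{\widetilde{X}}(-\log D))$. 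By Lemma~\ref{lemma:H2(-log(D))=0} this $H^2$ vanishes, so every such first-order deformation is unobstructed; in particular deformations of $\widetilde{X}$ preserving $D$ are unobstructed.

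The second step is to compare deformations of $X$ (or of $\overline{X}$, or of $\widehat{X}$) with deformations of the pair $(\widetilde{X}, D)$. The curves in $D$ that get contracted to form $\widehat{X}$ (the chain in Equation~\eqref{equation:[b]}) and the further curves contracted to form $\overline{X}$ (the chain in Equation~\eqref{equation:[a]}) are negative-definite configurations of rational curves, so contracting them is a deformation-theoretically harmless operation: a deformation of $\widetilde{X}$ that is locally trivial along these contracted chains pushes forward to a deformation of $\widehat{X}$ (resp.\ $\overline{X}$), and conversely a deformation of $\widehat{X}$ near the singular point $0 \in X$ lifts, after blowing up, to a deformation of $\widetilde{X}$ that is locally trivial along the exceptional chains. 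On the other hand the remaining curves of $D$ — the $(-1)$-curve, the $D_i$'s, and $\overline{S}_\infty$ — map to the divisor $E_\infty$, and "locally trivial along $E_\infty$" on $\widehat{X}$ corresponds to "locally trivial along the corresponding part of $D$" on $\widetilde{X}$. Thus the desired extension of a given smoothing of $X$ is produced by: taking the smoothing as a deformation of the germ $(X,0) \subset \widehat{X}$, viewing it near infinity as the trivial deformation of $E_\infty$, and gluing — the obstruction to gluing these two deformations of open pieces of $\widehat{X}$ into a global deformation of $\widehat{X}$ that preserves $E_\infty$ lives precisely in $H^2(\widetilde{X}, T_{\widetilde{X}}(-\log D))$ after pulling back to $\widetilde{X}$, which is zero. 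The same argument with $E_\infty$ replaced by $\overline{E}_\infty$ and $D$ replaced by its image chain gives the $\overline{X}$ statement, and the two are compatible via the contraction $\widehat{X} \to \overline{X}$.

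For the final assertion that the extension can be taken locally trivial on $\overline{E}_\infty$ (and then $E_\infty$): $\overline{E}_\infty$ is the image of the single $(+1)$-curve $\overline{S}_\infty$, which has positive self-intersection and is disjoint from the locus where $X$ is being smoothed; I would note that a deformation of a smooth rational curve $C \cong \mathbb{CP}^1$ with $C \cdot C \geq 0$ inside a surface, together with the ambient surface, has no local moduli forcing $C$ to move in an essential way — more precisely, after the gluing above the deformation is already the product deformation in a neighborhood of $\overline{E}_\infty$ by construction, so local triviality there is automatic. Then local triviality along $E_\infty$ in $\widehat{X}$ follows because $\widehat{X} \to \overline{X}$ is an isomorphism near $\overline{S}_\infty$ and the extra curves $D_1, \dots, D_e$ contracted by this map form another negative-definite chain, along which the pulled-back deformation is again locally trivial.

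The main obstacle, and the crux of the argument, is the second step: making precise the correspondence between "locally trivial along $E_\infty$" deformations of the singular surface $\widehat{X}$ and "locally trivial along $D$" deformations of its resolution $\widetilde{X}$, i.e.\ showing that the relevant obstruction genuinely lands in the group killed by Lemma~\ref{lemma:H2(-log(D))=0}. This requires care because $\widehat{X}$ is singular at $0$, so one must work with the deformation theory of the pair and track how the log tangent sheaf behaves under the contractions; the blow-down formula of Flenner--Zaidenberg (already invoked in Lemma~\ref{lemma:H2(-log(D))=0}) and standard facts about deformations of rational double/quotient singularities and negative-definite contractions are the tools that make this go through. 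Everything else — the vanishing itself, the gluing, and the positivity argument at infinity — is routine once this bookkeeping is set up.
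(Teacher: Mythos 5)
Your overall strategy -- use the vanishing from Lemma~\ref{lemma:H2(-log(D))=0} to kill a local-to-global obstruction and then globalize the given smoothing while keeping everything trivial near infinity -- is the right one, but the step you yourself call the crux is exactly what is missing, and one of your auxiliary claims is false. A smoothing of the germ $(X,0)$ does \emph{not} lift, after blowing up, to a deformation of $\widetilde{X}$ that is locally trivial along the exceptional chain of Equation~\eqref{equation:[b]}: deformations of the resolution that are locally trivial along the exceptional divisor blow down to equisingular deformations of the singularity, never to smoothings. So the deformation of $\overline{X}$ (or $\widehat{X}$) you are trying to build cannot be compared with deformations of the pair $(\widetilde{X},D)$ by pulling back, and the assertion that the gluing obstruction ``lives in $H^2(\widetilde{X},T_{\widetilde{X}}(-\log D))$ after pulling back to $\widetilde{X}$'' is unjustified. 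The obstruction to globalizing prescribed deformations of the singular germs of $\overline{X}$ lives in $H^2(\overline{X},T_{\overline{X}})$, the $H^2$ of the tangent sheaf of the \emph{singular} surface, and identifying this with a log $H^2$ on the resolution is a genuine descent statement about contractions to quotient singularities; it is not covered by the Flenner--Zaidenberg formula you invoke, which compares log tangent cohomology under blow-ups between \emph{smooth} surfaces (that is how it enters Lemma~\ref{lemma:H2(-log(D))=0}). Note also that insisting on deformations locally trivial along all of $D$ is at odds with your goal: you must allow the deformation to be nontrivial over the singular point of $X$, so the relevant log divisor for any descent is the exceptional locus of $\widetilde{X}\to\overline{X}$, not the full configuration $D$.

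This missing bridge is exactly what the paper supplies. One first deletes the $(-1)$-curve from $D$ (removing a component changes the log tangent sheaf by a sheaf supported on a curve, so the $H^2$ vanishing survives) to obtain $H^2(\widetilde{X},T_{\widetilde{X}}(-\log(D_0+D_1)))=0$, where $D_0$ and $D_1$ are the chains of Equations~\eqref{equation:[b]} and \eqref{equation:[a]}, i.e.\ precisely the exceptional locus of $\widetilde{X}\to\overline{X}$; then \cite[Theorem~2]{Lee-Park-K^2=2} converts this into $H^2(\overline{X},T_{\overline{X}})=0$. With that vanishing there is no local-to-global obstruction on $\overline{X}$: one globalizes the given smoothing at $(X,0)$ together with the trivial deformation at the singular point of $\overline{E}_{\infty}$, obtaining a deformation of $\overline{X}$ locally trivial along $\overline{E}_{\infty}$, and resolving the singularity on $\overline{E}_{\infty}$ yields the deformation of $\widehat{X}$ preserving $E_{\infty}$. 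To repair your write-up you would either have to quote such a descent result (Lee--Park, or the corresponding statements of Burns--Wahl type for quotient singularities, namely $\pi_{*}T_{\widetilde{X}}(-\log(D_0+D_1))\cong T_{\overline{X}}$ with vanishing $R^{1}\pi_{*}$) or prove it; as written, the proposal has a gap at precisely this point.
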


\begin{proof}
Let $D_0$ be the exceptional divisor of the cyclic quotient surface singularity of type $\frac{1}{n}(1,a)$ given in Equation~\eqref{equation:[b]} and let $D_1$ be the divisor corresponding to the linear chain given in Equation~\eqref{equation:[a]}. In Lemma~\ref{lemma:H2(-log(D))=0}, one can delete the $(-1)$-curve in $D$ so that
\begin{equation*}
H^2(\widetilde{X}, T_{\widetilde{X}}(-\log(D_0+D_1)))=0.
\end{equation*}
Therefore, it follows from Y.~Lee--J.~Park~\cite[Theorem~2]{Lee-Park-K^2=2} that $H^2(\overline{X}, T_{\overline{X}})=0$, which implies that there is no local-to-global obstructions to deform $\overline{X}$. Therefore the assertion follows.
\end{proof}

\subsection{Non-cyclic quotient surface singularities}

Let $X=\Spec(A)$ be a germ of a non-cyclic quotient surface singularity. We refer to Pinkham~\cite{Pinkham-1977, Pinkham-1978} for the details in what follows: Since $X$ is a weighted homogeneous surface singularity, it admits a good $\mathbb{C}^\ast$-action. Hence $A$ is a graded ring.

\begin{definition}[cf.~Pinkham~\cite{Pinkham-1977}]
\label{definition:natural-compactification-non-cyclic}
The projective surface $\overline{X}=\Proj(A[t])$ is classically called the \emph{$\mathbb{C}^\ast$-compactification} of $X$, where the degree of $t$ is given by $1$ in the graded ring $A[t]$. The difference $\overline{E}_{\infty}=\overline{X}-X=\Proj(A)$ is called the \emph{singular compactifying divisor} of $X$. It is known that there are three cyclic quotient surface singularities on $\overline{E}_{\infty}$. Let $\widehat{X} \to \overline{X}$ be the resolution of singularities along $\overline{E}_{\infty}$, and let $E_{\infty}$ be the proper transform of $\overline{E}_{\infty}$. We call $\widehat{X}$ and $E_{\infty}$ the \emph{natural compactification} and  the \emph{compactifying divisor} of $X$, respectively. Finally, let $\widetilde{X} \to \widehat{X}$ be the minimal resolution of the quotient surface singularity $(X,0)$. In summary, we have
\begin{equation*}
X \hookrightarrow (\overline{X}, \overline{E}_{\infty}) \leftarrow (\widehat{X}, E_{\infty}) \leftarrow (\widetilde{X}, E_{\infty}).
\end{equation*}
\end{definition}

\begin{proposition}[{Pinkham~\cite[Theorem~2.9]{Pinkham-1978}}]
\label{proposition:Extendable-non-cyclic}
Every smoothing of $X$ extends to a deformation of $\overline{X}$ which is locally trivial near $\overline{E}_{\infty}$,  and hence, to a deformation of $\widetilde{X}$ preserving $E_{\infty}$.
\end{proposition}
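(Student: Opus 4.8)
The plan is to run the argument used for the cyclic case in Proposition~\ref{proposition:Extendable-cyclic}. It suffices to show that there are no local-to-global obstructions to deforming $\overline{X}$, i.e.\ that $H^2(\overline{X}, T_{\overline{X}}) = 0$. Granting this, the deformation of the singularities of $\overline{X}$ which is the prescribed smoothing of $(X,0)$ together with the trivial deformation at each of the three cyclic quotient singularities of $\overline{X}$ lying on $\overline{E}_{\infty}$ lifts to a deformation $\overline{\mathcal{X}} \to \Delta$ of $\overline{X}$; this deformation is trivial at those three points by construction, and since $\overline{X}$ is smooth along $\overline{E}_{\infty}$ away from them it is locally trivial in a neighbourhood of $\overline{E}_{\infty}$. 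Restricting to the affine chart $\overline{X} \setminus \overline{E}_{\infty} = X$ recovers the smoothing, and pulling back along $\widetilde{X} \to \widehat{X} \to \overline{X}$ gives a deformation of $\widetilde{X}$ preserving $E_{\infty}$.

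To establish $H^2(\overline{X}, T_{\overline{X}}) = 0$ I would pass to the minimal resolution $\widetilde{X} \to \overline{X}$ and let $D = D_0 + D_1 \subset \widetilde{X}$ be the reduced exceptional divisor, where $D_0$ is the star-shaped configuration over $(X,0)$ of Figure~\ref{figure:non-cyclic-minimal-resolution} and $D_1$ is the disjoint union of the three Hirzebruch--Jung chains that $\widehat{X} \to \overline{X}$ contracts over the three cyclic quotient singularities on $\overline{E}_{\infty}$. Here $D_0$ and $D_1$ are disjoint, because $(X,0) \notin \overline{E}_{\infty}$, and the curve $E_{\infty}$ itself is not a part of $D$ --- exactly as $\overline{S}_{\infty}$ was excluded in the cyclic case. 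By Y.~Lee--J.~Park~\cite{Lee-Park-K^2=2} applied to the contraction of the disjoint negative-definite configurations $D_0$ and $D_1$, it then suffices to prove
\begin{equation*}
H^2(\widetilde{X}, T_{\widetilde{X}}(-\log D)) = 0.
\end{equation*}

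For this vanishing I would imitate Lemma~\ref{lemma:H2(-log(D))=0}. Since $\overline{X}$ is a compactification of $\mathbb{C}^2/G$ it is rational, so $\widetilde{X}$ is a smooth rational surface; moreover the $\mathbb{C}^{\ast}$-action on $\overline{X} = \Proj(A[t])$ lifts to the (canonical, hence equivariant) resolution $\widetilde{X}$, and $D$ and $E_{\infty}$ are invariant. An equivariant minimal model program then contracts a sequence of invariant $(-1)$-curves and terminates at a Hirzebruch surface $\mathbb{F}_n$ or at $\mathbb{P}^2$. At each contraction the $(-1)$-curve being blown down is a component of the current boundary, or is disjoint from it, or meets it transversally at one or two of its smooth points, and in every case Flenner--Zaidenberg~\cite[Lemma~1.5]{Flenner-Zaidenberg-1994} identifies $H^2(T(-\log))$ before and after the contraction --- the boundary being replaced by its proper transform, with the evident splicing of chains when the contracted curve met the boundary. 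After finitely many steps the boundary becomes an invariant configuration on $\mathbb{F}_n$ or $\mathbb{P}^2$, at worst a couple of sections together with finitely many fibers, and one finishes as in Lemma~\ref{lemma:H2(-log(D))=0}: the fibers drop out because $F \cdot F = 0$, Serre duality reduces the claim to the vanishing of $H^0$ of $\Omega^1(\log D) \otimes \omega$, and this follows from $H^0(\Omega^1) = 0$ on a rational surface together with the residue sequence and the numerical independence of the remaining sections.

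The part that needs care is the middle step --- tracking the boundary configuration through the equivariant minimal model program and checking at each contraction that Flenner--Zaidenberg applies, i.e.\ that the contracted curve never passes through a node of the boundary; for the tetrahedral, octahedral and icosahedral graphs this requires a short combinatorial case analysis. Once one is on $\mathbb{F}_n$ or $\mathbb{P}^2$ the computation is routine. A shorter route, closer to the cited source, is to invoke Pinkham's graded deformation theory~\cite{Pinkham-1978} directly: it identifies the deformations of $\overline{X} = \Proj(A[t])$ that are locally trivial near $\overline{E}_{\infty}$ with the deformations of $X$ of non-positive weight for the natural $\mathbb{C}^{\ast}$-action, and one checks that every smoothing of a quotient surface singularity lies among the latter; this bypasses the surface geometry entirely.
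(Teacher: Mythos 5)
The paper itself offers no argument for this proposition: it is imported wholesale from Pinkham~\cite[Theorem~2.9]{Pinkham-1978}, so the route you mention only in your last sentence (graded deformation theory of the cone, with non-positive weight deformations extending to $\overline{X}=\Proj(A[t])$ trivially near infinity) is the paper's entire proof. Your main line is instead the argument the paper runs in the cyclic case (Proposition~\ref{proposition:Extendable-cyclic}) and merely asserts for the non-cyclic case in Remark~\ref{remark:model-for-widetilde(X)}. As a strategy it is viable and genuinely different from the citation: vanishing of $H^2(\overline{X},T_{\overline{X}})$ makes the restriction to local deformations smooth, so the given smoothing at $(X,0)$ together with trivial deformations at the three cyclic quotient points of $\overline{E}_{\infty}$ does globalize, and the reduction via Y.~Lee--J.~Park~\cite{Lee-Park-K^2=2} to a log-$H^2$ vanishing on $\widetilde{X}$ (deleting components of the boundary is harmless, since the relevant quotient sheaves are supported on curves) is the right skeleton.

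However, the two places where the actual work lies are left soft. First, the vanishing $H^2(\widetilde{X},T_{\widetilde{X}}(-\log D))=0$: Flenner--Zaidenberg~\cite[Lemma~1.5]{Flenner-Zaidenberg-1994}, as used in Lemma~\ref{lemma:H2(-log(D))=0}, compares log-$H^2$ under blow-ups centered \emph{on} the boundary with the reduced total transform as new boundary; in your equivariant MMP the contracted $(-1)$-curve may be disjoint from the boundary or meet it without being a component, cases that lemma does not cover, and the promised combinatorial case analysis is not carried out. The detour is also unnecessary: Remark~\ref{remark:model-for-widetilde(X)} (Figure~\ref{figure:non-cyclic-widetilde(X)}) exhibits $\widetilde{X}$ as a blow-up of $\mathbb{F}_b$ with all centers on the (total transforms of) $S_0+S_{\infty}+F_1+F_2+F_3$, so the proof of Lemma~\ref{lemma:H2(-log(D))=0} repeats verbatim: the fibers drop out since $F_i\cdot F_i=0$, Serre duality plus the residue sequence and the numerical independence of $S_0$ and $S_{\infty}$ give $H^0(\Omega^1_{\mathbb{F}_b}(\log(S_0+S_{\infty})))=0$, and effectivity of $-K_{\mathbb{F}_b}$ allows the twist by $\omega$. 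Second, the final step is misstated: a deformation of $\overline{X}$ cannot be ``pulled back'' along the resolution; what one does (and what the paper does in Section~\ref{section:Milnor-fiber-as-complements}) is use local triviality along $\overline{E}_{\infty}$ to resolve the three cyclic quotient singularities simultaneously in the family, and the output is a deformation of $\widehat{X}$, not of $\widetilde{X}$, since $(X,0)$ itself is being smoothed (the ``$\widetilde{X}$'' in the statement should be read as $\widehat{X}$, cf.\ Proposition~\ref{proposition:Extendable-cyclic} and Remark~\ref{remark:model-for-widetilde(X)}). Relatedly, local triviality at each point near $\overline{E}_{\infty}$ is weaker than what the later constructions use, namely that $\overline{E}_{\infty}$ extends to a divisor in the total space of the family; in Pinkham's $\mathbb{C}^{\ast}$-equivariant setting this comes for free, in your setting it needs a word, and your closing claim that every smoothing of a quotient surface singularity occurs in non-positive weight is likewise asserted without proof.
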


The dual graph $\Gamma$ of $(X,0)$ is a star-shaped graph with the central curve $E_0$ with $E_0 \cdot E_0 = -b$ and three arms as in Figure~\ref{figure:non-cyclic-minimal-resolution}. It is known that the dual graph $\widehat{\Gamma}$ of the compactifying divisor $E_{\infty}$ is also a star-shaped graph with the central curve $E_{\infty}^0$ with $E_{\infty}^0 \cdot E_{\infty}^0 = b-3$, and three arms dual to the corresponding three arms of $\Gamma$. That is, the dual graph of $E_{\infty}$ is given as in Figure~\ref{figure:non-cyclic-compactifying-divisor}, where
\begin{equation*}
\frac{n}{a} = [b_1, \dotsc, b_r] \qquad \frac{n}{n-a} = [a_1, \dotsc, a_e].
\end{equation*}
The dual graph of $\widetilde{X}$ is given as in Figure~\ref{figure:non-cyclic-widetilde(X)}.

\begin{figure}[t]
\centering

\subfloat [dihedral] {%
\begin{tikzpicture}
\node[bullet] (00) at (0,0) [label=below:{$-2$}] {};
\node[bullet] (10) at (1,0) [label=below:{$b-3$}] {};
\node[bullet] (20) at (2,0) [label=below:{$-a_1$}] {};

\node[empty] (250) at (2.5,0) [] {};
\node[empty] (30) at (3,0) [] {};

\node[bullet] (350) at (3.5,0) [label=below:{$-a_e$}] {};

\node[bullet] (11) at (1,1) [label=left:{$-2$}] {};

\draw [-] (00)--(10);
\draw [-] (10)--(20);
\draw [-] (20)--(250);
\draw [dotted] (20)--(350);
\draw [-] (30)--(350);

\draw [-] (10)--(11);
\end{tikzpicture}}
\\
\subfloat [type $(3,2)$] {%
\begin{tikzpicture}
\node[bullet] (00) at (0,0) [label=below:{$-3$}] {};
\node[bullet] (10) at (1,0) [label=below:{$b-3$}] {};
\node[bullet] (20) at (2,0) [label=below:{$-a_1$}] {};

\node[empty] (250) at (2.5,0) [] {};
\node[empty] (30) at (3,0) [] {};

\node[bullet] (350) at (3.5,0) [label=below:{$-a_e$}] {};

\node[bullet] (11) at (1,1) [label=left:{$-2$}] {};

\draw [-] (00)--(10);
\draw [-] (10)--(20);
\draw [-] (20)--(250);
\draw [dotted] (20)--(350);
\draw [-] (30)--(350);

\draw [-] (10)--(11);
\end{tikzpicture}}
\qquad \qquad
\subfloat [type $(3,1)$] {%
\begin{tikzpicture}
\node[bullet] (-10) at (-1,0) [label=below:{$-2$}] {};
\node[bullet] (00) at (0,0) [label=below:{$-2$}] {};
\node[bullet] (10) at (1,0) [label=below:{$b-3$}] {};
\node[bullet] (20) at (2,0) [label=below:{$-a_1$}] {};

\node[empty] (250) at (2.5,0) [] {};
\node[empty] (30) at (3,0) [] {};

\node[bullet] (350) at (3.5,0) [label=below:{$-a_e$}] {};

\node[bullet] (11) at (1,1) [label=left:{$-2$}] {};

\draw [-] (-10)--(00);
\draw [-] (00)--(10);
\draw [-] (10)--(20);
\draw [-] (20)--(250);
\draw [dotted] (20)--(350);
\draw [-] (30)--(350);

\draw [-] (10)--(11);
\end{tikzpicture}}

\caption{The compactifying divisor $E_{\infty}$ for non-cyclic quotient surface singularities.}

\label{figure:non-cyclic-compactifying-divisor}
\end{figure}
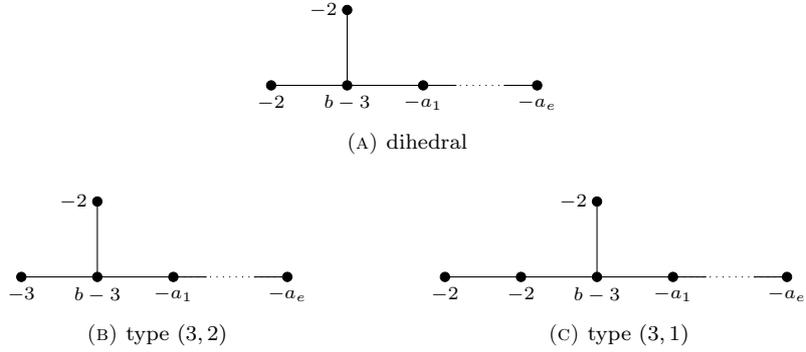

\begin{remark}\label{remark:model-for-widetilde(X)}
The natural compactification $\widehat{X}$ of $(X,0)$ is given by $\widehat{X}=\Proj(A[t])$, the projective cone over the singular compactifying divisor $\overline{E}_{\infty} = \Proj(A)$, if $X=\Spec(A)$. So the minimal resolution $\widetilde{X}$ has a $\mathbb{CP}^1$-fibration structure over the central curve $E_{\infty}^0 \cong \mathbb{CP}^1$ of $E_{\infty}$. In this way, the concrete model for $\widetilde{X}$ can be constructed from the Hirzebruch surface $\mathbb{F}_b$ by blowing ups as follows: Let $S_0$ and $S_{\infty}$ be two sections with $S_0 \cdot S_0 = -b$ and $S_{\infty} \cdot S_{\infty} = b$. Let $F_1$, $F_2$, $F_3$ be three distinct fibers. Then, by blowing up appropriately at $p_i \in F_i \cap S_{\infty}$ ($i=1,2,3$), including infinitely near points over $p_i$, one can construct the rational surface $\widetilde{X}$ with the configuration of smooth rational curves described in Figure~\ref{figure:non-cyclic-widetilde(X)}. It is not difficult to show that $H^2(\overline{X}, T_{\overline{X}})$ vanishes. Hence every smoothing of $X$ can be extended to $\overline{X}$ and $\widehat{X}$.
\end{remark}

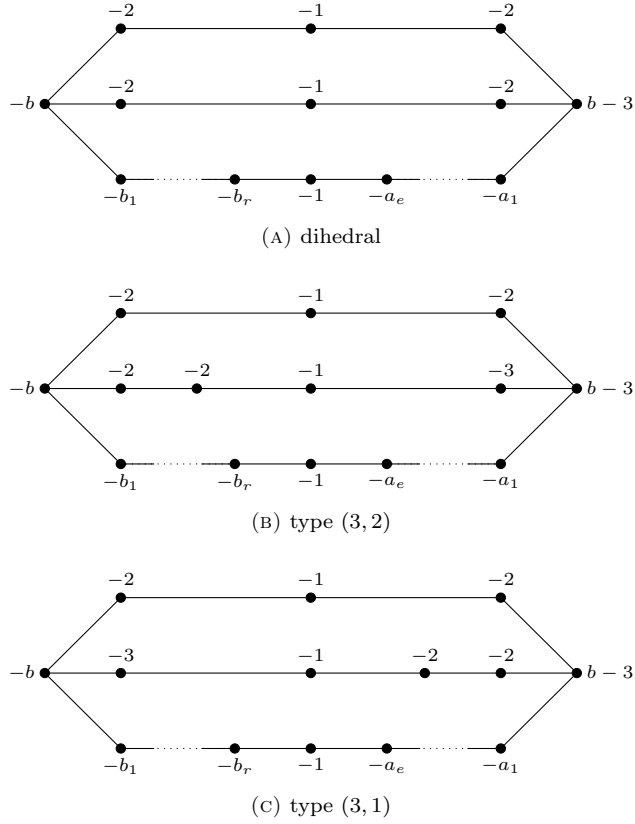
\begin{figure}[t]
\centering

\subfloat [dihedral] {%
\begin{tikzpicture}
\node[bullet] (00) at (0,0) [label=left:{$-b$}] {};
\node[bullet] (70) at (7,0) [label=right:{$b-3$}] {};

\node[bullet] (11) at (1,1) [label=above:{$-2$}] {};

\node[bullet] (351) at (3.5,1) [label=above:{$-1$}] {};

\node[bullet] (61) at (6,1) [label=above:{$-2$}] {};

\draw [-] (00)--(11)--(351)--(61)--(70);

\node[bullet] (10) at (1,0) [label=above:{$-2$}] {};

\node[bullet] (350) at (3.5,0) [label=above:{$-1$}] {};

\node[bullet] (60) at (6,0) [label=above:{$-2$}] {};

\draw [-] (00)--(10)--(350)--(60)--(70);

\node[bullet] (1-1) at (1,-1) [label=below:{$-b_1$}] {};
\node[empty] (15-1) at (1.5,-1) [] {};
\node[empty] (2-1) at (2,-1) [] {};
\node[bullet] (25-1) at (2.5,-1) [label=below:{$-b_r$}] {};

\draw [-] (00)--(1-1);
\draw [-] (1-1)--(15-1);
\draw [dotted] (1-1)--(25-1);
\draw [-] (2-1)--(25-1);

\node[bullet] (35-1) at (3.5,-1) [label=below:{$-1$}] {};
\node[bullet] (45-1) at (4.5,-1) [label=below:{$-a_e$}] {};
\node[empty] (5-1) at (5,-1) [] {};
\node[empty] (55-1) at (5.5,-1) [] {};
\node[bullet] (6-1) at (6,-1) [label=below:{$-a_1$}] {};

\draw [-] (25-1)--(35-1);
\draw [-] (35-1)--(45-1);
\draw [-] (45-1)--(5-1);
\draw [dotted] (45-1)--(6-1);
\draw [-] (55-1)--(6-1);
\draw [-] (6-1)--(70);
\end{tikzpicture}}
\\
\subfloat [type $(3,2)$] {%
\begin{tikzpicture}
\node[bullet] (00) at (0,0) [label=left:{$-b$}] {};
\node[bullet] (70) at (7,0) [label=right:{$b-3$}] {};

\node[bullet] (11) at (1,1) [label=above:{$-2$}] {};
\node[bullet] (351) at (3.5,1) [label=above:{$-1$}] {};
\node[bullet] (61) at (6,1) [label=above:{$-2$}] {};

\draw [-] (00)--(11)--(351)--(61)--(70);

\node[bullet] (10) at (1,0) [label=above:{$-2$}] {};
\node[bullet] (20) at (2,0) [label=above:{$-2$}] {};
\node[bullet] (350) at (3.5,0) [label=above:{$-1$}] {};
\node[bullet] (60) at (6,0) [label=above:{$-3$}] {};

\draw [-] (00)--(10)--(20)--(350)--(60)--(70);

\node[bullet] (1-1) at (1,-1) [label=below:{$-b_1$}] {};
\node[empty] (15-1) at (1.5,-1) [] {};
\node[empty] (2-1) at (2,-1) [] {};
\node[bullet] (25-1) at (2.5,-1) [label=below:{$-b_r$}] {};

\draw [-] (00)--(1-1);
\draw [-] (1-1)--(15-1);
\draw [dotted] (1-1)--(25-1);
\draw [-] (2-1)--(25-1);

\node[bullet] (35-1) at (3.5,-1) [label=below:{$-1$}] {};
\node[bullet] (45-1) at (4.5,-1) [label=below:{$-a_e$}] {};
\node[empty] (5-1) at (5,-1) [] {};
\node[empty] (55-1) at (5.5,-1) [] {};
\node[bullet] (6-1) at (6,-1) [label=below:{$-a_1$}] {};

\draw [-] (25-1)--(35-1);
\draw [-] (35-1)--(45-1);
\draw [-] (45-1)--(5-1);
\draw [dotted] (45-1)--(6-1);
\draw [-] (55-1)--(6-1);
\draw [-] (6-1)--(70);
\end{tikzpicture}}
\\
\subfloat [type $(3,1)$] {%
\begin{tikzpicture}
\node[bullet] (00) at (0,0) [label=left:{$-b$}] {};
\node[bullet] (70) at (7,0) [label=right:{$b-3$}] {};

\node[bullet] (11) at (1,1) [label=above:{$-2$}] {};
\node[bullet] (351) at (3.5,1) [label=above:{$-1$}] {};
\node[bullet] (61) at (6,1) [label=above:{$-2$}] {};

\draw [-] (00)--(11)--(351)--(61)--(70);

\node[bullet] (10) at (1,0) [label=above:{$-3$}] {};
\node[bullet] (350) at (3.5,0) [label=above:{$-1$}] {};
\node[bullet] (50) at (5,0) [label=above:{$-2$}] {};
\node[bullet] (60) at (6,0) [label=above:{$-2$}] {};

\draw [-] (00)--(10)--(350)--(50)--(60)--(70);

\node[bullet] (1-1) at (1,-1) [label=below:{$-b_1$}] {};
\node[empty] (15-1) at (1.5,-1) [] {};
\node[empty] (2-1) at (2,-1) [] {};
\node[bullet] (25-1) at (2.5,-1) [label=below:{$-b_r$}] {};

\draw [-] (00)--(1-1);
\draw [-] (1-1)--(15-1);
\draw [dotted] (1-1)--(25-1);
\draw [-] (2-1)--(25-1);

\node[bullet] (35-1) at (3.5,-1) [label=below:{$-1$}] {};
\node[bullet] (45-1) at (4.5,-1) [label=below:{$-a_e$}] {};
\node[empty] (5-1) at (5,-1) [] {};
\node[empty] (55-1) at (5.5,-1) [] {};
\node[bullet] (6-1) at (6,-1) [label=below:{$-a_1$}] {};

\draw [-] (25-1)--(35-1);
\draw [-] (35-1)--(45-1);
\draw [-] (45-1)--(5-1);
\draw [dotted] (45-1)--(6-1);
\draw [-] (55-1)--(6-1);
\draw [-] (6-1)--(70);
\end{tikzpicture}}

\caption{The dual graph of $\widetilde{X}$ for non-cyclic quotient surface singularities.}

\label{figure:non-cyclic-widetilde(X)}
\end{figure}

\section{Minimal symplectic fillings}
\label{section:minimal-symplectic-fillings}

As mentioned in Introduction above, Lisca~\cite{Lisca-2008} classifies minimal symplectic fillings of a cyclic quotient surface singularity up to diffeomorphism and Bhupal--Ono~\cite{Bhupal-Ono-2012} provides all minimal symplectic fillings of a non-cyclic quotient surface singularity. In this section we summarize Lisca's result~\cite{Lisca-2008} and Bhupal-Ono's result~\cite{Bhupal-Ono-2012}.

\subsection{Minimal symplectic fillings of a cyclic quotient surface singularity}

We first review Lisca's classification \cite{Lisca-2008}. Let $(X,0)$ be a cyclic quotient surface singularity of type $\frac{1}{n}(1,a)$, and let $L$ be its link. Let $W$ be a minimal symplectic filling of $(X,0)$. Then $W$ can be compactified as follows: By blowing-up $\mathbb{F}_1$ successively as in Section~\ref{subsection:compactifying-divisor-cyclic} above, one can obtain a rational complex surface $\widetilde{X}$ containing linear chains $E_0$ and $E_{\infty}$ of $\mathbb{CP}^1$'s whose dual graphs are
\begin{equation*}
\begin{tikzpicture}
\node (050) at (0.5,0) {$E_0 =$};
\node[bullet] (10) at (1,0) [label=above:{$-b_1$}] {};
\node[bullet] (20) at (2,0) [label=above:{$-b_2$}] {};

\node[empty] (250) at (2.5,0) [] {};
\node[empty] (30) at (3,0) [] {};

\node[bullet] (350) at (3.5,0) [label=above:{$-b_{r-1}$}] {};
\node[bullet] (450) at (4.5,0) [label=above:{$-b_r$}] {};

\draw [-] (10)--(20);
\draw [-] (20)--(250);
\draw [dotted] (20)--(350);
\draw [-] (30)--(350);
\draw [-] (350)--(450);
\end{tikzpicture}
\qquad
\begin{tikzpicture}
\node (50) at (5,0) {$E_{\infty}=$};

\node[bullet] (550) at (5.5,0) [label=above:{$-a_e$}] {};
\node[bullet] (650) at (6.5,0) [label=above:{$-a_{e-1}$}] {};
\node[empty] (70) at (7,0) [] {};
\node[empty] (750) at (7.5,0) [] {};
\node[bullet] (80) at (8,0) [label=above:{$-a_2$}] {};
\node[bullet] (90) at (9,0) [label=above:{$1-a_1$}] {};
\node[bullet] (100) at (10,0) [label=above:{$1$}] {};

\draw [-] (550)--(650);
\draw [-] (650)--(70);
\draw [dotted] (650)--(80);
\draw [-] (750)--(80);
\draw [-] (80)--(90);
\draw [-] (90)--(100);
\end{tikzpicture}
\end{equation*}
where
\begin{equation*}
\frac{n}{a} = [b_1, \dotsc, b_r], \qquad \frac{n}{n-a} = [a_1, \dotsc, a_e].
\end{equation*}
Let $\nu(E_0)$ be a regular neighborhood of $E_0$. Then $\partial(\nu(E_0))=L$.

\begin{proposition}[{Lisca~\cite[Theorem~3.2, Proposition~3.4]{Lisca-2008}}]
\label{proposition:symplectic-filling-is-rational-I}
There is a symplectic form on the smooth 4-manifold
\begin{equation*}
Z = W \cup_{L} (\widetilde{X} -\nu(E_0))
\end{equation*}
which is compatible with the symplectic structure on $W$. Furthermore $Z$ is a rational symplectic $4$-manifold.
\end{proposition}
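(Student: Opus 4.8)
The plan is to glue the symplectic filling $W$ to the complement $\widetilde{X} - \nu(E_0)$ along their common boundary $L$, and then recognize the result as a rational surface. First I would fix the contact structure: the boundary of $\nu(E_0)$ carries the canonical contact structure induced by a plumbing/convex neighborhood, and this is the Milnor fillable contact structure $\xi_{\mathrm{st}}$ on $L = L(n,a)$; by hypothesis this is also the contact structure filled by $(W,\omega)$. The key tool is a symplectic gluing result (in the style of Etnyre, or the standard ``completion and capping'' machinery): a strong symplectic filling $(W,\omega)$ of $(L,\xi_{\mathrm{st}})$ and a \emph{symplectic cap} $(\widetilde{X}-\nu(E_0),\omega')$ of the same contact manifold — i.e.\ a concave neighborhood of the configuration $E_\infty$ inside $\widetilde{X}$, with concave boundary $(L,\xi_{\mathrm{st}})$ — can be glued along a collar to produce a closed symplectic $4$-manifold $Z$. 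So the first real step is to produce the cap: show that $\widetilde{X}-\nu(E_0)$ admits a symplectic structure making $\partial$ a concave boundary inducing $\xi_{\mathrm{st}}$. Here I would use that $\widetilde{X}$ is a blow-up of $\mathbb{F}_1$ in which $E_0 + E_\infty$ (together with a $(-1)$-curve) is (essentially) the total transform of $S_0 + S_\infty + F$; since $E_\infty$ contains the positive section $\overline{S}_\infty$ with self-intersection $+1$, the divisor $E_\infty$ supports a positive/ample-on-a-neighborhood class, so a suitable Kähler form on $\widetilde{X}$ restricts to give the desired concave structure on a neighborhood of $E_\infty$, and dually a convex (plumbing) neighborhood of $E_0$.

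Once $Z = W \cup_L (\widetilde{X}-\nu(E_0))$ is built as a closed symplectic $4$-manifold, the second task is to show $Z$ is rational. The standard route is via Taubes--Seiberg--Witten / McDuff's characterization of symplectic rational and ruled surfaces: a closed symplectic $4$-manifold containing a symplectically embedded sphere of non-negative self-intersection is rational or ruled, and one of positive self-intersection forces rationality (or $\mathbb{CP}^2$, $S^2\times S^2$ after blow-downs). The configuration $E_\infty \subset \widetilde{X} \subset Z$ contains $\overline{S}_\infty$, a symplectic sphere with $\overline{S}_\infty^2 = +1$ (it is disjoint from $E_0$, so it survives into $Z$). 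Hence $Z$ is rational. I would also need $b_1(Z)=0$, which follows from $b_1(W)=0$ (fillings of rational homology sphere links are rational homology balls up to the obvious Betti numbers — more precisely $H_1(W;\mathbb{Q})=0$, cf.\ the fact that $L$ is a $\mathbb{QHS}^3$) together with a Mayer--Vietoris computation against the simply connected $\widetilde{X}-\nu(E_0)$.

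The main obstacle is the symplectic gluing itself — matching the Liouville/Reeb data on the two sides of $L$ so that the forms patch to a genuine symplectic form on $Z$ (and simultaneously extend the given $\omega$ on $W$). This requires that the concave contact structure induced by the cap agrees with $\xi_{\mathrm{st}}$ not merely up to isotopy but after an actual contactomorphism, and that one can deform collars to make the symplectization regions overlap; the work is in invoking and checking the hypotheses of the relevant gluing theorem (e.g.\ Etnyre--Honda style capping, or Eliashberg's construction) rather than in new ideas. A secondary subtlety is bookkeeping the self-intersections: one must verify that after contracting the internal $(-1)$-curve the divisor $E_\infty$ indeed has the claimed shape with the $+1$ curve $\overline{S}_\infty$, so that the rationality criterion applies; this is a direct continued-fraction computation using the blow-up description in Equation~\eqref{equation:blowing-up-F_1} and Lemma~\ref{lemma:H2(-log(D))=0}. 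Everything else — compatibility of $\omega$ on $W$, the Betti number count — is routine.
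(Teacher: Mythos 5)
Your proposal is correct and follows essentially the argument of Lisca that the paper cites: realize $\widetilde{X}-\nu(E_0)$ as a concave cap for $(L,\xi_{\mathrm{st}})$ (the complement of an $\omega$-convex neighborhood of the negative-definite configuration $E_0$ in the blown-up $\mathbb{F}_1$), glue it to the strong filling $W$, and conclude rationality from McDuff's characterization via the surviving symplectic $(+1)$-sphere $\overline{S}_{\infty}$. The paper itself supplies no proof of this proposition --- it is quoted directly from Lisca~\cite[Theorem~3.2, Proposition~3.4]{Lisca-2008} --- so there is nothing further to compare.
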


\begin{definition}\label{definition:natural-compactification-of-filling-cyclic}
Let $W$ be a minimal symplectic filling of $X$. The above symplectic $4$-manifold $Z=W \cup_{L} (\widetilde{X} -\nu(E_0))$ is called the \emph{natural compactification} of $W$.
\end{definition}

Since $W \cong Z - \nu(E_{\infty})$ and $W$ is minimal, every $(-1)$-curve in $Z$ intersects $E_{\infty}$. Lisca~\cite{Lisca-2008} parametrizes minimal symplectic fillings of a cyclic quotient surface singularity using the data of intersections of $(-1)$-curves with $E_{\infty}$ as follows:

\begin{definition}[cf.~N\'emethi--Popescu-Pampu~\cite{Nemethi-PPampu-2010}]
\label{definition:K_e(n/n-a)}
\hfill \null

\begin{enumerate}
\item A sequence $(n_1, \dotsc, n_e) \in \mathbb{N}^e$ is \emph{admissible} if the matrix $M(n_1, \dotsc, n_e)$ is positive semi-definite of rank at least $e-1$, where $M(n_1, \dotsc, n_e) \in M_{e, e}(\mathbb{Z})$ is defined by $M_{i,i}=n_i$, $M_{i,j}=-1$ if $\abs{i-j}=1$, and $M_{i,j}=0$ otherwise. Denote by $\adm(\mathbb{N}^e)$ the set of admissible $e$-tuples.

\item For $e \ge 1$, we define
\begin{multline*}
K_e(n/n-a) := \\
\{\underline{n} = (n_1, \dotsc, n_e) \in \adm(\mathbb{N}^e) \mid \text{$[n_1, \dotsc, n_e]=0$ and $0<n_i \le a_i$, $\forall i$}\}.
\end{multline*}
\end{enumerate}
\end{definition}

\begin{remark}\label{remark:representing-0}
For every $(n_1, \dotsc, n_e) \in K_e(n/n-a)$, the Hirzebruch-Jung continued fraction $[n_1, \dotsc, n_e]$ can be obtained from $[0]$ by blowing-ups; see Subsection \ref{subsection:HJ-continued-fraction}.
\end{remark}

Let $\underline{n} = (n_1, \dotsc, n_e) \in K_e(n/n-a)$. We define a smooth 4-manifold $W_{n,a}(\underline{n})$ associated to $\underline{n}$ as follows: Given two lines $L_1$ and $L_2$ in $\mathbb{CP}^2$, we blow up successively a line $L_1$ at a smooth point $p \in L_1 - L_2$ including infinitely near points over $p$ to obtain a linear chain of $\mathbb{CP}^1$'s whose dual graph is
\begin{equation*}
\begin{tikzpicture}
\node[bullet] (650) at (6.5,0) [label=above:{$-n_e$},label=below:{$R_e'$}] {};
\node[empty] (70) at (7,0) [] {};
\node[empty] (750) at (7.5,0) [] {};
\node[bullet] (80) at (8,0) [label=above:{$-n_2$},label=below:{$R_2'$}] {};
\node[bullet] (90) at (9,0) [label=above:{$1-n_1$},label=below:{$R_1'$}] {};
\node[bullet] (100) at (10,0) [label=above:{$+1$},label=below:{$L_2$}] {};

\draw [-] (650)--(70);
\draw [dotted] (650)--(80);
\draw [-] (750)--(80);
\draw [-] (80)--(90);
\draw [-] (90)--(100);
\end{tikzpicture}
\end{equation*}
We then blow up the vertex $R_i'$ at $(a_i-n_i)$ distinct points for each $i=1, \dotsc, e$ so that one obtain a rational complex surface $Z$ which contains a linear chain $E_{\infty}$ of $\mathbb{CP}^1$'s whose dual graph is
\begin{equation*}
\begin{tikzpicture}
\node[bullet] (650) at (6.5,0) [label=above:{$-a_e$},label=below:{$R_e$}] {};
\node[empty] (70) at (7,0) [] {};
\node[empty] (750) at (7.5,0) [] {};
\node[bullet] (80) at (8,0) [label=above:{$-a_2$},label=below:{$R_2$}] {};
\node[bullet] (90) at (9,0) [label=above:{$1-a_1$},label=below:{$R_1$}] {};
\node[bullet] (100) at (10,0) [label=above:{$+1$},label=below:{$L_2$}] {};

\draw [-] (650)--(70);
\draw [dotted] (650)--(80);
\draw [-] (750)--(80);
\draw [-] (80)--(90);
\draw [-] (90)--(100);
\end{tikzpicture}i
\end{equation*}

\begin{definition}[{Lisca~\cite[p.766]{Lisca-2008}}]

For each $\underline{n} = (n_1, \dotsc, n_e) \in K_e(n/n-a)$ in the construction above, the smooth 4-manifold $W_{n,a}(\underline{n})$ is defined by
\begin{equation*}
W_{n,a}(\underline{n}) = Z - \nu(E_{\infty}).
\end{equation*}
\end{definition}

Some of the main results of Lisca~\cite{Lisca-2008} may be summarized as follows:

\begin{theorem}[Lisca~\cite{Lisca-2008}]
\label{theorem:Lisca}
Let $(X,0)$ be a cyclic quotient surface singularity of type $\frac{1}{n}(1,a)$. Then

\begin{enumerate}
\item Every $W_{n,a}(\underline{n})$ is a Stein filling of $(X,0)$.

\item Any minimal symplectic filling of $(X,0)$ is diffeomorphic to  $W_{n,a}(\underline{n})$ for some $\underline{n} \in K_e(n/n-a)$.

\item The manifold $W_{n,a}(\underline{n})$ is orientation-preserving diffeomorphic to a $4$-manifold $W_{n,a}(\underline{m})$ if and only if $\underline{n} = \underline{m}$ or, additionally, $\underbar{n} = \underbar{m}^{\circ}$ in case $a^2 \equiv 1 \pmod{n}$, where $\underbar{m}^{\circ}=(m_e, m_{e-1}, \dotsc, m_1)$.
\end{enumerate}
\end{theorem}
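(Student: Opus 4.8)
The statement bundles three claims of different character; I would establish them in the stated order, using as a common backbone the compactification $Z = W \cup_L (\widetilde{X} - \nu(E_0))$ of Proposition~\ref{proposition:symplectic-filling-is-rational-I}, the Hirzebruch--Jung bookkeeping of Subsection~\ref{subsection:HJ-continued-fraction}, and McDuff's structure theory for rational symplectic $4$-manifolds.

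\textbf{Part (1): each $W_{n,a}(\underline{n})$ is a Stein filling.} In the construction, $Z$ is an explicit iterated blow-up of $\mathbb{CP}^2$, so $E_\infty \subset Z$ is an honest chain of smooth rational curves with self-intersections $-a_e, \dots, -a_2, 1-a_1, +1$. First I would check that $\partial \nu(E_\infty)$, with its induced orientation, is the lens space $L(n,a)$ carrying $\xi_{\mathrm{st}}$; this is exactly the identity $[b_1,\dots,b_r,1,a_e,\dots,a_1] = 0$ of Subsection~\ref{subsection:HJ-continued-fraction} read so that $E_\infty$ is the ``dual, compactifying'' end of the resolution of $\frac{1}{n}(1,a)$. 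To produce the Stein structure on $W_{n,a}(\underline{n}) = Z - \nu(E_\infty)$ I would verify that $E_\infty$ supports an ample $\mathbb{Q}$-divisor on $Z$ (the same computation used later in Proposition~\ref{proposition:supporting-ample-divisor}); then a large multiple embeds $Z$ in a projective space, the affine cone over $Z$ has $(X,0)$ as a hyperplane section through the vertex, and $Z$ minus that hyperplane section is affine --- hence Stein --- with strictly pseudoconvex boundary $(L(n,a),\xi_{\mathrm{st}})$. Equivalently one writes down a Legendrian surgery (handlebody) diagram for $W_{n,a}(\underline{n})$ and invokes Eliashberg's criterion; either route is bookkeeping once the curve configuration is fixed.

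\textbf{Part (2): every minimal symplectic filling is one of the $W_{n,a}(\underline{n})$.} This is the substantive step. Given a minimal symplectic filling $W$, form $Z = W \cup_L (\widetilde{X} - \nu(E_0))$, a symplectic rational $4$-manifold by Proposition~\ref{proposition:symplectic-filling-is-rational-I}, with $W \cong Z - \nu(E_\infty)$ for the \emph{same} fixed chain $E_\infty$ of symplectic spheres as above; minimality of $W$ forces every exceptional $(-1)$-sphere of $Z$ to meet $E_\infty$. Now run a symplectic minimal model program on $Z$: repeatedly blow down a $(-1)$-sphere, each time recording which component of the current proper transform of $E_\infty$ it met. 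Since the $+1$-sphere $L_2 \subset E_\infty$ stays disjoint from every blow-down center (it persists as a class of square $+1$), the terminal minimal surface contains a $+1$-sphere and hence must be $\mathbb{CP}^2$, with $E_\infty$ collapsed to a pair of lines. The key combinatorial lemma is that the intersection data of the blown-down $(-1)$-spheres with the arm of $E_\infty$ along $R_e, \dots, R_1$ is rigid: it is faithfully encoded by a tuple $\underline{n} = (n_1,\dots,n_e)$, and the three conditions defining $K_e(n/n-a)$ fall out --- $0 < n_i \le a_i$ because the intermediate curve $R_i'$ has square $-n_i$ and arises from $R_i$ by undoing $a_i - n_i \ge 0$ blow-ups; $[n_1,\dots,n_e]=0$ because the chain $R_e' - \dots - R_1' - L_2$ must contract consistently to two lines; and admissibility (positive semidefiniteness of $M(\underline{n})$ of rank $\ge e-1$) because the configuration lives inside a blow-up of $\mathbb{CP}^2$. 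Reversing the blow-ups reconstructs $Z$ from $\mathbb{CP}^2$ by precisely the recipe defining $W_{n,a}(\underline{n})$, yielding an orientation-preserving diffeomorphism $W \cong W_{n,a}(\underline{n})$.

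\textbf{Part (3): $\underline{n}$ is a (homeo)morphism invariant, and the main obstacle.} The ``if'' direction is immediate. For ``only if'', a diffeomorphism $W_{n,a}(\underline{n}) \to W_{n,a}(\underline{m})$ extends --- after blowing up if necessary, exactly as in the proof of Main Theorem~1 --- to a diffeomorphism of the two compactifications carrying $E_\infty$ to $E_\infty$ and $(-1)$-spheres to $(-1)$-spheres; reading off the intersection numbers of these $(-1)$-spheres with the components of $E_\infty$ recovers $\underline{n}$, so $\underline{n} = \underline{m}$. (Alternatively $\underline{n}$ is recovered from the lattice $H_2(W_{n,a}(\underline{n}))$ together with the boundary data of $L(n,a)$; and since these fillings are simply connected, Freedman's theorem upgrades the conclusion to orientation-preserving homeomorphism.) The real difficulty throughout is Part (2): keeping the symplectic MMP under control so that the $(-1)$-spheres are contracted compatibly with the arm structure of $E_\infty$, showing that the recorded tuple is independent of the contraction order, and matching the resulting positivity constraints \emph{exactly} with the admissibility and zero-continued-fraction conditions in the definition of $K_e(n/n-a)$ --- in particular ruling out every configuration that would land outside that set.
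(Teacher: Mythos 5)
You should first note that the paper does not prove this theorem at all: it is quoted as a summary of Lisca's results \cite{Lisca-2008}, and the paper's own contribution in this direction (Corollary~\ref{corollary:New-proof-of-Stevens-theorem}) is a different correspondence that takes Lisca's classification as input. So your proposal has to be measured against Lisca's argument, and there it has a genuine gap exactly where the theorem is hard, namely part (2). Your outline (compactify via Proposition~\ref{proposition:symplectic-filling-is-rational-I}, blow down to a minimal rational surface, read off a tuple in $K_e(n/n-a)$) is the right general shape, but everything you call ``the key combinatorial lemma'' is asserted rather than proved, and the justifications offered are circular: the bound $0<n_i\le a_i$ is explained by saying that $R_i'$ ``arises from $R_i$ by undoing $a_i-n_i\ge 0$ blow-ups,'' which presupposes that the arbitrary filling already carries the structure of the model $W_{n,a}(\underline{n})$. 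Likewise, the claims that the $(-1)$-spheres being contracted avoid the $+1$-sphere, that the minimal model is $\mathbb{CP}^2$ with $E_{\infty}$ collapsing to a pair of lines, that the recorded tuple is independent of the order of contractions, and that admissibility ``falls out'' because the configuration sits in a blow-up of $\mathbb{CP}^2$, are precisely the content of Lisca's homological analysis of how the chain of classes of $E_{\infty}$ can embed in $\mathbb{CP}^2\sharp N\overline{\mathbb{CP}}^2$; without that lattice-theoretic analysis, your proof of (2) is essentially a restatement of the statement.

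There are also concrete errors in the easier parts. In (3), the parenthetical claim that the fillings are simply connected is false: the rational homology ball fillings of lens spaces of type $\frac{1}{p^2}(1,pa-1)$ occur among the $W_{n,a}(\underline{n})$ and have $\pi_1\cong\mathbb{Z}/p$, so Freedman's theorem cannot be invoked that way; moreover, ``exactly as in the proof of Main Theorem~1'' does not apply verbatim, since Proposition~\ref{propsition:Extending-diffeomorphism} is calibrated to non-cyclic links (small Seifert fibred $3$-manifolds, Boileau--Otal, Rubinstein), whereas for lens-space boundaries the extension of a boundary diffeomorphism over the linear plumbing rests on Bonahon~\cite{Bonahon-1983} and needs to be argued separately. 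In (1), exhibiting $Z-\nu(E_{\infty})$ as an affine, hence Stein, surface is fine, but to conclude that it is a Stein filling of $(L(n,a),\xi_{\mathrm{st}})$ you must identify the induced contact structure with the Milnor fillable one; this is a real step (compare the paper's uniform argument via Liouville vector fields, $\SpinC$-structures and Gay--Stipsicz), not bookkeeping.
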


\begin{remark}
If $a^2 \equiv 1 \pmod{n}$, then the dual graph of the minimal resolution of $(X,0)$ is symmetric, or, more explicitly, \emph{palindromic}. That is, it is the same whether one reads it backwards or forwards. Therefore there are obvious additional diffeomorphic pairs in Theorem~\ref{theorem:Lisca}(3).
\end{remark}

In Section~\ref{section:identifying-Milnor-fibers}, we present another proof of the existence of a one-to-one correspondence between minimal symplectic fillings and $\underline{n} \in K_e(n/n-a)$ by using a technique from the minimal model program for 3-folds; Corollary~\ref{corollary:New-proof-of-Stevens-theorem}.

\subsection{Minimal symplectic fillings of non-cyclic quotient surface singularities}

Let $(X,0)$ be a non-cyclic quotient surface singularity and let $L$ be its link. We briefly summarize the main results of Bhupal--Ono~\cite{Bhupal-Ono-2012}.

Let $\widetilde{X}$ be the blown-up of the Hirzebruch surface $\mathbb{F}_b$ discussed in Remark~\ref{remark:model-for-widetilde(X)}. Let $E_0 \subset \widetilde{X}$ be the exceptional divisor of the minimal resolution of $(X,0)$ and let $E_{\infty} \subset \widetilde{X}$ be the compactifying divisor of $(X,0)$.

\begin{proposition}[{Bhupal--Ono~\cite{Bhupal-Ono-2012}}]
\label{proposition:symplectic-filling-is-rational-II}
Let $W$ be a minimal symplectic filling of $(X,0)$. Then there is a symplectic form on the smooth 4-manifold
\begin{equation*}
Z =  W \cup_L \nu(E_{\infty})
\end{equation*}
which is compatible with the symplectic structure on $W$. Furthermore $Z$ is a rational symplectic $4$-manifold.
\end{proposition}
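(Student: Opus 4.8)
The plan is to realize $Z$ by symplectically capping off the convex boundary of $W$ with a symplectic neighborhood of the compactifying divisor, and then to read off rationality from the symplectic spheres carried by that cap. By Remark~\ref{remark:model-for-widetilde(X)} the minimal resolution $\widetilde{X}$ is a \emph{projective} rational surface, built from $\mathbb{F}_b$ by blow-ups, in which $E_{\infty}$ is a simple normal crossing configuration of smooth rational curves. Fix a Kähler form $\omega_{\widetilde{X}}$ on $\widetilde{X}$; its restriction to a standard plumbing neighborhood $\nu(E_{\infty})$ makes $E_{\infty}$ a symplectic configuration. The key point, going back to Pinkham's $\mathbb{C}^{\ast}$-compactification picture (\cite{Pinkham-1977,Pinkham-1978}, cf.\ Proposition~\ref{proposition:Extendable-non-cyclic}), is that $\overline{E}_{\infty}=\Proj(A)$ is an \emph{ample} divisor on $\widehat{X}=\Proj(A[t])$ with affine complement $X=\Spec(A)$; consequently $\nu(E_{\infty})$, with the restricted symplectic form, has \emph{concave} contact boundary, and this boundary is the link $L$ equipped with the Milnor fillable contact structure $\xi_{\textrm{st}}$ (one may alternatively invoke uniqueness of Milnor fillable contact structures on $L$ to identify the induced structure with $\xi_{\textrm{st}}$). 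This is the non-cyclic analogue of the cap $\widetilde{X}-\nu(E_0)$ used by Lisca in Proposition~\ref{proposition:symplectic-filling-is-rational-I}; here we use the smaller neighborhood $\nu(E_{\infty})$, which is possible because, as noted below, $\nu(E_{\infty})$ already contains a symplectic sphere of non-negative self-intersection.

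Next I would glue. Since $W$ is a strong symplectic filling of $(L,\xi_{\textrm{st}})$, a collar of $\partial W$ is modelled on $(L\times(-\varepsilon,0],\,d(e^{t}\alpha_{\textrm{st}}))$, while a collar of $\partial\nu(E_{\infty})$ is modelled on $(L\times[0,\varepsilon),\,d(e^{t}\alpha'))$ with $\ker\alpha'=\xi_{\textrm{st}}$. After rescaling the two contact forms so they agree (Gray stability) and reparametrising the collars, the two symplectizations glue to $(L\times(-\varepsilon,\varepsilon),\,d(e^{t}\alpha_{\textrm{st}}))$, producing a \emph{closed} symplectic $4$-manifold $(Z,\omega_Z)=W\cup_L\nu(E_{\infty})$; since all the adjustment can be performed on the $\nu(E_{\infty})$ side, $\omega_Z$ restricts on $W$ to a form deformation equivalent to the given $\omega$, i.e.\ it is compatible with the symplectic structure on $W$. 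This is standard symplectic cut-and-paste along a contact hypersurface.

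Finally I would deduce rationality from McDuff's structure theorem~\cite{McDuff-1990}: a closed symplectic $4$-manifold containing a symplectically embedded $2$-sphere of non-negative self-intersection is a blow-up of $\mathbb{CP}^2$, of $\mathbb{CP}^1\times\mathbb{CP}^1$, or of an $S^2$-bundle over a positive-genus surface. In $Z$ the cap $\nu(E_{\infty})$ contains the central curve $E_{\infty}^{0}$, with $E_{\infty}^{0}\cdot E_{\infty}^{0}=b-3$; for $b\ge 3$ this is already a symplectic sphere of self-intersection $\ge 0$, and for $b=2$ one first contracts the symplectic $(-1)$-sphere $E_{\infty}^{0}$ and then the $(-1)$-spheres it creates along the short (``trivial'') arms — an explicit sequence of symplectic blow-downs, checked case by case on the three shapes of Figure~\ref{figure:non-cyclic-compactifying-divisor} — to produce such a sphere. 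Moreover, since $L$ is a rational homology sphere and $E_{\infty}$ is a tree of spheres, Mayer--Vietoris gives $b_1(Z)\le b_1(W)=0$, the latter because $W$ is a filling of the link of a quotient surface singularity (cf.\ \cite{Ohta-Ono-2005,Bhupal-Ono-2012}). Hence the positive-genus ruled case is excluded, and $Z$ is a blow-up of $\mathbb{CP}^2$ or of $\mathbb{CP}^1\times\mathbb{CP}^1$, that is, rational.

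The main obstacle is the first step: verifying that $\partial\nu(E_{\infty})$ inherits the \emph{concave} coorientation together with precisely the contact structure $\xi_{\textrm{st}}$, so that it is compatible for gluing with the convex boundary of $W$. This is where one genuinely uses Pinkham's equivariant compactification — the ampleness of $\overline{E}_{\infty}$ on $\widehat{X}$ — rather than just the combinatorics of the dual graph $\widehat{\Gamma}$; granting this, the gluing is routine, and the rationality conclusion is routine modulo the finite verification that each configuration $E_{\infty}$ furnishes a symplectic sphere of non-negative self-intersection.
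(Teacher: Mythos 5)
First, note that the paper does not actually prove this proposition: it is quoted from Bhupal--Ono, and the only method indicated is the remark that follows it, namely that $E_{\infty}\subset Z$ is transformed by blow-ups and blow-downs into a configuration containing a \emph{cuspidal} rational curve of positive self-intersection, after which rationality follows from Ohta--Ono's criterion for symplectic $4$-manifolds containing a singular rational curve with a $(2,3)$-cusp \cite{Ohta-Ono-2005-II}. Your gluing step (concave symplectic neighborhood of $E_{\infty}$ via ampleness of $\overline{E}_{\infty}$ on $\widehat{X}$, identification of the induced contact structure with $\xi_{\textrm{st}}$, convex--concave gluing) is sound and is essentially what the paper records in the subsection on Milnor fillable contact structures; the only caveat is that identifying the concave boundary's contact structure with $\xi_{\textrm{st}}$ requires the $\SpinC$/Gay--Stipsicz argument spelled out there rather than a bare appeal to ``uniqueness of Milnor fillable structures.''

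The genuine gap is in your rationality step. You claim that a short sequence of symplectic blow-downs inside $\nu(E_{\infty})$ always yields an \emph{embedded} symplectic sphere of non-negative square, so that McDuff's theorem applies. This works when $b\ge 3$ (the central curve), in the dihedral and type $(3,1)$ cases with $b=2$, and in the type $(3,2)$ case with $b=2$ and $a_1=2$; but it fails for type $(3,2)$ with $b=2$ and $a_1\ge 3$. Take $E_6$ (tetrahedral, $b=2$, third arm $[2,2]$, so $a_1=3$): there $E_{\infty}$ is a central $(-1)$-sphere with three length-one arms of squares $-3$, $-2$, $-3$. Blowing down the central curve gives spheres of squares $-2$, $-1$, $-2$ through a common point; blowing down the $(-1)$ gives two $(-1)$-spheres meeting at \emph{two} points; any further blow-down turns the surviving curve into a \emph{singular} rational curve of square $3$ with a double point, and at no stage does an embedded sphere of square $\ge 0$ appear (every component of $E_{\infty}$, and every smoothing of a connected union of components, has negative square). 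The same happens for $E_7$ and for infinitely many non-ADE type $(3,2)$ singularities with $b=2$ and $b_1=2$. This is precisely why Bhupal--Ono arrange a cuspidal curve of positive square and invoke \cite{Ohta-Ono-2005-II} instead of McDuff's embedded-sphere theorem; to close the gap you must either do the same in these residual cases or use McDuff's later results on immersed/singular symplectic spheres. Separately, the input $b_1(W)=0$ (needed to exclude irrational ruled surfaces, since Mayer--Vietoris only gives $b_1(Z)=b_1(W)$) is correct but is itself a nontrivial Seiberg--Witten--theoretic fact and should be cited precisely.
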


\begin{definition}\label{definition:natural-compactification-of-filling-non-cyclic}
For a minimal symplectic filling $W$ of $X$, the symplectic $4$-manifold $Z=W \cup_{L} \nu(E_{\infty})$ is called the \emph{natural compactification} of $W$.
\end{definition}

\begin{remark}\label{remark:complex-model}
The symplectic deformation type of the pair $(Z, E_{\infty})$ associated to a given minimal symplectic filling $W$ is unique (Bhupal--Ono~\cite[p.35]{Bhupal-Ono-2012}) and one can choose $Z$ as a rational complex surface and $E_{\infty}$ as a union of rational complex curves.
\end{remark}

Bhupal--Ono~\cite{Bhupal-Ono-2012} proves that $Z$ is rational by showing that there is a sequence of blow-downs and blow-ups transforming the compactifying divisor $E_{\infty}$ in $Z$ into a configuration containing a cuspidal curve with positive self-intersection number; then $Z$ is rational by Ohta--Ono~\cite{Ohta-Ono-2005-II}.

We present these blow-downs and blow-ups in Figure~\ref{figure:sequence-dihedral} for dihedral singularities and in Figures~\ref{figure:sequence-TOI-32} and \ref{figure:sequence-TOI-31} for tetrahedral, octahedral, and icosahedral singularities. We first blow-up successively (if necessary) the intersection point of the central curve of $E_{\infty}$ and the third branch until the self-intersection number of the central curve has dropped to $-1$. Then we blow down or blow up as described in Figures~\ref{figure:sequence-dihedral}, \ref{figure:sequence-TOI-32} and \ref{figure:sequence-TOI-31} so that we get a rational 4-manifold $Z_2$ with a cuspidal curve $C$ with $C \cdot C > 0$ and a linear chain of 2-spheres $C_1, \dotsc, C_k$ (plus some extra 2-spheres intersecting $C$ at the cusp). Let $\widetilde{E}_{\infty} \subset Z_2$ be the proper transform of $E_{\infty} \subset Z$. Since the blow-ups and blow-downs occur only on $E_{\infty}$ and its proper transforms, we have
\begin{equation}\label{equation:W=Z-E_infty=Z_2-E_infty}
W \cong Z - \nu(E_{\infty}) \cong Z_2 - \nu(\widetilde{E}_{\infty}).
\end{equation}
Since $W$ is minimal, every $(-1)$-curve in $Z_2$ should intersect $\widetilde{E}_{\infty}$. Let $Z_1$ be the rational 4-manifold obtained by contracting all $(-1)$-curves in $Z_2 - C$, that is, the $(-1)$-curves not intersecting $C$. Then, according to Bhupal--Ono~\cite{Bhupal-Ono-2012}, $Z_1$ can be obtained by a sequence of blowing-ups at $p$ (including infinitely near points over $p$) in a cuspidal curve $C$ in $\mathbb{CP}^2$ or $\mathbb{CP}^1 \times \mathbb{CP}^1$ and (if necessary) some points on $C$ as shown in Figures~\ref{figure:sequence-dihedral}, \ref{figure:sequence-TOI-32} and \ref{figure:sequence-TOI-31}. For details, refer Bhupal--Ono~\cite{Bhupal-Ono-2012}.

\begin{figure}[tp]
\centering
\includegraphics{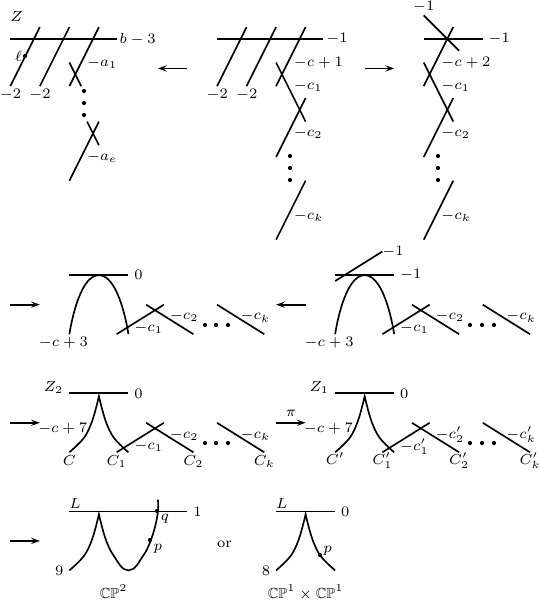}
\caption{For dihedral singularities; Bhupal--Ono~\cite[Figure~1]{Bhupal-Ono-2012}}
\label{figure:sequence-dihedral}
\end{figure}

\begin{figure}[tp]
\centering
\includegraphics{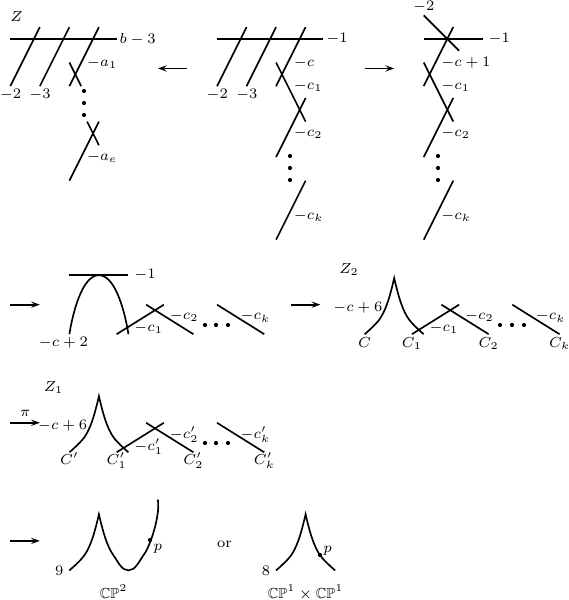}
\caption{For tetrahedral, octahedral, or icosahedral singularities of type $(3,2)$; Bhupal--Ono~\cite[Figure~3]{Bhupal-Ono-2012}}
\label{figure:sequence-TOI-32}
\end{figure}

\begin{figure}[tp]
\centering
\includegraphics{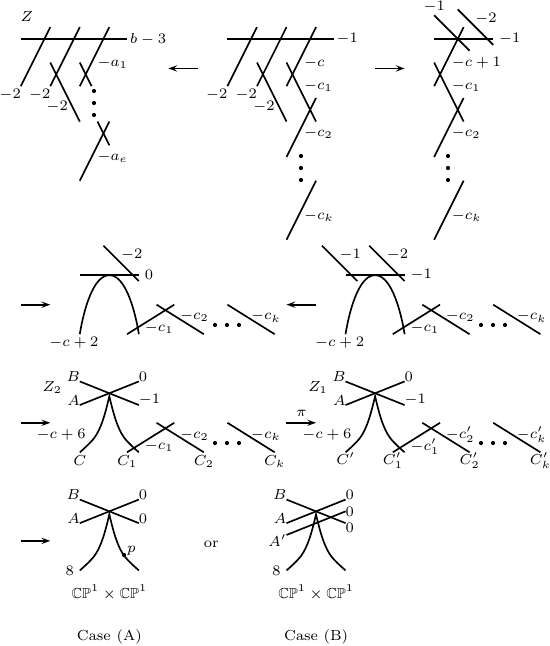}
\caption{For tetrahedral, octahedral, or icosahedral singularities of type $(3,1)$; Bhupal--Ono~\cite[Figure~5, 6, 10]{Bhupal-Ono-2012}}
\label{figure:sequence-TOI-31}
\end{figure}

Conversely, we explain briefly how to blow up $\mathbb{CP}^2$ or $\mathbb{CP}^1 \times \mathbb{CP}^1$ in order to get such a rational 4-manifold $Z_1$. Once we get $Z_1$, we blow up points on $C_1', \dotsc, C_k'$ to get $C_1, \dotsc, C_k$ so that we have $Z_2$. \emph{Note that the sequence of blow-ups $\pi \colon Z_2 \to Z_1$ occur only on $C_i'$.}

In case of dihedral singularities (cf.~Figure~\ref{figure:sequence-dihedral}), if we start with $\mathbb{CP}^2$, then we need a cuspidal cubic (a plane curve of degree $3$ with a cusp), and a line $L$ passing through the cusp. We first blow up at the intersection point $q$ with the line and the cuspidal curve, and then we blow up successively at a smooth point $p$ of the cuspidal curve and at some infinitely near points over $p$ to make the linear chain $C_1', \dotsc, C_k'$. We may need to blow up at some smooth points of the cuspidal curve (if necessary). Then we get the rational surface $Z_1$. If we start with $\mathbb{CP}^1 \times \mathbb{CP}^1$, then we need a cuspidal curve (a curve of type $(2,2)$ with a cusp), and a curve $L$ of type $(1,0)$ or $(0,1)$. As before, we blow up successively at a smooth point $p$ of the cuspidal curve and at some infinitely near points over $p$, to make the linear chain $C_1', \dotsc, C_k'$. If necessary, we blow up at some smooth points of the cuspidal curve. Then we get the rational surface $Z_1$.

In case of tetrahedral, octahedral, icosahedral singularities of type $(3,2)$ (cf.~Figure~\ref{figure:sequence-TOI-32}), we need only a cuspidal curve in $\mathbb{CP}^2$ or in $\mathbb{CP}^1 \times \mathbb{CP}^1$. We then blow up at $p$ including infinitely near points over $p$, and some smooth points of the cuspidal curve (if necessary) to get the rational surface $Z_1$.

Finally, for tetrahedral, octahedral, icosahedral singularities of type $(3,1)$, the rational surface $Z_1$ is obtained only from $\mathbb{CP}^1 \times \mathbb{CP}^1$. But there are two ways of blowing up and blowing down; we refer to Figure~\ref{figure:sequence-TOI-31}. In Case~(A), we start with a cuspidal curve and two curves $A$ and $B$ of type $(1,0)$ and type $(0,1)$, respectively, passing through the cusp singularity. We blow up at $p$ including infinitely near points over $p$ and some smooth points of the cuspidal curve (if necessary) to get the rational surface $Z_1$. In Case~(B), we need a cuspidal curve and two curves $A$, $A'$ of the same type $(1,0)$ and a curve $B$ of type $(0,1)$ passing through the cusp. We first blow up at some point in $B - (A \cup A')$, and then we blow up successively at $p$ including infinitely near points over $p$ to make the linear chain $C_1', \dotsc, C_k'$. Here the proper transform of $A'$ becomes a curve $C_i'$ for some $i$. Also, as before, one may need to blow up at some smooth points of the cuspidal curve. Then we get the rational surface $Z_1$.

\begin{remark}\label{remark:no-extra-blowup}
In any case of Figures~\ref{figure:sequence-dihedral}, \ref{figure:sequence-TOI-32} and \ref{figure:sequence-TOI-31}, if we blow up the cuspidal curve $C$ in $\mathbb{CP}^1 \times \mathbb{CP}^1$ at the other point $q \neq p$ in order to obtain $Z_1$, then we may obtain the same model $(Z, E)$ from $\mathbb{CP}^2$ because $\mathbb{CP}^2 \sharp 2 \overline{\mathbb{CP}}^2 \cong (\mathbb{CP}^1 \times \mathbb{CP}^1) \sharp \overline{\mathbb{CP}}^2$. So, we may assume that there are no extra blow-up points other than $p \in C$ in case we obtain $Z_1$ from $\mathbb{CP}^1 \times \mathbb{CP}^1$.
\end{remark}

\begin{example}[Continued from Example~\ref{example:I_30(b-2)+23}]
\label{example:I_30(b-2)+23-continued}
Let $(X,0)$ be an icosahedral singularity $I_{30(5-2) + 23}$. Since it is of type $(3,1)$, the rational 4-manifold $Z_2$ is given by
\begin{center}
\includegraphics{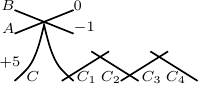}
\end{center}
where $C \cdot C = 5$, $C_1 \cdot C_1 = -2$, $C_2 \cdot C_2 = -2$, $C_3 \cdot C_3 = -3$, $C_4 \cdot C_4 = -3$. There are five models of $Z_2$ and each model is obtained from $\mathbb{CP}^1 \times \mathbb{CP}^1$:

\begin{itemize}
\item Case~(A): \#196 $(1 \times C_3, 2 \times C_4)$, \#197 $(1 \times C_1, 2 \times C_4)$, \#198 $(1 \times C_2, 1 \times C_4)$

\item Case~(B): \#253 $(C_1, C_3; 2 \times C_4)$, New $(C_4, C_4; 1 \times C_2)$
\end{itemize}
Here, in both cases, the number $\# nn$ is the number of the model in Bhupal--Ono~\cite{Bhupal-Ono-2012}, $a_i \times C_i$ means that there are $a_i$ distinct $(-1)$-curves intersecting $C_i$ only in $Z_2$, and we do not write the number of $(-1)$-curves intersecting the cuspidal curve $C$. In Case~(A), there is one $(-1)$-curve intersecting $B$ only, which is not denoted again. In Case~(B), $(C_j, C_k; \cdots)$ means that there are two $(-1)$-curves such that one of them intersects $B$ and $C_j$, and the other intersects $C$ and $C_k$, respectively.
\end{example}

\begin{remark}\label{remark:erratum-Bhypal-Ono}
The model ``New $(C_4, C_4; 1 \times C_2)$'' in Example~\ref{example:I_30(b-2)+23-continued} above is not recorded in Bhupal--Ono~\cite{Bhupal-Ono-2012}. Bhupal--Ono~\cite{Bhupal-Ono-2015} confirms that the number of possible models of $Z_2$ was erroneously claimed to be four in their paper~\cite{Bhupal-Ono-2012} and they check again the list with their own program; Bhupal--Ono~\cite{Bhupal-Ono-2017}.
\end{remark}

Bhupal--Ono~\cite{Bhupal-Ono-2012} shows that there are only finitely many ways to blow up $\mathbb{CP}^2$ or $\mathbb{CP}^1 \times \mathbb{CP}^1$ for constructing $Z_2$. For tetrahedral, octahedral, icosahedral singularities, Bhupal--Ono~\cite[\S5]{Bhupal-Ono-2012} presents all possible ways of blow-ups, or equivalently, all possibilities for the ways that $(-1)$-curves in $Z_2$ intersect $\widetilde{E}_{\infty}$.

\begin{proposition}[{Bhupal--Ono~\cite{Bhupal-Ono-2012}}]
There are only finitely many symplectic deformation types of minimal symplectic fillings for each non-cyclic quotient surface singularities.
\end{proposition}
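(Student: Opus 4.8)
The plan is to use the compactification set up above to replace the statement by a finite combinatorial enumeration. Let $W$ be an arbitrary minimal symplectic filling of $(X,0)$. By Proposition~\ref{proposition:symplectic-filling-is-rational-II} it compactifies to a rational symplectic $4$-manifold $(Z,E_{\infty})$, and the fixed sequence of blow-downs and blow-ups displayed in Figures~\ref{figure:sequence-dihedral}, \ref{figure:sequence-TOI-32}, \ref{figure:sequence-TOI-31} transforms $(Z,E_{\infty})$ into a pair $(Z_2,\widetilde{E}_{\infty})$ in which $\widetilde{E}_{\infty}$ is built from a cuspidal curve $C$ with $C\cdot C>0$ together with a linear chain $C_1,\dots,C_k$ (and the spheres through the cusp). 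The crucial structural point is that the full weighted dual graph of $\widetilde{E}_{\infty}$ --- in particular the positive integer $C\cdot C$, the length $k$, and the self-intersections $C_i\cdot C_i$ --- depends only on $(X,0)$, not on $W$. Since \eqref{equation:W=Z-E_infty=Z_2-E_infty} gives $W\cong Z_2-\nu(\widetilde{E}_{\infty})$ and, by Remark~\ref{remark:complex-model}, the symplectic deformation type of $W$ is recovered from that of the pair $(Z_2,\widetilde{E}_{\infty})$, it suffices to bound the number of symplectic deformation types of pairs $(Z_2,\widetilde{E}_{\infty})$ with $\widetilde{E}_{\infty}$ of this fixed shape.

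Next I would combine minimality with the reduction to $\mathbb{CP}^2$ or $\mathbb{CP}^1\times\mathbb{CP}^1$ recalled above. Since $W\cong Z_2-\nu(\widetilde{E}_{\infty})$ is minimal, every $(-1)$-curve of $Z_2$ meets $\widetilde{E}_{\infty}$; contracting those disjoint from $C$ yields $Z_1$, and $Z_1$ is obtained from $\mathbb{CP}^2$ or $\mathbb{CP}^1\times\mathbb{CP}^1$ by blowing up a point of a fixed cuspidal curve $C$ (a cuspidal cubic, resp.\ a cuspidal $(2,2)$-curve), together with infinitely near points and finitely many further points of $C$, while $Z_2\to Z_1$ only blows up points of the curves $C_i'$. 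Every blow-up whose center lies on a proper transform of $C$ strictly decreases $C\cdot C$, so their number is bounded by the self-intersection of the ambient cuspidal curve minus the fixed positive value $C\cdot C$ in $Z_2$; the blow-ups producing the spheres through the cusp realize the fixed configuration prescribed by the figures; and the blow-ups along the $C_i'$ turn the chain $C_1',\dots,C_k'$ of $Z_1$ into the chain $C_1,\dots,C_k$ of prescribed (hence bounded) self-intersections in $Z_2$, so their number is controlled once the $C_i'\cdot C_i'$ are. Carrying out this bookkeeping --- which is exactly the content of Bhupal--Ono's case analysis --- shows that the total number of blow-ups needed to build $(Z_2,\widetilde{E}_{\infty})$ from $\mathbb{CP}^2$ or $\mathbb{CP}^1\times\mathbb{CP}^1$ is at most a constant $N(X)$ depending only on $(X,0)$; equivalently, $b_2(W)$ is bounded by a constant depending only on $(X,0)$.

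Finally, for a number of blow-ups bounded by $N(X)$ there are only finitely many admissible combinatorial patterns: the choice of $\mathbb{CP}^2$ or $\mathbb{CP}^1\times\mathbb{CP}^1$, of the auxiliary lines or rulings, of which centers lie on $C$ and which on a $C_i'$, and of the infinitely near incidence structure among the centers. For each such pattern the corresponding parameter space of configurations (cuspidal cubic or $(2,2)$-curve, auxiliary curves, and admissible points) is irreducible, hence connected, so all configurations realizing a given pattern produce symplectic deformation equivalent pairs $(Z_2,\widetilde{E}_{\infty})$; running through the finitely many patterns therefore yields finitely many symplectic deformation types of such pairs, and hence finitely many minimal symplectic fillings of $(X,0)$. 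The genuinely hard part is not this last, soft step but the one before it: converting the rigidity of $\widetilde{E}_{\infty}$ and the minimality constraint on $(-1)$-curves into an \emph{explicit} finite list of blow-up patterns for each of the dihedral, tetrahedral, octahedral and icosahedral families. This is precisely where the detailed figures and the case-by-case verification of Bhupal--Ono~\cite{Bhupal-Ono-2012} are needed, and it is also what later allows one to match this count against the number of $P$-resolutions.
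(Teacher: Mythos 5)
Your proposal follows exactly the route the paper takes (and attributes to Bhupal--Ono~\cite{Bhupal-Ono-2012}): pass to the pair $(Z_2,\widetilde{E}_{\infty})$ whose configuration depends only on $(X,0)$, use minimality of $W\cong Z_2-\nu(\widetilde{E}_{\infty})$ to force every $(-1)$-curve to meet $\widetilde{E}_{\infty}$, and reduce to a finite list of blow-up patterns of $\mathbb{CP}^2$ or $\mathbb{CP}^1\times\mathbb{CP}^1$ --- which is precisely the summary after which the paper simply quotes the proposition rather than proving it independently. So the proposal is correct and takes essentially the same approach, with the explicit case-by-case enumeration (and the check that each blow-up pattern yields a single symplectic deformation class) deferred to Bhupal--Ono, just as in the paper.
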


\subsection{Minimal symplectic fillings of dihedral singularities}
\label{subsection:symplectic-fillings-of-dihedral-singularities}

In this subsection we show that the study of minimal symplectic fillings of dihedral singularities is reduced to the cyclic case.

Let $(X,0)$ be a dihedral singularity whose dual graph of the minimal resolution is given by
\begin{equation}
\label{equation:dual-graph-of-dihedral}
\begin{tikzpicture}
\node[bullet] (005) at (0,0.5) [label=below:{$-2$}, label=above:{$E_1$}] {};
\node[bullet] (0-05) at (0,-0.5) [label=below:{$-2$}, label=above:{$E_2$}] {};

\node[bullet] (10) at (1,0) [label=below:{$-b_3$}] {};
\node[bullet] (20) at (2,0) [label=below:{$-b_4$}] {};

\node[empty] (250) at (2.5,0) [] {};
\node[empty] (30) at (3,0) [] {};

\node[bullet] (350) at (3.5,0) [label=below:{$-b_{r-1}$}] {};
\node[bullet] (450) at (4.5,0) [label=below:{$-b_r$}] {};

\draw [-] (005)--(10);
\draw [-] (0-05)--(10);
\draw [-] (10)--(20);
\draw [-] (20)--(250);
\draw [dotted] (250)--(30);
\draw [-] (30)--(350);
\draw [-] (350)--(450);
\end{tikzpicture}
\end{equation}

We will show in Section~\ref{section:minimal-symplectic-fillings-are-Milnor-fibers} that every minimal symplectic filling of a quotient surface singularity is diffeomorphic to its Milnor fiber. By the way, every Milnor fibers is a general fiber of a smoothing of the corresponding $P$-resolution; cf.~Section~\ref{section:$P$-resolution}. Therefore every minimal symplectic fillings of a quotient surface singularity is a $4$-manifold obtained by rationally blowing-down the corresponding $P$-resolution; cf.~Corollary~\ref{corollary:Milnor-fiber=Rational-blow-down}. Hence it is enough to parametrize Milnor fibers and classify them instead of symplectic fillings.

On the other hand, according to Stevens~\cite[\S7]{Stevens-1991}, every $P$-resolution of a dihedral singularity $(X,0)$ induces a $P$-resolution of the cyclic quotient singularity corresponding to $\frac{n}{a}=[2, b_3, \dotsc, b_r]$, and, conversely, every $P$-resolution of $[2, b_3, \dotsc, b_r]$ is realized as a $P$-resolution on the dihedral singularity $(X,0)$ (or, in two ways for some cases because of the symmetry of the dual graph of the minimal resolution of $(X,0)$). However even though a $P$-resolution of $[2, b_3, \dotsc, b_r]$ is realized on $(X,0)$ in two ways, the corresponding minimal symplectic fillings are diffeomorphic to each other. Hence we have a similar result to Theorem~\ref{theorem:Lisca} on cyclic quotient surface singularities.

\begin{definition}
Let $V_{n,a}(\underline{m})$ be the rationally blown-down $4$-manifold associated to the $P$-resolution of $(X,0)$ corresponding to $\underline{m} \in K_{e}(n/n-a)$ as discussed above.
\end{definition}

\begin{proposition}
\label{proposition:diffeomorphism-type-dihedral}
Let $(X,0)$ be a dihedral singularity whose resolution graph is given as \eqref{equation:dual-graph-of-dihedral}. Let $\frac{n}{a}=[2,b_3,\dotsc,b_r]$.

\begin{enumerate}
\item Any minimal symplectic filling of $(X,0)$ is diffeomorphic to  $V_{n,a}(\underline{m})$ for some $\underline{m} \in K_e(n/n-a)$.

\item The manifold $V_{n,a}(\underline{n})$ is orientation-preserving diffeomorphic to $V_{n,a}(\underline{m})$ if and only if $\underline{n} = \underline{m}$ or, additionally, $\underbar{n} = \underbar{m}^{\circ}$ in case $a^2 \equiv 1 \pmod{n}$, where $\underbar{m}^{\circ}=(m_e, m_{e-1}, \dotsc, m_1)$.
\end{enumerate}
\end{proposition}

\begin{proof}
We need to prove only the second assertion. According to Remark~\ref{remark:the-same--1-data}, $V_{n,a}(\underline{n})$ and $V_{n,a}(\underline{m})$ are diffeomorphic if and only if the two data of $(-1)$-curves intersecting only one components of the compactifying divisor $E_{\infty}$ in the natural compactification $Z$ are the same. On the other hand, we will show in Section~\ref{section:applications} that the data of such $(-1)$-curves is completely determined by the corresponding $P$-resolution, hence, by $\underline{m} \in K_{e}(n/n-a)$. Therefore the second assertion follows as Theorem~\ref{theorem:Lisca} of Lisca on cyclic quotient surface singularities.
\end{proof}

\subsection{Removing duplicate entries in the list of Bhupal--Ono on tetrahedral, octahedral, icosahedral singularities}

Bhupal--Ono~\cite[\S5]{Bhupal-Ono-2012} gives the list of all possible rational surfaces $Z_2$ in Figures~\ref{figure:sequence-TOI-32}, \ref{figure:sequence-TOI-31} for tetrahedral, octahedral, and icosahedral singularities together with the data of $(-1)$-curves intersecting $Z_2$. But they mention the possibility of symplectically deformation equivalent fillings in the list.

We show that there are duplicate entries in their list. That is, we show that certain 6 pairs of entries in the list of Bhupal--Ono~\cite[\S5]{Bhupal-Ono-2012} represent actually the same symplectic fillings; Proposition~\ref{proposition:reduced-BO-list}. We then classify minimal symplectic fillings of tetrahedral, octahedral, and icosahedral singularities up to diffeomorphism in Section~\ref{section:Diffeomorphism-type} using this reduced list.

At first, we briefly explain why there may exist duplicate entries in the list of Bhupal--Ono~\cite[\S5]{Bhupal-Ono-2012}. The duplications occur because of the symmetry of the dual graphs of minimal resolutions of the singularities. Among tetrahedral, octahedral, and icosahedral singularities, tetrahedral singularities $T_m$ with $m=6(b-2)+1$ and $m=6(b-2)+5$ have a symmetry in the minimal resolutions. So they have a symmetry in the compactifying divisors, which may introduce duplicate entries. The following are the dual graph of minimal resolutions and the compactifying divisors of tetrahedral singularities $T_m$ with $m=6(b-2)+1$ and $m=6(b-2)+5$:
\begin{itemize}
\item $T_{6(b-2)+1}$

\begin{tikzpicture}
\node[bullet] (00) at (0,0) [label=below:{$-2$}] {};
\node[bullet] (10) at (1,0) [label=below:{$-2$}] {};

\node[bullet] (20) at (2,0) [label=below:{$-b$}] {};

\node[bullet] (30) at (3,0) [label=below:{$-2$}] {};
\node[bullet] (40) at (4,0) [label=below:{$-2$}] {};

\node[bullet] (21) at (2,1) [label=left:{$-2$}] {};

\draw [-] (00)--(10);
\draw [-] (10)--(20);
\draw [-] (20)--(30);
\draw [-] (30)--(40);
\draw [-] (20)--(21);
\end{tikzpicture}
\qquad \qquad
\includegraphics{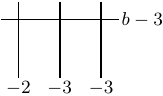}

\item $T_{6(b-2)+5}$

\begin{tikzpicture}
\node[bullet] (10) at (1,0) [label=below:{$-3$}] {};
\node[bullet] (20) at (2,0) [label=below:{$-b$}] {};
\node[bullet] (30) at (3,0) [label=below:{$-3$}] {};

\node[bullet] (21) at (2,1) [label=left:{$-2$}] {};

\draw [-] (10)--(20);
\draw [-] (20)--(30);
\draw [-] (20)--(21);
\end{tikzpicture}
\qquad \qquad
\includegraphics{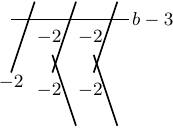}
\end{itemize}

For instance, $T_{6(b-2)+1}$ singularity has two $(-3)$-curves intersecting the node $(b-3)$-curve in the compactifying divisors. So we have two choices of the blow-up center where we blow up and down several times in order to reduce the compactifying divisor to the divisor in $Z_2$ consisting a cuspidal curve with some tails of negative curves. Therefore it may be possible that one has two entries of Bhupal-Ono's list from one minimal symplectic filling if one choose different blowup centers and if the numbers of $(-1)$-curves intersecting the $(-3)$-curves in the compactifying divisor are different.

Now we explain how to find the duplicate entries. In Theorem~\ref{theorem:fillings-diffeomorphic-to-Milnors}, we will show that  any minimal symplectic fillings of non-cyclic quotient surface singularities are diffeomorphic to their Milnor fibers. On the other hand, Milnor fibers are general fibers of smoothings of the corresponding $P$-resolutions (Section~\ref{section:$P$-resolution}) and they can be described as complements of the compactifying divisors embedded in a rational complex surfaces; Section~\ref{section:Milnor-fiber-as-complements}. Furthermore one can compute the data of the $(-1)$-curves intersecting the compactifying divisors from $P$-resolutions via running semi-stable minimal model program; Section~\ref{section:identifying-Milnor-fibers}. Therefore one can reproduce the list of Bhupal--Ono~\cite[\S5]{Bhupal-Ono-2012} if one finds all $P$-resolutions and runs the machinery developed in Section~\ref{section:identifying-Milnor-fibers}. Therefore if there are two $P$-resolutions which are symmetrical to each other, then the corresponding Milnor fibers (hence the corresponding minimal symplectic fillings) are obviously diffeomorphic, even though they introduce two different irreducible components of $\Def(X)$. For example, $T_{6(5-2)+1}$ singularity has the following two $P$-resolutions which are symmetrical to each other.

\begin{equation*}
\begin{tikzpicture}
\node[bullet] (00) at (0,0) [label=below:{$-2$}] {};
\node[rectangle] (10) at (1,0) [label=below:{$-2$}] {};

\node[rectangle] (20) at (2,0) [label=below:{$-5$}] {};

\node[bullet] (30) at (3,0) [label=below:{$-2$}] {};
\node[rectangle] (40) at (4,0) [label=below:{$-2$}] {};

\node[bullet] (21) at (2,1) [label=left:{$-2$}] {};

\draw [-] (00)--(10);
\draw [-] (10)--(20);
\draw [-] (20)--(30);
\draw [-] (30)--(40);
\draw [-] (20)--(21);
\end{tikzpicture}
\qquad \qquad
\begin{tikzpicture}
\node[rectangle] (00) at (0,0) [label=below:{$-2$}] {};
\node[bullet] (10) at (1,0) [label=below:{$-2$}] {};

\node[rectangle] (20) at (2,0) [label=below:{$-5$}] {};

\node[rectangle] (30) at (3,0) [label=below:{$-2$}] {};
\node[bullet] (40) at (4,0) [label=below:{$-2$}] {};

\node[bullet] (21) at (2,1) [label=left:{$-2$}] {};

\draw [-] (00)--(10);
\draw [-] (10)--(20);
\draw [-] (20)--(30);
\draw [-] (30)--(40);
\draw [-] (20)--(21);
\end{tikzpicture}
\end{equation*}

\begin{proposition}[{The reduced Bhupal-Ono's list (HJS~\cite[Proposition~1.4]{HJS-2018})}]
\label{proposition:reduced-BO-list}
There are exactly 6 pairs of entries (of the data of $(-1)$-curves) in the list of Bhupal--Ono~\cite[\S5]{Bhupal-Ono-2012} such that each pairs are associated to the same minimal symplectic fillings. The pairs of duplicate entries are as follows:

    \begin{itemize}
    \item \#7 and \#8
    \item \#10 and \#11
    \item \#129 and \#209
    \item \#132 and \#211
    \item \#212 and \#213
    \item \#135 and \#214
    \end{itemize}
\end{proposition}

\begin{proof}
At first, we find the duplicate entries that come from $P$-resolutions symmetrical to each other. We then show that there are no more duplicate entries.

HJS~\cite{HJS-2018} finds all $P$-resolutions of tetrahedral, octahedral, and icosahedral singularities together with their dual graphs, which reproduces Jan Steven's list \cite{Stevens-1993} of the numbers of $P$-resolutions of each singularities. Furthermore HJS~\cite{HJS-2018} builds up the list of the data of $(-1)$-curves intersecting the compactifying divisors by running the MMP machinery developed in Section~\ref{section:identifying-Milnor-fibers}. Then HJS~\cite{HJS-2018} compares the data of $(-1)$-curves from $P$-resolutions with the data of $(-1)$-curves from minimal symplectic fillings in Bhupal--Ono~\cite[\S5]{Bhupal-Ono-2012}.

According to the list of HJS~\cite{HJS-2018}, there are only 4 tetrahedral singularities which have pairs of two $P$-resolutions symmetrical to each other: $T_{6(5-2)+1}$, $T_{6(6-2)+1}$, $T_{6(3-2)+5}$, $T_{6(3-2)+5}$, $T_{6(4-2)+5}$ singularities. Below are pairs of $P$-resolutions symmetrical to each other and the data of $(-1)$-curves intersecting the compactifying divisors for each singularities, where the label $T_{6(b-2)+1}[n]$ denotes the $n$-th $P$-resolution of $T_{6(b-2)+1}$ singularity in the list of HJS~\cite{HJS-2018}, the label $BO \#m$ denotes the $m$-th symplectic filling in the list of Bhupal--Ono~\cite[\S5]{Bhupal-Ono-2012}, and the red curves in the compactifying divisors are $(-1)$-curves.

For $T_{6(5-2)+1}$ singularity,

\begin{itemize}
\item $T_{6(5-2)+1}[3] = BO \#7$

\begin{tikzpicture}
\node[bullet] (00) at (0,0) [label=below:{$-2$}] {};
\node[rectangle] (10) at (1,0) [label=below:{$-2$}] {};

\node[rectangle] (20) at (2,0) [label=below:{$-5$}] {};

\node[bullet] (30) at (3,0) [label=below:{$-2$}] {};
\node[rectangle] (40) at (4,0) [label=below:{$-2$}] {};

\node[bullet] (21) at (2,1) [label=left:{$-2$}] {};

\draw [-] (00)--(10);
\draw [-] (10)--(20);
\draw [-] (20)--(30);
\draw [-] (30)--(40);
\draw [-] (20)--(21);
\end{tikzpicture}
\qquad
\includegraphics[scale=0.8]{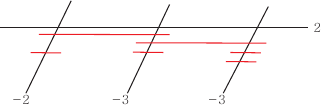}

\item $T_{6(5-2)+1}[4] = BO \#8$

\begin{tikzpicture}
\node[rectangle] (00) at (0,0) [label=below:{$-2$}] {};
\node[bullet] (10) at (1,0) [label=below:{$-2$}] {};

\node[rectangle] (20) at (2,0) [label=below:{$-5$}] {};

\node[rectangle] (30) at (3,0) [label=below:{$-2$}] {};
\node[bullet] (40) at (4,0) [label=below:{$-2$}] {};

\node[bullet] (21) at (2,1) [label=left:{$-2$}] {};

\draw [-] (00)--(10);
\draw [-] (10)--(20);
\draw [-] (20)--(30);
\draw [-] (30)--(40);
\draw [-] (20)--(21);
\end{tikzpicture}
\qquad
\includegraphics[scale=0.8]{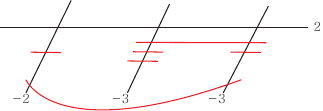}
\end{itemize}

For $T_{6(6-2)+1}$ singularity,

\begin{itemize}
\item $T_{6(6-2)+1}[2] = BO \#10$

\begin{tikzpicture}
\node[rectangle] (00) at (0,0) [label=below:{$-2$}] {};
\node[rectangle] (10) at (1,0) [label=below:{$-2$}] {};

\node[rectangle] (20) at (2,0) [label=below:{$-6$}] {};

\node[bullet] (30) at (3,0) [label=below:{$-2$}] {};
\node[rectangle] (40) at (4,0) [label=below:{$-2$}] {};

\node[bullet] (21) at (2,1) [label=left:{$-2$}] {};

\draw [-] (00)--(10);
\draw [-] (10)--(20);
\draw [-] (20)--(30);
\draw [-] (30)--(40);
\draw [-] (20)--(21);
\end{tikzpicture}
\qquad
\includegraphics[scale=0.8]{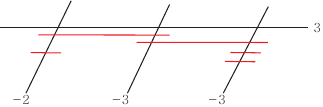}

\item $T_{6(6-2)+1}[3] = BO \#11$

\begin{tikzpicture}
\node[rectangle] (00) at (0,0) [label=below:{$-2$}] {};
\node[bullet] (10) at (1,0) [label=below:{$-2$}] {};

\node[rectangle] (20) at (2,0) [label=below:{$-6$}] {};

\node[rectangle] (30) at (3,0) [label=below:{$-2$}] {};
\node[rectangle] (40) at (4,0) [label=below:{$-2$}] {};

\node[bullet] (21) at (2,1) [label=left:{$-2$}] {};

\draw [-] (00)--(10);
\draw [-] (10)--(20);
\draw [-] (20)--(30);
\draw [-] (30)--(40);
\draw [-] (20)--(21);
\end{tikzpicture}
\qquad
\includegraphics[scale=0.8]{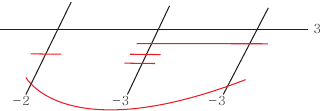}
\end{itemize}

For $T_{6(3-2)+5}$ singularity,

\begin{itemize}
\item $T_{6(3-2)+5}[2] = BO \#209$

\begin{tikzpicture}
\node[rectangle] (10) at (1,0) [label=below:{$-3$}] {};
\node[rectangle] (20) at (2,0) [label=below:{$-3$}] {};
\node[bullet] (30) at (3,0) [label=below:{$-3$}] {};

\node[bullet] (21) at (2,1) [label=left:{$-2$}] {};

\draw [-] (10)--(20);
\draw [-] (20)--(30);
\draw [-] (20)--(21);
\end{tikzpicture}
\qquad
\includegraphics[scale=0.8]{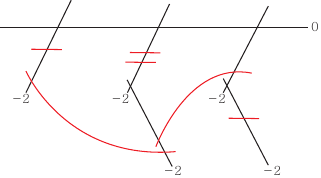}

\item $T_{6(3-2)+5}[3] = BO \#129$

\begin{tikzpicture}
\node[bullet] (10) at (1,0) [label=below:{$-3$}] {};
\node[rectangle] (20) at (2,0) [label=below:{$-3$}] {};
\node[rectangle] (30) at (3,0) [label=below:{$-3$}] {};

\node[bullet] (21) at (2,1) [label=left:{$-2$}] {};

\draw [-] (10)--(20);
\draw [-] (20)--(30);
\draw [-] (20)--(21);
\end{tikzpicture}
\qquad
\includegraphics[scale=0.8]{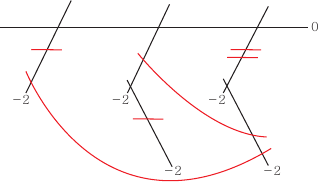}
\end{itemize}

For $T_{6(4-2)+5}$ singularity,

\begin{itemize}
\item $T_{6(4-2)+5}[3] = BO \#211$

\begin{tikzpicture}
\node[rectangle] (00) at (0,0) [label=below:{$-4$}] {};
\node[bullet] (10) at (1,0) [label=below:{$-1$}] {};
\node[rectangle] (20) at (2,0) [label=below:{$-5$}] {};
\node[bullet] (30) at (3,0) [label=below:{$-3$}] {};

\node[rectangle] (21) at (2,1) [label=left:{$-2$}] {};

\draw [-] (00)--(10);
\draw [-] (10)--(20);
\draw [-] (20)--(30);
\draw [-] (20)--(21);
\end{tikzpicture}
\qquad
\includegraphics[scale=0.8]{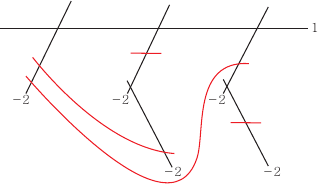}

\item $T_{6(4-2)+5}[5] = BO \#132$

\begin{tikzpicture}
\node[bullet] (10) at (1,0) [label=below:{$-3$}] {};
\node[rectangle] (20) at (2,0) [label=below:{$-5$}] {};
\node[bullet] (30) at (3,0) [label=below:{$-1$}] {};
\node[rectangle] (40) at (4,0) [label=below:{$-4$}] {};

\node[rectangle] (21) at (2,1) [label=left:{$-2$}] {};

\draw [-] (10)--(20);
\draw [-] (20)--(30);
\draw [-] (30)--(40);
\draw [-] (20)--(21);

\end{tikzpicture}
\qquad
\includegraphics[scale=0.8]{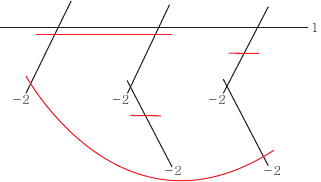}
\end{itemize}
and
\begin{itemize}
\item $T_{6(4-2)+5}[4] = BO \#213$

\begin{tikzpicture}
\node[rectangle] (00) at (0,0) [label=below:{$-4$}] {};
\node[bullet] (10) at (1,0) [label=below:{$-1$}] {};
\node[rectangle] (20) at (2,0) [label=below:{$-5$}] {};
\node[rectangle] (30) at (3,0) [label=below:{$-3$}] {};

\node[rectangle] (21) at (2,1) [label=left:{$-2$}] {};

\draw [-] (00)--(10);
\draw [-] (10)--(20);
\draw [-] (20)--(30);
\draw [-] (20)--(21);
\end{tikzpicture}
\qquad
\includegraphics[scale=0.8]{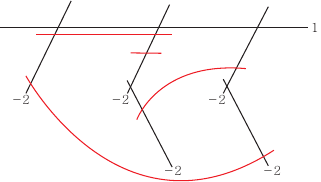}

\item $T_{6(4-2)+5}[6] = BO \#212$

\begin{tikzpicture}
\node[rectangle] (10) at (1,0) [label=below:{$-3$}] {};
\node[rectangle] (20) at (2,0) [label=below:{$-5$}] {};
\node[bullet] (30) at (3,0) [label=below:{$-1$}] {};
\node[rectangle] (40) at (4,0) [label=below:{$-4$}] {};

\node[rectangle] (21) at (2,1) [label=left:{$-2$}] {};

\draw [-] (10)--(20);
\draw [-] (20)--(30);
\draw [-] (30)--(40);
\draw [-] (20)--(21);

\end{tikzpicture}
\qquad
\includegraphics[scale=0.8]{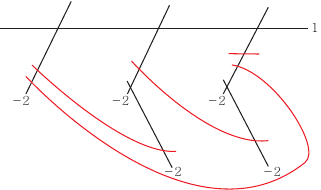}
\end{itemize}

For $T_{6(5-2)+5}$ singularity,

\begin{itemize}
\item $T_{6(5-2)+5}[3] = BO \#214$

\begin{tikzpicture}
\node[rectangle] (10) at (1,0) [label=below:{$-3$}] {};
\node[rectangle] (20) at (2,0) [label=below:{$-5$}] {};
\node[bullet] (30) at (3,0) [label=below:{$-3$}] {};

\node[rectangle] (21) at (2,1) [label=left:{$-2$}] {};

\draw [-] (10)--(20);
\draw [-] (20)--(30);
\draw [-] (20)--(21);
\end{tikzpicture}
\qquad
\includegraphics[scale=0.8]{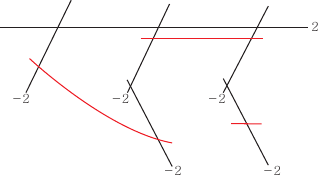}

\item $T_{6(5-2)+5}[4] = BO \#135$

\begin{tikzpicture}
\node[bullet] (10) at (1,0) [label=below:{$-3$}] {};
\node[rectangle] (20) at (2,0) [label=below:{$-5$}] {};
\node[rectangle] (30) at (3,0) [label=below:{$-3$}] {};

\node[rectangle] (21) at (2,1) [label=left:{$-2$}] {};

\draw [-] (10)--(20);
\draw [-] (20)--(30);
\draw [-] (20)--(21);
\end{tikzpicture}
\qquad
\includegraphics[scale=0.8]{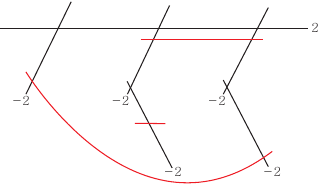}
\end{itemize}

We prove in Theorem~\ref{theorem:diffeomorphism-type} that minimal symplectic fillings in the reduced Bhupal-Ono's list are non-diffeomorphic to each other. Therefore the redundancy in the list of Bhupal-Ono comes only from $P$-resolutions symmetrical to each other.
\end{proof}

\subsection{Milnor fillable contact structure on the complement}

In Proposition~\ref{proposition:symplectic-filling-is-rational-I} and Proposition~\ref{proposition:symplectic-filling-is-rational-II} above, we see that a minimal symplectic filling $W$ of a quotient surface singularity $(X,0)$ is given as a complement $Z-\nu(E_{\infty})$ where $E_{\infty} \subset Z$ is the compactifying divisor of $(X,0)$ embedded in a rational symplectic 4-manifold $Z$. Conversely, Lisca~\cite{Lisca-2008} and Bhupal--Ono~\cite{Bhupal-Ono-2012} proves that if the compactifying divisor $E_{\infty}$ is embedded in a rational symplectic 4-manifold $Z$, then the boundary of the complement $Z-\nu(E_{\infty})$ admits a Milnor fillable contact structure; hence, the complement $Z-\nu(E_{\infty})$ is indeed a symplectic filling of $(X,0)$. But Lisca~\cite{Lisca-2008} and Bhupal--Ono~\cite{Bhupal-Ono-2012} used different methods. Here we present a uniform way to show the existence of Milnor fillable contact structure on $\partial(Z-\nu(E_{\infty}))$.

\begin{proposition}
Assume that the compactifying divisor $E_{\infty}$ of a quotient surface singularity $(X,0)$ is symplectically embedded in a rational symplectic 4-manifold $Z$. Then the boundary of the complement $Z-\nu(E_{\infty})$ admits a Milnor fillable contact structure induced from the symplectic structure of $Z$. Hence $Z-\nu(E_{\infty})$ is a symplectic filling of $(X,0)$.
\end{proposition}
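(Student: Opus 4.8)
The plan is to exhibit $\nu(E_{\infty})$ as a \emph{concave} symplectic neighbourhood of $E_{\infty}$ inside $Z$, to identify its concave boundary with the Milnor fillable contact $3$-manifold $(L,\xi_{\textrm{st}})$, and then to read off that the complement $Z-\nu(E_{\infty})$ is a strong symplectic filling of that contact structure.

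First I would assemble the purely combinatorial input. By Definition~\ref{definition:natural-compactification-cyclic} (cyclic case), and by Figure~\ref{figure:non-cyclic-compactifying-divisor} together with Section~\ref{section:Compactifying-divisor} (non-cyclic case), the dual graph of $E_{\infty}$ is dual to the negative-definite resolution graph $E_0$ of $(X,0)$, and in the natural compactification $\widehat{X}$ one has $\widehat{X}\setminus E_{\infty}\cong X$; hence a small neighbourhood of $E_{\infty}$ in $\widehat{X}$ has complement a neighbourhood of $0\in X$, whose boundary is $L$ with its Milnor fillable contact structure $\xi_{\textrm{st}}$ defined by complex tangencies. A direct computation from these graphs shows that the intersection matrix $I$ of $E_{\infty}$ admits a vector $a$ with all entries positive such that $Ia$ is entrywise positive; equivalently, $E_{\infty}$ supports an ample $\mathbb{Q}$-divisor in $\widehat{X}$. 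This is exactly the positivity condition under which a configuration of $\omega$-symplectic surfaces meeting $\omega$-orthogonally admits a concave neighbourhood (it is the analytic content behind the constructions of Lisca~\cite{Lisca-2008} and Bhupal--Ono~\cite{Bhupal-Ono-2012}).

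Next, after a $C^{0}$-small isotopy supported near $E_{\infty}$ making its components meet $\omega$-orthogonally, I would invoke the symplectic neighbourhood theorem for such positive configurations: $\nu(E_{\infty})\subset Z$ may be taken with contact-type boundary on the concave side, so that $(Z-\nu(E_{\infty}),\omega)$ has convex boundary and is therefore a strong symplectic filling of the induced contact structure $\xi'$ on $\partial\bigl(Z-\nu(E_{\infty})\bigr)$. Moreover $\xi'$ depends only on the combinatorial type of $E_{\infty}$ and on the symplectic areas of its components, and a Moser-type argument shows that deforming those areas inside the admissible cone does not change $\xi'$ up to contactomorphism; in particular $\xi'$ is the same whether we use the given symplectic $Z$ or the complex model $\widehat{X}$ with a Kähler form for which $E_{\infty}$ is symplectic (such a form exists since $E_{\infty}$ supports an ample divisor). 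In the complex model, by the first step, the concave boundary of $\nu(E_{\infty})$ is $(L,\xi_{\textrm{st}})$. Hence $\xi'\cong\xi_{\textrm{st}}$ and $Z-\nu(E_{\infty})$ is a symplectic filling of $(X,0)$.

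The main obstacle is the uniqueness assertion of the third paragraph: one must check that the concave neighbourhood of $E_{\infty}$, and in particular the contact structure on its boundary, is genuinely determined by $E_{\infty}$ alone rather than by the way it is embedded in $Z$, and that the symplectic areas of the components of $E_{\infty}$ --- which do vary with $Z$ --- can be deformed within the allowed region without affecting the boundary up to contactomorphism. Granting this, the identification with the complex model, hence with $\xi_{\textrm{st}}$, together with the routine orientation bookkeeping showing $\partial\bigl(Z-\nu(E_{\infty})\bigr)=L$ with the link orientation, finishes the argument.
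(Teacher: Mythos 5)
There is a genuine gap, and it sits exactly where you flag it yourself: the assertion that the contact structure on the concave boundary of $\nu(E_{\infty})$ depends only on the dual graph of $E_{\infty}$ (plus areas), so that the boundary computed in $Z$ can be traded for the boundary computed in the complex model. This is the entire content of the proposition --- that the complement is a strong filling of \emph{some} contact structure is comparatively soft --- and your proposed justification, a ``Moser-type argument'' deforming the symplectic areas inside an admissible cone, does not establish it. A concave neighbourhood of a configuration containing curves of nonnegative self-intersection is not canonical: it depends on the choice of inward Liouville vector field and on the germ of $\omega$ near $E_{\infty}$, and there is no a priori reason that two such neighbourhoods, one inside the given symplectic $Z$ and one inside $\widehat{X}$ (or $\widetilde{X}$), are joined by a smooth family of concave structures to which Gray stability could be applied; producing such a family is not a routine Moser argument, and nothing in your sketch constructs it. (Your first-paragraph claim that positivity of an area vector is ``the analytic content behind'' Lisca's and Bhupal--Ono's constructions also overstates what those papers provide: they give existence of the concave structure, not uniqueness of the induced contact structure.)

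The paper closes this gap by an entirely different mechanism, which you may want to compare. It takes the two contact structures in question --- $\alpha_1$ on $\partial(Z-\nu(E_{\infty}))$ coming from the inward Liouville field of Bhupal--Ono~\cite{Bhupal-Ono-2012}, and the Milnor fillable $\alpha_2$ on $\partial(\widetilde{X}-\nu(E_{\infty}))$ coming from Gay--Stipsicz~\cite{Gay-Stipzicz-2009} and H.~Park--Stipsicz~\cite{HPark-Stipsicz-2014} --- and compares not the contact structures directly but their induced $\SpinC$-structures: each extends over $\nu(E_{\infty})$ satisfying the adjunction equality on every component $E_i$, so the extensions have equal $c_1$, and since $\nu(E_{\infty})$ is simply connected they are equal, hence their restrictions to the common boundary agree. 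The identification of the contact structures is then delegated to a classification result: the boundary is an $L$-space (N\'emethi~\cite{Nemethi-2005}, Gorsky--N\'emethi~\cite{Gorsky-Nemethi-2014}), and Gay--Stipsicz~\cite[Theorem~3.1]{Gay-Stipsicz-2007} shows two such contact structures inducing the same $\SpinC$-structure are contactomorphic. So the uniqueness you need is genuinely input from Heegaard Floer--type classification on the specific $3$-manifold, not from a deformation of symplectic forms; without supplying either that input or a worked-out family argument, your proof does not go through.
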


\begin{proof}
Let $E_{\infty} = \sum E_i$, where $E_i$ is a symplectically embedded curve embedded in $Z$. By Bhupal--Ono~\cite[p 34]{Bhupal-Ono-2012}, there is an inward Liouville vector field on $\partial(\nu(E_{\infty}))$. So the inward Liouville vector field induces a contact structure $\alpha_1$ on $\partial(Z-\nu(E_{\infty}))$. The contact structure $\alpha_1$ induces the $\SpinC$-structure which extends to $\nu(E_{\infty})$ as a $\SpinC$-structure $s_1$ with the properties that $c_1(s_1)$ satisfies the adjunction equality for all $E_i$.

On the other hand, let $\widetilde{X}$ be the blown-up of the Hirzebruch surface containing the configuration described in Equation~\eqref{equation:blowing-up-F_1} or in Remark~\ref{remark:model-for-widetilde(X)} defined in Section~\ref{section:Compactifying-divisor}. According to Gay--Stipsicz~\cite{Gay-Stipzicz-2009}, $\partial(\nu(E_{\infty})) \subset \widetilde{X}$ also has an inward Liouville vector field, which induces Milnor fillable contact $\alpha_2$ on $\partial(\nu(E_0)) = \partial(\widetilde{X} - \nu(E_{\infty}))$ by H.~Park--Stipsicz~\cite{HPark-Stipsicz-2014}. The contact structure $\alpha_2$ also induces a $\SpinC$-structure which extends to $\nu(E_{\infty})$ as a $\SpinC$-structure $s_2$ with the properties that $c_1(s_2)$ satisfies the adjunction equality for all $E_i$.

Then $c_1(s_1)=c_1(s_2)$ on $\nu(E_{\infty})$. Furthermore, since $\nu(E_{\infty})$ is simply-connected, we have $s_1=s_2$ on $\nu(E_{\infty})$. Therefore the two restrictions of the $\SpinC$-structure $s_1$ to $\partial(Z-\nu(E_{\infty}))$ and the $\SpinC$-structure $s_2$ to $\partial(\widetilde{X}-\nu(E_{\infty}))$ are the same. Note that $\partial(\nu(E_0))=\partial(Z-\nu(E_{\infty}))=\partial(\widetilde{X}-\nu(E_{\infty}))$ and, furthermore, $\partial(\nu(E_0))$ is an $L$-space by N{\'e}methi~\cite[Theorem~6.3, 8.3]{Nemethi-2005} and Gorsky--N{\'e}methi~\cite[Theorem~4]{Gorsky-Nemethi-2014}. Then two contact structures $\alpha_1$ and $\alpha_2$ are contactomorphic by Gay--Stipsicz~\cite[Theorem~3.1]{Gay-Stipsicz-2007}. Therefore $\alpha_1$ is also Milnor fillable contact structure on $Z-\nu(E_{\infty})$.
\end{proof}

\section{Classification of minimal symplectic fillings}
\label{section:Diffeomorphism-type}

We classify minimal symplectic fillings of \emph{tetrahedral/octahedral/icosahedral} singularities up to diffeomorphism in Theorem~\ref{theorem:diffeomorphism-type} based on the reduced list of Bhupal--Ono~\cite{Bhupal-Ono-2012} (Proposition~\ref{proposition:reduced-BO-list}).

\subsection{Small Seifert fibred $3$-manifolds}

We briefly recall the definition and some properties of small Seifert fibred $3$-manifold to fix conventions and notations. For details we refer to Orlik--Wagreich~\cite{Orlik-Wagreich-1971}, Orlik~\cite{Orlik-1972}, Jankins--Neumann~\cite{Jankins-Neumann-1983}, for example.

A \emph{small Seifert fibred $3$-manifold} $M=M(-b; (\alpha_1, \beta_1), (\alpha_2, \beta_2), (\alpha_3, \beta_3))$ is a $3$-manifold whose surgery diagram is given in Figure~\ref{figure:small-Seifert-3-manifold}, where $-b \in \mathbb{Z}$, $0 < \beta_i < \alpha_i$ and $(\alpha_i, \beta_i)=1$ for all $i=1,2,3$. The data
\begin{equation*}
(-b; (\alpha_1, \beta_1), (\alpha_2, \beta_2), (\alpha_3, \beta_3))
\end{equation*}
is called the \emph{normalized Seifert invariants} of $M$.

\begin{figure}
\centering
\includegraphics{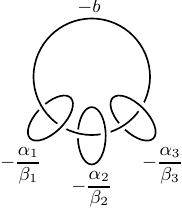}
\caption{Small Seifert $3$-manifolds}
\label{figure:small-Seifert-3-manifold}
\end{figure}

A small Seifert fibred $3$-manifold $M=M(-b; (\alpha_1, \beta_1), (\alpha_2, \beta_2), (\alpha_3, \beta_3))$ can also be described as follows. Let $S$ be an $S^1$-bundle over $S^2$ with Euler number $-b$, that is, $S$ is the lens space $L(b,1)$. Take three disjoint small $2$-disks $D_1$, $D_2$, $D_3$ on the base $S^2$ such that the $S^1$-bundle $S$ is locally trivial over $D_i$, i.e., $S \cong D_i \times S^1$ over $D_i$  for all $i=1, 2, 3$. Then the small Seifert fibred $3$-manifold $M$ is constructed by removing the solid tori $D_i \times S^1$ ($i=1, 2, 3$) in $S$ and gluing back solid tori $T_i=D^2 \times S^1$ ($i=1,2,3$) to each $\partial(D_i \times S^1)$ ($i=1,2,3$), respectively, via the map
\begin{equation*}
\mu_i \mapsto \alpha_i Q_i - \beta_i H_i,
\end{equation*}
where $\mu_i$ is a meridian of the solid torus $D^2 \times S^1$, $Q_i$ is a meridian of the solid torus $D_i \times S^1$, and $H_i$ is a fiber of the $S^1$-bundle $S$ lying on $\partial(D_i \times S^1)$. That is,
\begin{equation*}
M = \left(S - \cup_{i=1}^{3} (D_i \times S^1) \right) \cup_{\mu_1} T_1 \cup_{\mu_2} T_2 \cup_{\mu_3} T_3.
\end{equation*}
Here we call $S - \cup_{i=1}^{3} (D_i \times S^1)$ the \emph{body} of $M$. On the other hand, the complement $M - \left( S - \cup_{i=1}^{3} (D_i \times S^1) \right)$ of the body in $M$ consists of open subsets of three lens spaces: Explicitly,
\begin{equation*}
M - \left( S - \cup_{i=1}^{3} (D_i \times S^1) \right)
= \cup_{i=1}^{3} \left(L(\alpha_i, \beta_i) - (D^2 \times S^1)\right).
\end{equation*}
The three open subsets
\begin{equation*}
L(\alpha_i, \beta_i) - D^2 \times S^1
\end{equation*}
of $M$ are called the \emph{arms} of $M$.

The center $L_i = \{0\} \times S^1$ of $D^2 \times S^1$ glued to $S - (D_i \times S^1)$ becomes a \emph{multiple fiber} with multiplicity $\alpha_i$ of $M$, that is, $\alpha_i L_i = H$, where $H$ is a general fiber of $M$, and
\begin{equation*}
Q_i = \beta_i L_i
\end{equation*}
in $H_1(M; \mathbb{Z})$. We call this multiple fiber $L_i$ the \emph{exceptional fiber} of type $(\alpha_i, \beta_i)$.

We summarize some well-known properties of a small Seifert fibred $3$-manifold.

\begin{lemma}\label{lemma:properties-Seifert}
Let $M=M(-b; (\alpha_1, \beta_1), (\alpha_2, \beta_2), (\alpha_3, \beta_3))$ be a small Seifert fibred $3$-manifold.

\begin{enumerate}
\item Let $H$ be a general fiber of $M$. We may assume that $H$ is contained in the body of $M$, that is,
\begin{equation*}
H \subset S - \cup_{i=1}^{3} (D_i \times S^1).
\end{equation*}
So $H$ may be considered as a $S^1$-fiber over the central lens space $S=L(b,1)$. Then it is a generator of $H_1(S; \mathbb{Z})$.

\item The circle $Q_i=\beta_i L_i$ is contained in the corresponding arm $L(\alpha_i,\beta_i) - (D^2 \times S^1)$. Then $Q_i$ is a generator of $H_1(L(\alpha_i,\beta_i); \mathbb{Z})$.

\item The first homology $H_1(M; \mathbb{Z})$ of $M$ is generated by $Q_1, Q_2, Q_3, H$ with the relations
    \begin{align*}
    &bH+Q_1+Q_2+Q_3=0, \\
    &\text{$\beta_j H - \alpha_j Q_j = 0$ for all $i=1,2,3$}.
    \end{align*}
\end{enumerate}
\end{lemma}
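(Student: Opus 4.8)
The three items are standard facts about small Seifert fibred $3$-manifolds, and the plan is to extract all of them from the explicit gluing description of $M$ given just above the statement, namely
\begin{equation*}
M = \left(S - \cup_{i=1}^{3} (D_i \times S^1)\right) \cup_{\mu_1} T_1 \cup_{\mu_2} T_2 \cup_{\mu_3} T_3,
\end{equation*}
together with a Mayer--Vietoris computation. For (1), I would recall that the body $S - \cup_i (D_i \times S^1)$ is obtained from the $S^1$-bundle $S = L(b,1) \to S^2$ by deleting three fibered solid tori lying over disjoint disks; hence a general fiber $H$ over a point of $S^2 - \cup_i D_i$ lies in the body, and the inclusion $H \hookrightarrow S$ is exactly a fiber of the $S^1$-bundle $L(b,1)\to S^2$. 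Since $H_1(L(b,1);\mathbb{Z}) \cong \mathbb{Z}/b$ is generated by the fiber class of the circle bundle (the fiber of $S^2\times S^1$ desingularized by the $-b$-framing), $H$ generates $H_1(S;\mathbb{Z})$. For (2), the arm is $L(\alpha_i,\beta_i) - (D^2\times S^1)$, and $Q_i = \beta_i L_i$ where $L_i$ is the core of the reglued solid torus; in the lens space $L(\alpha_i,\beta_i)$ the core circle generates $H_1 \cong \mathbb{Z}/\alpha_i$, and since $(\alpha_i,\beta_i)=1$ the class $\beta_i L_i = Q_i$ is also a generator. The relevant point is that deleting an open solid tubular neighborhood of $L_i$ does not change $H_1$ up to this generator, so $Q_i$ survives as a generator of $H_1(L(\alpha_i,\beta_i);\mathbb{Z})$.

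The heart of the statement is (3). Here I would set up Mayer--Vietoris for the decomposition of $M$ into the body $B := S - \cup_i(D_i\times S^1)$ and the three solid tori $T_i = D^2\times S^1$, with intersections the three boundary tori $T^2_i = \partial(D_i\times S^1)$. The body $B$ deformation retracts onto $S$ with three fibers removed; its first homology is generated by the general fiber class $H$ and by the meridian classes $Q_i$ of the removed solid tori, subject to the single relation coming from $H_2$ of the closed bundle, namely $bH + Q_1 + Q_2 + Q_3 = 0$ — this is precisely the statement that the section class of $L(b,1)\to S^2$ has self-intersection $-b$ and, after removing three fibers, the boundary meridians $Q_i$ together with $b$ copies of $H$ bound. (Concretely: a section of the circle bundle over $S^2 - \cup_i D_i$ has boundary $\sum_i \mu'_i$ on the three boundary tori and, capping off in $S$, the obstruction is $bH$; this gives the relation.) Each solid torus $T_i$ contributes $H_1(T_i)\cong\mathbb{Z}$ generated by $L_i$, and the gluing map sends the meridian $\mu_i$ of $T_i$ to $\alpha_i Q_i - \beta_i H_i$ on $T^2_i$; since $H_i$ is a fiber of $S$ lying on $\partial(D_i\times S^1)$, it is homologous to $H$ in $M$. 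Running Mayer--Vietoris, the class $Q_i$ (the meridian of the deleted solid torus) becomes identified with $\beta_i L_i$ after regluing (because the longitude–meridian pair gets twisted by the matrix with rows $(\alpha_i,-\beta_i)$), and the relation $\mu_i = \alpha_i Q_i - \beta_i H_i \mapsto 0$ in $H_1(M)$ gives $\alpha_i Q_i - \beta_i H = 0$, i.e. $\beta_j H - \alpha_j Q_j = 0$. Finally the body relation $bH + Q_1 + Q_2 + Q_3 = 0$ descends unchanged, since none of $H$ or the $Q_i$ die in $M$. Collecting these, $H_1(M;\mathbb{Z})$ is generated by $Q_1,Q_2,Q_3,H$ with exactly the asserted relations, and one checks the presentation matrix has the expected determinant $\pm(\alpha_1\alpha_2\alpha_3)(-b + \sum \beta_i/\alpha_i)$ up to sign, which matches the known order of $H_1$ of a small Seifert manifold.

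The main obstacle is bookkeeping of framings and orientations: one must be careful that $H_i$ (the fiber of $S$ sitting on the $i$-th boundary torus) is homologous to the general fiber $H$ with the correct sign, that the meridian $Q_i$ of the removed neighborhood is correctly identified with $\beta_i L_i$ (and not, say, $-\beta_i L_i$ or $\beta_i' L_i$ for the inverse residue), and that the body relation $bH + \sum Q_i = 0$ carries the sign conventions matching the normalized Seifert invariant $-b$. None of this is deep, but it is exactly the point where a careless computation would produce the wrong relations; I would fix all conventions at the outset using the surgery diagram in Figure~\ref{figure:small-Seifert-3-manifold} and check consistency by recomputing $\lvert H_1(M;\mathbb{Z})\rvert$. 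Since this lemma is explicitly labelled "well-known", I would keep the write-up short, citing Orlik--Wagreich~\cite{Orlik-Wagreich-1971} and Orlik~\cite{Orlik-1972} for the details of the Mayer--Vietoris computation and only spelling out the identifications of generators needed for the later arguments.
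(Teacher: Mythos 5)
The paper does not actually prove this lemma: it is recorded as a summary of well-known facts, with the references Orlik--Wagreich~\cite{Orlik-Wagreich-1971}, Orlik~\cite{Orlik-1972} and Jankins--Neumann~\cite{Jankins-Neumann-1983} given at the start of the subsection, so there is no proof in the paper to compare against. Your Mayer--Vietoris argument from the gluing description $M=\bigl(S-\cup_i(D_i\times S^1)\bigr)\cup_{\mu_i}T_i$ is exactly the standard derivation those references supply: (1) and (2) are the usual statements that the circle fiber generates $H_1(L(b,1))$ and that the core of the reglued solid torus generates $H_1(L(\alpha_i,\beta_i))$ (with $(\alpha_i,\beta_i)=1$ giving the same for $\beta_iL_i$), and (3) comes from killing the meridians $\mu_i\mapsto \alpha_iQ_i-\beta_iH_i$ together with the Euler-number relation in the body. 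So the approach is correct and appropriate.

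One concrete caution on the sign bookkeeping you rightly flag as the delicate point: the two relations as written in the lemma, $bH+Q_1+Q_2+Q_3=0$ and $\alpha_jQ_j=\beta_jH$, have presentation matrix of determinant $\pm\alpha_1\alpha_2\alpha_3\bigl(b+\sum_i\beta_i/\alpha_i\bigr)$, whereas your proposed consistency check quotes $\pm\alpha_1\alpha_2\alpha_3\bigl(-b+\sum_i\beta_i/\alpha_i\bigr)$, and the plumbing description $\partial V_0=M(-b;(\alpha_i,\beta_i))$ in Lemma~\ref{lemma:V_0-V_infty} gives $|H_1|=\alpha_1\alpha_2\alpha_3\bigl(b-\sum_i\beta_i/\alpha_i\bigr)$. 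These three expressions are not equal up to sign, so when you fix orientations of $H$, $Q_i$ and the meridians from Figure~\ref{figure:small-Seifert-3-manifold}, the relative sign between the body relation and the arm relations must be pinned down (it is exactly this relative sign that the order of $H_1$ detects); carrying out the $|H_1|$ check you propose is the right way to resolve it, and it is the only place where your write-up, as sketched, could go astray.
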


The small Seifert fibred $3$-manifold $M$ can be regarded as a boundary of a plumbed $4$-manifold. Let $\Gamma_0$ and $\Gamma_{\infty}$ be the following star-shape graphs
\begin{equation*}
\begin{tikzpicture}
\node[] () at (-2.5,2.5) [] {$\Gamma_0$};

\node[bullet] (-250) at (-2.5,0) [label=below:{$-b_{1r_1}$}] {};
\node[empty] (-20) at (-2,0) [] {};
\node[empty] (-150) at (-1.5,0) [] {};
\node[bullet] (-10) at (-1,0) [label=below:{$-b_{11}$}] {};

\node[bullet] (00) at (0,0) [label=below:{$-b$}] {};

\node[bullet] (01) at (0,1) [label=left:{$-b_{21}$}] {};
\node[empty] (015) at (0, 1.5) [] {};
\node[empty] (02) at (0,2) [] {};
\node[bullet] (025) at (0,2.5) [label=left:{$-b_{2r_2}$}] {};

\node[bullet] (10) at (1,0) [label=below:{$-b_{31}$}] {};
\node[empty] (150) at (1.5,0) [] {};
\node[empty] (20) at (2,0) [] {};
\node[bullet] (250) at (2.5,0) [label=below:{$-b_{3r_3}$}] {};

\draw [-] (00)--(-10);
\draw [-] (-10)--(-150);
\draw [dotted] (-10)--(-250);
\draw [-] (-20)--(-250);

\draw [-] (00)--(01);
\draw [-] (01)--(015);
\draw [dotted] (01)--(025);
\draw [-] (02)--(025);

\draw [-] (00)--(10);
\draw [-] (10)--(150);
\draw [dotted] (10)--(250);
\draw [-] (20)--(250);
\end{tikzpicture}
\quad
\begin{tikzpicture}
\node[] () at (-2.5,2.5) [] {$\Gamma_{\infty}$};

\node[bullet] (-250) at (-2.5,0) [label=below:{$-a_{1e_1}$}] {};
\node[empty] (-20) at (-2,0) [] {};
\node[empty] (-150) at (-1.5,0) [] {};
\node[bullet] (-10) at (-1,0) [label=below:{$-a_{11}$}] {};

\node[bullet] (00) at (0,0) [label=below:{$b-3$}] {};

\node[bullet] (01) at (0,1) [label=left:{$-a_{21}$}] {};
\node[empty] (015) at (0, 1.5) [] {};
\node[empty] (02) at (0,2) [] {};
\node[bullet] (025) at (0,2.5) [label=left:{$-a_{2e_2}$}] {};

\node[bullet] (10) at (1,0) [label=below:{$-a_{31}$}] {};
\node[empty] (150) at (1.5,0) [] {};
\node[empty] (20) at (2,0) [] {};
\node[bullet] (250) at (2.5,0) [label=below:{$-a_{3e_3}$}] {};

\draw [-] (00)--(-10);
\draw [-] (-10)--(-150);
\draw [dotted] (-10)--(-250);
\draw [-] (-20)--(-250);

\draw [-] (00)--(01);
\draw [-] (01)--(015);
\draw [dotted] (01)--(025);
\draw [-] (02)--(025);

\draw [-] (00)--(10);
\draw [-] (10)--(150);
\draw [dotted] (10)--(250);
\draw [-] (20)--(250);
\end{tikzpicture}
\end{equation*}
where
\begin{align*}
\frac{\alpha_i}{\beta_i} &= [b_{i1}, \dotsc, b_{ir_i}], \qquad b_{ij} \ge 2 \\
\frac{\alpha_i}{\alpha_i - \beta_i} &= [a_{i1}, \dotsc, a_{ie_i}], \qquad a_{ij} \ge 2
\end{align*}
for $i=1,2,3$.

\begin{lemma}\label{lemma:V_0-V_infty}
Let $V_0$ and $V_{\infty}$ be the $4$-manifold obtained by plumbing $\Gamma_0$ and $\Gamma_{\infty}$, respectively. Then
\begin{align*}
\partial(V_0) &= M(-b; (\alpha_1, \beta_1), (\alpha_2, \beta_2), (\alpha_3, \beta_3)) =: M_0 ,\\
\partial(V_{\infty}) &= M(b-3; (\alpha_1, \alpha_1-\beta_1), (\alpha_2, \alpha_2-\beta_2), (\alpha_3, \alpha_3-\beta_3)) =: M_{\infty}.
\end{align*}
Furthermore there is a diffeomorphism
\begin{equation*}
M_0 \to - M_{\infty}
\end{equation*}
which sends a general fiber of $M_0$ to that of $M_{\infty}$.
\end{lemma}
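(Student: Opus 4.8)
The plan is to verify both plumbing-boundary identifications by matching Seifert invariants, and then to construct the orientation-reversing diffeomorphism $M_0 \to -M_\infty$ fiberwise. For the first statement, I would recall the standard correspondence between a star-shaped plumbing graph and the Seifert invariants of its boundary: a central vertex with weight $-b$ and arms encoding Hirzebruch--Jung continued fractions $\alpha_i/\beta_i = [b_{i1},\dotsc,b_{ir_i}]$ yields boundary $M(-b;(\alpha_1,\beta_1),(\alpha_2,\beta_2),(\alpha_3,\beta_3))$; this is classical (Orlik--Wagreich~\cite{Orlik-Wagreich-1971}, Neumann). Applying this to $\Gamma_0$ gives $\partial(V_0)=M_0$ directly. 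For $\Gamma_\infty$, the central weight is $b-3$ and the arms carry the \emph{dual} continued fractions $\alpha_i/(\alpha_i-\beta_i)=[a_{i1},\dotsc,a_{ie_i}]$, so the same dictionary gives $\partial(V_\infty)=M(b-3;(\alpha_1,\alpha_1-\beta_1),(\alpha_2,\alpha_2-\beta_2),(\alpha_3,\alpha_3-\beta_3))=M_\infty$, after normalizing each pair $(\alpha_i,\alpha_i-\beta_i)$ to satisfy $0<\alpha_i-\beta_i<\alpha_i$ (automatic since $0<\beta_i<\alpha_i$).

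For the diffeomorphism $M_0\to -M_\infty$, I would use the description of a small Seifert fibered manifold given earlier in the excerpt: $M_0$ is obtained from the $S^1$-bundle $S=L(b,1)$ over $S^2$ by removing three trivialized solid tori and regluing $T_i$ via $\mu_i\mapsto \alpha_i Q_i-\beta_i H$. Orientation reversal turns $S=L(b,1)$ into $-S = L(-b,1)$, i.e.\ an $S^1$-bundle over $S^2$ of Euler number $+b$; but that equals the $S^1$-bundle of Euler number $b-3$ after absorbing three $(-1)$-twists into the three arms, which is exactly the effect of replacing each $(\alpha_i,\beta_i)$ by $(\alpha_i,\alpha_i-\beta_i)$ (since $\beta_i+(\alpha_i-\beta_i)=\alpha_i$, each arm contributes one unit of Euler number when passing to the dual). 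Concretely, orientation reversal on the body sends $H$ to $-H$ and $Q_i$ to $Q_i$ (up to the conventional choices), and on each arm $L(\alpha_i,\beta_i)-(D^2\times S^1)$ it induces the identification $L(\alpha_i,\beta_i)\to -L(\alpha_i,\beta_i)=L(\alpha_i,\alpha_i-\beta_i)$; assembling these pieces along their common torus boundaries and checking that the gluing data matches, one obtains the diffeomorphism $M_0\to -M_\infty$. By construction it carries the generic $S^1$-fiber of $M_0$ (a fiber of $S$) to the generic $S^1$-fiber of $M_\infty$ (a fiber of the corresponding bundle over $S^2$).

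The main obstacle I expect is bookkeeping of orientation conventions: the sign in $\mu_i\mapsto \alpha_iQ_i-\beta_iH$, the identification $-L(p,q)\cong L(p,p-q)$, and the precise way the three $(-1)$-twists redistribute the Euler number from $-b$ to $b-3$ must all be tracked consistently so that the three regluings fit together into a \emph{global} diffeomorphism rather than just matching Seifert invariants abstractly. A clean way to sidestep some of this is to argue via the plumbing calculus directly: the plumbed $4$-manifold $-V_\infty$ (orientation reversed) has a negative-definite intersection form obtained from $V_0$ by a sequence of blow-ups and blow-downs along the arms (this is exactly the passage between dual Hirzebruch--Jung strings, recorded in Subsection~\ref{subsection:HJ-continued-fraction}), so $\partial(V_0)$ and $\partial(-V_\infty)$ are diffeomorphic by a diffeomorphism respecting the Seifert structure; the statement that it sends generic fiber to generic fiber then follows because the plumbing calculus moves never touch the central vertex's fiber class. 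I would present the argument in this second form, as it keeps the orientation accounting inside the well-documented continued-fraction duality and the fiber-preservation becomes transparent.
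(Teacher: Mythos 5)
The paper states this lemma without proof, as part of a list of ``well-known properties'' of small Seifert fibred $3$-manifolds, so there is no in-paper argument to compare against. Your first argument is the standard and correct one: the Orlik--Wagreich/Neumann dictionary identifies $\partial(V_0)$ and $\partial(V_\infty)$ from the star-shaped graphs, and the identity
\begin{equation*}
-M(-b;(\alpha_i,\beta_i)) \;=\; M(b;(\alpha_i,-\beta_i)) \;=\; M(b-3;(\alpha_i,\alpha_i-\beta_i))
\end{equation*}
(each normalization $-\beta_i \mapsto \alpha_i-\beta_i$ shifting the central framing by $-1$, three arms giving the shift from $b$ to $b-3$) together with the classification of Seifert fibred spaces by normalized invariants produces a fibre-preserving diffeomorphism $M_0 \to -M_\infty$. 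That part is complete and I would accept it as the proof.

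However, the ``second form'' that you say you would actually present is flawed, and you should not substitute it for the first. Two concrete problems: (i) $-V_\infty$ is not negative definite --- its central vertex has self-intersection $3-b$ and its arm vertices have self-intersections $+a_{ij}\geq 2$, so the claimed ``negative-definite intersection form obtained from $V_0$'' does not exist; (ii) blow-ups and blow-downs in the plumbing calculus preserve the \emph{oriented} boundary, so no sequence of such moves can carry $V_0$ (with boundary $M_0$) to a plumbing with boundary $-M_\infty$ unless $M_0\cong -M_\infty$ is already known --- the argument is circular. The continued-fraction identity $[b_1,\dotsc,b_r,1,a_e,\dotsc,a_1]=0$ recorded in Subsection~\ref{subsection:HJ-continued-fraction} is a statement about the \emph{concatenated} chain sitting inside the closed surface $\widetilde{X}$, not a boundary-preserving transformation of one string into its dual. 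The orientation reversal is an extra move (negate all weights, then renormalize), and once you invoke it you are back to the Seifert-invariant bookkeeping of your first argument. Present the first argument; the fibre-preservation statement is also cleanest there, since equality of normalized Seifert invariants yields a diffeomorphism respecting the Seifert fibrations.
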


\subsection{Classification up to diffeomorphism}

We classify minimal symplectic fillings of a tetrahedral/octahedral/icosahedral singularity in Theorem~\ref{theorem:diffeomorphism-type}.

But we assume that $(X,0)$ is just a \emph{non-cyclic} quotient surface singularity (that is, $(X,0)$ may be a dihedral singularity) for a while.

Let $E_0$ be the exceptional divisor of its minimal resolution given by the star-shape graph $\Gamma_0$ in Figure~\ref{figure:non-cyclic-minimal-resolution} and let $E_{\infty}$ be the compactifying divisor of $(X,0)$ whose dual graph $\Gamma_{\infty}$ is given in Figure~\ref{figure:non-cyclic-compactifying-divisor}. Let $V_0$ and $V_{\infty}$ be the plumbed $4$-manifolds from $E_0$ (or $\Gamma_0$) and $E_{\infty}$ (or $\Gamma_{\infty}$), respectively. One may identify the natural compactification $Z = W \cup_{L} (\widetilde{X} - \nu(E_0))$ of $W$ defined in Definition~\ref{definition:natural-compactification-of-filling-non-cyclic} as
\begin{equation*}
Z = W \cup_{\partial{W} = - \partial{V_{\infty}}} V_{\infty}.
\end{equation*}
because $\partial{W} = - \partial{V_{\infty}}$.

\begin{lemma}
\label{lemma:V_infty->V_infty}
With the same notations as in Lemma~\ref{lemma:V_0-V_infty}, there is a orien\-tation-preserving diffeomorphism
\begin{equation*}
G_0 \colon V_{\infty} \to V_{\infty}
\end{equation*}
such that its restriction $g_0 \colon M_{\infty} \to M_{\infty}$ to the boundaries induces a homomorphism
\begin{equation*}
{g_0}_{\ast} \colon H_1(M_{\infty}; \mathbb{Z}) \to H_1(M_{\infty}; \mathbb{Z})
\end{equation*}
satisfying ${g_0}_{\ast}([H]) = - [H]$ and ${g_0}_{\ast}([Q_i]) = -[Q_i]$ for all $i$.
\end{lemma}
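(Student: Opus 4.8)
The plan is to construct $G_0$ directly from the $\mathbb{CP}^1$-fibration structure on $\widetilde{X}$ described in Remark~\ref{remark:model-for-widetilde(X)}. Recall that $\widetilde{X}$ is built from the Hirzebruch surface $\mathbb{F}_b$ with its two sections $S_0$ (self-intersection $-b$) and $S_\infty$ (self-intersection $+b$), by blowing up at three fibers $F_1,F_2,F_3$. The plumbed $4$-manifold $V_\infty$ is (diffeomorphic to) a regular neighborhood $\nu(E_\infty)$ of the compactifying divisor inside $\widetilde{X}$, and its boundary is $M_\infty$, a small Seifert fibred $3$-manifold. The key observation is that $\mathbb{F}_b$ carries an obvious fibre-preserving involution coming from the $\mathbb{CP}^1$-bundle structure: over each point of the base $\mathbb{CP}^1$, the $\mathbb{P}^1$-fibre has an involution fixing $S_0$ and $S_\infty$ pointwise (in suitable coordinates, $[u:v]\mapsto[u:-v]$ on each fibre), and this assembles into an automorphism $\tau_0$ of $\mathbb{F}_b$ fixing $S_0$ and $S_\infty$.

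First I would choose the three fibres $F_1,F_2,F_3$ to be invariant under $\tau_0$ (this is possible since $\tau_0$ acts trivially on the base) and choose the blown-up points $p_i\in F_i\cap S_\infty$ together with their infinitely near points so that the entire blow-up construction of $\widetilde{X}$ is $\tau_0$-equivariant; since each $p_i$ lies on the fixed locus $S_\infty$, the centres of the successive blow-ups all lie on (proper transforms of) fixed loci, so $\tau_0$ lifts to an automorphism $\widetilde{\tau}_0$ of $\widetilde{X}$. By construction $\widetilde{\tau}_0$ preserves every component of $E_\infty$ — it fixes the central curve $E_\infty^0$ and each arm pointwise on a section, while acting as a fibrewise involution transverse to those curves — hence it preserves the regular neighborhood $\nu(E_\infty)\cong V_\infty$ and restricts to an orientation-preserving diffeomorphism $G_0\colon V_\infty\to V_\infty$. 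Restricting further to the boundary gives $g_0\colon M_\infty\to M_\infty$, which is fibre-preserving for the Seifert structure: it sends the general fibre $H$ (an $S^1$-fibre of the $\mathbb{CP}^1$-fibration, i.e. a meridian circle in a $\mathbb{P}^1$-fibre) to itself set-wise but reversing orientation, because the fibrewise involution $[u:v]\mapsto[u:-v]$ reverses the orientation of the circle $|u|^2+|v|^2=$ const around the fixed points. This gives ${g_0}_\ast([H])=-[H]$.

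Next I would read off the action on the arm generators $[Q_i]$. Each $Q_i=\beta_iL_i$ lives in the $i$-th arm, which is a neighborhood of the $i$-th arm of $E_\infty$; since $\widetilde{\tau}_0$ preserves each such arm and acts there again as a fibrewise involution reversing the circle direction while fixing the chain of curves, the induced map on $H_1$ of the arm lens space $L(\alpha_i,\alpha_i-\beta_i)$ sends its generator to its negative, giving ${g_0}_\ast([Q_i])=-[Q_i]$ for each $i$. Consistency with the relations of Lemma~\ref{lemma:properties-Seifert}(3) is automatic since $-\mathrm{id}$ on the free generators respects the linear relations $bH+Q_1+Q_2+Q_3=0$ and $\beta_iH-\alpha_iQ_i=0$. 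Finally, $G_0$ is orientation-preserving because $\widetilde{\tau}_0$ is a holomorphic automorphism of the complex surface $\widetilde{X}$, hence orientation-preserving, and restricting a complex automorphism to an invariant real-codimension-zero submanifold preserves orientation.

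The main obstacle I anticipate is the equivariance bookkeeping in the blow-up step: one must verify carefully that the infinitely near points appearing in the construction of $\widetilde{X}$ (which produce the $(-1)$-curves and the full configuration of Figure~\ref{figure:non-cyclic-widetilde(X)}) can be taken on the fixed locus of the successively lifted involution, so that $\widetilde{\tau}_0$ genuinely exists and preserves $E_\infty$ component-by-component rather than permuting curves. Once equivariance of the model is established, identifying the action on $H_1(M_\infty;\mathbb{Z})$ is a routine computation with the Seifert/plumbing presentation of Lemma~\ref{lemma:V_0-V_infty}. An alternative, more hands-on route — defining $g_0$ directly on the surgery description of $M_\infty$ as an orientation-reversing fibrewise map of each solid torus and checking the gluing maps $\mu_i\mapsto\alpha_iQ_i-(\alpha_i-\beta_i)H_i$ are respected — would avoid the algebraic-geometry equivariance issue entirely and may be cleaner; I would use whichever the surrounding sections make most convenient.
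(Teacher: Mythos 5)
There is a genuine error at the heart of your main construction. The fibrewise involution $\tau_0\colon[u:v]\mapsto[u:-v]$ of $\mathbb{F}_b$ is \emph{holomorphic}, and near its fixed point on a fibre (in the affine coordinate $z=v/u$) it is $z\mapsto -z$, a rotation by $\pi$. It does \emph{not} reverse the orientation of small circles about the fixed points; no holomorphic automorphism can, since it acts on the normal direction to an invariant curve by multiplication by a nonzero complex number. Consequently the lift $\widetilde{\tau}_0$, restricted to $\nu(E_\infty)\cong V_\infty$, acts on the circle fibres of the normal circle bundles by orientation-preserving maps, so the induced map on $H_1(M_\infty;\mathbb{Z})$ sends $[H]\mapsto +[H]$ and $[Q_i]\mapsto +[Q_i]$ — i.e.\ it is (isotopic to) the identity, and it fails exactly the property the lemma is designed to produce. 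To get $-1$ on these classes you need a symmetry that reverses the fibre-circle orientations, i.e.\ an \emph{anti-holomorphic} (real) involution such as complex conjugation for a real structure in which $E_\infty$ is defined over $\mathbb{R}$, not the holomorphic rotation you chose. Your concluding sanity check (``$-\mathrm{id}$ respects the relations'') does not detect this, because $+\mathrm{id}$ respects them too.

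The paper avoids the algebro-geometric model altogether: it works with the plumbing description of $V_\infty$, decomposes each $D^2$-bundle piece $S(v)$ as $(B_1^2\times D^2)\cup_{\phi_n}(B_2^2\times D^2)$, and defines on each piece the reflection $((x,y),(u,v))\mapsto((x,-y),(u,-v))$ — in effect ``complex conjugation'' on base and fibre simultaneously, which is orientation-preserving on the $4$-manifold but reverses the fibre circles — and then checks these glue across the plumbing identifications. Your second, unfleshed alternative (an orientation-reversing fibrewise map on the surgery description of $M_\infty$) is in this spirit, but as stated it only produces a self-map of the $3$-manifold; the lemma requires a diffeomorphism of the $4$-manifold $V_\infty$ inducing it, so you would still need to extend over the solid tori/disk-bundle pieces, which is essentially the paper's argument. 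If you want to salvage the algebro-geometric route, replace $\tau_0$ by an anti-holomorphic involution of $\mathbb{F}_b$ with the sections and the blown-up points chosen real, and then verify it preserves each component of $E_\infty$; the equivariance bookkeeping you flagged is then the only remaining (and manageable) issue.
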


\begin{proof}
Notice that $V_{\infty}$ is the plumbing of $D^2$-bundles $S(v)$ over $S^2$ corresponding to each vertex $v$ of the graph $E_{\infty}$. We decompose $S(v)$ as the following standard way
\begin{equation*}
S(v) = (B_1^2 \times D^2) \cup_{\phi_n} (B_2^2 \times D^2)
\end{equation*}
where $B_i^2 = \{(x,y) \in \mathbb{R}^2 \mid x^2 + y^2 \le 1 \}$ ($i=1,2,$) and $D^2 = \{(u,v) \in \mathbb{R}^2 \mid u^2 + v^2 \le 1\}$, and the pasting map $\phi_n \colon (\partial B_1^2) \times D^2 \to (\partial B_2^2) \times D^2$ satisfies
\begin{equation*}
\phi_n \mid _{(\partial B_1^2) \times (\partial D^2)} = \begin{pmatrix}
-1 & 0 \\ n & 1
\end{pmatrix}
\end{equation*}
if $v \cdot v=n$. One can define a diffeomorphism $G_0(v) \colon S(v) \to S(v)$ such that
\begin{equation*}
((x,y), (u,v)) \mapsto ((x,-y), (u,-v))
\end{equation*}
on $B_1 \times D^2 \to B_1 \times D^2$. Then it is easy to show that the above diffeomorphisms $G_0(v)$ can be glued to a diffeomorphism $G_0 \colon V_{\infty} \to V_{\infty}$ with the desired properties.
\end{proof}

\begin{proposition}
\label{propsition:Extending-diffeomorphism}
Let $W$ and $W'$ be (minimal) symplectic fillings of a non-cyclic quotient surface singularity  $(X,0)$. Suppose that there is a diffeomorphism $F \colon W' \to W$. Then there is an orientation-preserving diffeomorphism
\begin{equation*}
\widetilde{F} \colon Z' \to Z
\end{equation*}
between the natural compactifications of $W'$ and $W$ such that $\widetilde{F}|_{W'} = F$ and its induced map $\widetilde{F}_{\ast} \colon H_2(Z';\mathbb{Z}) \to H_2(Z;\mathbb{Z})$ satisfies $\widetilde{F}_{\ast}([C_i]) = [C_i]$ for all $i$ or $\widetilde{F}_{\ast}([C_i]) = -[C_i]$ for all $i$, where $C_i$ is the irreducible component of the compactifying divisor $E_{\infty} \subset Z', Z$.
\end{proposition}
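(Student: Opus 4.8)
The plan is to glue the given diffeomorphism $F\colon W'\to W$ with a diffeomorphism of the compactifying piece $V_\infty$ (or possibly with $-\mathrm{id}$ on it), using the standard fact that an automorphism of the boundary Seifert manifold $M_\infty$ extends over $V_\infty$ once it respects the fiber structure. First I would recall from Proposition~\ref{proposition:symplectic-filling-is-rational-II} and Definition~\ref{definition:natural-compactification-of-filling-non-cyclic} that $Z = W\cup_{\partial W=-\partial V_\infty} V_\infty$ and $Z' = W'\cup_{\partial W'=-\partial V_\infty} V_\infty$, so to produce $\widetilde F$ it suffices to understand the restriction $f:=F|_{\partial W'}\colon \partial W'\to\partial W$, i.e. a self-diffeomorphism $f\colon M_0\to M_0$ of the link $L=M_0$ (after the orientation-reversing identification $M_0\cong -M_\infty$ of Lemma~\ref{lemma:V_0-V_infty}). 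The key point is that any self-diffeomorphism of a small Seifert fibred $3$-manifold that is \emph{not} a lens space or $S^1\times S^2$ is isotopic to a fiber-preserving one, hence on $H_1$ it either fixes the class $[H]$ of a general fiber and permutes the $[Q_i]$ within the distinct exceptional types, or sends $[H]\mapsto -[H]$ and $[Q_i]\mapsto -[Q_i]$; and since in the non-cyclic case the three arm-invariants $(\alpha_i,\beta_i)$ of the compactifying divisor are pairwise non-isomorphic (the three cyclic quotient singularities on $\overline E_\infty$ are distinct), no nontrivial permutation of the arms occurs.

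Concretely, the steps are: \textbf{(1)} Identify $f\colon M_0\to M_0$ and transport it to $f'\colon M_\infty\to M_\infty$ via the diffeomorphism $M_0\to -M_\infty$ of Lemma~\ref{lemma:V_0-V_infty} that carries fibers to fibers. \textbf{(2)} Appeal to the Seifert rigidity statement to isotope $f'$ to a fiber-preserving map; since the arms are pairwise distinct the induced action on the base orbifold fixes each cone point, so on homology $f'_\ast$ is either the identity on $\{[H],[Q_1],[Q_2],[Q_3]\}$ or is $-\mathrm{id}$ on all of them (using the relations in Lemma~\ref{lemma:properties-Seifert}(3) to see these are the only possibilities compatible with preserving the Seifert structure). \textbf{(3)} In the first case, $f'$ itself extends over $V_\infty$: a fiber-preserving map of $M_\infty$ which is the identity on homology extends over the plumbing because $V_\infty$ is a disc-bundle-plumbing over the same base orbifold, and one builds the extension bundle chart by bundle chart as in the proof of Lemma~\ref{lemma:V_infty->V_infty}. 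In the second case, compose with the diffeomorphism $G_0\colon V_\infty\to V_\infty$ of Lemma~\ref{lemma:V_infty->V_infty}, whose boundary restriction $g_0$ satisfies ${g_0}_\ast[H]=-[H]$, ${g_0}_\ast[Q_i]=-[Q_i]$: then $g_0\circ f'$ (or $f'\circ g_0^{-1}$) acts trivially on $H_1(M_\infty)$ and we are reduced to the first case. \textbf{(4)} Glue the resulting extension $\widehat F\colon V_\infty\to V_\infty$ to $F$ along the common boundary to get $\widetilde F\colon Z'\to Z$ with $\widetilde F|_{W'}=F$; arrange orientations so $\widetilde F$ is orientation-preserving (since $F$ is a diffeomorphism of fillings of the same oriented boundary, it is automatically orientation-preserving, and both $\mathrm{id}_{V_\infty}$ and $G_0$ are orientation-preserving).

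\textbf{(5)} Finally, read off the action on $H_2$. The classes $[C_i]$ of the irreducible components of $E_\infty\subset Z$ (resp.\ $Z'$) are, via the plumbing description, Poincaré–Lefschetz dual to cycles supported near $V_\infty$, and their images under $\widetilde F_\ast$ are determined by the boundary map: if on $\partial V_\infty$ we used the homologically trivial extension then $\widetilde F_\ast[C_i]=[C_i]$ for all $i$, whereas if we composed with $G_0$ then on the level of the bundle charts $G_0(v)$ reverses the $D^2$-fiber orientation, so $\widetilde F_\ast[C_i]=-[C_i]$ for all $i$; in either case the sign is uniform across $i$, which is exactly the asserted dichotomy. \textbf{Main obstacle.} The delicate point is Step (2)–(3): making precise that a self-diffeomorphism of $M_\infty$ which is fiber-preserving and trivial (or $-\mathrm{id}$) on $H_1$ actually extends over the plumbed $4$-manifold $V_\infty$, and ruling out arm permutations. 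The extension is not formal — it requires that the map on each boundary solid-torus-neighborhood of an exceptional fiber be isotopic to one induced by a bundle automorphism of the corresponding disc bundle, which one checks using that $\pi_0\mathrm{Diff}(D^2\times S^1\ \mathrm{rel}\ \text{fiber structure})$ is controlled by the gluing matrices $\phi_n$ appearing in Lemma~\ref{lemma:V_infty->V_infty}, together with the classification of mapping classes of Seifert fibred spaces (Orlik–Wagreich, Jankins–Neumann); this is where the hypothesis that $(X,0)$ is \emph{non-cyclic}, so that $M_0=L$ is a genuine small Seifert manifold with three distinct multiple fibers rather than a lens space, is essential.
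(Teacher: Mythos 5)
Your outline follows the same skeleton as the paper's proof: restrict $F$ to the boundary, isotope the resulting self-map of the small Seifert manifold to a fiber-preserving one, extend over $V_\infty$ using mapping-class results for the lens-space arms (Bonahon), and in the case $[H]\mapsto -[H]$ compose with the diffeomorphism $G_0$ of Lemma~\ref{lemma:V_infty->V_infty} to reduce to the case $[H]\mapsto [H]$. However, two of your supporting claims are not correct as stated, and one of them is exactly where the real content of the proof lies.

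First, the three exceptional fibers of $M_\infty$ need not have pairwise distinct invariants: for a dihedral singularity the compactifying divisor has two arms each consisting of a single $(-2)$-curve, so two exceptional fibers have the same type $(2,1)$. Hence you cannot exclude arm permutations on the grounds that the three cyclic quotient singularities on $\overline{E}_{\infty}$ are distinct; the paper instead allows $g$ to permute exceptional fibers of equal type and disposes of this by rearranging indices, which is harmless for the intersection data extracted later. Second, and more seriously, your assertion that a fiber-preserving $f'$ with $f'_{\ast}[H]=[H]$ must satisfy $f'_{\ast}[Q_i]=[Q_i]$ does not follow from the relations in Lemma~\ref{lemma:properties-Seifert}(3); those relations alone do not exclude, say, $[Q_i]\mapsto -[Q_i]$ on a single arm. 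This is precisely the step the paper proves: for tetrahedral, octahedral and icosahedral singularities it lists the possible actions on $H_1(L(\alpha_i,\alpha_i-\beta_i);\mathbb{Z})$, namely $\pm[Q_i]$ and, when $(\alpha_i-\beta_i)^2\equiv 1 \pmod{\alpha_i}$, also $\pm(\alpha_i-\beta_i)[Q_i]$; it checks that this congruence forces $\alpha_i-\beta_i=\alpha_i-1$, so that $(\alpha_i-\beta_i)[Q_i]=-[Q_i]$, and then rules out the mixed case $[H]\mapsto [H]$, $[Q_i]\mapsto -[Q_i]$ because the corresponding extension $V_\infty\to V_\infty$ would have to be orientation-reversing; in the dihedral case it instead quotes Rubinstein to conclude that $g$ is isotopic to the identity. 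Without an argument of this kind, your dichotomy ($+\mathrm{id}$ on all of $[H],[Q_1],[Q_2],[Q_3]$ or $-\mathrm{id}$ on all of them), and hence the uniform sign on the $[C_i]$ in your step (5), remains unproved.
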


\begin{proof}
The restriction $f := F|_{\partial{W'}} \colon \partial{W'} \to \partial{W}$ is a diffeomorphism between small Seifert fibred $3$-manifolds. Then it follows by a general theory of diffeomorphisms of small Seifert fibred $3$-manifolds (cf. Waldhausen~\cite{Waldhausen-1968}, Boileau--Otal~\cite[Proposition~3.1]{Boileau-Otal-1991}, Birman--Rubinstein~\cite[\S0 Main~Theorem]{Birman-Rubinstein-1984}, Rubinstein~\cite[Theorems~5,~6]{Rubinstein-1979}) that $f$ is isotopic to a diffeomorphism which sends fibers of $\partial{W'}$ to that of $\partial{W}$. Therefore one may assume that $f$ is a fiber-preserving diffeomorphism. Then there is a fiber-preserving diffeomorphism $g \colon \partial{V_{\infty}} \to \partial{V_{\infty}}$. Therefore it is enough to show that $g$ can be extended to a diffeomorphism $G \colon V_{\infty} \to V_{\infty}$ with desired properties.

Let $M_{\infty}=\partial{V_{\infty}}$ for simplicity. Then $M_{\infty}=M(b-3; (\alpha_1, \alpha_1-\beta_1), (\alpha_2, \alpha_2-\beta_2), (\alpha_3, \alpha_3-\beta_3))$. Since $g$ preserves fibers of $M_{\infty}$, by arranging the indices, one may assume that $g$ sends the exceptional fiber of type $(\alpha_i, \alpha_i-\beta_i)$ to the same exceptional fiber of type $(\alpha_i, \alpha_i-\beta_i)$. Then its restriction
\begin{equation*}
\overline{g} \colon S-\cup_{i=1}^{3} (D_i \times S^1) \to S-\cup_{i=1}^{3} (D_i \times S^1)
\end{equation*}
is a fiber-preserving diffeomorphism. Then $\overline{g}$ is extended to a diffeomorphism $\overline{G}$ on the $D^2$-bundle over $S^2 - \cup_{i=1}^{3} D_i$, which has three boundary components. We need to extend $\overline{G}$ into the plumbed 4-manifolds corresponding to the linear graphs $[a_{i1}, \dotsc, a_{ie}]$ for $i=1,2,3$. Then, it follows by Bonahon~\cite[Theorem~3]{Bonahon-1983} that the diffeomorphism $G$ can be extended to the plumed $4$-manifolds corresponding to the linear graphs $[a_{i1}, \dotsc, a_{ie_i}]$. Hence, we get a diffeomorphism $\widetilde{F} \colon Z' \to Z$ such that $\widetilde{F}|_{W'} = F$.

We now show that $\widetilde{F}_{\ast} \colon H_2(Z';\mathbb{Z}) \to H_2(Z;\mathbb{Z})$ satisfies the desired properties.

Since $g$ is a diffeomorphism preserving fibers, $g_{\ast} \colon H_1(V_{\infty};\mathbb{Z}) \to H_1(V_{\infty};\mathbb{Z})$ sends $[H]$ to $[H]$ or $-[H]$. Assume first that $g_{\ast}([H])=[H]$.

\medskip

\textit{Case~1: Dihedral singularities}.

\medskip

By Rubinstein~\cite[Theorem~6]{Rubinstein-1979}, since $g_{\ast}([H])=[H]$ in $H_1(M_{\infty};\mathbb{Z})$ for $g \colon M_{\infty} \to M_{\infty}$  and $g$ sends the exceptional fiber of type $(\alpha_i, \alpha_i-\beta_i)$ to the same exceptional fiber of type $(\alpha_i, \alpha_i-\beta_i)$, $g$ is isotopic to the identity map. Therefore it follows by Bonahon~\cite{Bonahon-1983} that $g$ can be extended to the interior of each $L(\alpha_i, \alpha_i - \beta_i)$ and, furthermore, $[C_i] \mapsto [C_i]$.

\medskip

\textit{Case~2: Tetrahedral, octahedral, icosahedral singularities}.

\medskip

We claim that $g_{\ast}([Q_i])=[Q_i]$ for all $i$.

Note that any diffeomorphism $L(\alpha_i, \alpha_i - \beta_i) \to L(\alpha_i, \alpha_i - \beta_i)$ is completely determined by the image of $[Q_i]$ because $[Q_i]$ is a generator of $H_1(L(\alpha_i, \alpha_i - \beta_i);\mathbb{Z})$. There are only four possibilities (cf. Bonahon~\cite[Theorem~3]{Bonahon-1983}, Lisca~\cite[Section~7]{Lisca-2008}): $[Q_i]\mapsto \pm [Q_i]$, or, additionally $[Q_i] \mapsto \pm(\alpha_i - \beta_i)[Q_i]$ in case of $(\alpha_i - \beta_i)^2 \equiv 1 \pmod{\alpha_i}$. But one can easily check that $(\alpha_i - \beta_i)^2 \equiv 1 \pmod{\alpha_i}$ if and only if $\alpha_i - \beta_i = \alpha_i - 1$ in Case~2. Then $(\alpha_i - \beta_i)[Q_i]=-[Q_i]$ in $H_1(L(\alpha_i, \alpha_i - \beta_i);\mathbb{Z})$. Therefore one may assume that there are only two possibilities: $[Q_i]\mapsto [Q_i]$ or $-[Q_i]$. Suppose that $[Q_i]\mapsto -[Q_i]$. Then there is an extension $G\colon V_{\infty} \to V_{\infty}$ sends $[H] \mapsto [H]$ but $[Q_i] \mapsto -[Q_i]$, which contradicts that $G \colon V_{\infty} \to V_{\infty}$ is a orientation-preserving diffeomorphism. Therefore the claim follows. Then one may apply the same argument in Case~1; then, the proof of Case 2 is done.

Assume now that $g_{\ast}([H])=-[H]$.

By Lemma~\ref{lemma:V_infty->V_infty}, there exists a diffeomorphism
\begin{equation*}
G_0 \colon V_{\infty} \to V_{\infty}
\end{equation*}
such that ${g_0}_{\ast}([H])=-[H]$ and ${g_0}_{\ast}([Q_i])=-[Q_i]$, where ${g_0}={G_0}|_{\partial{V_{\infty}}}$. Since $(g_0 \circ g)_{\ast}([H]) = [H]$,  one can apply the above arguments to $g_0 \circ g$ so that there is a diffeomorphism $J \colon V_{\infty} \to V_{\infty}$ such that $J|_{\partial{V_{\infty}}} = g_0 \circ g$. Take $G = G_0^{-1} \circ J$.
\end{proof}

We now classify minimal symplectic fillings of tetrahedral, octahedral, and icosahedral singularities.

\begin{theorem}\label{theorem:diffeomorphism-type}
Let $(X,0)$ be a tetrahedral, octahedral, or icosahedral singularity. Then the minimal symplectic fillings of $(X,0)$ in the reduced list of Bhupal-Ono (cf. Proposition~\ref{proposition:reduced-BO-list}) are non-diffeomorphic to each other.
\end{theorem}

\begin{proof}
We assume that $(X,0)$ is a non-cyclic quotient surface singularity for a while.

Let $W$ and $W'$ be two minimal symplectic fillings of $(X,0)$. Assume that $W$ and $W'$ are diffeomorphic. By Proposition~\ref{propsition:Extending-diffeomorphism}, there is a extension $\widetilde{F} \colon Z' \to Z$. We apply to $Z$ and $Z'$ the same procedure of blow-ups and blow-downs transforming the compactifying divisor into a configuration containing a cuspidal curve $C$ as in Figure~\ref{figure:sequence-dihedral} or Figure~\ref{figure:sequence-TOI-32},~\ref{figure:sequence-TOI-31}. Then we have a diffeomorphism
\begin{equation*}
\phi \colon Z_2' \to Z_2,
\end{equation*}
where $Z_2$ and $Z_2'$ are the rational 4-manifolds (containing the corresponding cuspidal curves) corresponding to $Z$ and $Z'$, respectively, described in Figures~\ref{figure:sequence-dihedral}, \ref{figure:sequence-TOI-32}, \ref{figure:sequence-TOI-31}. According to Equation~\eqref{equation:W=Z-E_infty=Z_2-E_infty},
\begin{equation*}
W = Z_2 - \nu(\widetilde{E}_{\infty}), \quad W' = Z_2' - \nu(\widetilde{E}_{\infty}).
\end{equation*}

Let $H \subset \mathbb{CP}^2$ be a line and let $A, B \subset \mathbb{CP}^1 \times \mathbb{CP}^1$ be curves of type $(1,0), (0,1)$, respectively. We denote their homology classes again by $[H]$, $[A]$, $[B]$. Let $[e_i] \in H_2(Z_2;\mathbb{Z})$ $(i=1,\dotsc,n$) be the homology classes of the exceptional curves of the sequence of blow-ups from $\mathbb{CP}^2$ or $\mathbb{CP}^1 \times \mathbb{CP}^1$ to $Z_2$. Finally, let $E$ be a $(-1)$-curve in $Z_2'$ of the blowing-ups $Z_2' \to Z_1'$ occurring on the curves $C_i' \subset Z_1'$ ($i=1,\dotsc, k$), whose processes are described also in Figures~\ref{figure:sequence-dihedral}, \ref{figure:sequence-TOI-32}, \ref{figure:sequence-TOI-31}. Note that  $\phi_{\ast}[C_i] = [C_i]$ for all $i$ or $\phi_{\ast}[C_i] = -[C_i]$ for all $i$.

We briefly sketch the proof. In the following we will show that $\phi(E)$ (if $\phi_{\ast}[C_i] = [C_i]$) or $-\phi(E)$ (if $\phi_{\ast}[C_i] = -[C_i]$) is a $(-1)$-curve whose homology class is same with that of one of the exceptional curves of the blowing-ups $Z_2 \to Z_1$. Then, since $\phi_{\ast}[C_i] = [C_i]$ or $\phi_{\ast}[C_i] = -[C_i]$ for all $i=1,\dotsc,k$, the two data of intersections of $(-1)$-curves with $C_1, \dotsc, C_k$ in $Z_2$ and $Z_2'$ should be the same.

But one can check that the intersection data in the reduced list of Bhupal-Ono (Proposition~\ref{proposition:reduced-BO-list}) for tetrahedral, octahedral, and icosahedral singularities are different to each other for a given singularity except some exceptional cases. Hence the assertion of Theorem~\ref{theorem:diffeomorphism-type} follows if we can deal with the exceptional cases.

The exceptional cases are as follows: Two minimal symplectic fillings $W$ and $W'$ have the same intersection data, but $W$ is obtained from $\mathbb{CP}^1 \times \mathbb{CP}^1$ and, on the other hand, $W'$ is obtained from $\mathbb{CP}^2$. By the same method as above, one can prove that there is a diffeomorphism $\phi \colon Z_2' \to Z_2$ such that $\phi(E)$ (if $\phi_{\ast}[C_i] = [C_i]$) or $-\phi(E)$ ($\phi_{\ast}[C_i] = -[C_i]$) is a $(-1)$-curve of $Z_2$ such that its homology class is same with that of one of the exceptional curves of the blowing-ups $Z_2 \to Z_1$. Then there is a isomorphism
\begin{equation*}
\phi_{\ast} \colon (H^2(Z_2';\mathbb{Z}), Q_{Z_2'}) \to (H^2(Z_2;\mathbb{Z}), Q_{Z_2})
\end{equation*}
where $Q_{Z_2'}$ is the intersection form on $H^2(Z_2';\mathbb{Z})$ under the basis containing $[H]$, $[C_i]$'s, and the exceptional curves $[E]$'s occurring from $Z_2' \to Z_1'$ and $Q_{Z_2}$ is the intersection form on $H^2(Z_2;\mathbb{Z})$ under the basis containing $[A]$, $[B]$, and, in addition, $[C_i]$'s and $\phi_{\ast}[E]$'s if $\phi_{\ast}[C_i] = [C_i]$ or $-[C_i]$'s and $-\phi_{\ast}[E]$'s if $\phi_{\ast}[C_i] = -[C_i]$, respectively. Then, at first, blowing down $E$'s and $\phi(E)$'s at the same time and then blowing down $C_i' \subset Z_1$ and $C_i'=\phi(C_i') \subset Z_1$ ($i=1,\dotsc,k$), we get a isomorphism
\begin{equation*}
\phi_{\ast} \colon (H^2(\mathbb{CP}^2 \sharp \overline{\mathbb{CP}}^2;\mathbb{Z}), Q_{\mathbb{CP}^2 \sharp \overline{\mathbb{CP}}^2}) \to (H^2(\mathbb{CP}^1 \times \mathbb{CP}^1;\mathbb{Z}), Q_{\mathbb{CP}^1 \times \mathbb{CP}^1}),
\end{equation*}
that is, we get the equivalence between the intersection matrix of $\mathbb{CP}^2 \sharp \overline{\mathbb{CP}}^2$ and that of $\mathbb{CP}^1 \times \mathbb{CP}^1$, which is a contradiction. Therefore the assertion of Theorem~\ref{theorem:diffeomorphism-type} follows.

From now on, we will show that $\phi(E)$ (if $\phi_{\ast}[C_i] = [C_i]$) or $-\phi(E)$ (if $\phi_{\ast}[C_i] = -[C_i]$) is a $(-1)$-curve whose homology class is same with that of one of the exceptional curves of the blowing-ups $Z_2 \to Z_1$.

\medskip

\textit{At first, we assume that $\phi_{\ast}[C_i] = [C_i]$.}

\medskip

\textit{Case~1: Dihedral singularities}

\medskip

We divide Case~1 into two subcases.

\medskip

\textit{Case~1-1: $Z_2$ is obtained from $\mathbb{CP}^1 \times \mathbb{CP}^1$ by a sequence of blow-ups}

\medskip

Note that $\{[A], [B], [e_1], \dotsc, [e_n]\}$ is a basis for $H_2(Z_2; \mathbb{Z})$. Then
\begin{equation*}
\phi_{\ast}([E])=a[A]+b[B]+\sum c_i [e_i]
\end{equation*}
for some integers $a, b, c_i$ in $H_2(Z_2;\mathbb{Z})$. But $\phi(E) \cdot A=0$. So we have $b=0$. Therefore
\begin{equation*}
\phi_{\ast}([E])=a[A]+\sum c_i [e_i].
\end{equation*}
Since $\phi(E) \cdot \phi(E)=-1$, we have $(a[A]+\sum c_i[e_i]) \cdot (a[A]+\sum c_i[e_i])=-1$. Therefore, it follows that only one $c_{i_0}=\pm 1$ and the other $c_j$'s $(j \neq i_0)$ are all zero. Therefore
\begin{equation*}
\phi_{\ast}([E])=a[A] \pm [e_{i_0}].
\end{equation*}
On the other hand, $\phi(E)$ does not intersect the cuspidal curve $C$ in $Z_2$. Since $[C]=2[A]+2[B]-\sum d_i[e_i]$ for some $d_i=0$ or $1$, we have
\begin{equation*}
0 = \phi(E) \cdot C = \text{$2a$ or $2a \pm 1$}
\end{equation*}
Therefore $a=0$. Then
\begin{equation*}
\phi_{\ast}(E) = \pm [e_{i_0}].
\end{equation*}
Assume that $\phi(E)$ intersects $C_s$ in $Z_2$ (cf.~Figure~\ref{figure:sequence-dihedral}). Since
\begin{equation*}
[C_s] = [e_{\ell}]-[e_{j_1}]-\dotsb-[e_{j_t}]
\end{equation*}
for some $\ell, j_1, \dotsc, j_t$, we have
\begin{equation*}
1 = \phi_{\ast}([E]) \cdot [C_s] = (\pm e_{i_0}) \cdot ([e_{\ell}]-[e_{j_1}]-\dotsb-[e_{j_t}]).
\end{equation*}
Therefore $\phi_{\ast}([E])=[e_{j_s}]$ for some $s \ge 1$ or $\phi_{\ast}([E])=-[e_{\ell}]$. Suppose that $\phi_{\ast}([E])=-[e_{\ell}]$. Then $\phi(E)$ intersect with a curve $D$ whose homology class is given by $D'-e_{\ell}$, which is contradict to the fact that the intersection numbers of $\phi(E)$ with any other curves are zero. Therefore $\phi_{\ast}([E])=[e_{j_s}]$, which implies that $\phi(E)$ is a $(-1)$-curve whose homology class is same with that of one of the $(-1)$-curves coming from the blowing-ups $Z_2 \to Z_1$ on $C_{\ell}$.

Therefore one can conclude that the data of intersections of $(-1)$-curves with $C_i$'s from $Z_2$ and $Z_2'$ are the same if $W$ and $W'$ are diffeomorphic.

\medskip

\textit{Case~1-2: $Z_2$ is obtained from $\mathbb{CP}^2$ by a sequence of blow-ups}

\medskip

Since $\{[H], [e_1], \dotsc, [e_n]\}$ is a basis for $H_2(Z_2; \mathbb{Z})$, we have
\begin{equation*}
\phi_{\ast}([E])=a[H]+\sum c_i [e_i]
\end{equation*}
for some integers $a, c_i$. Let $L$ be the line passing through the cusp (cf.~Figure~\ref{figure:sequence-dihedral}). Let $[e_1]$ be the homology class of the exceptional curve of the blowing up at $q \in L$. Let $\widetilde{L}$ be the proper transform of $L$ in $Z_2$. Then $[\widetilde{L}]=[H]-[e_1]$. Since $\phi(E) \cdot \widetilde{L}=0$, we have
\begin{equation*}
0 = \phi(E) \cdot \widetilde{L} = a+c_1.
\end{equation*}
Therefore
\begin{equation*}
\phi_{\ast}([E]) = a[H] - a[e_1] + \sum_{i \ge 2} c_i [e_i].
\end{equation*}
Furthermore, $\phi(E) \cdot \phi(E) = -1$. Then we have $c_{i_0} = \pm 1$ for some $i_0 \ge 2$, and the other $c_j$'s are all zero. Therefore
\begin{equation*}
\phi_{\ast}([E]) = a[H] - a[e_1] \pm [e_{i_0}].
\end{equation*}
Since $\phi(E)$ does not intersect the cuspidal curve $C$ and $[C]=3[H]-[e_1]-\sum_{i \ge 2} d_i [e_i]$ for some $d_i=0$ or $1$, we have
\begin{equation*}
0 = C \cdot \phi(E) = \text{$2a$ or $2a \pm 1$}.
\end{equation*}
Therefore $a=0$; hence, we have
\begin{equation*}
\phi_{\ast}([E]) = \pm [e_{i_0}].
\end{equation*}
By a similar argument as Case~I-1, one can conclude that $\phi_{\ast}([E]) = [e_{i_0}]$ and it is a $(-1)$-curve whose homology class is same with that of one of the $(-1)$-curves originated from the blowing-ups $Z_2 \to Z_1$. Therefore the two data on intersections of $(-1)$-curves with $C_i$'s in $Z_2$ and $Z_2'$ are the same.

\medskip

\textit{Case~2: Tetrahedral, Octahedral, Icosahedral singularities of type (3,2)}

\medskip

Let $C$ be the cuspidal curve in $Z_2$. From the list of Bhupal--Ono~\cite{Bhupal-Ono-2012}, we have $C \cdot C=3$, $4$, or $5$. If $C \cdot C= 3$ or $4$, then one can easily check that the models of $Z_2$ in their list have the different second Betti numbers. Therefore one can distinguish minimal symplectic fillings via the second Betti numbers. So one may assume that
\begin{equation*}
C \cdot C = 5.
\end{equation*}

We divide Case~2 into two subcases.

\medskip

\textit{Case~2-1: $Z_2$ is obtained from $\mathbb{CP}^2$ by a sequence of blow-ups}

\medskip

Since $\{[H], [e_1], \dotsc, [e_n]\}$ is a basis for $H_2(Z;\mathbb{Z})$, we have
\begin{equation*}
\phi_{\ast}([E]) = a[H] + \sum_{i=1}^{n} b_i [e_i]
\end{equation*}
for some $a, b_i \in \mathbb{Z}$. On the other hand, let $C$ be the cuspidal curve in $Z_2$. Since $[C]=3[H]-\sum d_i [e_i]$ for some $d_i=0$ or $1$, and since $[C_i] = [e_{i_0}]-[e_{j_1}]-\dotsb-[e_{j_s}]$ ($i=1,\dotsc,k$) and $\{[C_1],\dotsc,[C_k]\} \subset H_2(Z;\mathbb{Z})$ are linearly independent over $\mathbb{Z}$, one may assume that
\begin{equation*}
\{[C], [C_1], \dotsc, [C_k], [e_1], \dotsc, [e_m]\}
\end{equation*}
is also a basis for $H_2(Z;\mathbb{Q})$ for some $m < n$, where $e_1, \dotsc, e_m$ are the $(-1)$-curves obtained from the blowing-ups $Z_2 \to Z_1$. Then we have
\begin{equation*}
\phi_{\ast}([E]) = \alpha [C] + \sum_{i=1}^{k} \beta_i [C_i] + \sum_{i=1}^{m} \gamma_i [e_i]
\end{equation*}
for some $\alpha, \beta_i, \gamma_i \in \mathbb{Q}$. Then, since $3\alpha = a \in \mathbb{Z}$, we have
\begin{equation*}
\alpha = \frac{a}{3}
\end{equation*}
for some $a \in \mathbb{Z}$. Then, since $\phi(E) \cdot C = 0$, we have
\begin{equation*}
0 = \phi(E) \cdot C = 5 \alpha + \beta_1
\end{equation*}
Therefore we have
\begin{equation*}
\phi_{\ast}([E]) = \alpha [C] - 5\alpha [C_1] + \sum_{i=2}^{k} \beta_i [C_i] + \sum_{i=1}^{m} \gamma_i [e_i].
\end{equation*}
On the other hand,
\begin{equation*}
-1 = \phi(E) \cdot \phi(E) = -5 \alpha^2 + \Delta
\end{equation*}
where
\begin{multline*}
\Delta = \\
\left(- 5\alpha [C_1] + \sum_{i=2}^{k} \beta_i [C_i] + \sum_{i=1}^{m} \gamma_i [e_i]\right) \cdot \left(- 5\alpha [C_1] + \sum_{i=2}^{k} \beta_i [C_i] + \sum_{i=1}^{m} \gamma_i [e_i]\right).
\end{multline*}
Since $[C_i] = [e_{i_0}]-[e_{j_1}]-\dotsb-[e_{j_s}]$ ($i=1,\dotsc,k$), we have $\Delta \le 0$. Therefore we have $5 \alpha^2 - 1 \le 0$. But $\alpha=\frac{a}{3}$ for $a \in \mathbb{Z}$. Therefore $\alpha = 0$ or $\alpha = \pm \frac{1}{3}$. If $\alpha=0$, then one can show that $\phi_{\ast}([E]) = [e_{i_0}]$ for some $i_0 \le m$ by a similar method used in the above cases. Therefore $\phi(E)$ is a $(-1)$-curve coming from the blowing-ups $Z_2 \to Z_1$.

So it remains to prove that $\alpha$ cannot be $\pm \frac{1}{3}$. Suppose on the contrary that $\alpha=\pm \frac{1}{3}$. Then
\begin{equation*}
\phi_{\ast}([E]) = \pm [H] + \sum_{i=1}^{n} b_i [e_i].
\end{equation*}
Since $\phi(E) \cdot \phi(E) = -1$; so one may assume that
\begin{equation*}
\phi_{\ast}([E]) = \pm [H] \pm [e_1] \pm [e_2].
\end{equation*}
But, we then have $\phi(E) \cdot C \neq 0$, which is a contradiction.

\medskip

\textit{Case~2-2: $Z_2$ is obtained from $\mathbb{CP}^1 \times \mathbb{CP}^1$ by a sequence of blow-ups}

\medskip

Since $\{[A], [B], [e_1], \dotsc, [e_n]\}$ is a basis for $H_2(Z;\mathbb{Z})$, we have
\begin{equation*}
\phi_{\ast}([E]) = a[A] + b[B] + \sum_{i=1}^{n} c_i [e_i]
\end{equation*}
for some $a, b, c_i \in \mathbb{Z}$. On the other hand, if $C$ is the cuspidal curve in $Z_2$, we have $[C]=2[A]+2[B]-\sum_{i=1}^{n} d_i [e_i]$ for some $d_i=0$ or $1$. As in Case~2-1, one may assume that
\begin{equation*}
\{[A], [C], [C_1], \dotsc, [C_k], [e_1], \dotsc, [e_m]\}
\end{equation*}
is a basis for $H_2(Z;\mathbb{Q})$ for some $m < n$, where $e_1, \dotsc, e_m$ are the $(-1)$-curves which are the exceptional curves of the blowing-ups $Z_2 \to Z_1$. Then
\begin{equation*}
\phi_{\ast}([E]) = \alpha [A] + \beta [C] + \sum_{i=1}^{k} \gamma_i [C_i] + \sum_{i=1}^{m} \delta_i [e_i]
\end{equation*}
for some $\alpha, \beta, \gamma_i, \delta_i \in \mathbb{Q}$. Note that $\beta=\frac{b}{2}$ for $b \in \mathbb{Z}$.

Since $\phi(E) \cdot C = 0$, we have
\begin{equation*}
2\alpha + 5 \beta + \gamma_1 = 0.
\end{equation*}
Therefore
\begin{equation*}
\phi_{\ast}([E]) = \alpha [A] + \beta [C] + (-2\alpha -5\beta) [C_1] + \sum_{i=2}^{k} \gamma_i [C_i] + \sum_{i=1}^{m} \delta_i [e_i].
\end{equation*}
On the other hand, from the fact that$\phi(E) \cdot \phi(E) = -1$, we have
\begin{equation*}
-5\beta^2 + \Delta = -1
\end{equation*}
where
\begin{equation*}
\Delta = \left( (-2\alpha -5\beta) [C_1] + \sum_{i=2}^{k} \gamma_i [C_i] + \sum_{i=1}^{m} \delta_i [e_i] \right)^2.
\end{equation*}
As in Case~2-1, one can conclude that $\Delta \le 0$. Therefore we have
\begin{equation*}
5 \beta^2 - 1 \le 0
\end{equation*}
which is possible only when $\beta = 0$ because $\beta = \frac{b}{2}$ for $b \in \mathbb{Z}$. Therefore
\begin{equation*}
\phi_{\ast}([E]) = \alpha [A] + \sum_{i=1}^{k} \gamma_i [C_i] + \sum_{i=1}^{m} \delta_i [e_i].
\end{equation*}
But $[C_i] = [e_{i_0}]-[e_{j_1}]-\dotsb-[e_{j_s}]$ ($i=1,\dotsc,k$). Therefore one may assume that
\begin{equation*}
\phi_{\ast}([E]) = a[A] + \sum_{i=1}^{n} c_i [e_i]
\end{equation*}
for $a, c_i \in \mathbb{Z}$. Then, since $\phi(E) \cdot \phi(E) = -1$, we have $\phi_{\ast}([E]) = a[A] \pm [e_i]$ for some $i$. But $0=\phi(E) \cdot C = 2a$ or $2a \pm 1$. So $a=0$. Therefore $\phi_{\ast}([E]) = \pm [e_i]$. As before one can conclude that $\phi_{\ast}([E]) = [e_i]$ for $i \le m$. Therefore $\phi(E)$ is a $(-1)$-curve whose homology class is same with that of one of the exceptional curves of the blowing-ups $Z_2 \to Z_1$.

\medskip

\textit{Case~3: Tetrahedral, Octahedral, Icosahedral singularities of type (3,1)}

\medskip

Tetrahedral, Octahedral, Icosahedral singularities of type (3,1) are divided into two subclasses as given in Figure~\ref{figure:sequence-TOI-31} according to the existence of a $(-1)$-curve connecting the cuspidal curve of $Z_2$ and an arm $C_i$.

\medskip

\textit{Case~3-1: Case~(A) in Figure~\ref{figure:sequence-TOI-31}}

\medskip

Since $\{[A], [B], [e_1], \dotsc, [e_n]\}$ is a basis for $H_2(Z_2; \mathbb{Z})$, we have
\begin{equation*}
\phi_{\ast}([E]) = a[A] + b[B] + \sum_{i=1}^{n} c_i [e_i]
\end{equation*}
for some $a, b, c_i \in \mathbb{Z}$. Since $\phi(E) \cdot A = 0$, we have $b=0$. For $\phi(E) \cdot \phi(E) = -1$, we have $\phi_{\ast}(E) = a[A] \pm [e_{i_0}]$ for some $i_0=1,\dotsc,n$. Finally, from the condition $\phi(E) \cdot C = 0$, we have $a=0$. Therefore $\phi_{\ast}(E)=\pm [e_{i_0}]$. Then, by a similar arguments as before, one can conclude that $\phi_{\ast}([E]) = [e_{i_0}]$; hence $\phi(E)$ is a $(-1)$-curve such that its homology class is same with that of one of the exceptional curves of the blowing-ups $Z_2 \to Z_1$.

\medskip

\textit{Case~3-2 : Case~(B) in Figure~\ref{figure:sequence-TOI-31}}

\medskip

In the list of Bhupal--Ono~\cite{Bhupal-Ono-2012}, minimal symplectic fillings are classified by the data $(m; C \cdot C, -c_1, \dotsc, -c_k; i, j; a_1 \times i_1, \dotsc, a_k \times i_k)$ where the numbers $i$ and $j$ denote the existence of $(-1)$-curves intersecting $B$ and $C_i$, and $D$ and $C_j$, respectively. One can check that if $Z_2$ and $Z_2'$ have the same data $(a_1 \times i_1, \dotsc, a_k \times i_k)$, then their corresponding $(i,j)$'s also coincide. Therefore it is enough to show that $Z_2$ and $Z_2'$ have the same $(a_1 \times i_1, \dotsc, a_k \times i_k)$.

As before, we have $\phi_{\ast}([E]) = a[A] + b[B] + \sum_{i=1}^{n} c_i [e_i]$. Since $\phi(E) \cdot A = 0$, we have $b=0$. Furthermore $\phi(E) \cdot \phi(E) = -1$. Therefore we have $\phi_{\ast}([E]) = a[A] \pm [e_{i_0}]$ for some $i_0$. On the other hand, $\phi(E) \cdot C = 0$. Therefore $\phi_{\ast}([E]) = \pm [e_{i_0}]$. Since $\phi_{\ast}([B-e_1]) \cdot \phi_{\ast}([E]) = 0$, it follows that $[e_{i_0}] \neq [e_1]$. Finally, using the same argument as above, one can conclude that $\phi_{\ast}([E]) = [e_{i_0}]$. Therefore $\phi(E)$ is a $(-1)$-curve with the same homology class with one of the exceptional curves of the blowing-ups $Z_2 \to Z_1$.

It remains to distinguish minimal symplectic fillings of Case~(A) and Case~(B) in Figure~\ref{figure:sequence-TOI-31}. Assume that $W$ is obtained by a sequence of blow-downs and blow-ups described in Case~(A) of Figure~\ref{figure:sequence-TOI-31} and $W'$ is obtained by that of Case~(B) of Figure~\ref{figure:sequence-TOI-31}. Let $E_1 \subset Z_2'$ be the $(-1)$-curve obtained by blowing-up the intersection of $B$ and $A'$ and let $E_N$ be the $(-1)$-curve connecting the cuspidal curve $C \subset Z_2'$ and a curve $C_k$.

Suppose that there is a diffeomorphism $W' \to W$; so we get a diffeomorphism $\phi \colon Z_2' \to Z_2$ as before. We have
\begin{equation*}
\phi_{\ast}([E_N]) = a[A] + b[B] + \sum c_i [e_i]
\end{equation*}
for some $c_i \in \mathbb{Z}$. Since $\phi(E_N) \cdot A = 0$, we have $b=0$. On the other hand, $\phi(E_N) \cdot \phi(E_N) = -1$. Therefore we have
\begin{equation*}
\phi_{\ast}([E_N]) = a[A] \pm [e_{i_0}]
\end{equation*}
for some $i_0$. Note that the homology class of $C_k \subset Z_2$ is given as
\begin{equation}\label{equation:C_k}
[C_k] = [e_{i_1}] - [e_{j_1}] - \dotsb - [e_{j_r}]
\end{equation}
with $e_{i_1} \neq e_1$ and $e_{j_s} \neq e_1$ for any $j_s=j_1, \dotsc, j_r$. Since $\phi(E_N) \cdot C_k = 1$, $e_{i_0} \neq e_1$. On the other hand, $\phi(E_N) \cdot (B-e_1) = 0$. So $a=0$. Therefore we have
\begin{equation*}
\phi_{\ast}([E_N]) = \pm [e_{i_0}]
\end{equation*}
for some $e_{i_0} \neq e_1$.

Let $[C] = 2[A] + 2[B] - \sum_{i=1}^{n} d_i [e_i]$ for some $d_i=0$ or $1$. Since $\phi(E_N) \cdot C = 1$, we must have $d_{i_0} = 1$. Therefore
\begin{equation*}
[C] = 2[A] + 2[B] - [e_{i_0}] - \sum_{i \neq i_0} d_i [e_i].
\end{equation*}
On the other hand, $\phi(E_N) \cdot C_k = 1$. Therefore, $e_{j_s} = e_{i_0}$ for some $j_s$ in Equation~\eqref{equation:C_k}. One can conclude that $e_{i_0}$ is a $(-1)$-curve coming from the blowing-ups $Z_2 \to Z_1$, or there is a $C_i$ ($i \neq k$) such that its homology class is given by $[C_i] = [e_{i_0}] - [e_{k_1}] - \dotsb - [e_{k_p}]$. But $\phi(E_N)$ does not intersect any $C_i$ for $i \neq k$. Therefore $e_{i_0}$ must be  a $(-1)$-curve coming from the blowing-ups $Z_2 \to Z_1$. However any $(-1)$-curve coming from the blowing-ups $Z_2 \to Z_1$ cannot intersect $C$, which is a contradiction.

\medskip

\textit{Next, we assume that $\phi[C_i] = -[C_i]$.}

\medskip

Let $\epsilon = -\phi(E) \subset Z_2$. We can show  with the same method that $\epsilon$ is a $(-1)$-curve whose homology class is same with that of one of the exceptional curves of the blowing-ups $Z_2 \to Z_1$. Therefore the data of the intersections of $(-1)$-curves with $C_i$ and that of $(-1)$-curves with $C_i$ are the same. But one can check that the data in the list of Bhupal--Ono~\cite{Bhupal-Ono-2012} are all different to each other for a given quotient surface singularity. Hence the assertion follows.
\end{proof}

\begin{remark}\label{remark:the-same--1-data}
In the above proof we show that two symplectic fillings $W$ and $W'$ are diffeomorphic if and only if the two data of intersections of $(-1)$-curves with $C_1, \dotsc, C_k$ in $Z_2$ and $Z_2'$ should be the same under the assumption that $Z$ and $Z'$ are obtained at the same time from $\mathbb{CP}^2$ or $\mathbb{CP}^1 \times \mathbb{CP}^1$. But it is not difficult to show that $W$ and $W'$ are diffeomorphic if and only if the two data of intersections of $(-1)$-curves with $E_{\infty}$ in $Z$ and $Z'$ should be the same under the same assumption. By the way HJS~\cite{HJS-2018} finds all $P$-resolutions of each quotient surface singularities and builds up the data of $(-1)$-curves intersecting $E_{\infty}$ via the technique developed in Section~\ref{section:identifying-Milnor-fibers}. We prove that every minimal symplectic filling of a quotient surface singularity is diffeomorphic to one of its Milnor fibers in Theorem~\ref{theorem:fillings-diffeomorphic-to-Milnors}. Therefore one may prove the above Theorem~\ref{theorem:diffeomorphism-type} using the list in HJS~\cite{HJS-2018}.
\end{remark}

The main parts of the above proof of Theorem~\ref{theorem:diffeomorphism-type} consist of calculating the image of the homology classes of $(-1)$-curves $E$ in $Z_2'$ under a given diffeomorphism $\phi \colon Z_2' \to Z_2$. One can easily show that the same arguments also hold in case a homeomorphism $\phi \colon Z_2' \to Z_2$ is given. Therefore:

\begin{corollary}\label{corollary:no-exotic-fillings}
For any quotient surface singularity, there are no exotic symplectic fillings of its link.
\end{corollary}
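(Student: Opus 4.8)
The plan is to show that the proof of Theorem~\ref{theorem:diffeomorphism-type} upgrades essentially verbatim from diffeomorphisms to homeomorphisms, so that homeomorphic minimal symplectic fillings are forced to carry the same combinatorial data and hence are diffeomorphic. Combined with Lisca's~\cite{Lisca-2008} classification in the cyclic case --- which is already stated up to orientation-preserving homeomorphism --- and the fact that every symplectic filling of a quotient surface singularity is a blow-up of a minimal one, this yields the assertion.

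First I would establish the homeomorphism analogue of Proposition~\ref{propsition:Extending-diffeomorphism}. Given an orientation-preserving homeomorphism $F\colon W'\to W$ between two minimal symplectic fillings, its restriction $f\colon\partial W'\to\partial W$ is a homeomorphism between the small Seifert fibred $3$-manifolds $\partial V_{\infty}$. By Moise's theorem a homeomorphism of $3$-manifolds is isotopic to a diffeomorphism, so $f$ is isotopic to a diffeomorphism, which by the very rigidity results for Seifert fibred $3$-manifolds already cited in the proof of Proposition~\ref{propsition:Extending-diffeomorphism} (Waldhausen~\cite{Waldhausen-1968}, Boileau--Otal~\cite{Boileau-Otal-1991}, Birman--Rubinstein~\cite{Birman-Rubinstein-1984}, Rubinstein~\cite{Rubinstein-1979}) is in turn isotopic to a fiber-preserving one. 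After an isotopy of $F$ supported in a collar of $\partial W'$ we may assume $f$ itself is fiber-preserving, and then the plumbing/extension argument of Proposition~\ref{propsition:Extending-diffeomorphism} --- using Lemmas~\ref{lemma:V_0-V_infty} and \ref{lemma:V_infty->V_infty} and Bonahon~\cite{Bonahon-1983} --- produces an orientation-preserving homeomorphism $\widetilde F\colon Z'\to Z$ of the natural compactifications with $\widetilde F|_{W'}=F$ and $\widetilde F_{\ast}([C_i])=[C_i]$ for all $i$ or $\widetilde F_{\ast}([C_i])=-[C_i]$ for all $i$. Note that the $V_{\infty}$-part of $\widetilde F$ is built smoothly; only the $W'$-part is merely a homeomorphism, which is all that is needed below.

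Next I would rerun the proof of Theorem~\ref{theorem:diffeomorphism-type} with the homeomorphism $\phi\colon Z_2'\to Z_2$ obtained from $\widetilde F$ by the common sequence of (smooth) blow-ups and blow-downs of Figures~\ref{figure:sequence-dihedral}, \ref{figure:sequence-TOI-32}, \ref{figure:sequence-TOI-31}. Every computation there uses only that $\phi_{\ast}$ is an isometry of $(H_2,Q)$ with $\phi_{\ast}([C_i])=\pm[C_i]$ (uniformly in $i$), together with the fixed homological expansions of $[C]$ and the $[C_i]$ in terms of $[H]$, or $[A],[B]$, and the exceptional classes $[e_j]$; none of it uses smoothness of $\phi$ or that $\phi(E)$ is an embedded sphere. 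Hence, exactly as before, for each exceptional $(-1)$-class $[E]$ of $Z_2'\to Z_1'$ the class $\pm\phi_{\ast}([E])$ coincides with the class of one of the exceptional $(-1)$-curves of $Z_2\to Z_1$, with matching intersection number against every $[C_i]$; and the ``exceptional case'' obstruction is again simply the non-isometry of the intersection forms of $\mathbb{CP}^2\sharp\overline{\mathbb{CP}}^2$ and $\mathbb{CP}^1\times\mathbb{CP}^1$. Therefore $Z_2$ and $Z_2'$ carry the same $(-1)$-curve intersection data with $C_1,\dotsc,C_k$, so $W$ and $W'$ correspond to the same entry in the list of Bhupal--Ono~\cite{Bhupal-Ono-2012}, and in particular are diffeomorphic. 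Since every symplectic filling is a blow-up of a minimal one, and Lisca~\cite{Lisca-2008} settles the cyclic case at the level of orientation-preserving homeomorphism, there are no exotic symplectic fillings.

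The only point that genuinely requires attention --- and hence the main (mild) obstacle --- is the reduction in the second step to a fiber-preserving boundary homeomorphism and, with it, the homeomorphism version of Proposition~\ref{propsition:Extending-diffeomorphism}; but this is immediate from Moise's theorem together with the Seifert-fibration rigidity already invoked for diffeomorphisms, so Corollary~\ref{corollary:no-exotic-fillings} is a routine upgrade of Theorem~\ref{theorem:diffeomorphism-type}.
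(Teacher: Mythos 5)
Your proposal is correct and follows essentially the same route as the paper: the paper's own proof is just the observation that the argument for Theorem~\ref{theorem:diffeomorphism-type} only uses the induced isometry on $H_2$ together with $\phi_{\ast}[C_i]=\pm[C_i]$, hence applies verbatim to homeomorphisms. Your additional care in upgrading Proposition~\ref{propsition:Extending-diffeomorphism} via Moise's theorem and Seifert rigidity fills in a step the paper leaves implicit, but does not change the argument.
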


\section{Partial resolutions and versal deformation spaces}
\label{section:$P$-resolution}

The diffeomorphism types of Milnor fibers of a quotient surface singularity $(X,0)$ are invariants of the irreducible components of the reduced versal base space $\Def(X)$ of deformations of $X$. In this section, we recall that there is a one-to-one correspondence between the components of $\Def(X)$ and certain partial resolutions of $(X,0)$. We refer to KSB~\cite{Kollar-Shepherd-Barron-1988}, Stevens~\cite{Stevens-1991, Stevens-1993}, Behnke--Christophersen~\cite{Behnke-Christophersen-1994} for details.

\begin{definition}
A smoothing $\mathcal{X} \to \Delta$ of a quotient surface singularity $(X,0)$ is \emph{$\mathbb{Q}$-Gorenstein} if $K_{\mathcal{X}}$ is $\mathbb{Q}$-Cartier.
\end{definition}

\begin{definition}
A normal surface singularity is \emph{of class $T$} if it is a quotient surface singularity, and it admits a $\mathbb{Q}$-Gorenstein one-parameter smoothing.
\end{definition}

It is known that a singularity of class $T$ is a rational double point or a cyclic quotient surface singularity of type $\frac{1}{dn^2}(1, dna-1)$ with $d \ge 1$, $n \ge 2$, $1 \le a < n$, and $(n,a)=1$. Due essentially to Wahl~\cite{Wahl-1981}, a cyclic quotient surface singularity of class $T$ may be recognized from its minimal resolution as follows:

\begin{proposition}\hfill
\label{proposition:T-algorithm}
\begin{enumerate}[(i)]
\item The singularities
\begin{tikzpicture}
\node[bullet] (10) at (1,0) [label=above:{$-4$}] {};
\end{tikzpicture}
and
\begin{tikzpicture}
\node[bullet] (10) at (1,0) [label=above:{$-3$}] {};
\node[bullet] (20) at (2,0) [label=above:{$-2$}] {};

\node[empty] (250) at (2.5,0) [] {};
\node[empty] (30) at (3,0) [] {};

\node[bullet] (350) at (3.5,0) [label=above:{$-2$}] {};
\node[bullet] (450) at (4.5,0) [label=above:{$-3$}] {};

\draw [-] (10)--(20);
\draw [-] (20)--(250);
\draw [dotted] (20)--(350);
\draw [-] (30)--(350);
\draw [-] (350)--(450);
\end{tikzpicture}
are of class $T$

\item If the singularity
\begin{tikzpicture}
\node[bullet] (20) at (2,0) [label=above:{$-b_1$}] {};

\node[empty] (250) at (2.5,0) [] {};
\node[empty] (30) at (3,0) [] {};

\node[bullet] (350) at (3.5,0) [label=above:{$-b_r$}] {};

\draw [-] (20)--(250);
\draw [dotted] (20)--(350);
\draw [-] (30)--(350);
\end{tikzpicture}
is of class $T$, then so are
\begin{equation*}
\begin{tikzpicture}
\node[bullet] (10) at (1,0) [label=above:{$-2$}] {};
\node[bullet] (20) at (2,0) [label=above:{$-b_1$}] {};

\node[empty] (250) at (2.5,0) [] {};
\node[empty] (30) at (3,0) [] {};

\node[bullet] (350) at (3.5,0) [label=above:{$-b_{r-1}$}] {};
\node[bullet] (450) at (4.5,0) [label=above:{$-b_r-1$}] {};

\draw [-] (10)--(20);
\draw [-] (20)--(250);
\draw [dotted] (20)--(350);
\draw [-] (30)--(350);
\draw [-] (350)--(450);
\end{tikzpicture}
\text{~and~}
\begin{tikzpicture}
\node[bullet] (10) at (1,0) [label=above:{$-b_1-1$}] {};
\node[bullet] (20) at (2,0) [label=above:{$-b_2$}] {};

\node[empty] (250) at (2.5,0) [] {};
\node[empty] (30) at (3,0) [] {};

\node[bullet] (350) at (3.5,0) [label=above:{$-b_r$}] {};
\node[bullet] (450) at (4.5,0) [label=above:{$-2$}] {};

\draw [-] (10)--(20);
\draw [-] (20)--(250);
\draw [dotted] (20)--(350);
\draw [-] (30)--(350);
\draw [-] (350)--(450);
\end{tikzpicture}
\end{equation*}

\item Every singularity of class $T$ that is not a rational double point can be obtained by starting with one of the singularities described in (i) and iterating the steps described in (ii).
\end{enumerate}
\end{proposition}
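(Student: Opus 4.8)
The plan is to prove the three parts in order, taking as given the classification quoted above (a class-$T$ singularity is a rational double point or a cyclic quotient singularity $\frac{1}{dn^2}(1,dna-1)$ with $d\ge 1$, $n\ge 2$, $1\le a<n$, $(n,a)=1$) and supplying the sufficiency direction by an explicit smoothing. Namely, I would record once and for all that $\frac{1}{dm^2}(1,dma-1)$ is the quotient of the $A_{dm-1}$-singularity $\{xy=z^{dm}\}$ by the $\mathbb{Z}_m$-action $(x,y,z)\mapsto(\eta x,\eta^{-1}y,\eta^{a}z)$ with $\eta$ a primitive $m$-th root of unity, and that the one-parameter family $\mathcal{X}:=\{xy=z^{dm}+t\}/\mathbb{Z}_m\to\Delta_t$ is a $\mathbb{Q}$-Gorenstein smoothing of it: the hypersurface $\{xy=z^{dm}+t\}\subset\mathbb{C}^4$ is smooth, every nontrivial element of $\mathbb{Z}_m$ fixes on it only the origin (this uses $(a,m)=1$), so $\mathcal{X}$ has a single cyclic quotient threefold point, of type $\frac1m(1,m-1,a)$, and is $\mathbb{Q}$-Gorenstein, while the action is free on the fibres $t\ne 0$, which are therefore smooth; the central fibre is the given singularity. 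Part (i) is then immediate: a short Hirzebruch--Jung computation identifies $[-4]$ with $\frac14(1,1)=\frac{1}{1\cdot 2^2}(1,1\cdot 2\cdot 1-1)$ and the chain $[-3,\underbrace{-2,\dots,-2}_{r-2},-3]$ with $\frac{1}{4r}(1,2r-1)=\frac{1}{r\cdot 2^2}(1,r\cdot 2\cdot 1-1)$, both of the required form.

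For part (ii) I would first reduce to a single move. Reversing a Hirzebruch--Jung string turns $[b_1,\dots,b_r]$ into $[b_r,\dots,b_1]$, and these are the minimal resolutions of $\frac1n(1,a)$ and $\frac1n(1,a^{\ast})$ with $aa^{\ast}\equiv 1\pmod n$, hence of isomorphic singularities; since being of class $T$ is an analytic invariant, it passes through reversal. As the second move applied to $[b_r,\dots,b_1]$ produces $[b_r+1,b_{r-1},\dots,b_1,2]$, whose reverse is the first move applied to $[b_1,\dots,b_r]$, it suffices to treat the move $[b_1,\dots,b_r]\mapsto[b_1+1,b_2,\dots,b_r,2]$. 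For this, a direct manipulation with the blow-up rules for Hirzebruch--Jung continued fractions (equivalently, with Riemenschneider's point diagram) shows that if $[b_1,\dots,b_r]$ is the resolution graph of $\frac{1}{dn^2}(1,dna-1)$, then $[b_1+1,b_2,\dots,b_r,2]$ is the resolution graph of $\frac{1}{d(n+a)^2}\bigl(1,d(n+a)a-1\bigr)$; that singularity is of class $T$ by the construction of the first paragraph (with $m=n+a$), so the move preserves class $T$. In particular, on the level of parameters, the second move is $(d,n,a)\mapsto(d,n+a,a)$ and, via the reversal identity, the first move is $(d,n,a)\mapsto(d,2n-a,n)$.

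For part (iii), let $(X,0)$ be a class-$T$ singularity that is not a rational double point; by the classification $X=\frac{1}{dn^2}(1,dna-1)$ with $n\ge 2$, and I argue by induction on $n$. If $n=2$ then $a=1$, so $X$ is $[-4]$ (when $d=1$) or $[-3,-2^{\,d-2},-3]$ (when $d\ge 2$), one of the base singularities of (i). If $n\ge 3$, exactly one inverse move is available: when $n>2a$, undoing the second move replaces $(d,n,a)$ by $(d,n-a,a)$; when $n<2a$, undoing the first move replaces $(d,n,a)$ by $(d,a,2a-n)$. In either case the new triple again satisfies $d\ge 1$, $n'\ge 2$, $1\le a'<n'$, $(n',a')=1$, with $n'<n$, so by the inductive hypothesis the corresponding singularity, and hence $X$, is obtained from a base singularity of (i) by iterating the two moves.

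The only substantial point is the continued-fraction bookkeeping in (ii): reading the pair $(n,a)$ correctly off the graph $[b_1,\dots,b_r]$ and verifying that the move sends it to the resolution graph of $\frac{1}{d(n+a)^2}(1,d(n+a)a-1)$. The check that the total space of $\mathcal{X}$ carries only the stated quotient threefold singularity, so that $K_{\mathcal{X}}$ is $\mathbb{Q}$-Cartier, and the elementary arithmetic underlying the descent in (iii), are routine.
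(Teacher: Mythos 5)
The paper does not prove this proposition at all: it is quoted as a known result, attributed essentially to Wahl~\cite{Wahl-1981} (see also Koll\'ar--Shepherd-Barron~\cite{Kollar-Shepherd-Barron-1988}), so there is no internal proof to compare yours with. Judged against the classical argument, your proposal takes essentially the standard route and is correct in outline: you use the quoted classification for the necessary direction, supply the sufficient direction yourself via the index-one cover construction (the family $\{xy=z^{dm}+t\}/\mu_m$, whose total space has a single terminal quotient point of type $\frac1m(1,m-1,a)$, hence $\mathbb{Q}$-Gorenstein, and whose nonzero fibres are smooth because the action there is free), and reduce (ii) and (iii) to continued-fraction bookkeeping. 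Your parameter formulas are right: reversal of the chain replaces $(d,n,a)$ by $(d,n,n-a)$ and so legitimately reduces the first move to the second; the move $[b_1,\dots,b_r]\mapsto[b_1+1,b_2,\dots,b_r,2]$ takes the chain of $\frac{1}{dn^2}(1,dna-1)$ to that of $\frac{1}{d(n+a)^2}(1,d(n+a)a-1)$; and the descent in (iii) (replace $(d,n,a)$ by $(d,n-a,a)$ if $n>2a$, by $(d,a,2a-n)$ if $n<2a$, the case $n=2a$ being excluded by coprimality) stays in the admissible range, strictly decreases $n$, and terminates at $n=2$, i.e.\ at $[4]$ or $[3,2,\dots,2,3]$. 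The only step you assert rather than carry out, the chain identity in (ii), is indeed a short verification: writing $[b_1,\dots,b_i]=p_i/q_i$ with the usual convergent recursion, one has $p_r=dn^2$, $q_r=dna-1$, and since $p_{r-1}$ is the inverse of $q_r$ modulo $p_r$ while $(dna-1)(dn(n-a)-1)\equiv 1 \pmod{dn^2}$, one gets $p_{r-1}=dn(n-a)-1$ and then $q_{r-1}=da(n-a)-1$; the moved chain has numerator $2(p_r+q_r)-(p_{r-1}+q_{r-1})=d(n+a)^2$ and denominator $2q_r-q_{r-1}=d(n+a)a-1$, confirming your claim.

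One caveat concerns the statement rather than your argument: as literally phrased (here as in Koll\'ar--Shepherd-Barron), part (ii) also admits cyclic rational double points as input, and for those the conclusion fails (for instance $[2]\mapsto[2,3]$ yields $\frac15(1,3)$, which is not of class $T$). Your proof, like the standard one, tacitly establishes the intended non-RDP version; this suffices for (iii), since the predecessors produced by your descent are of the form $\frac{1}{dn'^2}(1,dn'a'-1)$ with $a'<n'$ and hence never rational double points.
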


The one-parameter $\mathbb{Q}$-Gorenstein smoothing of a singularity of class $T$ is interpreted topologically as a rational blowdown surgery defined by Fintushel--Stern~\cite{Fintushel-Stern-1997}, and extended by J.~Park~\cite{JPark-1977}.

\subsection{$P$-resolutions}

Koll{\'a}r--Shepherd-Barron~\cite{Kollar-Shepherd-Barron-1988} gave a one-to-one correspondence between the (reduced irreducible) components of the versal deformation space $\Def(X)$ of $X$ and the $P$-resolutions of $X$, which are defined as follows:

\begin{definition}
\label{definition:$P$-resolution}
A \emph{$P$-resolution} of a quotient surface singularity $X$ is a partial resolution $f \colon Y \to X$ such that $Y$ has only singularities of class $T$, and $K_Y$ is ample relative to $f$.
\end{definition}

KSB~\cite{Kollar-Shepherd-Barron-1988} also provided an algorithm for finding all $P$-resolutions of a given quotient surface singularity.

\begin{definition}[{KSB~\cite[Definition~3.12]{Kollar-Shepherd-Barron-1988}}]
\label{definition:maximal-resolution}
Let $X$ be a quotient surface singularity. A resolution $f \colon V \to X$ is \emph{maximal} if $K_V \sim f^{\ast}{K_X} - \sum{a_i E_i}$, where $0 < a_i < 1$, and for any proper birational morphism $g \colon U \to V$ that is not an isomorphism, we have $K_U \sim h^{\ast}{K_X} - \sum{b_j F_j}$, where $h=f \circ g$ and some $b_j \le 0$.
\end{definition}

\begin{proposition}[{KSB~\cite[Lemma~3.13]{Kollar-Shepherd-Barron-1988}}]
\label{proposition:maximal-resolution-uniqueness}
A quotient surface singularity $(X,0)$ has a unique maximal resolution.
\end{proposition}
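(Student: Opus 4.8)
The plan is to reduce the statement to a combinatorial procedure on the dual graph of the minimal resolution, and then obtain uniqueness (together with existence) from the confluence and termination of that procedure.

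First I would set up the discrepancy bookkeeping. Let $\pi_0\colon V_0\to X$ be the minimal resolution; since $X$ is a quotient singularity its exceptional divisor $\bigcup E_i$ is a simple normal crossing tree of smooth rational curves, and writing $K_{V_0}\sim\pi_0^{\ast}K_X-\sum a_i^0E_i$ one has $0\le a_i^0<1$. (For a rational double point $V_0$ is crepant and is taken to be the maximal resolution; so assume henceforth that $X$ is not a rational double point, in which case a short negative-definiteness argument on the resolution graph gives $a_i^0>0$ for all $i$.) Every resolution of $X$ dominates $V_0$, and under a point blow-up $\sigma\colon V'\to V$ of a resolution $f\colon V\to X$ the exceptional divisor stays simple normal crossing; writing $K_{V'}\sim(f\sigma)^{\ast}K_X-\sum a_iE_i'-a_CC$, the strict transforms keep their coefficients while the new curve $C$ has $a_C=\bigl(\sum_{p\in E_i}a_i\bigr)-1$. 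Hence $a_C<1$ always, and $a_C>0$ forces the center $p$ to be a node $E_i\cap E_j$ with $a_i+a_j>1$, in which case $a_C=a_i+a_j-1\in(0,1)$. It follows that a resolution $V$ is maximal if and only if (i) each of its exceptional curves has coefficient in $(0,1)$ and (ii) every node of its exceptional divisor has $a_i+a_j\le1$: under (i)--(ii) any further blow-up introduces a curve with nonpositive coefficient, which survives as its strict transform, while if some node violated (ii) then blowing it up would produce a modification over $X$ all of whose exceptional curves have positive coefficient, contradicting maximality.

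Passing to the variables $u_i=1-a_i\in(0,1)$, condition (ii) reads $u_i+u_j\ge1$, and blowing up a bad node amounts to subdividing an edge with $u_i+u_j<1$ by a new vertex of weight $u_i+u_j$, the endpoints keeping their weights. So the maximal resolutions are precisely the terminal configurations of this rewriting procedure on the weighted tree $\Gamma_0$; and running the coefficient formula of the previous paragraph along a factorization of $V\to V_0$ into point blow-ups shows, inductively, that every maximal resolution $V$ is obtained by the procedure. Uniqueness will therefore follow from Newman's lemma once I check local confluence and termination. Local confluence is immediate: two available moves at disjoint edges commute, and two moves at edges sharing a vertex produce the very same weighted tree in either order, because the weight of an inserted vertex is forced to equal the sum of the two (unaltered) endpoint weights.

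The real content is termination, which I expect to be the main obstacle. The mechanism is that the vertices ever created strictly between two fixed original curves $E_i,E_j$ carry weights generated from $u_i,u_j$ by iterated mediants, so each such weight has the form $mu_i+nu_j$ with $m,n\in\mathbb{Z}_{\ge0}$ and, by the blow-up condition, $mu_i+nu_j<1$; since the $u_k$ are rational with a common denominator $N$ (one may take $N=|G|$), only finitely many values occur, and the Stern--Brocot structure of mediant insertion forces the pairs $(m,n)$ appearing along a single path to be pairwise distinct. As the tree structure is preserved under subdivision and there are finitely many original edges, only finitely many subdivisions can be performed. Making this last point fully rigorous --- identifying precisely the paths along which newly created vertices can appear and verifying the mediant/Stern--Brocot form of their weights, so that the bounded denominator forces termination --- is the technical heart of the proof; everything else is formal.
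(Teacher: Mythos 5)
Your argument is, at its core, the same as the paper's: your weights $u_i=1-a_i$ are exactly the quantities $\alpha_i$ appearing in the paper's proof of Proposition~\ref{proposition:maximal-resolution-uniqueness}, your insertion rule $u_C=u_i+u_j$ is the identity $\alpha_k=\alpha_i+\alpha_j$, and your terminal condition ($u_i+u_j\ge 1$ at every node) is the paper's stopping criterion. What you add on top of the paper's write-up is genuinely useful: the paper only records the algorithm and asserts termination, leaving uniqueness to KSB, while you make uniqueness explicit by (a) characterizing maximal resolutions as exactly the configurations satisfying your (i)--(ii), (b) observing that any maximal resolution is reached from the minimal one by legal moves, since in a factorization of $V\to V_0$ a blow-up at anything other than a node with $a_i+a_j>1$ creates a curve of nonpositive coefficient that survives as a strict transform, and (c) invoking local confluence plus termination. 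These steps, including the negativity-lemma remark that $a_i^0>0$ when $(X,0)$ is not a rational double point, are correct.

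The one place you stop short---termination---is indeed needed (the paper compresses it into a single sentence), but it does not require the Stern--Brocot bookkeeping you flag as the technical heart. All weights occurring are positive integer multiples of $1/N$ for a fixed common denominator $N$ of the discrepancies of the minimal resolution, so every weight is at least $1/N$. When an edge $\{u,u'\}$ with $u+u'<1$ is subdivided, the two resulting edges have endpoint sums $2u+u'$ and $u+2u'$, each at least $(u+u')+1/N$; since only edges with endpoint sum $<1$ are ever subdivided, the nesting depth of subdivisions inside any original edge is less than $N$, and as each subdivision replaces one edge by two, the edges descended from a fixed original edge form a tree of bounded depth, so only finitely many moves are possible. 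With that short count in place of your mediant argument, your proof is complete and coincides in substance with the paper's (and KSB's) argument, while spelling out the confluence step that the paper leaves implicit.
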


\begin{proof}
We briefly recall the proof of KSB~\cite[Lemma~3.13]{Kollar-Shepherd-Barron-1988} because it provides an algorithm for finding the maximal resolution. Let $\pi \colon U \to X$ be the minimal resolution. We can write
\begin{equation*}
K_U \sim \pi^{\ast}{K_X} + \sum_{j}{(-1+\alpha_j) E_j}
\end{equation*}
where $E_j$ are the exceptional divisors. We successively blow up any point $Q = E_i \cap E_j$ with $\alpha_i + \alpha_j < 1$ until the quantities $\alpha_i$ satisfy $\alpha_i < 1$ for all $i$, but if $E_i \cap E_j \neq \varnothing$, then $\alpha_i + \alpha_j \ge 1$. This procedure stops after finitely many times because if $E_k$ is the exceptional divisor of the blow-up along $Q=E_i \cap E_j$, then $\alpha_k = \alpha_i+\alpha_j$.
\end{proof}

\begin{proposition}[{KSB~\cite[Lemma~3.14]{Kollar-Shepherd-Barron-1988}}]
\label{proposition:maximal-resolution-dominating}
Let $(X,0)$ be a quotient surface singularity and let $f \colon Z \to X$ be a partial resolution such that $Z$ has only quotient surface singularities and $K_Z$ is ample relative to $f$. Then $Z$ is dominated by the maximal resolution $X_m$ of $X$. In particular, every $P$-resolution of $(X,0)$ is dominated by $X_m$.
\end{proposition}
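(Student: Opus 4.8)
The plan is to show that the natural birational map $\phi\colon X_m \dashrightarrow Z$ over $X$ is in fact a morphism; this is precisely the statement that $Z$ is dominated by $X_m$, and a $P$-resolution $Y\to X$ satisfies the hypotheses with $Z=Y$ since class $T$ singularities are quotient singularities, so the last assertion is a special case. Since $X_m$ is smooth — it is built from the minimal resolution by blowing up points, see the proof of Proposition~\ref{proposition:maximal-resolution-uniqueness} — I would resolve the indeterminacy of $\phi$ by a composition of blow‑ups $p\colon W\to X_m$, obtaining a second birational morphism $q\colon W\to Z$ with $h_m\circ p = f\circ q =: h$. Because $p_{\ast}\mathcal{O}_W=\mathcal{O}_{X_m}$, the map $\phi$ is a morphism as soon as every $p$-exceptional curve of $W$ is contracted by $q$ (then $q$ factors through $p$ by rigidity). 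So suppose $C\subset W$ is $p$-exceptional but $q(C)$ is a curve $D\subset Z$; then $h(C)=f(D)$ is a point, so $D$ is $f$-exceptional, and since $q$ is birational $C$ must be the unique prime divisor of $W$ dominating $D$. Thus the whole proof reduces to the following claim: \textbf{(A)} every $f$-exceptional prime divisor $D$ of $Z$ has divisorial (not punctual) center on $X_m$. Indeed, granting (A), the valuation $\mathrm{ord}_D$ has as its center on $X_m$ a prime divisor $\widetilde D$, hence its center on $W$ is a prime divisor dominating both $\widetilde D$ and $D$ — which must be the curve $C$ above — and this divisor is not $p$-exceptional, a contradiction. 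Hence $\phi$ is a morphism.

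To set up (A), I would first observe that $\sum a_i D_i := K_Z - f^{\ast}K_X$ (sum over the $f$-exceptional prime divisors $D_i$) is $f$-exceptional and, since $K_Z$ is $f$-ample, also $f$-ample; by the negativity lemma $-\sum a_i D_i$ is effective, and $f$-ampleness forces $a_i = a(D_i;X) < 0$ for every $i$. Therefore (A) follows from the more general assertion that \emph{every divisor $F$ over $X$ with $a(F;X)<0$ has divisorial center on $X_m$}, equivalently (by contraposition) \emph{every divisor $F$ over $X$ whose center on $X_m$ is a point $P$ satisfies $a(F;X)\ge 0$}.

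This last statement is the technical core. Recall from the proof of Proposition~\ref{proposition:maximal-resolution-uniqueness} that $X_m$ is obtained from the minimal resolution by repeatedly blowing up nodes $E_i\cap E_j$ with $\alpha_i+\alpha_j<1$, and that it stops exactly when the configuration of exceptional curves $E_\ell\subset X_m$ (a simple normal crossing tree of rational curves) satisfies $\alpha_\ell+\alpha_{\ell'}\ge 1$ whenever $E_\ell\cap E_{\ell'}\neq\varnothing$, where $a(E_\ell;X)=-1+\alpha_\ell$ with $0<\alpha_\ell\le 1$. Given $F$ centered at a point $P\in X_m$, realise $F$ by point blow-ups $U_0=X_m,\ U_{k+1}=\mathrm{Bl}_{P_{k+1}}U_k$ with $P_1=P$; each $U_k$ is smooth with simple normal crossing exceptional locus over $X$, and writing $K_{U_k}=g_k^{\ast}K_X+\sum_\Gamma a_\Gamma\Gamma$ over the $g_k$-exceptional curves, one has $a(E^{(k+1)};X)=1+\sum_{\Gamma\ni P_{k+1}}a_\Gamma$, a sum over at most two transverse curves. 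A short induction on $k$, splitting into the cases where the (at most two) curves through $P_{k+1}$ are strict transforms of the $E_\ell$ or earlier exceptional divisors $E^{(j)}$, and using $\alpha_\ell+\alpha_{\ell'}\ge 1$, $\alpha_\ell>0$, and the inductive bound $a_{E^{(j)}}\ge 0$, yields $a(E^{(k+1)};X)\ge 0$ in every case. Taking $E^{(k+1)}=F$ gives $a(F;X)\ge 0$, which proves the core statement and hence (A).

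I expect the main obstacle to be exactly this inductive discrepancy estimate — in particular the ``mixed'' case, where $P_{k+1}$ lies on one strict transform $\widetilde E_\ell$ and one new exceptional curve $E^{(j)}$, since this is where the stopping condition $\alpha_\ell+\alpha_{\ell'}\ge 1$ of the maximal-resolution algorithm together with the strict positivity $\alpha_\ell>0$ are used in an essential way; one must also be careful that the simple normal crossing property (no triple points, all intersections transverse) persists under all the blow-ups, so that only at most two curves ever pass through a blown-up point. Everything else — the negativity lemma input, the rigidity/contraction argument on the common resolution $W$, and the reduction of the $P$-resolution case — is routine.
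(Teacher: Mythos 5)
The paper does not prove this proposition at all---it is imported verbatim from KSB~\cite[Lemma~3.14]{Kollar-Shepherd-Barron-1988}---so there is no in-paper argument to measure you against; judged on its own merits, your proof is correct, and it is in essence a reconstruction of the KSB argument. Your two pillars are the right ones: relative ampleness of $K_Z$ together with negative definiteness of the exceptional intersection matrix forces $a(D_i;X)<0$ for every $f$-exceptional prime divisor $D_i\subset Z$, and the stopping condition of the algorithm in the proof of Proposition~\ref{proposition:maximal-resolution-uniqueness} yields, via your induction over towers of point blow-ups, that every divisorial valuation with punctual center on $X_m$ has discrepancy $\ge 0$; hence each $D_i$ is extracted on $X_m$, and the common-resolution/rigidity argument turns $X_m\dashrightarrow Z$ into a morphism. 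KSB get there slightly differently: they pass through the minimal resolution $V\to Z$, check that all discrepancies of $V$ over $X$ are negative, and conclude that every exceptional divisor of $V$ already appears on $X_m$, so $X_m$ dominates $V$ and hence $Z$; your version skips $V$ but instead must prove (as you do, correctly) the standard criterion that the birational map is a morphism once every target divisor has divisorial center on the source. Two small touch-ups: the case of your induction that genuinely uses the stopping inequality $\alpha_\ell+\alpha_{\ell'}\ge 1$ is the blow-up at a node $\widetilde E_\ell\cap\widetilde E_{\ell'}$ of two strict transforms (legitimate because blow-ups never create new intersections of strict transforms, so adjacency on $U_k$ implies adjacency on $X_m$), whereas the ``mixed'' case $\widetilde E_\ell\cap E^{(j)}$ needs only $\alpha_\ell>0$ and the inductive bound $a_{E^{(j)}}\ge 0$; and it is worth noting that your induction only uses $0<\alpha_\ell\le 1$ and the adjacency inequality, so it also covers the degenerate ADE case $\alpha_\ell=1$, where $X_m$ is just the minimal resolution.
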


\begin{example}[Continued from Example~\ref{example:cyclic-19/7}]
\label{example:cyclic-19/7-$P$-resolution}
Let $(X,0)$ be a cyclic quotient surface singularity of type $\frac{1}{19}(1,7)$. The minimal resolution is
\begin{equation*}
\begin{tikzpicture}
\node[bullet] (-10) at (-1,0) [label=below:{$-3$},label=above:{$8/19$}] {};

\node[bullet] (00) at (0,0) [label=below:{$-4$},label=above:{$5/19$}] {};

\node[bullet] (10) at (1,0) [label=below:{$-2$},label=above:{$12/19$}] {};

\draw [-] (-10)--(00)--(10);
\end{tikzpicture}
\end{equation*}
where the positive numbers are the $\alpha_i$ in the proof of Proposition~\ref{proposition:maximal-resolution-uniqueness}. So its maximal resolution is
\begin{equation*}
\begin{tikzpicture}
\node[bullet] (-30) at (-3,0) [label=below:{$-4$},label=above:{$8/19$}] {};

\node[bullet] (-20) at (-2,0) [label=below:{$-2$},label=above:{$13/19$}] {};

\node[bullet] (-10) at (-1,0) [label=below:{$-1$},label=above:{$18/19$}] {};

\node[bullet] (00) at (0,0) [label=below:{$-7$},label=above:{$5/19$}] {};

\node[bullet] (10) at (1,0) [label=below:{$-1$},label=above:{$17/19$}] {};

\node[bullet] (20) at (2,0) [label=below:{$-3$},label=above:{$12/19$}] {};

\draw [-] (-30)--(-20)--(-10)--(00)--(10)--(20);
\end{tikzpicture}
\end{equation*}
Then it has three $P$-resolutions:
\begin{equation*}
\begin{tikzpicture}
\node[bullet] (-10) at (-1,0) [label=below:{$-3$},label=above:{}] {};
\node[] (-105) at (-1,0.5) [] {$Y_1$};

\node[bullet] (00) at (0,0) [label=below:{$-4$}] {};

\node[rectangle] (10) at (1,0) [label=below:{$-2$}] {};

\draw [-] (-10)--(00)--(10);
\end{tikzpicture}
\quad
\begin{tikzpicture}
\node[bullet] (-10) at (-1,0) [label=below:{$-3$}] {};
\node[] (-105) at (-1,0.5) [] {$Y_2$};

\node[rectangle] (00) at (0,0) [label=below:{$-4$}] {};

\node[bullet] (10) at (1,0) [label=below:{$-2$}] {};

\draw [-] (-10)--(00)--(10);
\end{tikzpicture}
\quad
\begin{tikzpicture}
\node[rectangle] (-20) at (-2,0) [label=below:{$-4$}] {};
\node[] (-205) at (-2,0.5) [] {$Y_3$};

\node[bullet] (-10) at (-1,0) [label=below:{$-1$}] {};

\node[rectangle] (00) at (0,0) [label=below:{$-5$}] {};

\node[rectangle] (10) at (1,0) [label=below:{$-2$}] {};

\draw [-] (-20)--(-10)--(00)--(10);
\end{tikzpicture}
\end{equation*}
Here a linear chain of vertices decorated by a rectangle $\square$ denotes curves on the minimal resolution of a $P$-resolution which are contracted to a singularity of class $T$ on the $P$-resolution.
\end{example}

\begin{example}\label{example:I_30(7-3)+7}
Let $(X,0)$ be an icosahedral singularity $I_{30(7-3)+7}$ whose dual graph of the minimal resolution is given by
\begin{equation*}
\begin{tikzpicture}
\node[bullet] (-20) at (-2,0) [label=below:{$-2$}] {};
\node[bullet] (-10) at (-1,0) [label=below:{$-2$}] {};

\node[bullet] (00) at (0,0) [label=below:{$-6$}] {};

\node[bullet] (01) at (0,1) [label=left:{$-2$}] {};

\node[bullet] (10) at (1,0) [label=below:{$-2$}] {};
\node[bullet] (20) at (2,0) [label=below:{$-3$}] {};

\draw [-] (00)--(-10)--(-20);
\draw [-] (00)--(01);
\draw [-] (00)--(10)--(20);
\end{tikzpicture}
\end{equation*}
Then there are four $P$-resolutions:
\begin{equation*}
\begin{tikzpicture}
\node[rectangle] (-20) at (-2,0) [label=below:{$-2$}] {};
\node[rectangle] (-10) at (-1,0) [label=below:{$-2$}] {};

\node[bullet] (00) at (0,0) [label=below:{$-6$}] {};

\node[rectangle] (01) at (0,1) [label=left:{$-2$}] {};

\node[rectangle] (10) at (1,0) [label=below:{$-2$}] {};
\node[bullet] (20) at (2,0) [label=below:{$-3$}] {};

\draw [-] (00)--(-10)--(-20);
\draw [-] (00)--(01);
\draw [-] (00)--(10)--(20);
\end{tikzpicture}
\qquad
\begin{tikzpicture}
\node[bullet] (-20) at (-2,0) [label=below:{$-2$}] {};
\node[rectangle] (-10) at (-1,0) [label=below:{$-2$}] {};

\node[rectangle] (00) at (0,0) [label=below:{$-6$}] {};

\node[bullet] (01) at (0,1) [label=left:{$-2$}] {};

\node[rectangle] (10) at (1,0) [label=below:{$-2$}] {};
\node[rectangle] (20) at (2,0) [label=below:{$-3$}] {};

\draw [-] (00)--(-10)--(-20);
\draw [-] (00)--(01);
\draw [-] (00)--(10)--(20);
\end{tikzpicture}
\end{equation*}

\begin{equation*}
\begin{tikzpicture}
\node[rectangle] (-20) at (-2,0) [label=below:{$-2$}] {};
\node[rectangle] (-10) at (-1,0) [label=below:{$-2$}] {};

\node[rectangle] (00) at (0,0) [label=below:{$-6$}] {};

\node[bullet] (01) at (0,1) [label=left:{$-2$}] {};

\node[bullet] (10) at (1,0) [label=below:{$-2$}] {};
\node[bullet] (20) at (2,0) [label=below:{$-3$}] {};

\draw [-] (00)--(-10)--(-20);
\draw [-] (00)--(01);
\draw [-] (00)--(10)--(20);
\end{tikzpicture}
\qquad
\begin{tikzpicture}
\node[rectangle] (-20) at (-2,0) [label=below:{$-2$}] {};
\node[bullet] (-10) at (-1,0) [label=below:{$-2$}] {};

\node[rectangle] (00) at (0,0) [label=below:{$-6$}] {};

\node[rectangle] (01) at (0,1) [label=left:{$-2$}] {};

\node[rectangle] (10) at (1,0) [label=below:{$-2$}] {};
\node[rectangle] (20) at (2,0) [label=below:{$-3$}] {};

\draw [-] (00)--(-10)--(-20);
\draw [-] (00)--(01);
\draw [-] (00)--(10)--(20);
\end{tikzpicture}
\end{equation*}
\end{example}

\begin{example}\label{example:I_30(2-2)+29}
Let $(X,0)$ be an icosahedral singularity $I_{30(2-2)+29}$ whose dual graph of the minimal resolution is given by
\begin{equation*}
\begin{tikzpicture}
\node[bullet] (-10) at (-1,0) [label=below:{$-3$}] {};

\node[bullet] (00) at (0,0) [label=below:{$-2$}] {};

\node[bullet] (01) at (0,1) [label=left:{$-2$}] {};

\node[bullet] (10) at (1,0) [label=below:{$-5$}] {};

\draw [-] (00)--(-10);
\draw [-] (00)--(01);
\draw [-] (00)--(10);
\end{tikzpicture}
\end{equation*}
Then there are three $P$-resolutions $Y_1$, $Y_2$, $Y_3$:
\begin{equation*}
\begin{tikzpicture}
\node at (-1,1) {$Y_1$};

\node[bullet] (-10) at (-1,0) [label=below:{$-3$}] {};

\node[rectangle] (00) at (0,0) [label=below:{$-2$}] {};

\node[rectangle] (01) at (0,1) [label=left:{$-2$}] {};

\node[bullet] (10) at (1,0) [label=below:{$-5$}] {};

\draw [-] (00)--(-10);
\draw [-] (00)--(01);
\draw [-] (00)--(10);
\end{tikzpicture}
\quad
\begin{tikzpicture}
\node at (-1,1) {$Y_2$};

\node[bullet] (-10) at (-1,0) [label=below:{$-3$}] {};

\node[rectangle] (00) at (0,0) [label=below:{$-2$}] {};

\node[bullet] (01) at (0,1) [label=left:{$-2$}] {};

\node[rectangle] (10) at (1,0) [label=below:{$-5$}] {};

\draw [-] (00)--(-10);
\draw [-] (00)--(01);
\draw [-] (00)--(10);
\end{tikzpicture}
\quad
\begin{tikzpicture}
\node at (-1,1) {$Y_3$};

\node[rectangle] (-10) at (-1,0) [label=below:{$-3$}] {};

\node[rectangle] (00) at (0,0) [label=below:{$-5$}] {};

\node[rectangle] (01) at (0,1) [label=left:{$-2$}] {};

\node[bullet] (10) at (1,0) [label=below:{$-1$}] {};

\node[rectangle] (20) at (2,0) [label=below:{$-2$}] {};

\node[rectangle] (30) at (3,0) [label=below:{$-2$}] {};

\node[rectangle] (40) at (4,0) [label=below:{$-6$}] {};

\draw [-] (00)--(-10);
\draw [-] (00)--(01);
\draw [-] (00)--(10)--(20)--(30)--(40);
\end{tikzpicture}
\end{equation*}
\end{example}

\begin{remark}\label{remark:erratum-Stevens}
Stevens~\cite{Stevens-1991, Stevens-1993} provided an algorithm for finding all $P$-resolutions of a given non-cyclic quotient surface singularity. But the numbers of $P$-resolutions of the icosahedral singularities $I_{30(7-3)+7}$ and $I_{30(6-6)+29}$ above in Stevens~\cite{Stevens-1993} are erroneously claimed that to be three and two, respectively. Here we found one more $P$-resolution for each of the singularities, which was confirmed by Stevens~\cite{Stevens-2015}.
\end{remark}

Let $f \colon Y \to X$ be a $P$-resolution. There is an induced map $F \colon \Def(Y) \to \Def(X)$ of deformation spaces by Wahl~\cite{Wahl-1976}, which we refer to as \emph{blowing-down deformations}. On the other hand, there is an irreducible subspace $\DefQG(Y) \subset \Def(Y)$ that corresponds to the $\mathbb{Q}$-Gorenstein deformations of singularities of class $T$ in $Y$.

\begin{proposition}[{KSB~\cite[Theorem~3.9]{Kollar-Shepherd-Barron-1988}}]
Let $X$ be a quotient surface singularity. Then

\begin{enumerate}
\item If $f \colon Y \to X$ is a $P$-resolution, then $F(\DefQG(Y))$ is an irreducible component of $\Def(X)$.

\item If $f_1 \colon Y_1 \to X$ and $f_2 \colon Y_2 \to X$ are two $P$-resolutions of $X$ that are not isomorphic over $X$, and if $F_1$ and $F_2$ are the corresponding maps of deformation spaces, then $F_1(\DefQG(Y_1)) \neq F_2(\DefQG(Y_2))$.

\item Every component of $\Def(X)$ arises in this way.
\end{enumerate}
\end{proposition}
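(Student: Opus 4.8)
The plan is to prove all three statements simultaneously, the common engine being the \emph{relative canonical model} of a one‑parameter (or versal) family, together with the local structure of class $T$ singularities from Proposition~\ref{proposition:T-algorithm}. As a preliminary I would record that $\DefQG(Y)$ is smooth, hence irreducible: each singular point of $Y$ is of class $T$ and therefore has an unobstructed $\mathbb{Q}$‑Gorenstein deformation functor (with smooth base of the expected dimension, by Wahl~\cite{Wahl-1981} and the description following Proposition~\ref{proposition:T-algorithm}); since $X$ is rational one has $H^{1}(Y,\mathcal{O}_{Y})=0$ and the $H^{2}$ obstruction space for global $\mathbb{Q}$‑Gorenstein deformations vanishes for dimension reasons, so the local smoothness globalizes. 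Thus $\DefQG(Y)$ is a smooth (irreducible) germ, and $F(\DefQG(Y))$ is an irreducible locally closed subset of $\Def(X)$.

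Next I would show that $F$ restricted to $\DefQG(Y)$ is injective with injective differential, hence a closed immersion onto its image. The key is that $K_{Y}$ is ample relative to $f\colon Y\to X$: for a $\mathbb{Q}$‑Gorenstein deformation $\phi\colon\mathcal{Y}\to\Delta$ of $Y$ the sheaf $K_{\mathcal{Y}}$ is $\mathbb{Q}$‑Cartier and, after possibly shrinking, $f$‑ample, so $\mathcal{Y}=\Proj\bigoplus_{m\ge 0}(f_{\mathcal{X}})_{*}\mathcal{O}_{\mathcal{Y}}(mK_{\mathcal{Y}})$ is recovered from its blow‑down $\mathcal{X}=F(\mathcal{Y})$ as the relative canonical model of $\mathcal{X}$ over $\Delta$. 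The same argument at first order shows $dF$ is injective on $T^{1}_{Y,\mathrm{QG}}$. Consequently $\DefQG(Y)$ is identified with its image in $\Def(X)$.

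The heart of the matter --- and the step I expect to be the main obstacle --- is to upgrade "locally closed subset'' to "irreducible component'': I must show that every deformation of $X$ whose class lies near the generic point of $F(\DefQG(Y))$ lifts to a $\mathbb{Q}$‑Gorenstein deformation of $Y$. For this I would take a versal deformation $\mathcal{X}\to S$, resolve the total space, and run the minimal model program for $3$‑folds relative to $\mathcal{X}$ over $S$; the output is a relative canonical model $\mathcal{Y}'\to\mathcal{X}$ whose central fibre $Y'\to X$ is again a $P$‑resolution --- the surface singularities occurring on the central fibre of a relative canonical model with terminal total space are exactly the class $T$ ones, which is the structural theorem underlying this whole circle of ideas --- and whose formation commutes with restriction to the special point, and which is $\mathbb{Q}$‑Gorenstein over $S$. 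By the uniqueness of the relative canonical model (applied over the special point), $Y'\cong Y$ over $X$, so $\mathcal{X}\to S$ factors through $\DefQG(Y)$; this forces $F(\DefQG(Y))$ to be open in its closure, i.e.\ a component of $\Def(X)$. The delicate ingredients here are the flatness and base‑change properties of the relative canonical construction in families, and the identification of the central‑fibre singularities with class $T$.

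Parts (2) and (3) then follow quickly. For (2), if $F_{1}(\DefQG(Y_{1}))=F_{2}(\DefQG(Y_{2}))$, then at a generic point of the common component a single deformation $\mathcal{X}$ of $X$ admits $\mathbb{Q}$‑Gorenstein $P$‑resolutions deforming both $Y_{1}$ and $Y_{2}$; since the $P$‑resolution arises as the relative canonical model of $\mathcal{X}$ it is unique, whence $Y_{1}\cong Y_{2}$ over $X$, contrary to hypothesis. For (3), let $C$ be any irreducible component of $\Def(X)$ and choose a one‑parameter smoothing $\mathcal{X}\to\Delta$ with class in $C$ (it exists because $X$ is rational, so every component contains a smoothing); running the relative $3$‑fold MMP on a resolution of $\mathcal{X}$ over $\Delta$ produces a relative canonical model $\mathcal{Y}\to\mathcal{X}$ whose central fibre $Y\to X$ is a $P$‑resolution and whose family is $\mathbb{Q}$‑Gorenstein, so the chosen deformation lies in $F(\DefQG(Y))$; by part (1) that locus is a component, hence $C=F(\DefQG(Y))$.
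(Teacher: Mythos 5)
The paper does not prove this proposition; it is quoted as is from Koll\'ar--Shepherd-Barron~\cite[Theorem~3.9]{Kollar-Shepherd-Barron-1988}, so there is no in-paper argument to compare against. Your sketch is a reconstruction of the KSB strategy, and its main pillars are the right ones: smoothness (hence irreducibility) of $\DefQG(Y)$ coming from unobstructedness of $\mathbb{Q}$-Gorenstein deformations of class $T$ singularities together with vanishing of the global obstruction space, injectivity of the blow-down map by recovering $\mathcal{Y}$ as $\Proj\bigoplus_{m\ge 0}f_{*}\mathcal{O}_{\mathcal{Y}}(mK_{\mathcal{Y}})$ from its image, and the identification of $P$-resolutions with central fibres of relative canonical models of three-fold total spaces.

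The step that does not work as written is the lifting argument in part (1): you run the three-fold MMP on a resolution of the total space of the \emph{versal} family $\mathcal{X}\to S$. When $\dim S>1$ the total space is not a three-fold, and the existence of a relative canonical model over a higher-dimensional base, with its central fibre compatible with base change, is precisely what cannot be taken for granted here; your part (2) also implicitly uses that the canonical model of a family through a generic point of $F_{1}(\DefQG(Y_{1}))$ has central fibre $Y_{1}$, which is the same issue. The KSB argument avoids this by working only with one-parameter families: every component of $\Def(X)$ contains a smoothing (rationality of $X$); the canonical model of a resolution of the three-fold total space of any one-parameter smoothing yields a $P$-resolution together with a $\mathbb{Q}$-Gorenstein smoothing blowing down to it; since there are only finitely many $P$-resolutions, the finitely many irreducible closed germs $F(\DefQG(Y))$ cover the reduced versal base; and uniqueness of three-fold canonical models over a generic arc shows that none of these germs is properly contained in another. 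Hence each is a component, which gives (1), (2) and (3) simultaneously. If you replace the versal-base MMP by this one-parameter argument, the rest of your outline goes through.
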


Since Milnor fibers are invariants of irreducible components of $\Def(X)$, there is a one-to-one correspondence between Milnor fibers and $P$-resolutions of $(X,0)$.

\subsection{$M$-resolutions}

One may establish another one-to-one correspondence between the components of $\Def(X)$ and certain partial resolutions of $X$, the so-called \emph{$M$-resolutions}. See Behnke--Christophersen~\cite{Behnke-Christophersen-1994} for details on $M$-resolutions.

\begin{definition}
A \emph{Wahl singularity} is a cyclic quotient surface singularity of class $T$ that admits a smoothing whose Milnor fiber $M$ is a rational homology disk, i.e., $H^i(M, \mathbb{Q})=0$ for all $i \ge 1$.
\end{definition}

We remark that a Wahl singularity is a cyclic quotient surface singularity of type \[\frac{1}{n^2}(1, na-1)\] and it can be obtained by iterating the steps described in Proposition~\ref{proposition:T-algorithm} (ii) to
\begin{tikzpicture}[scale=0.5]
\node[bullet] (00) at (0,0) [label=above:{$-4$}] {};
\end{tikzpicture}.

\begin{definition}[{Behnke--Christophersen~\cite[p.882]{Behnke-Christophersen-1994}}]
\label{definition:$M$-resolution}
An \emph{$M$-resolution} of a quotient surface singularity $(X,0)$ is a partial resolution $f \colon Y_M \to X$ such that
\begin{enumerate}
\item $Y_M$ has only Wahl singularities.

\item $K_{Y_M}$ is nef relative to $f$, i.e., $K_{Y_M} \cdot E \ge 0$ for all $f$-exceptional curves $E$.
\end{enumerate}
\end{definition}

Notice that the minimal resolution is an $M$-resolution.

\begin{theorem}[{Behnke--Christophersen~\cite[3.1.4, 3.3.2, 3.4]{Behnke-Christophersen-1994}}]
Let $(X,0)$ be a quotient surface singularity. Then

\begin{enumerate}
\item Each $P$-resolution $Y \to X$ is dominated by a unique $M$-resolution $Y_M \to X$, i.e., there is a surjection $g \colon Y_M \to Y$, with the property that $K_{Y_M} = g^{\ast}{K_Y}$.

\item There is a surjective map $\DefQG(Y_M) \to \DefQG(Y)$ induced by blowing down deformations.

\item There is a one-to-one correspondence between the components of $\Def(X)$ and $M$-resolutions of $X$.
\end{enumerate}
\end{theorem}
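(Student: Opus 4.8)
The plan is to establish the three assertions in order: assertion~(1) is essentially local at the singular points of $Y$, assertion~(2) is a deformation-theoretic statement deduced from the structure found in~(1), and assertion~(3) is then a formal consequence combined with the KSB correspondence (the proposition above, \cite{Kollar-Shepherd-Barron-1988}) between $P$-resolutions of $X$ and irreducible components of $\Def(X)$. For~(1), since the claim concerns a partial resolution of $X$ with prescribed singularity types plus a crepancy condition, I would first reduce it to: every germ $(Z,p)$ of class $T$ admits a \emph{unique} partial resolution $g_Z\colon Z_M\to Z$ with $Z_M$ having only Wahl singularities and $K_{Z_M}=g_Z^{\ast}K_Z$. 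For a rational double point this is the minimal resolution (smooth, hence vacuously ``only Wahl'', and crepant); for a cyclic class $T$ germ $\frac{1}{dn^2}(1,dna-1)$ it is the identity when $d=1$, and for $d\ge 2$ one constructs $Z_M$ by induction on the number of steps in the algorithm of Proposition~\ref{proposition:T-algorithm}, extracting from the minimal resolution chain $[b_1,\dots,b_r]$ exactly the curves realizing the crepant contractions to Wahl sub-chains, while tracking discrepancies as in the proof of Proposition~\ref{proposition:maximal-resolution-uniqueness}. Uniqueness: any crepant partial resolution with log terminal singularities sits between $Z$ and its minimal resolution, and $K_{Z_M}=g_Z^{\ast}K_Z$ forces all discrepancies of the $g_Z$-exceptional curves to vanish; together with the requirement that the remaining curves contract to \emph{Wahl} singularities, this pins down a unique decomposition of the dual chain into maximal crepantly contractible Wahl sub-chains. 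Gluing these local models over the finitely many singular points of $Y$ yields $g\colon Y_M\to Y$ with $K_{Y_M}=g^{\ast}K_Y$; it is an $M$-resolution because $K_Y$ is $f$-ample, so a curve $C$ contracted by $f\circ g$ satisfies $K_{Y_M}\cdot C=0$ if $g(C)$ is a point and $K_{Y_M}\cdot C=(\deg g|_C)(K_Y\cdot g_{\ast}C)>0$ if $g(C)$ is an $f$-exceptional curve.

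For~(2), I would first note that $\Def(Y)$ and $\Def(Y_M)$ restrict to smooth (unobstructed) subspaces $\DefQG(Y)$ and $\DefQG(Y_M)$: both surfaces are rational with quotient singularities, so the global $H^2$-obstruction vanishes (by a Flenner--Zaidenberg/Lee--Park type vanishing, in the spirit of Proposition~\ref{proposition:Extendable-cyclic}), and $\mathbb{Q}$-Gorenstein deformations of class $T$, resp. Wahl, germs are themselves unobstructed. Wahl's blowing-down of deformations provides $\Def(Y_M)\to\Def(Y)$, and since $g$ is crepant it sends $\mathbb{Q}$-Gorenstein deformations to $\mathbb{Q}$-Gorenstein deformations, hence restricts to $\DefQG(Y_M)\to\DefQG(Y)$. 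For surjectivity the key step is to produce, for an arbitrary $\mathbb{Q}$-Gorenstein deformation $\mathcal{Y}\to T$ of $Y$, a simultaneous crepant partial resolution $\mathcal{Y}_M\to\mathcal{Y}$ over $T$: over each class $T$ point the crepant resolution $g_Z$ from~(1) deforms in the $\mathbb{Q}$-Gorenstein family (most transparently on the index-one covers, where it becomes a genuine resolution commuting with the deformation), and $\mathcal{Y}_M$ is then a $\mathbb{Q}$-Gorenstein deformation of $Y_M$ blowing down to $\mathcal{Y}$. Equivalently, I would verify that $T^1_{QG}(Y_M)\to T^1_{QG}(Y)$ is surjective and invoke the smoothness of both spaces.

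For~(3), KSB gives a bijection $Y\mapsto F(\DefQG(Y))$ between $P$-resolutions of $X$ and irreducible components of $\Def(X)$, and~(1) gives a map $Y\mapsto Y_M$ from $P$-resolutions to $M$-resolutions. To invert it I would, given an $M$-resolution $Y_M$ (for which $K_{Y_M}$ is already nef over $X$), take its relative canonical model over $X$, i.e. contract the $K_{Y_M}$-trivial curves over $X$; using the structure of Wahl chains (equivalently, reversing the algorithm of Proposition~\ref{proposition:T-algorithm}) one checks that the contracted surface $Y$ has only class $T$ singularities and $K_Y$ is $f$-ample, so $Y$ is a $P$-resolution with $K_{Y_M}=g^{\ast}K_Y$, whence $Y_M=(Y)_M$ by the uniqueness in~(1). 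This shows that $M$-resolutions of $X$ are in bijection with $P$-resolutions, hence with components of $\Def(X)$ (cf. Proposition~\ref{proposition:maximal-resolution-dominating} for the compatibility with the maximal resolution), and~(2) guarantees that this identification is the one realized on deformation spaces, since $\DefQG(Y_M)\twoheadrightarrow\DefQG(Y)\xrightarrow{F}\Def(X)$ has the same image as $F(\DefQG(Y))$.

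I expect the main obstacle to be~(2): establishing the unobstructedness of $\DefQG(Y)$ and $\DefQG(Y_M)$ and, above all, constructing the simultaneous crepant partial resolution of an arbitrary $\mathbb{Q}$-Gorenstein family over $Y$ is where the genuine deformation theory enters, as opposed to the (delicate but essentially combinatorial) birational bookkeeping underlying the uniqueness in~(1) and the verification in~(3) that the relative canonical model of an $M$-resolution has only class $T$ singularities.
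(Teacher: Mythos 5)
First, a point of reference: the paper does not prove this theorem at all --- it is quoted verbatim from Behnke--Christophersen with a citation to [3.1.4, 3.3.2, 3.4], and the only supporting material the paper supplies is the explicit construction, immediately after the statement, of the \emph{crepant} $M$-resolution of a class $T$ germ $\frac{1}{dn^2}(1,dna-1)$ (with $d-1$ exceptional curves and $d$ Wahl points of type $\frac{1}{n^2}(1,na-1)$), together with Lemma~\ref{lemma:$M$-resolution-by-blowing-up}. So your proposal can only be measured against that construction and against the original reference. Your overall architecture --- localize (1) at the class $T$ points, build and prove uniqueness of a local crepant partial resolution with only Wahl singularities, glue, deduce (2) from a simultaneous crepant partial resolution of a $\mathbb{Q}$-Gorenstein family, and obtain (3) from the KSB bijection together with the observation that $Y$ is recovered from $Y_M$ as the relative canonical model over $X$ --- is the correct one and matches how Behnke--Christophersen proceed; your identification of (2) as the place where genuine deformation theory enters is also accurate.

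There is, however, a concrete error in your uniqueness argument for (1). You assert that ``any crepant partial resolution with log terminal singularities sits between $Z$ and its minimal resolution,'' and you then decompose the \emph{minimal resolution chain} of the class $T$ germ into Wahl sub-chains. This is false: the crepant $M$-resolution of $\frac{1}{dn^2}(1,dna-1)$ with $d\ge 2$ extracts $d-1$ exceptional curves of discrepancy exactly $0$, and these curves do \emph{not} appear on the minimal resolution of the germ --- the minimal resolution of the crepant $M$-resolution is obtained by blowing \emph{up} the minimal resolution of $Z$ (this is precisely Lemma~\ref{lemma:$M$-resolution-by-blowing-up} and the paper's example $\frac{1}{4d}(1,2d-1)$, whose minimal resolution $[3,2,\dotsc,2,3]$ becomes $[4,1,4,\dotsc,1,4]$). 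In particular the Wahl chains of $Z_M$ are sub-chains of a blow-up of the minimal resolution, not of the minimal resolution itself, so the ``unique decomposition of the dual chain'' you invoke does not exist where you look for it. The repair is to run the uniqueness argument on the finite set of exceptional divisors of discrepancy $0$ over the germ: crepancy ($K_{Z_M}=g_Z^{\ast}K_Z$) forces $Z_M$ to extract only such divisors, and one then verifies combinatorially that exactly one such extraction leaves only Wahl singularities, namely the chain consisting of $d$ copies of $[e_1,\dotsc,e_s]$ (the resolution of $\frac{1}{n^2}(1,na-1)$) separated by $(-1)$-curves. With that correction the remainder of your outline --- the nefness computation via the projection formula in (1), the inverse map in (3) via the relative canonical model, and the reduction of (2) to a simultaneous crepant partial resolution on index-one covers --- is sound, though of course it remains a sketch of the argument actually carried out in Behnke--Christophersen rather than a self-contained proof.
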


We briefly recall how to construct the $M$-resolution corresponding to a given $P$-resolution. At first, we describe a special $M$-resolution of a singularity of class $T$. Let $Z_0$ be a cyclic quotient surface singularity of type $\frac{1}{dn^2}(1,dna-1)$. The \emph{crepant} $M$-resolution $Y_0 \to Z_0$ of $Z_0$ is defined by the following partial resolution of $Z_0$: $Y_0$ has $d-1$ exceptional components $C_i \cong \mathbb{CP}^1$ ($i=1,\dotsc,d-1$) and $d$ singular points $P_i$ of type $\frac{1}{n^2}(1, na-1)$ as described in the following figure:
\begin{center}
\includegraphics{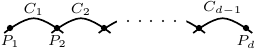}
\end{center}
The proper transforms of $C_i$'s in the minimal resolution $\widetilde{Y}_0$ of $Y_0$ are $(-1)$-curves. So the minimal resolution $\widetilde{Y}_0$ is given by
\begin{equation*}
\begin{tikzpicture}
\node[longrectangle] (00) at (0,0) [] {\ {\Large $\ast$}\raisebox{0.2em}{---}{\Large $\ast$}\ \ };

\node[bullet] (10) at (1,0) [label=above:{$-1$}] {};

\node[longrectangle] (20) at (2,0) [] {\ {\Large $\ast$}\raisebox{0.2em}{---}{\Large $\ast$}\ \ };

\node[bullet] (30) at (3,0) [label=above:{$-1$}] {};

\node[empty] (350) at (3.5,0) [] {};
\node[empty] (40) at (4,0) [] {};

\node[bullet] (450) at (4.5,0) [label=above:{$-1$}] {};

\node[longrectangle] (550) at (5.5,0) [] {\ {\Large $\ast$}\raisebox{0.2em}{---}{\Large $\ast$}\ \ };

\draw [-] (00)--(10);
\draw [-] (10)--(20);
\draw [-] (20)--(30);
\draw [-] (30)--(350);
\draw [dotted] (30)--(450);
\draw [-] (40)--(450);
\draw [-] (450)--(550);
\end{tikzpicture}
\end{equation*}
where
\begin{tikzpicture}
\node[longrectangle] (00) at (0,0) [] {\ {\Large $\ast$}\raisebox{0.2em}{---}{\Large $\ast$}\ \ };
\end{tikzpicture}
is the minimal resolution of the singularity $\frac{1}{n^2}(1,na-1)$. One can check that the above linear chain contracts to the singularity $Z_0=\frac{1}{dn^2}(1,dna-1)$.

The $M$-resolution of a given $P$-resolution $Y \to X$ of a quotient surface singularity $(X,0)$ in the theorem above is obtained by taking the crepant $M$-resolutions for each singularity of class $T$ in $Y$. So we call this $M$-resolution again by the \emph{crepant $M$-resolution} corresponding to a $P$-resolution.

\begin{lemma}\label{lemma:$M$-resolution-by-blowing-up}
The minimal resolution of the crepant resolution $Y_M \to X$ corresponding to a $P$-resolution $Y \to X$ of a quotient surface singularity is obtained by blowing up appropriately the minimal resolution of $Y$.
\end{lemma}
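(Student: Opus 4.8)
The plan is to recognize the minimal resolution $\widetilde{Y}_M$ of $Y_M$ as a resolution of $Y$ itself, and then to apply the universal property of the minimal resolution of $Y$ together with the elementary structure theory of birational morphisms of smooth surfaces.

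First I would use the fact, recalled from Behnke--Christophersen \cite{Behnke-Christophersen-1994} above, that the crepant $M$-resolution comes equipped with a surjective birational morphism $g\colon Y_M\to Y$; concretely $Y_M$ is obtained from $Y$ by replacing each singularity of class $T$ of $Y$ by its crepant $M$-resolution (a Wahl singularity is left untouched, a rational double point is replaced by its minimal resolution, and a cyclic quotient singularity $\frac{1}{dn^2}(1,dna-1)$ with $d\ge 2$ is replaced by the configuration of $d$ Wahl points joined by $d-1$ rational curves pictured above). Composing $g$ with the minimal resolution $\rho\colon\widetilde{Y}_M\to Y_M$ yields a proper birational morphism $g\circ\rho\colon\widetilde{Y}_M\to Y$ whose source is smooth, i.e.\ a resolution of $Y$.

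Next I would invoke that the minimal resolution $\sigma\colon\widetilde{Y}\to Y$ is dominated by every resolution of $Y$: since $\widetilde{Y}$ carries no $(-1)$-curve among the $\sigma$-exceptional curves, the birational map to $\widetilde{Y}$ from any resolution of $Y$ is automatically a morphism. Applied to $g\circ\rho$ this produces a morphism $h\colon\widetilde{Y}_M\to\widetilde{Y}$ with $\sigma\circ h=g\circ\rho$. Since $h$ is a proper birational morphism between smooth surfaces, it is a finite composition of blow-ups at points (and their infinitely near points), which is exactly the claim.

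I do not expect a serious obstacle here: the content is all standard surface geometry, and the only point that genuinely needs care is that the $M$-resolution construction supplies an honest morphism $Y_M\to Y$, so that $\widetilde{Y}_M$ is literally a resolution of $Y$ rather than merely birational to one. If one wishes to describe the blow-ups explicitly --- which is not needed for the statement but is convenient in later sections --- the computation localizes: $h$ is an isomorphism away from $\operatorname{Sing}(Y)$ and over every rational double point or Wahl singularity of $Y$, while over a point of type $\frac{1}{dn^2}(1,dna-1)$ one checks, using the continued-fraction calculus of Subsection~\ref{subsection:HJ-continued-fraction}, that contracting the $(-1)$-curves in the chain $[W]\,(-1)\,[W]\cdots(-1)\,[W]$ (and those $(-1)$-curves created along the way) recovers precisely the minimal resolution of $\frac{1}{dn^2}(1,dna-1)$; the contracted curves record the centers of the blow-ups constituting $h$. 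That bookkeeping is the only spot where a computation enters, and it is routine.
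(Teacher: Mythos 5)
Your argument is correct, but it proceeds differently from the paper, which in fact states this lemma without a formal proof: the paper's justification is implicit in the explicit local description of the crepant $M$-resolution over each class-$T$ point, namely that the minimal resolution of $Y_M$ over a singularity $\frac{1}{dn^2}(1,dna-1)$ is the chain consisting of $d$ copies of the Wahl resolution $[e_1,\dotsc,e_s]$ joined by $(-1)$-curves, and that this chain contracts back to the $T$-chain $[b_1,\dotsc,b_r]$ of the minimal resolution of $Y$ (illustrated in the example with $\frac{1}{4d}(1,2d-1)$, where the blow-ups occur at the nodes of the chain). You instead argue abstractly: the surjection $g\colon Y_M\to Y$ of Behnke--Christophersen~\cite{Behnke-Christophersen-1994} composed with $\widetilde{Y}_M\to Y_M$ is a resolution of $Y$, hence factors through the minimal resolution $\widetilde{Y}\to Y$ by the standard universal property (your remark that this follows from the absence of $(-1)$-curves in the $\sigma$-exceptional locus is the right justification), and the resulting proper birational morphism $\widetilde{Y}_M\to\widetilde{Y}$ of smooth surfaces is a composition of point blow-ups by Zariski factorization; in the degenerate case where $Y$ already has only Wahl singularities this sequence is empty, which is harmless. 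Your route gives a clean existence proof of the blow-up factorization with no combinatorics, while the paper's implicit continued-fraction computation additionally records \emph{where} the blow-ups occur (at nodes of the $T$-chains), which is the information actually exploited later (e.g.\ in locating initial curves and running the controlled MMP); your closing remark correctly notes that this explicit bookkeeping localizes over the singular points and is routine, so the two treatments are complementary rather than in conflict.
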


\begin{example}
The singularity $\frac{1}{4d}(1,2d-1)$ is of class $T$ and its minimal resolution is given by
\begin{equation*}
\begin{tikzpicture}
\node[bullet] (10) at (1,0) [label=above:{$-3$}] {};
\node[bullet] (20) at (2,0) [label=above:{$-2$}] {};

\node[empty] (250) at (2.5,0) [] {};
\node[empty] (30) at (3,0) [] {};

\node[bullet] (350) at (3.5,0) [label=above:{$-2$}] {};
\node[bullet] (450) at (4.5,0) [label=above:{$-3$}] {};

\draw [-] (10)--(20);
\draw [-] (20)--(250);
\draw [dotted] (20)--(350);
\draw [-] (30)--(350);
\draw [-] (350)--(450);

\draw [thick, decoration={brace,mirror,raise=0.5em}, decorate] (20) -- (350)

node [pos=0.5,anchor=north,yshift=-0.55em] {$(d-2)$-times};
\end{tikzpicture}
\end{equation*}
Then the minimal resolution of its crepant $M$-resolution is obtained by blowing up all edges:
\begin{equation*}
\begin{tikzpicture}
\node[bullet] (00) at (0,0) [label=above:{$-4$}] {};
\node[bullet] (10) at (1,0) [label=above:{$-1$}] {};
\node[bullet] (20) at (2,0) [label=above:{$-4$}] {};

\node[empty] (250) at (2.5,0) [] {};
\node[empty] (30) at (3,0) [] {};

\node[bullet] (350) at (3.5,0) [label=above:{$-4$}] {};
\node[bullet] (450) at (4.5,0) [label=above:{$-1$}] {};
\node[bullet] (550) at (5.5,0) [label=above:{$-4$}] {};

\draw [-] (00)--(10);
\draw [-] (10)--(20);
\draw [-] (20)--(250);
\draw [dotted] (20)--(350);
\draw [-] (30)--(350);
\draw [-] (350)--(450);
\draw [-] (450)--(550);
\end{tikzpicture}
\end{equation*}
where the number of $(-4)$-curves is $d$ and that of $(-1)$-curves is  $d-1$.
\end{example}


\section{Milnor fibers as complements of compactifying divisors}
\label{section:Milnor-fiber-as-complements}

Let $\mathcal{X} \to \Delta$ be a smoothing of $(X,0)$. We describe a Milnor fiber of the smoothing $\mathcal{X} \to \Delta$ as the complement of its compactifying divisor contained in a certain rational complex surface.

As we have seen in Section~\ref{section:$P$-resolution}, according to Behnke--Christophersen~\cite{Behnke-Christophersen-1994}, there is a one-to-one correspondence between the (reduced irreducible) components of the versal deformation space $\Def(X)$ and the $M$-resolutions of $X$. More explicitly:
\begin{proposition}[Behnke--Christophersen~\cite{Behnke-Christophersen-1994}]
For each smoothing $\mathcal{X} \to \Delta$ of $X$, there exist an $M$-resolution $\phi \colon Y \to X$ and a $\mathbb{Q}$-Gorenstein smoothing $\mathcal{Y} \to \Delta$ of $Y$ such that the smoothing $\mathcal{Y} \to \Delta$ blows down to the smoothing $\mathcal{X} \to \Delta$. These maps give the following commutative diagram
\begin{center}
\includegraphics{figure-XY.pdf}
\end{center}
\end{proposition}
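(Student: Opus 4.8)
The statement is essentially a reformulation, for honest one-parameter smoothings, of the Behnke--Christophersen theorem combined with the Koll\'ar--Shepherd-Barron correspondence recalled above, so the plan is to string those results together while keeping track of the underlying morphisms; recall that ``$\mathcal{Y}\to\Delta$ blows down to $\mathcal{X}\to\Delta$'' means there is a proper birational $\Delta$-morphism $\mathcal{Y}\to\mathcal{X}$ which is an isomorphism away from the central fibers and restricts there to the given partial resolution (cf.~Wahl~\cite{Wahl-1976}).

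First I would identify the component and its $P$-resolution. The smoothing $\mathcal{X}\to\Delta$ is classified by a map germ $\Delta\to\Def(X)$ whose image lies in some irreducible component $C\subset\Def(X)$. By Theorem~3.9 of \cite{Kollar-Shepherd-Barron-1988}, $C=F'\!\bigl(\DefQG(Y')\bigr)$ for a unique $P$-resolution $f'\colon Y'\to X$, where $F'\colon\Def(Y')\to\Def(X)$ is Wahl's blowing-down-deformations map, and $\DefQG(Y')$ contains $\mathbb{Q}$-Gorenstein smoothings of $Y'$. Since $F'\!\bigl(\DefQG(Y')\bigr)$ is all of $C$, I can lift the arc $\Delta\to C$ to $\DefQG(Y')$ --- after, if necessary, a finite base change $\Delta\to\Delta$, $t\mapsto t^{k}$, which leaves the general fiber and hence the Milnor fiber unchanged --- obtaining a $\mathbb{Q}$-Gorenstein smoothing $\mathcal{Y}'\to\Delta$ of $Y'$ that blows down to $\mathcal{X}\to\Delta$.

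Then I would climb to the crepant $M$-resolution. By the first part of the Behnke--Christophersen theorem above, $Y'$ is dominated by a unique $M$-resolution $g\colon Y\to Y'$ with $K_Y=g^{\ast}K_{Y'}$, namely the one assembled from the crepant $M$-resolutions of the class-$T$ singularities of $Y'$; hence $\phi:=f'\circ g\colon Y\to X$ is an $M$-resolution of $X$, and since $\DefQG(Y)$ is irreducible and surjects onto $C$, it is precisely the $M$-resolution assigned to $C$ by the third part of that theorem. By the second part there is a surjection $\DefQG(Y)\twoheadrightarrow\DefQG(Y')$ induced by blowing down deformations; lifting $\mathcal{Y}'\to\Delta$ through it (again up to a harmless finite base change) gives a $\mathbb{Q}$-Gorenstein smoothing $\mathcal{Y}\to\Delta$ of $Y$ together with a birational morphism $\mathcal{Y}\to\mathcal{Y}'$ over $\Delta$ that restricts to $g$ on central fibers. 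Finally, composing $\mathcal{Y}\to\mathcal{Y}'$ with $\mathcal{Y}'\to\mathcal{X}$ yields a morphism $\mathcal{Y}\to\mathcal{X}$ over $\Delta$ which is an isomorphism over $\Delta\setminus\{0\}$ and restricts to $\phi$ on central fibers; thus $\mathcal{Y}\to\Delta$ blows down to $\mathcal{X}\to\Delta$, and the two families, together with $\mathcal{Y}\to\mathcal{X}$ and $\phi$, form the commutative diagram in the statement.

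I expect the only genuinely non-formal point to be the passage from versal deformation spaces to a prescribed one-parameter family: the cited theorems furnish surjections of base spaces $\DefQG(Y)\twoheadrightarrow\DefQG(Y')\twoheadrightarrow C$, but to realise the given arc $\Delta\to C$ --- as opposed to merely some smoothing lying in $C$ --- one must lift it through these maps, which in general forces the finite base change $t\mapsto t^{k}$. This costs nothing, since such a base change leaves the general fiber $\pi^{-1}(t)$, and hence the diffeomorphism type of the Milnor fiber, untouched; one could even phrase the Proposition as an identification of the Milnor fibers of the two families and sidestep the bookkeeping entirely. Everything else --- that $f'\circ g$ is genuinely an $M$-resolution of $X$, that the two total-space blow-downs compose correctly, and that the $\mathbb{Q}$-Gorenstein and crepancy conditions propagate along the families --- is immediate from the definitions and the results already in place.
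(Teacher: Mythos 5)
The paper gives no argument for this statement at all: it is quoted directly from Behnke--Christophersen \cite{Behnke-Christophersen-1994} (together with the Koll\'ar--Shepherd-Barron correspondence), so your proposal is necessarily a different route --- a reconstruction of the cited result from the two theorems already recorded in Section~\ref{section:$P$-resolution}. As such a reconstruction it is sound in outline: identify the component $C$ containing the image of the classifying arc, take the $P$-resolution $Y'$ with $F'(\DefQG(Y'))=C$, pass to the crepant $M$-resolution $Y\to Y'$ via the surjection $\DefQG(Y)\to\DefQG(Y')$, and compose the total-space blow-downs. The observation that $\phi=f'\circ g$ is again an $M$-resolution because $K_Y=g^{\ast}K_{Y'}$ and $K_{Y'}$ is $f'$-ample is exactly right.

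The one place where your argument delivers less than the statement claims is the lifting step. You lift the arc $\Delta\to C$ only after a finite base change $t\mapsto t^k$, and then the family you construct blows down to the pullback $\mathcal{X}\times_{\Delta,\,t\mapsto t^k}\Delta$, not to the given smoothing $\mathcal{X}\to\Delta$ itself; this is a genuinely weaker assertion than the proposition, even though (as you note) it is harmless for the paper's purposes, since the Milnor fiber is unchanged. To get the statement verbatim one needs the stronger form of Behnke--Christophersen's result, namely that the blowing-down map identifies the $\mathbb{Q}$-Gorenstein (versal) deformation of the $M$-resolution with the component --- equivalently, that every one-parameter smoothing over $\Delta$ lying in $C$ itself lifts, with no base change --- and this is precisely the content being cited rather than something one can extract formally from surjectivity of $\DefQG(Y)\to\DefQG(Y')\to C$ alone. (A second, smaller point you pass over: after base change, the dominating component of the fiber product must be shown to pass through the origin of $\DefQG(Y)$, i.e.\ that no nontrivial $\mathbb{Q}$-Gorenstein deformation of $Y$ blows down to the trivial deformation of $X$; this is true here but deserves a sentence.) So: acceptable as a reduction to the cited theorem in its strong form, but not a self-contained proof of the proposition as stated.
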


Since the morphism $Y \to X$ is birational and $K_{\mathcal{Y}}$ is nef, we have the following.

\begin{proposition}
\label{proposition:Milnor-fiber=rationally-blown-down}
Every Milnor fiber of the singularity $(X,0)$ is diffeomorphic to the general fiber $Y_t$ of a $\mathbb{Q}$-Gorenstein smoothing $\mathcal{Y} \to \Delta$ of the corresponding $M$-resolution $Y \to X$.
\end{proposition}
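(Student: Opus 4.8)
The plan is to prove the sharper statement that, for a general $t$, the blow-down morphism furnished by Behnke--Christophersen~\cite{Behnke-Christophersen-1994} restricts to a biholomorphism $Y_t \xrightarrow{\sim} X_t$ between general fibers. Since $X_t$ is a Milnor fiber of the smoothing $\mathcal{X}\to\Delta$ while $Y_t$ is the general fiber of the $\mathbb{Q}$-Gorenstein smoothing $\mathcal{Y}\to\Delta$ of the $M$-resolution $Y\to X$, this yields at once the asserted diffeomorphism (indeed a biholomorphism). So the entire argument reduces to identifying $Y_t$ with $X_t$.

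Let $\psi\colon\mathcal{Y}\to\mathcal{X}$ be the blow-down morphism over $\Delta$ from the preceding proposition: it is proper and birational, and on central fibers it restricts to the $M$-resolution $\phi\colon Y\to X$. Because $\phi$ is an isomorphism over $X\setminus\{0\}$ and the blowing-down of deformations (Wahl~\cite{Wahl-1976}, Behnke--Christophersen~\cite{Behnke-Christophersen-1994}) is constructed locally over $X$, the morphism $\psi$ is an isomorphism over $\mathcal{X}\setminus\{0\}$; in particular every $\psi$-exceptional curve is contained in the central fiber $Y=\mathcal{Y}_0$. Next, $K_{\mathcal{Y}}$ is nef relative to $\psi$: since $\mathcal{Y}\to\Delta$ is $\mathbb{Q}$-Gorenstein, $K_{\mathcal{Y}}$ is $\mathbb{Q}$-Cartier, and the central fiber $Y$, being the zero divisor of the pulled-back coordinate on $\Delta$, is a principal Cartier divisor in $\mathcal{Y}$, so adjunction gives $K_{\mathcal{Y}}|_Y=K_Y$; hence for every $\psi$-exceptional curve $C$, which lies in $Y$, we get $K_{\mathcal{Y}}\cdot C=K_Y\cdot C\ge 0$ by condition~(2) in the definition of an $M$-resolution (Definition~\ref{definition:$M$-resolution}).

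Now fix a general $t\ne 0$. Both $X_t$ and $Y_t$ are smooth surfaces, since $\mathcal{X}\to\Delta$ is a smoothing and $\mathcal{Y}\to\Delta$ is a $\mathbb{Q}$-Gorenstein smoothing of $Y$, and $\psi_t\colon Y_t\to X_t$ is a proper birational morphism. If $\psi_t$ were not an isomorphism it would, by the structure of birational morphisms of smooth surfaces, contract some $(-1)$-curve $E\subset Y_t$; but such an $E$ is $\psi$-exceptional, whence $K_{Y_t}\cdot E=(K_{\mathcal{Y}}|_{Y_t})\cdot E=K_{\mathcal{Y}}\cdot E\ge 0$, contradicting $K_{Y_t}\cdot E=-1$. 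Therefore $\psi_t$ contracts nothing, and a proper birational morphism between smooth surfaces with no contracted curve is an isomorphism. Thus $Y_t\cong X_t$, which completes the proof.

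The delicate points are the two middle assertions: that $\psi$ introduces no exceptional curves away from the central fiber, and that the relative nefness of $K_Y$ over $\phi$ upgrades to relative nefness of $K_{\mathcal{Y}}$ over $\psi$. Both are part of the KSB/Behnke--Christophersen package --- the blow-down is an extension over $\Delta$ of the partial resolution, carried out singularity by singularity --- but one should check that the fact that $\mathcal{Y}$ is only $\mathbb{Q}$-Gorenstein does not obstruct the adjunction formula $K_{\mathcal{Y}}|_Y=K_Y$. This is harmless, since $Y$ is a Cartier divisor (a fiber of $\mathcal{Y}\to\Delta$), so ordinary adjunction applies even at the Wahl singular points of $Y$, which are exactly the terminal cyclic quotient $3$-fold singular points of $\mathcal{Y}$ governing $\mathbb{Q}$-Gorenstein smoothings.
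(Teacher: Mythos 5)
Your overall strategy --- extend the $M$-resolution to a proper birational morphism $\psi\colon\mathcal{Y}\to\mathcal{X}$ over $\Delta$, note that $K_{\mathcal{Y}}$ restricts to $K_Y$ on the Cartier central fiber, and rule out contracted $(-1)$-curves in the general fiber by nefness --- is exactly the justification the paper offers (its entire argument is the sentence preceding the proposition: ``since $Y\to X$ is birational and $K_{\mathcal{Y}}$ is nef''), and the adjunction step and the reduction to $(-1)$-curves via smoothness of $X_t$ are both fine. So in outline you and the paper agree, and your write-up supplies more detail than the original.

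There is, however, one point where the logic as written is circular. In your second paragraph you assert that $\psi$ is an isomorphism over $\mathcal{X}\setminus\{0\}$, so that every $\psi$-exceptional curve lies in the central fiber. If that were already known, the proof would be over --- $X_t$ sits inside $\mathcal{X}\setminus\{0\}$ for $t\neq 0$, so $\psi_t$ would be an isomorphism with no further argument --- and your third paragraph would be vacuous. But this assertion is not a formal consequence of the blow-down of deformations being ``constructed locally over $X$'': the construction gives $\mathcal{X}_t=\Spec H^0(Y_t,\mathcal{O}_{Y_t})$, and whether $\psi_t$ contracts anything is precisely the question of whether the $\phi$-exceptional curves survive into $Y_t$, which is what the proposition is about. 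Correspondingly, in the third paragraph the inequality $K_{\mathcal{Y}}\cdot E\ge 0$ for a putative contracted $(-1)$-curve $E\subset Y_t$ does not follow from what you established, since you verified $K_{\mathcal{Y}}\cdot C\ge 0$ only for exceptional curves $C$ in the \emph{central} fiber. The missing step is a specialization argument: because $\psi$ is a local isomorphism along $Y\setminus\phi^{-1}(0)$ (finite birational onto a normal target), the exceptional locus of $\psi$ meets $Y$ only in $\phi^{-1}(0)$ and is proper over $\Delta$, so the curve $E$ degenerates as $t\to 0$ to an effective $1$-cycle $\sum a_iC_i$ supported on $\phi^{-1}(0)$; constancy of intersection numbers in the flat family then gives $K_{\mathcal{Y}}\cdot E=\sum a_i\,K_Y\cdot C_i\ge 0$, contradicting $K_{Y_t}\cdot E=-1$. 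With that insertion (and with the claim of your second paragraph demoted from hypothesis to consequence), the proof is complete.
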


In topological viewpoint,  it means that

\begin{corollary}\label{corollary:Milnor-fiber=Rational-blow-down}
Every Milnor fiber of the singularity $(X,0)$ is diffeomorphic to a smooth 4-manifold $Y_t$ which is obtained from the central fiber $Y$ by rationally blowing down Wahl singularities in $Y$.
\end{corollary}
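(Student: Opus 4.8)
The plan is to reduce everything to the local structure of the $\mathbb{Q}$-Gorenstein smoothing $\mathcal{Y}\to\Delta$ near the Wahl singularities of the $M$-resolution. By Proposition~\ref{proposition:Milnor-fiber=rationally-blown-down}, any Milnor fiber of $(X,0)$ is diffeomorphic to a general fiber $Y_t$ of such a smoothing of the corresponding $M$-resolution $\phi\colon Y\to X$, so it suffices to show that $Y_t$ is obtained from the central fiber $Y$ by a rational blowdown at each of its finitely many Wahl singular points $P_1,\dots,P_m$.

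First I would split the family into a locally trivial part and a local part. Since $Y$ is smooth away from $P_1,\dots,P_m$, the total space $\mathcal{Y}$ is smooth along $(Y\setminus\{P_1,\dots,P_m\})\times\{0\}$ and $\mathcal{Y}\to\Delta$ is a submersion there. Choose disjoint contractible (cone) neighborhoods $U_j\ni P_j$ in $Y$ with boundaries $L_j=\partial U_j$ the lens space links of the singularities, and let $\mathcal{U}_j\subset\mathcal{Y}$ be a neighborhood of $U_j$ that fibers over $\Delta$. Applying Ehresmann's fibration theorem to $\mathcal{Y}\setminus\bigcup_j\mathcal{U}_j\to\Delta$ yields a diffeomorphism between the corresponding piece of $Y_t$ and the exterior $Y\setminus\bigcup_j\operatorname{int}(U_j)$. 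Hence $Y_t$ is assembled from this exterior by gluing, along each $L_j$, the local Milnor fiber $B_j$ of the restricted one-parameter family $\mathcal{U}_j\to\Delta$.

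Next I would identify each $B_j$. Because $K_{\mathcal{Y}}$ is $\mathbb{Q}$-Cartier, so is its restriction near $P_j$, whence $\mathcal{U}_j\to\Delta$ is a $\mathbb{Q}$-Gorenstein one-parameter smoothing of the Wahl singularity $(Y,P_j)$, say of type $\frac{1}{n_j^2}(1,n_ja_j-1)$. By the definition of a Wahl singularity together with the uniqueness of the $\mathbb{Q}$-Gorenstein smoothing component of a class-$T$ singularity, $B_j$ is a rational homology ball; moreover the topological interpretation of such a smoothing (Fintushel--Stern~\cite{Fintushel-Stern-1997} and J.~Park~\cite{JPark-1977}, recalled after Proposition~\ref{proposition:T-algorithm}) identifies $B_j$ with the rational homology ball bounded by $L_j$ that is glued in during the rational blowdown surgery along $L_j$. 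Combining the two steps gives
\begin{equation*}
Y_t\ \cong\ \Bigl(Y\setminus\bigcup_{j=1}^m\operatorname{int}(U_j)\Bigr)\ \cup_{\,\bigcup_j L_j}\ \bigcup_{j=1}^m B_j,
\end{equation*}
which is exactly the smooth $4$-manifold obtained from $Y$ by rationally blowing down each of its Wahl singularities.

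The one genuinely delicate point is the local identification of $B_j$ in the third step: one must invoke the $\mathbb{Q}$-Gorenstein hypothesis to ensure that the local family $\mathcal{U}_j\to\Delta$ lands on the rational-homology-ball smoothing component of the $T$-singularity rather than on some other component, and that $B_j$ is attached along $L_j$ in the same way as in the Fintushel--Stern/J.~Park rational blowdown. Both of these are precisely what that construction, together with the uniqueness of the $\mathbb{Q}$-Gorenstein component, provides, so no input beyond the cited results is needed; the remaining verification — that the exteriors match and that orientations are consistent — is routine.
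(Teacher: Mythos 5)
Your argument is correct and is essentially the paper's own reasoning: the paper treats the corollary as an immediate topological restatement of Proposition~\ref{proposition:Milnor-fiber=rationally-blown-down}, using the standard fact (recalled after Proposition~\ref{proposition:T-algorithm}, via Fintushel--Stern and J.~Park) that a one-parameter $\mathbb{Q}$-Gorenstein smoothing of a Wahl/class-$T$ singularity is topologically a rational blowdown. Your localization via Ehresmann's theorem and the gluing of the local rational homology ball Milnor fibers along the lens space links simply fills in the details the paper leaves implicit.
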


We now show that a general fiber of a smoothing $\mathcal{X} \to \Delta$ of $X$ may be described as a complement of the compactifying divisor in a general fiber of a smoothing of a certain partial resolution of the $\mathbb{C}^\ast$-compactification of $X$, which may be regarded as a compactification of $Y$; Proposition~\ref{proposition:Milnor-fiber=complement}.

Let $\overline{X}$ be the singular natural compactification of $X$ and let $\widehat{X}$ be the natural compactification of $X$ defined in Section~\ref{section:Compactifying-divisor}. Let $\overline{E}_{\infty}$ and $E_{\infty}$ be the corresponding compactifying divisors. We showed in Proposition~\ref{proposition:Extendable-cyclic} and Proposition~\ref{proposition:Extendable-non-cyclic} that any smoothing of $X$ can be extended to deformations of $\overline{X}$ and $\widehat{X}$ preserving $\overline{E}_{\infty}$ and $E_{\infty}$, respectively. So the smoothing $\mathcal{X} \to \Delta$ is extended to a deformation $\overline{\mathcal{X}} \to \Delta$ of $\overline{X}$ which is a locally trivial deformation near $\overline{E}_{\infty}$ and to a smoothing $\widehat{\smash[b]{\mathcal{X}}} \to \Delta$ of $\widehat{X}$ which preserves $E_{\infty}$.

Now we extend the deformations $\mathcal{X}$, $\overline{\mathcal{X}}$, and $\widehat{\smash[b]{\mathcal{X}}}$ to deformations of certain partial resolutions of $X$, $\overline{X}$, and $\widehat{X}$, respectively. Let $f \colon Y \to X$ be the $M$-resolution (or the $P$-resolution) of $X$ corresponding to the smoothing $\mathcal{X} \to \Delta$. Take a partial resolution $\overline{f} \colon \overline{Y} \to \overline{X}$ of $\overline{X}$ corresponding to $f$. Let $\widehat{Y}$ be the minimal resolution of the cyclic quotient surface singularities on $\overline{E}_{\infty} \subset \overline{Y}$.

\begin{definition}
For an $M$-resolution (or a $P$-resolution) $Y \to X$, we call $\widehat{Y}$ the \emph{natural compactification} of $Y$.
\end{definition}

The smoothing $\mathcal{Y} \to \Delta$ extends to a deformation $\overline{\mathcal{Y}} \to \Delta$ of $\overline{Y}$ which is again a locally trivial deformation near $\overline{E}_{\infty}$. Let $\widehat{\smash[b]{\mathcal{Y}}} \to \Delta$ be the simultaneous resolution of the cyclic quotient surface singularities along $\overline{E}_{\infty}$ in each fiber of $\overline{\mathcal{Y}} \to \Delta$. Then the deformations $\mathcal{Y}$, $\overline{\mathcal{Y}}$, and $\widehat{\smash[b]{\mathcal{Y}}}$ blow down to the deformations $\mathcal{X}$, $\overline{\mathcal{X}}$, and $\widehat{\smash[b]{\mathcal{X}}}$ so that the diagram in Figure~\ref{figure:diagram-of-deformations} commutes.

\begin{figure}[t]
\centering
\includegraphics{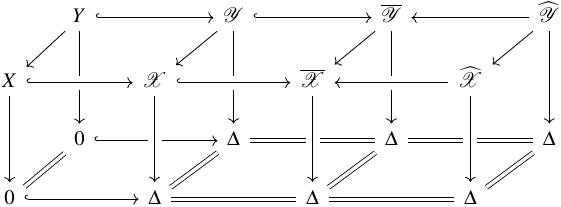}
\caption{A commutative diagram of deformations}
\label{figure:diagram-of-deformations}
\end{figure}


\begin{proposition}\label{proposition:Milnor-fiber=complement}
A general fiber $X_t$ ($t \neq 0$) of the smoothing $\mathcal{X} \to \Delta$ is isomorphic to the complement $\widehat{Y}_t - E_{\infty}$,  where $\widehat{Y}_t$ is a general fiber of the smoothing $\widehat{\smash[b]{\mathcal{Y}}} \to \Delta$.
\end{proposition}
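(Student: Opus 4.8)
The plan is to trace the isomorphism $X_t \cong \widehat Y_t - E_\infty$ through the commutative diagram of Figure~\ref{figure:diagram-of-deformations}, reducing it to two facts: that a general fiber of the $\mathbb{Q}$-Gorenstein smoothing $\mathcal{Y}\to\Delta$ agrees with a general fiber of $\mathcal{X}\to\Delta$ away from the exceptional locus, and that the compactifying divisor $E_\infty$ sits inside $\widehat Y_t$ as a fixed, unchanging configuration because the deformation $\widehat{\smash[b]{\mathcal{Y}}}\to\Delta$ is locally trivial near it. First I would set up notation: write $X_t=\pi^{-1}(t)$, $\overline X_t$, $\widehat X_t$, and correspondingly $Y_t$, $\overline Y_t$, $\widehat Y_t$ for the general fibers of the six deformations in Figure~\ref{figure:diagram-of-deformations}, and record which maps among them are the fiberwise versions of $\mathcal{Y}\to\mathcal{X}$, $\overline{\mathcal{Y}}\to\overline{\mathcal{X}}$, $\widehat{\smash[b]{\mathcal{Y}}}\to\widehat{\smash[b]{\mathcal{X}}}$ and of the compactification maps $\widehat X\to\overline X\hookleftarrow X$, etc.

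The first key step: since $f\colon Y\to X$ is a $P$-resolution (or $M$-resolution), it is an isomorphism over $X\setminus\{0\}$, and the smoothing $\mathcal{Y}\to\Delta$ blows down to $\mathcal{X}\to\Delta$; hence the blow-down map $Y_t\to X_t$ of general fibers is a proper birational morphism between smooth surfaces which is an isomorphism outside a set contracted over the finitely many points of $X$ lying over the images of the singularities of $Y$. But for $t\neq 0$ the fiber $Y_t$ is smooth and, because the $\mathbb{Q}$-Gorenstein smoothing is locally the standard smoothing of the Wahl (or class-$T$) singularities, $Y_t\to X_t$ contracts nothing: a general fiber over a neighborhood of a Wahl singularity is already smooth and the blow-down is an isomorphism there. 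Thus $Y_t\xrightarrow{\ \sim\ } X_t$. (This is exactly the statement already noted in the excerpt, "It is easy to show that a general fiber $Y_t=\phi^{-1}(t)$ is isomorphic to $X_t=\pi^{-1}(t)$", and in Proposition~\ref{proposition:Milnor-fiber=rationally-blown-down}; I would just cite it.)

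The second key step is to compactify this isomorphism. By Proposition~\ref{proposition:Extendable-cyclic} and Proposition~\ref{proposition:Extendable-non-cyclic}, the deformation $\overline{\mathcal{X}}\to\Delta$ is locally trivial near $\overline E_\infty$, and by construction so is $\overline{\mathcal{Y}}\to\Delta$; passing to the simultaneous resolution $\widehat{\smash[b]{\mathcal{Y}}}\to\Delta$ along $\overline E_\infty$, the resulting deformation is locally trivial in a neighborhood of $E_\infty$, and in particular $E_\infty$ is naturally a divisor in each fiber $\widehat Y_t$ with the same dual graph. Now $\overline Y_t$ (resp.\ $\overline X_t$) is obtained from $Y_t$ (resp.\ $X_t$) by gluing on a neighborhood of $\overline E_\infty$, so $\overline Y_t\to\overline X_t$ is an isomorphism (it is the identity near $\overline E_\infty$ by local triviality, and it is $Y_t\cong X_t$ away from there). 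Resolving the cyclic quotient singularities on $\overline E_\infty$ on both sides — on the $Y$-side this is by definition how $\widehat Y_t$ is produced from $\overline Y_t$, on the $X$-side it produces $\widehat X_t$ — and using that $X_t$ is smooth so $\widehat X_t\setminus E_\infty = X_t$, we get $\widehat Y_t\setminus E_\infty \cong \widehat X_t\setminus E_\infty = X_t$, which is the claim. I would present this as a short diagram chase: the three vertical maps $\widehat Y_t\to\widehat X_t$, $\overline Y_t\to\overline X_t$, $Y_t\to X_t$ fit into Figure~\ref{figure:diagram-of-deformations}, the bottom two are isomorphisms, and the top one restricts to an isomorphism on the complement of $E_\infty$ because the blow-ups/blow-downs all take place away from $E_\infty$.

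The main obstacle, and the step deserving the most care, is verifying that $Y_t\to X_t$ genuinely contracts nothing for $t\neq 0$ — i.e.\ that no $(-1)$-curve of $Y_t$ lies over a point of $X$. This is where one uses $K_{\mathcal{Y}}$ nef relative to $X$ (from the $M$-resolution condition, or $K_Y$ ample relative to $f$ for a $P$-resolution) together with the explicit local model of the $\mathbb{Q}$-Gorenstein smoothing of a Wahl singularity, whose Milnor fiber is a rational homology disk containing no compact curves that die in $X_t$. A second, more bookkeeping-type subtlety is checking that the simultaneous resolution $\widehat{\smash[b]{\mathcal{Y}}}\to\Delta$ of the cyclic quotient singularities along $\overline E_\infty$ exists and is compatible on the $X$- and $Y$-sides (so that the square $\widehat Y_t\to\overline Y_t$, $\widehat X_t\to\overline X_t$ commutes with $\overline Y_t\cong\overline X_t$); this follows because, over the locally trivial locus, the singularities along $\overline E_\infty$ are constant in the family and simultaneous resolution of cyclic quotient surface singularities in a locally trivial family is standard. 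Everything else is a formal consequence of the local triviality statements already established earlier in the paper.
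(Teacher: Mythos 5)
Your proposal is correct and follows essentially the same route as the paper's (much terser) proof: identify $X_t$ with the general fiber $Y_t$ via the blow-down $\mathcal{Y}\to\mathcal{X}$, and use local triviality of the compactified deformations near $\overline{E}_{\infty}$ (together with the simultaneous resolution along it) to see $Y_t \cong \overline{Y}_t-\overline{E}_{\infty} \cong \widehat{Y}_t-E_{\infty}$. The extra care you devote to showing $Y_t\to X_t$ contracts nothing is harmless but unnecessary, since the exceptional locus of $\mathcal{Y}\to\mathcal{X}$ lies entirely in the central fiber.
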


\begin{proof}
A general fiber $X_t$ of the smoothing $\mathcal{X} \to \Delta$ is isomorphic to $\overline{X}_t - \overline{E}_{\infty}$.
On the other hand, $X_t$ is also isomorphic to a general fiber $Y_t$ of the smoothing $\mathcal{Y} \to \Delta$.
Therefore the assertion follows.
\end{proof}

\begin{definition}\label{definition:compactified-Milnor-fiber}
A general fiber $\widehat{Y}_t$ ($t \neq 0$) of the smoothing $\widehat{\smash[b]{\mathcal{Y}}} \to \Delta$ is called a \emph{compactified Milnor fiber} of a smoothing $\mathcal{X} \to \Delta$ of a quotient surface singularity $X$.
\end{definition}

\section{Semi-stable minimal model program}
\label{section:Semi-stable-MMP}

We apply the semi-stable minimal model program to the smoothing $\widehat{\smash[b]{\mathcal{Y}}} \to \Delta$ of the natural compactification $\widehat{Y}$ of the crepant $M$-resolution $Y \to X$ in order to identify the compactified Milnor fiber $\widehat{Y}_t$ as a rational complex surface. It turns out that we will use very particular $3$-fold divisorial contractions and flips. We refer to HTU~\cite{Hacking-Tevelev-Urzua-2013} for a general picture in relation to normal degenerations with only quotient surface singularities.

\subsection{Divisorial contractions and flips}

Let us recall some basics from Koll{\'a}r--Mori~\cite{Kollar-Mori-1992}.

\begin{definition}
A three dimensional \emph{extremal neighborhood} is a proper birational morphism $f \colon (C \subset \mathcal{W}) \to (Q \in \mathcal{Z})$ satisfying the following properties:
\begin{enumerate}[(i)]
\item The canonical class $K_{\mathcal{W}}$ is $\mathbb{Q}$-Cartier and $\mathcal{W}$ has only terminal singularities;

\item $\mathcal{Z}$ is normal with a distinguished point $Q \in \mathcal{Z}$;

\item $C=f^{-1}(Q)$ is an irreducible curve;

\item $K_{\mathcal{W}} \cdot C < 0$.
\end{enumerate}
If the exceptional set of $f$ is an irreducible divisor, then the extremal neighborhood is said to be \emph{divisorial}. Otherwise it is said to be \emph{flipping}.
\end{definition}

In case of flipping, $K_{\mathcal{Z}}$ is not $\mathbb{Q}$-Cartier, and so one performs the following birational operation.

\begin{definition}
The \emph{flip} of a flipping extremal neighborhood $f \colon (C \subset \mathcal{W}) \to (Q \in \mathcal{Z})$  (or, if no confusion is likely, the flip of $\mathcal{W}$) is a proper birational morphism $f^+ \colon (C^+ \subset \mathcal{W}^+) \to (Q \in \mathcal{Z})$, where $\mathcal{W}^+$ is normal with only terminal singularities such that the exceptional set of $f^+$ is $C^+$ and $K_{\mathcal{W}^+}$ is $\mathbb{Q}$-Cartier and $f^+$-ample.
\end{definition}

Note that the flip of $\mathcal{W}$ induces a birational map $\mathcal{W} \dashrightarrow \mathcal{W}^+$ to which we also refer as the flip. By Mori~\cite{Mori-1988} $3$-fold flips always exist; they are unique (see Koll{\'a}r--Mori~\cite{Kollar-Mori-1998}).

HTU~\cite{Hacking-Tevelev-Urzua-2013} explicitly described divisorial and flipping extremal neighborhoods of a special type, which naturally appear in the context of the Koll{\'a}r--Shepherd-Barron--Alexeev compactification of the moduli space of surfaces of general type (see e.g. Urz\'ua~\cite{Urzua-2013,Urzua-2013b}).

\begin{definition}
Let $f \colon W \to Z$ be a partial resolution of a two dimensional cyclic quotient surface singularity germ $(Q \in Z)$ such that $f^{-1}(Q)=C$ is a smooth rational curve with one (or two) Wahl singularity(ies) of $W$ on it. Suppose that $K_W \cdot C < 0$. Let $\mathcal{W} \to \Delta$ be a $\mathbb{Q}$-Gorenstein smoothing of $W$ and let $\mathcal{Z} \to \Delta$ be the corresponding blown-down deformation of $Z$. The induced birational morphism $(C \subset \mathcal{W}) \to (Q \in \mathcal{Z})$ will be called an \emph{extremal neighborhood of type mk1A (or mk2A)}.
\end{definition}

Let $(C \subset \mathcal{W}) \to (Q \in \mathcal{Z})$ be an extremal neighborhood of type mk1A or mk2A. For divisorial contractions, we have

\begin{proposition}[cf.~Urz{\'u}a~{\cite[Proposition~2.8]{Urzua-2013}}]
If $(C \subset \mathcal{W}) \to (Q \in \mathcal{Z})$ is a divisorial extremal neighborhood of type mk1A or mk2A, then $(Q \in \mathcal{Z})$ is a Wahl singularity. The divisorial contraction $\mathcal{W} \to \mathcal{Z}$ induces the blowing down of a $(-1)$-curve between the smooth fibers of $\mathcal{W} \to \Delta$ and $\mathcal{Z} \to \Delta$.
\end{proposition}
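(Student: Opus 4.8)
The plan is to analyze the extremal neighborhood $(C \subset \mathcal{W}) \to (Q \in \mathcal{Z})$ of type mk1A or mk2A under the divisorial hypothesis by combining the explicit classification of such neighborhoods from HTU~\cite{Hacking-Tevelev-Urzau} with the deformation-theoretic bookkeeping of $\mathbb{Q}$-Gorenstein smoothings. First I would set up the two-dimensional picture: on the central fiber $W \to Z$, the curve $C$ is a smooth rational curve carrying one Wahl singularity (type mk1A) or two Wahl singularities (type mk2A) of $W$, with $K_W \cdot C < 0$. Passing to the minimal resolution $\widetilde{W} \to W$, the divisor $C$ together with the exceptional chains of the Wahl singularities forms an explicit linear chain of smooth rational curves contracting to the cyclic quotient germ $(Q \in Z)$; since $f^{-1}(Q) = C$ is irreducible, this chain is the full exceptional set and $Z$ is itself a cyclic quotient singularity. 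The inequality $K_W \cdot C < 0$ together with the Wahl-singularity discrepancy computation (each Wahl singularity $\frac{1}{m^2}(1, ma-1)$ has a single crepant-type contribution) pins down the self-intersection of the proper transform $\widetilde{C}$ and hence the entire chain.

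The core step is to show $(Q \in \mathcal{Z})$ is a Wahl singularity. Here I would invoke the HTU description: a divisorial mk1A/mk2A extremal neighborhood is, after the divisorial contraction $\mathcal{W} \to \mathcal{Z}$, modeled on the $\mathbb{Q}$-Gorenstein smoothing of a surface $Z$ that is a cyclic quotient singularity, and the divisorial nature of the contraction (the exceptional locus being a divisor, not just the curve $C$) forces the special numerical coincidence characterizing class-$T$ chains. Concretely, one checks that the linear chain for $Z$ obtained by contracting $\widetilde{C}$ inside $\widetilde{W}$ satisfies the Wahl algorithm of Proposition~\ref{proposition:T-algorithm}: starting from the Wahl chain(s) on $C$ and performing the contraction of the $(-1)$- or low self-intersection proper transform $\widetilde{C}$ is exactly the inverse of one of the allowed moves in Proposition~\ref{proposition:T-algorithm}(ii) (or reduces to the base case $[-4]$), so $Z$ is of class $T$; being also cyclic and admitting a $\mathbb{Q}$-Gorenstein smoothing with rational homology disk Milnor fiber (which follows because $\mathcal{W} \to \Delta$ already has such fibers and the Milnor numbers match), $Z$ is a Wahl singularity.

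For the second assertion, I would track the smooth fibers. On the smooth fiber $\mathcal{W}_t$ the exceptional configuration of $\widetilde{W}_t \to W_t$ has been smoothed, and the statement that the divisorial contraction $\mathcal{W} \to \mathcal{Z}$ restricts to the blow-down of a single $(-1)$-curve $\mathcal{W}_t \to \mathcal{Z}_t$ follows from the fact that the divisorial contraction decreases the Picard number of the general fiber by exactly one (the divisor $\mathcal{E}$ contracted by $\mathcal{W} \to \mathcal{Z}$ meets $\mathcal{W}_t$ in an irreducible curve $E_t$), together with $K_{\mathcal{W}} \cdot C < 0$ forcing $K_{\mathcal{W}_t} \cdot E_t < 0$ and adjunction on the smooth surface $\mathcal{W}_t$ forcing $E_t \cong \mathbb{P}^1$ with $E_t^2 = -1$. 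One then verifies $\mathcal{Z}_t$ is smooth since $\mathcal{Z}$ has only the Wahl singularity $Q$ on the central fiber and $\mathcal{Z} \to \Delta$ is the blown-down $\mathbb{Q}$-Gorenstein smoothing. The main obstacle I anticipate is making the "divisorial $\Leftrightarrow$ class-$T$ numerical coincidence" step rigorous without simply quoting HTU wholesale: one must carefully run the discrepancy computation to see that the flipping case fails this coincidence while the divisorial case satisfies it, i.e. distinguishing the two cases is precisely the arithmetic of whether contracting $\widetilde{C}$ yields another class-$T$ chain, and this is where I would either cite Urz\'ua~\cite[Proposition~2.8]{Urzua-2013} directly or reproduce its short continued-fraction argument.
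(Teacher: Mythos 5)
The paper does not actually prove this proposition: it is imported verbatim as a citation of Urz\'ua~\cite[Proposition~2.8]{Urzua-2013}, with no proof environment at all, so there is no in-paper argument to compare against line by line. Your sketch is essentially a reconstruction of the cited argument, and its overall architecture is sound: the central fiber $Z$ is a cyclic quotient because the exceptional set of $W \to Z$ is the linear chain formed by $\widetilde{C}$ and the Wahl chains on $\widetilde{W}$; the general fiber of the contraction is a blow-down of a $(-1)$-curve by extremality, adjunction, and negativity of $K$ on the contracted ray; and Wahl-ness of $(Q\in Z)$ then follows. Two remarks on the details. First, your claim that contracting $\widetilde{C}$ is ``exactly the inverse of one of the allowed moves in Proposition~\ref{proposition:T-algorithm}(ii)'' is not automatic --- whether the contracted chain $[e_1,\dotsc,e_i-1,\dotsc,e_s]$ is again a Wahl chain is precisely the dichotomy between the divisorial and flipping cases, so this step cannot be checked ``for free''; you correctly flag this and propose to either cite Urz\'ua or reproduce the continued-fraction computation. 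Second, you can avoid that combinatorial step entirely by reordering your argument: establish the second assertion first (the exceptional divisor $\mathcal{E}$ dominates $\Delta$ since $f^{-1}(Q)=C$ is a curve, and $\mathcal{E}\cap \mathcal{W}_t$ is a $(-1)$-curve --- here you should justify irreducibility and reducedness of $\mathcal{E}_t$, e.g.\ from extremality and $K_{\mathcal{W}}\cdot \mathcal{E}_t<0$, $\mathcal{E}_t^2<0$). Then the Milnor fiber of the induced $\mathbb{Q}$-Gorenstein smoothing of $(Q\in Z)$ is the blow-down of the general fiber of $\mathcal{W}\to\Delta$ near $C$, which has $b_2=1$ (a QHD for each Wahl singularity plus one class from $C$), hence $b_2=0$ after the blow-down; a cyclic singularity of class $T$ of type $\frac{1}{dn^2}(1,dna-1)$ has $\mathbb{Q}$-Gorenstein Milnor fiber with $b_2=d-1$, so $d=1$ and $(Q\in Z)$ is Wahl. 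Your parenthetical that ``$\mathcal{W}\to\Delta$ already has such [rational homology disk] fibers'' is not literally correct --- the local fibers of $\mathcal{W}$ are not QHDs --- but the Betti-number bookkeeping you allude to with ``the Milnor numbers match'' is the right mechanism.
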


On the other hand, a flipping $(C \subset \mathcal{W}) \to (Q \in \mathcal{Z})$ is related to a certain special $P$-resolution of the central fiber $(Q \in Z)$ of $\mathcal{Z}$.

\begin{definition}[HTU~\cite{Hacking-Tevelev-Urzua-2013}]
\label{definition:extremalPres}
An \emph{extremal $P$-resolution} of a two dimensional cyclic quotient surface singularity germ $(Q \in Z)$ is a $P$-resolution $f^+ \colon W^+ \to Z$ such that $C^+=(f^+)^{-1}(Q)$ is a smooth rational curve and $W^+$ has only Wahl singularities (thus at most two; cf. KSB~\cite[Lemma 3.14]{Kollar-Shepherd-Barron-1988}).
\end{definition}

\begin{proposition}[Koll{\'a}r--Mori~{\cite[\S11 and Theorem~13.5]{Kollar-Mori-1992}}]
Suppose that $(C \subset \mathcal{W}) \to (Q \in \mathcal{Z})$ is a flipping extremal neighborhood of type mk1A or mk2A. Let $(C \subset W) \to (Q \in Z)$ be the contraction of $C$ between the central fibers $W$ and $Z$. Then there exists an extremal $P$-resolution $(C^+ \subset W^+) \to (Q \in Z)$ such that the flip $(C^+ \subset \mathcal{W}^+) \to (Q \in \mathcal{Z})$ is obtained by the blown-down deformation of a $\mathbb{Q}$-Gorenstein smoothing of $W^+$. That is, we have the commutative diagram
\begin{center}
\includegraphics{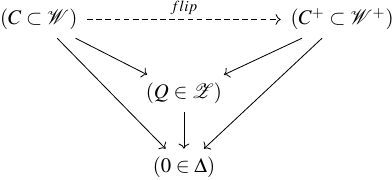}
\end{center}
%
which is restricted to the central fibers as follows:
\begin{equation*}
\includegraphics{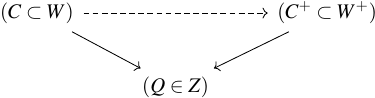}
\end{equation*}
\end{proposition}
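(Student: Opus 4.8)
The plan is to realize $(C\subset\mathcal W)\to(Q\in\mathcal Z)$ as a semistable three-fold flipping contraction and then to read off the flipped central fiber from the structure theory of Koll\'ar--Mori. First I would unpack the hypotheses: $\mathcal W$ is terminal with $K_{\mathcal W}$ $\mathbb Q$-Cartier and $K_{\mathcal W}\cdot C<0$; the $\mathbb Q$-Gorenstein smoothing $\mathcal W\to\Delta$ has central fiber $W$, which is a Cartier divisor because $\mathcal W\to\Delta$ is flat over a smooth curve, with only cyclic quotient (in fact Wahl) singularities along $C$; and $\mathcal Z\to\Delta$ is the blown-down deformation, whose central fiber is the cyclic quotient germ $(Q\in Z)$. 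I would then apply Mori's existence theorem~\cite{Mori-1988} to produce the flip $f^{+}\colon(C^{+}\subset\mathcal W^{+})\to(Q\in\mathcal Z)$, with $\mathcal W^{+}$ terminal, $K_{\mathcal W^{+}}$ $\mathbb Q$-Cartier and $f^{+}$-ample (unique by Koll\'ar--Mori~\cite{Kollar-Mori-1998}). Composing with $\mathcal Z\to\Delta$ gives a flat family $\mathcal W^{+}\to\Delta$; its general fiber is smooth, since the flip is an isomorphism away from the central fiber where $\mathcal W^{+}\cong\mathcal W$, and its central fiber $W^{+}$ is again a Cartier divisor, so $\mathcal W^{+}\to\Delta$ is a $\mathbb Q$-Gorenstein smoothing of $W^{+}$ and adjunction yields $K_{\mathcal W^{+}}|_{W^{+}}=K_{W^{+}}$. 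At this stage the induced contraction $(C^{+}\subset W^{+})\to(Q\in Z)$ already has exceptional locus the single curve $C^{+}$, satisfies $K_{W^{+}}\cdot C^{+}=K_{\mathcal W^{+}}\cdot C^{+}>0$, and has $W^{+}$ with only class $T$ singularities, each inheriting a $\mathbb Q$-Gorenstein smoothing from $\mathcal W^{+}$.

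The remaining, and main, task is to upgrade these soft conclusions to the statement that $W^{+}\to Z$ is an \emph{extremal} $P$-resolution, i.e. that $C^{+}$ is a smooth rational curve carrying only Wahl singularities rather than arbitrary class $T$ or Du Val ones. This is exactly the content of the Koll\'ar--Mori classification of semistable flips: in~\cite[\S11 and Theorem~13.5]{Kollar-Mori-1992} they describe, via general-elephant arguments and explicit weighted local models, how the central fiber is transformed under a semistable flip and record the resulting normal forms. I would specialize that analysis to flipping extremal neighborhoods of type mk1A or mk2A --- for which the local bookkeeping has been carried out by HTU~\cite{Hacking-Tevelev-Urzau} --- to conclude that $C^{+}\cong\mathbb P^{1}$ and that $W^{+}$ has at most two Wahl singularities, the bound being KSB~\cite[Lemma~3.14]{Kollar-Shepherd-Barron-1988} as already noted in Definition~\ref{definition:extremalPres}. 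Then $f^{+}\colon W^{+}\to Z$ is an extremal $P$-resolution; $\mathcal W^{+}\to\mathcal Z$ is, by construction, the blown-down deformation of the $\mathbb Q$-Gorenstein smoothing $\mathcal W^{+}\to\Delta$ of $W^{+}$; and assembling the morphisms over $\mathcal Z$ and over $\Delta$ gives the first commutative diagram, which restricts on central fibers to the second. Uniqueness of the extremal $P$-resolution here matches the uniqueness of the flip.

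To justify the phrase ``is obtained by'' I would also record the converse compatibility: starting from the extremal $P$-resolution $W^{+}\to Z$, any $\mathbb Q$-Gorenstein smoothing of $W^{+}$ blows down, in the sense of Wahl~\cite{Wahl-1976}, to a deformation of $Z$ over which the total space has $K$ relatively ample, because $K_{W^{+}}$ is relatively ample and the smoothing is $\mathbb Q$-Gorenstein; matching general fibers identifies that deformation with $\mathcal Z\to\Delta$ and the contraction with $f^{+}$, so by uniqueness of three-fold flips it is the flip of $\mathcal W$. The only genuinely non-formal input is the second paragraph: the shape of the flipped central fiber is invisible from the abstract flip and has to be extracted from the Koll\'ar--Mori semistable-flip machinery, whereas the rest is adjunction together with the existence and uniqueness of three-fold flips.
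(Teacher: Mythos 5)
Your proposal is consistent with the paper, which offers no independent argument for this statement: it is quoted directly from Koll\'ar--Mori~\cite[\S11 and Theorem~13.5]{Kollar-Mori-1992} (with the mk1A/mk2A bookkeeping done in HTU~\cite{Hacking-Tevelev-Urzau}), and your write-up reduces everything to exactly that citation, the rest being routine adjunction and existence/uniqueness of $3$-fold flips. So you take essentially the same route as the paper, just with the formal scaffolding spelled out.
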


\begin{convention}
If $(C \subset \mathcal{W})$ is an extremal neighborhood of type mk1A or m2A, then a flip of $(C \subset \mathcal{W})$  changes only the central fiber $(C \subset W)$ to $(C^+ \subset W^+)$. Hence we sometimes say that a \emph{flip of $(C \subset W)$} (or a \emph{flip of $C$}) instead of a flip of an extremal neighborhood $(C \subset \mathcal{W})$.
\end{convention}

\subsection{Explicit semi-stable minimal model program}

One can describe explicitly the numerical data of the central fibers $(C \subset W) \to (Q \in Z)$ of an extremal neighborhood of type mk1A or mk2A, and of the $(C^+ \subset W^+)$ in an extremal $P$-resolution of $(Q \in Z)$. For details, refer HTU~\cite{Hacking-Tevelev-Urzua-2013} (see also Urz\'ua~\cite[\S2.4]{Urzua-2013}). In this paper we do not meet any extremal neighborhoods of type mk2A. So we consider only extremal neighborhoods of type mk1A.

\subsubsection{$W \to Z$ for mk1A}

Let $(C \subset W)$ be the central fiber of an extremal neighborhood of type mk1A with a Wahl singularity of type $\frac{1}{m^2}(1, ma-1)$ lying on $C$. Let
\begin{equation*}
\frac{m^2}{ma-1} = [e_1, \dotsc, e_s]
\end{equation*}
and let $E_1, \dotsc, E_s$ be the exceptional curves of the minimal resolution $\widetilde{W}$ of $W$ with $E_j^2=-e_j$ for all $j$.

Since $K_W \cdot C < 0$ and $C^2 < 0$, the strict transform of $C$, denoted by $C$ again, is a $(-1)$-curve intersecting only one exceptional curve, say $E_i$, at one point. This data will be written as
\begin{equation*}
[e_1, \dotsc, \overline{e_i}, \dotsc, e_s].
\end{equation*}
Then $(Q \in Z)$ is a cyclic quotient surface singularity of type $\frac{1}{\Delta}(1, \Omega)$ where
\begin{equation*}
\frac{\Delta}{\Omega} = [e_1, \dotsc, e_i-1, \dotsc, e_s].
\end{equation*}

HTU~\cite{Hacking-Tevelev-Urzua-2013} shows that, if $1 < i < s$, then any extremal neighborhood of type mk1A degenerates to two extremal neighborhoods of type mk2A sharing the same birational type (see Urz\'ua~\cite[Prop.2.12]{Urzua-2013}). So one can compute either the flip or the divisorial contraction for any extremal neighborhood of type mk1A through the Mori algorithm \cite{Mori-2002} for extremal neighborhoods of type k2A. But we do not give the details here because we will not use it. The only k1A extremal neighborhoods which appear in this paper are the following.

\begin{proposition}[Usual flips]
The extremal neighborhood $[e_1,\dotsc,e_{s-1},\overline{e_s}]$ is of flipping type. Let $i \in \{1,\dotsc,s\}$ be such that $e_i \geq 3$ and $e_j=2$ for all $j>i$. (If $e_s>2$, then we set $i=s$.)

Then the image of $E_1$ in the $P$-resolution $W^+$ is the curve $C^+$ with the Wahl singularity of type $\frac{1}{m^2}(1, ma-1)$ where $\frac{m^2}{ma-1}=[e_2, \dotsc, e_i-1]$.
\label{proposition:usualflip}
\end{proposition}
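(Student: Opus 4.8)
The plan is to run the Mori–Kollár minimal model program for the threefold extremal neighborhood $(C\subset\mathcal{W})\to(Q\in\mathcal{Z})$ of type mk1A with central fiber $[e_1,\dotsc,e_{s-1},\overline{e_s}]$, and to identify the flip concretely at the level of the central fiber $W$. First I would verify that the neighborhood is indeed of flipping (rather than divisorial) type: by the discussion preceding the statement, a k1A neighborhood $[e_1,\dotsc,\overline{e_i},\dotsc,e_s]$ with the marked curve at an \emph{end} of the chain (here $i=s$) is flipping precisely when the cyclic quotient singularity $(Q\in Z)$ of type $\frac{1}{\Delta}(1,\Omega)$ with $\frac{\Delta}{\Omega}=[e_1,\dotsc,e_{s-1},e_s-1]$ is \emph{not} of class $T$ (equivalently, $K_Z$ is not $\mathbb{Q}$-Cartier), while it is divisorial when $(Q\in Z)$ is a Wahl singularity. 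Since contracting only $C$ from $W$ — whose minimal resolution is the chain $[e_1,\dotsc,e_{s-1},e_s]$ with one $(-1)$-curve attached to $E_s$ — produces $[e_1,\dotsc,e_{s-1},e_s-1]$, one checks by the Wahl/KSB algorithm (Proposition~\ref{proposition:T-algorithm}) that a one-end single Wahl chain cannot have this shape unless it reduces to the empty chain, so generically we are in the flipping case; the genuinely divisorial sub-cases (if any) are precisely those where $[e_1,\dotsc,e_{s-1},e_s-1]$ is itself Wahl, which are handled separately and yield the blow-down of a $(-1)$-curve as in Proposition~2.7 of the excerpt (this is the degenerate case $i$ not well-defined, i.e.\ the whole tail collapses).

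Next, to produce the flip $(C^+\subset\mathcal{W}^+)$, I would invoke the general structure theorem (Koll{\'a}r--Mori~\cite{Kollar-Mori-1992}, cited above as the proposition preceding the Convention): the flip is obtained as the blown-down deformation of a $\mathbb{Q}$-Gorenstein smoothing of an \emph{extremal $P$-resolution} $(C^+\subset W^+)\to(Q\in Z)$. So the entire problem reduces to a purely two-dimensional computation: determine the unique extremal $P$-resolution of the cyclic quotient singularity $(Q\in Z)$ of type $\frac{1}{\Delta}(1,\Omega)$ with $\frac{\Delta}{\Omega}=[e_1,\dotsc,e_{s-1},e_s-1]$ whose contracted curve $C^+$ carries at most two Wahl singularities and $K_{W^+}\cdot C^+>0$. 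The claim is that this $P$-resolution has $C^+$ carrying a \emph{single} Wahl singularity of type $\frac{1}{m^2}(1,ma-1)$ with $\frac{m^2}{ma-1}=[e_2,\dotsc,e_i-1]$, where $i$ is the index singled out in the statement ($e_i\ge 3$ and $e_j=2$ for $j>i$). I would prove this by an explicit continued-fraction manipulation: starting from $[e_1,\dotsc,e_{s-1},e_s-1]$, the string of trailing $2$'s after position $i$ together with the $-1$ adjustment telescopes, and one recognizes the resulting configuration as the minimal resolution of a Wahl chain $[e_2,\dotsc,e_i-1]$ attached to a $(-1)$-curve which is the strict transform of $E_1$; one then checks the Wahl property of $[e_2,\dotsc,e_i-1]$ by induction using Proposition~\ref{proposition:T-algorithm}, and checks $K_{W^+}\cdot C^+>0$ (ampleness of $K$ relative to $f^+$) via the standard discrepancy formula for $K_{W^+}=g^{\ast}K_Z+\text{(nonpositive)}$ over the resolution, so that positivity against $C^+$ follows from $C^+$ being the image of $E_1$, which had self-intersection $-e_1$ and hence contributes a strictly positive intersection after the arithmetic genus/adjunction bookkeeping.

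I expect the main obstacle to be the bookkeeping in the continued-fraction identity that converts $[e_1,\dotsc,e_{s-1},e_s-1]$ (with its marking telescoped through the tail of $2$'s) into the Wahl singularity $[e_2,\dotsc,e_i-1]$ on $C^+$, together with the consistent tracking of \emph{which} curve becomes $C^+$ (namely $E_1$) and why $K$ is $f^+$-ample rather than merely $f^+$-nef. Concretely: one must show that the $P$-resolution obtained has exactly one Wahl point (not zero, not two), and that the discrepancy $K_{W^+}\cdot C^+$ is strictly positive; both are small but delicate computations with Hirzebruch--Jung fractions, and the edge cases $e_s>2$ (forcing $i=s$, so the tail is empty) and $e_s=2$ must be treated uniformly. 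Once the 2-dimensional picture is pinned down, the 3-dimensional conclusion — that the flip $(C^+\subset\mathcal{W}^+)$ is the $\mathbb{Q}$-Gorenstein smoothing of this $W^+$ — is immediate from the Koll{\'a}r--Mori proposition quoted above, and uniqueness of the flip (Koll{\'a}r--Mori~\cite{Kollar-Mori-1998}) shows no other outcome is possible.
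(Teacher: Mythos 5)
The paper itself gives no argument for Proposition~\ref{proposition:usualflip}: it simply cites Urz\'ua~\cite[Prop.~2.15]{Urzua-2013}. Your outline therefore has to stand on its own, and its first step --- the verification that the neighborhood is of flipping type --- fails. You use ``divisorial if and only if $(Q\in Z)$ is a Wahl singularity,'' but the quoted result (Urz\'ua's Prop.~2.8) gives only the implication ``divisorial $\Rightarrow$ Wahl,'' and your supporting claim that $[e_1,\dotsc,e_{s-1},e_s-1]$ can never reduce to a Wahl chain is false. Take $[5,\overline{2}]$, i.e.\ the Wahl singularity $\frac{1}{9}(1,2)$ with the $(-1)$-curve meeting the $(-2)$-end: contraction gives $[5,1]\equiv[4]$, the Wahl singularity $\frac{1}{4}(1,1)$, yet this neighborhood is flipping --- it is exactly the flip of $F_1$ performed in Example~\ref{example:cyclic-19/7-MMP} (Figure~\ref{figure:flips-cyclic}), and its flip (smooth $W^+$, with $C^+$ the image of $E_1$ of self-intersection $-4$) is precisely what Proposition~\ref{proposition:usualflip} with $i=1$ predicts. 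So your fallback, that such cases are ``genuinely divisorial sub-cases handled separately,'' contradicts both the statement being proved and the paper's own use of it. Being Wahl downstairs is necessary but not sufficient for a divisorial contraction: one must also rule out that the induced deformation of $(Q\in Z)$ is $\mathbb{Q}$-Gorenstein, e.g.\ by computing the general fibre of the neighborhood (for $[5,\overline{2}]$ it is the rational blow-down of the $[5,2]$-plumbing with a $(-1)$-curve attached, which has $H_1=0$ and intersection form $\langle -4\rangle$, whereas a divisorial mk1A would force a blown-up rational homology ball, with $H_1=\mathbb{Z}/2$ and a $(-1)$-class). Proving this for every end-marked $[e_1,\dotsc,\overline{e_s}]$ is the real content of the flipping claim, and it is absent from your plan.

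Two further gaps concern the identification of $W^+$. First, you invoke ``the unique extremal $P$-resolution'' of $(Q\in Z)$, but a cyclic quotient singularity may carry two extremal $P$-resolutions (cf.~\cite{Hacking-Tevelev-Urzau}); exhibiting one with the right continued fraction does not yet identify the flip. You must also show that its $\mathbb{Q}$-Gorenstein smoothing blows down to the given threefold $\mathcal{Z}$, i.e.\ match the component of the deformation of $(Q\in Z)$ induced by $\mathcal{Z}$ (for instance by comparing general fibres); only then does uniqueness of flips finish the argument. Second, a factual slip: the strict transform of $E_1$ in the minimal resolution of $W^+$ is in general not a $(-1)$-curve attached to the Wahl chain $[e_2,\dotsc,e_i-1]$ --- for $[2,\overline{5}]$ it is a $(-2)$-curve meeting the chain $[4]$, and for $[5,\overline{2}]$ it is the $(-4)$-curve itself. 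The reduction of the threefold statement to a two-dimensional computation via Koll\'ar--Mori, together with the deferred continued-fraction bookkeeping and the check $K_{W^+}\cdot C^+>0$, is reasonable in outline, but as written the proof breaks at the flipping step and is incomplete at the identification step.
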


\begin{proof}
See Urz\'ua~\cite[Prop.2.15]{Urzua-2013}.
\end{proof}

\subsubsection{Iitaka-Kodaira divisorial contractions} The only divisorial contractions we will encounter in this paper are

\begin{proposition}[Iitaka-Kodaira contractions]
\label{proposition:IitakaKodaira}
Let $F \colon W \subset \mathcal{W} \to 0 \in \Delta$ be a $\mathbb{Q}$-Gorenstein smoothing of a surface $W$ with only Wahl singularities. Let $C$ be a smooth rational curve in $W$ such that it contains no singularity of $W$ and $C^2 =-1$. Then, after possibly shrinking $\Delta$, there is a divisor $\mathcal{C} \subset \mathcal{W}$ such that $F^{-1}(t) \cap \mathcal{C}$ is a $(-1)$-curve in $F^{-1}(t)$ for every $t \in \Delta$, and there exists a contraction $\mathcal{C} \subset \mathcal{W} \to \mathcal{Z}$ of $\mathcal{C}$ giving a new $\mathbb{Q}$-Gorenstein smoothing $F' \colon Z \subset \mathcal{Z} \to 0 \in \Delta$ such that ${F'}^{-1}(t)$ is the blow-down of $F^{-1}(t) \cap \mathcal{C}$ for every $t \in \Delta$.
\end{proposition}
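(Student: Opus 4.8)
The plan is to realize this as a relative version of Castelnuovo's contractibility criterion, applied to the total space $\mathcal{W}$ rather than to a single surface. The key point is that $C$ is a $(-1)$-curve sitting in the \emph{smooth} locus of $\mathcal{W}$: since $C$ contains no singularity of $W$ and $W$ has only Wahl singularities (in particular $\mathcal{W}$ is smooth along $C$ because a $\mathbb{Q}$-Gorenstein smoothing is smooth away from the singular points of the central fiber), the curve $C$ has an honest neighborhood in $\mathcal{W}$ that is a smooth threefold.

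First I would compute the normal bundle $N_{C/\mathcal{W}}$. From the exact sequence $0 \to N_{C/W} \to N_{C/\mathcal{W}} \to N_{W/\mathcal{W}}|_C \to 0$ and the fact that $W$ is a fiber of $F$, we have $N_{W/\mathcal{W}}|_C \cong \mathcal{O}_C$, while $N_{C/W} \cong \mathcal{O}_{\mathbb{P}^1}(-1)$ because $C^2=-1$ and $C$ lies in the smooth part of $W$. Hence $N_{C/\mathcal{W}} \cong \mathcal{O}_{\mathbb{P}^1}(-1) \oplus \mathcal{O}_{\mathbb{P}^1}$. This is precisely the normal bundle of a smooth rational curve that can be contracted to a smooth point of a threefold, with $\mathcal{W}$ near $C$ looking like the blow-up of a smooth surface $\times \Delta$ along a section; concretely, a neighborhood of $C$ is isomorphic to a neighborhood of the exceptional curve of $\mathrm{Bl}_0(\mathbb{C}^2) \times \Delta$. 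So there is an analytic (or, after shrinking, algebraic) contraction $g \colon \mathcal{W} \to \mathcal{Z}$ of $C$ onto a smooth point $Q$, with $\mathcal{Z}$ a threefold; by the local model $\mathcal{Z}$ is again smooth along $Q$.

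Next I would exhibit the divisor $\mathcal{C}$ and check fiberwise behaviour. The curve $C$ moves in $\mathcal{W}$: indeed $H^0(C, N_{C/\mathcal{W}})$ has a one-dimensional subspace coming from the $\mathcal{O}_C$ summand, which corresponds to deforming $C$ in the direction transverse to $W$, i.e. sweeping out a family of $(-1)$-curves over a small disk. More directly, from the local model $\mathrm{Bl}_0(\mathbb{C}^2)\times\Delta$ the exceptional locus of the blow-up is $E \times \Delta$ where $E \cong \mathbb{P}^1$; set $\mathcal{C}$ to be the closure of this in $\mathcal{W}$. After possibly shrinking $\Delta$, $\mathcal{C}$ is a divisor, $F|_{\mathcal{C}} \colon \mathcal{C} \to \Delta$ is a $\mathbb{P}^1$-bundle, and $\mathcal{C} \cap F^{-1}(t)$ is a smooth rational curve with self-intersection $-1$ inside the smooth surface $F^{-1}(t)$ for every $t$ — for $t=0$ it is exactly $C$, and for $t \neq 0$ it is a $(-1)$-curve in the smooth Milnor fiber. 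Then the contraction $g \colon \mathcal{W} \to \mathcal{Z}$ restricts on each fiber to the blow-down of this $(-1)$-curve, giving the new family $F' \colon \mathcal{Z} \to \Delta$. That $F'$ is again a $\mathbb{Q}$-Gorenstein smoothing follows because $g$ is crepant near $C$ in the relevant sense (it is just a smooth blow-down along the smooth locus, so $K_{\mathcal{Z}}$ is $\mathbb{Q}$-Cartier wherever $K_{\mathcal{W}}$ was, the Wahl singularities of $W$ being untouched), and the central fiber $Z = {F'}^{-1}(0)$ is the blow-down of $W$ along $C$, hence still has only Wahl singularities.

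The main obstacle is the passage from the formal/analytic local contraction of $C$ to an honest contraction of the divisor $\mathcal{C}$ over a neighborhood of $0 \in \Delta$: a priori one only knows $C$ itself is contractible, and one must guarantee that the contraction extends over nearby fibers simultaneously, i.e. that $\mathcal{C}$ and not just $C$ is contractible. This is exactly where "after possibly shrinking $\Delta$" is used, and it is handled by the explicit local model $\mathrm{Bl}_0(\mathbb{C}^2) \times \Delta$ identified above (semicontinuity of $h^0$ of the normal bundle plus the rigidity of $\mathbb{P}^1 \times \Delta$ inside a smooth threefold guarantees the local picture persists), together with the fact that $\mathcal{C}$ is a $\mathbb{P}^1$-bundle over $\Delta$ with normal bundle $\mathcal{O}(-1)$ on each ruling, which is the standard situation for a smooth blow-down of a smooth threefold. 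Everything else — the normal bundle computation, identification of the local model, and verifying that $Z$ still has Wahl singularities — is routine.
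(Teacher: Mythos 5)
Your argument is correct and is essentially the classical Kodaira stability-of-$(-1)$-curves plus simultaneous blow-down proof; the paper itself disposes of this proposition simply by citing BHPV (p.~154), which contains exactly the argument you reconstruct (normal bundle $\mathcal{O}_{\mathbb{P}^1}(-1)\oplus\mathcal{O}_{\mathbb{P}^1}$, the curve sweeping out a divisor $\mathcal{C}$ of $(-1)$-curves in nearby fibers, and the relative Castelnuovo contraction of $\mathcal{C}$). Two wording slips do not affect the logic but should be fixed: a curve with normal bundle $\mathcal{O}(-1)\oplus\mathcal{O}$ cannot be contracted by itself to a point --- what is contracted is the whole divisor $\mathcal{C}$, as you in fact do --- and the resulting blow-down is not crepant ($K_{\mathcal{W}}=g^{*}K_{\mathcal{Z}}+\mathcal{C}$); the relevant point is only that $\mathcal{Z}$ agrees with $\mathcal{W}$ away from $\mathcal{C}$ and is smooth along its image, so $K_{\mathcal{Z}}$ stays $\mathbb{Q}$-Cartier and the Wahl singularities are untouched.
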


\begin{proof}
See BHPV~\cite[p.154]{BHPV-2004}.
\end{proof}

\subsubsection{Degenerations of curves} \label{subsection:degenerationCurves}
Let $F \colon W \subset \mathcal{W} \to 0 \in \Delta$ be a proper $\mathbb{Q}$-Gorenstein smoothing of a surface $W$ with only Wahl singularities. Let $\Gamma$ be a divisor in $\mathcal{W}$ such that $\Gamma_t :=F^{-1}(t) \cap \Gamma$ is an irreducible proper curve for every $t$. Assume that $\Gamma_0$ does not pass through the singularities of $W$.

We now consider $F \colon W \subset \mathcal{W} \to 0 \in \Delta$ as either Iitaka-Kodaira contraction (Proposition~\ref{proposition:IitakaKodaira}) for some $\mathcal{C} \subset \mathcal{W}$ or usual flip (Proposition~\ref{proposition:usualflip}). We want to know what happens to $\Gamma$ after we perform the corresponding birational operation. If $\Gamma_0$ does not touch $C$, then $\Gamma$ has no change in the new family.

The only extra case we need to consider is when $\Gamma_0$ intersects transversally $C$ at one point; for other situations see Urz\'ua~\cite[\S4]{Urzua-2013b}. For a divisorial contraction, if it does, then it is clear how $\Gamma$ is transformed everywhere.

\begin{proposition} Suppose that $\Gamma_0$ intersects transversally $C$ at one point, and that $F \colon C \subset W \subset \mathcal{W} \to 0 \in \Delta$ is a usual flip. Let $F \colon C^+ \subset W^+ \subset \mathcal{W}^+ \to 0 \in \Delta$ be the flip, where $C^+=E_1$ as in Proposition~\ref{proposition:usualflip}, and let $\Gamma^+ \subset \mathcal{W}^+$ be the proper transform of $\Gamma$. Then $\Gamma^+_0 = \Gamma_0 + C^+$, which forms the toric boundary of the corresponding Wahl singularity in $W^+$.

\label{proposition:degenerationCurves}
\end{proposition}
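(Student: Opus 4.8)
The plan is to reduce the statement to a purely local computation near the flipping curve $C$, since away from $C$ the divisor $\Gamma$ is unaffected by the flip (this was already noted in the paragraph preceding the proposition). So first I would localize: by Proposition~\ref{proposition:usualflip} the flip $(C \subset \mathcal{W}) \dashrightarrow (C^+ \subset \mathcal{W}^+)$ fits into the Koll\'ar--Mori commutative diagram over $(Q \in \mathcal{Z})$, and everything takes place inside an analytic neighborhood of $C$ in $\mathcal{W}$, resp. of $C^+$ in $\mathcal{W}^+$. The key point is that $\Gamma_0 \cap C$ is a single transversal point $p$ which does \emph{not} lie on the Wahl singularity of $W$ on $C$ (since $\Gamma_0$ avoids $\mathrm{Sing}(W)$), so near $p$ both $W$ and $\mathcal{W}$ are smooth, and $\Gamma$ passes through $p$ as a smooth divisor meeting $C$ transversally there.

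Next I would track the image of $p$ under the flip. The flipping contraction $f\colon (C\subset \mathcal{W})\to (Q\in\mathcal{Z})$ sends $C$ to the point $Q$; hence $\Gamma$, meeting $C$ at $p$, maps to a surface $f(\Gamma)\subset \mathcal{Z}$ passing through $Q$, and $\Gamma_0$ maps to a curve through $Q\in Z$ which is smooth at $Q$ (the contraction is an isomorphism outside $C$, and $\Gamma_0$ meets $C$ only at the single smooth point $p$, so the image curve is just $\Gamma_0$ with $C$ collapsed to a point on it). Now I would use the description of the flip $f^+\colon (C^+\subset W^+)\to (Q\in Z)$ as the crepant-over-$K$ extremal $P$-resolution: $W^+$ has a Wahl singularity of type $\frac{1}{m^2}(1,ma-1)$ with $\frac{m^2}{ma-1}=[e_2,\dots,e_i-1]$, and $C^+=E_1$ is the image of the first minimal-resolution curve. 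The proper transform $\Gamma^+_0$ is obtained by taking the total transform of the image curve under $f^+$ and removing components not meeting it — but since the image curve passes through $Q$, its total transform under $f^+$ is $\Gamma_0 + C^+$ (one needs $\Gamma_0$ to meet $C^+$ and not the rest of the exceptional configuration), and $\Gamma^+_0$ being the full fiber-$0$ degeneration of the flat family $\Gamma^+$ is exactly this: $\Gamma^+_0 = \Gamma_0 + C^+$.

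The remaining assertion — that $\Gamma_0 + C^+$ forms the toric boundary of the Wahl singularity in $W^+$ — I would verify on the minimal resolution $\widetilde{W^+}$. Blowing up $W^+$ minimally resolves the singularity $\frac{1}{m^2}(1,ma-1)$ into the chain $[e_2,\dots,e_i-1]$ (with the convention of the excerpt, after reindexing), $C^+=E_1$ becomes the $(-1)$-curve attaching this chain to the rest, and the claim is that the proper transform of $\Gamma_0$ meets the \emph{other} end of this chain transversally at one point; then $\Gamma_0 + (\text{chain}) + $ (the curve realizing $C^+$) is precisely the standard toric boundary of that cyclic quotient singularity, and contracting the chain gives the two-branch toric boundary $\Gamma_0 + C^+$ of the Wahl point. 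This is forced by the position of $p$ on $C$ relative to $E_i$ in Proposition~\ref{proposition:usualflip}: the intersection point $p$ sits on the $(-1)$-curve $C$ which attaches to $E_i$, and under the flip the roles of the two ends of the resolution chain get swapped, so $\Gamma_0$ ends up attached at the opposite end. I expect the main obstacle to be precisely this bookkeeping of which end of the resolution chain $\Gamma_0$ attaches to after the flip — i.e. making the ``swap'' in Proposition~\ref{proposition:usualflip} completely explicit on minimal resolutions and checking that $\Gamma_0$ lands on the correct curve so that $\Gamma_0 + C^+$ is genuinely the toric boundary and not some other pair of curves. Everything else (flatness of $\Gamma^+$, that $\Gamma^+_0$ reduced equals $\Gamma_0 + C^+$, smoothness away from the Wahl point) is routine once the local model is pinned down.
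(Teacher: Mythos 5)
The paper offers no argument of its own here: its ``proof'' is the single line that the statement is direct from Proposition~\ref{proposition:usualflip}, with the details delegated to Urz\'ua's computation. So your proposal has to stand on its own, and as written it has two genuine gaps, both located exactly where the content of the proposition lies. First, the step in which you obtain $\Gamma^+_0$ as ``the total transform of the image curve under $f^+$'' is not justified and is incorrect as literally stated: $f^+\colon W^+\to Z$ contracts $C^+$ to the cyclic quotient point $Q$, which is a singular point of $Z$, so the image of $\Gamma_0$ is not Cartier at $Q$ and its pullback is the proper transform plus a \emph{rational}, in general non-integral, multiple of $C^+$. More to the point, $\Gamma^+_0$ is by definition the central fibre of the threefold divisor $\Gamma^+\subset\mathcal{W}^+$ (the flat limit of the curves $\Gamma_t$ in the new family), and identifying it with any surface-level transform requires an argument. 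What actually forces $C^+$ into the limit is the flip itself: the relative Picard rank over $\mathcal{Z}$ is one, so $\Gamma\cdot C=1>0$ together with $K_{\mathcal{W}}\cdot C<0<K_{\mathcal{W}^+}\cdot C^+$ gives $\Gamma^+\cdot C^+<0$, hence $C^+\subset\Gamma^+$; and one must then still check that $C^+$ occurs with multiplicity exactly one in $\Gamma^+_0$. None of this appears in your proposal.

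Second, the assertion that $\Gamma_0+C^+$ is the toric boundary of the new Wahl singularity --- i.e.\ that the proper transform of $\Gamma_0$ passes through the Wahl point of $W^+$ and, on the minimal resolution, attaches to the end of the chain $[e_2,\dotsc,e_i-1]$ opposite to the one where $C^+$ attaches --- is precisely the nontrivial part of the statement, and you explicitly leave it unproved (``the main obstacle''), offering only the heuristic that the two ends of the chain get swapped. That bookkeeping is not routine: it requires the explicit local model of the usual flip, i.e.\ the computation behind Proposition~\ref{proposition:usualflip} (equivalently, Urz\'ua's degeneration-of-curves computation that the paper cites). So while your local picture and the expected final configuration are the right ones, the proposal establishes neither the multiplicity of $C^+$ in the limit nor the toric-boundary structure; it is a plausible plan rather than a proof, and its missing pieces are exactly the ones the paper discharges by citation.
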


\begin{proof}
Direct from Proposition~\ref{proposition:usualflip}; see Urz\'ua~\cite[Prop.4.1]{Urzua-2013b} for details.
\end{proof}

In this way, flips will be responsible for broken curves after running MMP.


\section{Identifying Milnor fibers}
\label{section:identifying-Milnor-fibers}

In this section we identify Milnor fibers of quotient surface singularities with their minimal symplectic fillings via running (a controlled) semi-stable minimal model program.

Let $(X,0)$ be a quotient surface singularity, and let $\pi \colon \mathcal{X} \to \Delta$ be a smoothing of $X$ whose Milnor fiber is $M$; that is, $M$ is a general fiber $X_t$ of $\pi$. Let $Y \to X$ be the crepant $M$-resolution corresponding to $\pi$, and let $\mathcal{Y} \to \Delta$ be a $\mathbb{Q}$-Gorenstein smoothing of $Y$ corresponding to $\pi$. We showed in Proposition~\ref{proposition:Milnor-fiber=complement} that the Milnor fiber $M$ can be described as a complement of $E_{\infty}$ in a general fiber $\widehat{Y}_t$ of the smoothing $\widehat{\smash[b]{\mathcal{Y}}} \to \Delta$ of the natural compactification $\widehat{Y}$ of $Y$. That is,
\begin{equation*}
M = X_t = \widehat{Y}_t - E_{\infty}.
\end{equation*}

On the other hand, let $W$ be a minimal symplectic filling of $(X,0)$ and let $Z$ be its natural compactification mentioned in Definition~\ref{definition:natural-compactification-of-filling-cyclic} and Definition~\ref{definition:natural-compactification-of-filling-non-cyclic}. Then we showed in Section~\ref{section:Diffeomorphism-type} that the diffeomorphism type of $W$ is completely determined by the data of intersections of $(-1)$-curves in $Z$ with the compactifying divisor $E_{\infty} \subset Z$. Therefore, in order to identify Milnor fibers as minimal symplectic fillings, we have to get the data from $(E_{\infty} \subset \widehat{Y}_t)$. See Subsection~\ref{subsection:procedure-for-identifying}.

\subsection{Reduction to controlled MMP and smooth deformation}

We first work out certain key properties of Wahl singularities, which will be useful to run MMP in our situation.

\begin{definition}\label{definition:initialCurve}
We know that any Wahl singularity $\frac{1}{n^2}(1,na-1)=[e_1,\dotsc,e_s]$ can be obtained via the algorithm in Proposition~\ref{proposition:T-algorithm} (ii) applied to
\begin{tikzpicture}[scale=0.5]
\node[bullet] (00) at (0,0) [label=above:{$-4$}] {};
\end{tikzpicture}.
Let $e_i$ be the image of
\begin{tikzpicture}[scale=0.5]
\node[bullet] (00) at (0,0) [label=above:{$-4$}] {};
\end{tikzpicture}
under this procedure. The corresponding exceptional curve $E_i$ is called the \emph{initial curve} of the Wahl singularity.
\end{definition}

Notice that $E_i$ is unique up to reordering the indices. For example, the initial curve of $[2,6,2,3]$ is $E_2$, of $[3,2,6,2]$ is $E_3$, and for $[2,2,2,2,2,7,2,2,7]$ is $E_6$.

Next, we want to get control on the discrepancies of Wahl singularities. For this, we explain the point of view in Urz\'ua~\cite[\S4]{Urzua-2013}.

The exceptional divisor of any Wahl singularity $\frac{1}{n^2}(1,na-1)$ can be obtained from a nodal $I_1$ elliptic singular fiber by blowing up over the node. We blow up the node of $I_1$ and subsequent nodes coming from the new $(-1)$-curves. The exceptional divisor appears as the chain of curves of the total
transform of $I_1$ which does not contain the (last) $(-1)$-curve. We denote by $\sigma \colon \widetilde{Z} \to Z$ the composition of blow-ups used. Let $\{E_1, \dotsc, E_s \}$ be the corresponding Wahl configuration where $\frac{1}{n^2}(1,na-1)=[e_1,\dotsc,e_s]$, and $E_i^2=-e_i$. Write $\sigma^*(I_1)= \sum_{i=1}^{s+1} \nu_i E_i$, where $E_{s+1}$ is the $(-1)$-curve, and $\nu_i \geq 1$ are integers. Notice that the initial curve of $\frac{1}{n^2}(1,na-1)$ is the $E_i$ with $\nu_i=1$, i.e., the proper transform of $I_1$.

\begin{lemma}\label{lemma:numerics}
In a situation as above, we have $n=\nu_{s+1}$, $a=\nu_{s+1}-\nu_s$, and the discrepancy of $E_i$ is $-1 + \frac{\nu_i}{\nu_{s+1}}$ for all $i=1,\dotsc,s$.
\end{lemma}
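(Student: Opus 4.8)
The plan is to prove Lemma~\ref{lemma:numerics} by induction on the number of blow-ups in $\sigma\colon\widetilde{Z}\to Z$, tracking the multiplicities $\nu_i$ together with the continued fraction data simultaneously. Recall that the Wahl configurations $[e_1,\dots,e_s]$ with $\frac{n^2}{na-1}=[e_1,\dots,e_s]$ are exactly those built from $[4]$ by the two operations of Proposition~\ref{proposition:T-algorithm}(ii), and on the elliptic-fibration side these operations correspond precisely to blowing up a node of the total transform of $I_1$ (a node formed by the $(-1)$-curve $E_{s+1}$ meeting one of the two end curves of the current chain). So I would set up the induction with the following statement: after any number of these blow-ups, writing $\sigma^*(I_1)=\sum_{i=1}^{s+1}\nu_i E_i$ with $E_{s+1}$ the unique $(-1)$-curve, one has $\nu_{s+1}=n$, $\nu_{s+1}-\nu_s = a$ (with $E_s$ chosen as the end of the chain adjacent to $E_{s+1}$ on the appropriate side — one must be careful about which end), and $K_{\widetilde Z}\equiv \sigma^*K_Z + \sum_{i=1}^s (-1+\tfrac{\nu_i}{\nu_{s+1}})E_i$.

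The base case is $[4]$: here $\sigma$ is a single blow-up of a smooth point on the cuspidal/nodal side — actually the configuration $[4]=\frac{1}{4}(1,1)$ comes from three blow-ups over the node of $I_1$, giving $\sigma^*(I_1)=E_1+2E_2+3E_3$ with $E_3^2=-1$, $E_1^2=E_2^2=-2$... but in the Wahl normalization $[4]$ corresponds to $\frac{4}{1}$, so I need to recheck the exact base configuration; in any event the base case is an explicit finite computation of $\nu_i$ and discrepancies, and $n=2$, $a=1$ is read off directly. For the inductive step, suppose the claim holds for a configuration $[e_1,\dots,e_s]$ with data $(\nu_1,\dots,\nu_{s+1})$, and we blow up the node $p = E_{s+1}\cap E_1$ (the case of the other end is symmetric). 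The new exceptional curve $E_0'$ has multiplicity $\nu_{s+1}+\nu_1$ in the total transform, becomes the new $(-1)$-curve, $E_{s+1}$ acquires self-intersection $-2$, and $E_1$ stays; the new chain is $[e_1+1,e_2,\dots,e_s,2]$ wait — one checks this against the two KSB operations: blowing up at the $E_1$-end gives $[2,e_1,e_2,\dots,e_s{+}1]$-type modification, i.e.\ exactly the operation in Proposition~\ref{proposition:T-algorithm}(ii), so the new configuration is a Wahl configuration with $n' = \nu_{s+1}+\nu_1$, and the new $a'$ is determined by the new adjacency. The key recursion $n' = n + \nu_1$ and the update of $a$ follow from the arithmetic of the Riemenschneider duality together with the standard fact that $\frac{1}{n'^2}(1,n'a'-1)$ is the new singularity; I would verify $n'a'-1 \equiv$ the correct residue by matching the two operations of Proposition~\ref{proposition:T-algorithm}(ii) with the two choices of node.

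The discrepancy statement then propagates automatically: pulling back $K$ under one further blow-up at $p\in E_{s+1}\cap E_1$ adds $E_0'$ with coefficient $($discrepancy of $E_{s+1})+($discrepancy of $E_1)+1$, and since before the blow-up $E_{s+1}$ is a $(-1)$-curve contracted by $\sigma$ (discrepancy computed from $\sigma^*(I_1)$ and adjunction: $-1+\nu_{s+1}/\nu_{s+1}=0$ for the would-be "initial curve" role, but here we want the formula $-1+\nu_i/\nu_{s+1}$ which for $i=s+1$ gives $0$, consistent), the new coefficient is $(-1+\nu_1/\nu_{s+1}) + 0 + 1 = \nu_1/\nu_{s+1}$, and after renormalizing the $(-1)$-curve we divide by $n'/n = (n+\nu_1)/n$... the bookkeeping here is the routine part. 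The only genuine subtlety — and the main obstacle — is keeping the orientation/indexing conventions straight: which of $E_1, E_s$ is adjacent to $E_{s+1}$, which end gets blown up in each of the two Proposition~\ref{proposition:T-algorithm}(ii) operations, and consequently whether the relevant multiplicity difference is $\nu_{s+1}-\nu_s$ or $\nu_{s+1}-\nu_1$. I would resolve this once and for all by fixing the convention that $E_{s+1}$ meets $E_1$ and $E_s$ (the two ends) at the last step, tabulating the base case $[4]$ explicitly, and then checking a second small example such as $[2,6,2,3]$ or $[3,2,6,2]$ against the claimed formula to confirm the conventions match those used elsewhere in the paper (in particular Definition~\ref{definition:initialCurve}, where the initial curve is the $E_i$ with $\nu_i=1$). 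Once the conventions are pinned down, the induction is short.
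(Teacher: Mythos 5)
The paper does not prove this lemma at all: it simply cites Urz\'ua's paper (Lemma~4.1 there), so any comparison is with the standard argument rather than with a proof written out here. Your inductive scheme is workable for the statements $n=\nu_{s+1}$ and $a=\nu_{s+1}-\nu_s$: blowing up the node $E_{s+1}\cap E_s$ (resp.\ $E_{s+1}\cap E_1$) realizes the two operations of Proposition~\ref{proposition:T-algorithm}(ii), the new $(-1)$-curve has multiplicity $\nu_{s+1}+\nu_s$ (resp.\ $\nu_{s+1}+\nu_1$), and this matches the recursions $(n,a)\mapsto(2n-a,n)$ and $(n,a)\mapsto(n+a,a)$ \emph{provided} you carry along the extra inductive statement $\nu_1=a$ (equivalently $\nu_{s+1}-\nu_1=n-a$), which is exactly what resolves the two-ends ambiguity you worry about. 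Two small corrections: the base case $[4]$ comes from a \emph{single} blow-up of the node, giving $\sigma^*(I_1)=E_1+2E_2$ with $E_1$ the $(-4)$-curve (your ``three blow-ups, $E_1+2E_2+3E_3$'' configuration is not the one for $[4]$), and the reversal of the chain swaps $a\leftrightarrow n-a$, so the convention must indeed be fixed once and checked on an example.

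The genuine gap is in the discrepancy part. Your ``propagation'' step treats the discrepancies as if they could be updated by the usual blow-up formula (new coefficient $=d_a+d_b+1$) and then ``renormalized by $n'/n$''. But the discrepancies in the lemma are computed with respect to the \emph{contraction of the current chain}, and after one more blow-up the reference singularity changes: the old $(-1)$-curve $E_{s+1}$, which was not exceptional over the old Wahl singularity, becomes part of the new exceptional chain, and every discrepancy changes denominator from $\nu_{s+1}$ to $\nu_{s+1}+\nu_j$. There is no additive blow-up formula relating the two sets of discrepancies, and ``dividing by $n'/n$'' is not a legitimate operation, so as written this step fails. The efficient fix is to drop the induction for this part and verify the formula directly: since $I_1$ is a fiber, $I_1^2=K\cdot I_1=0$, hence $\bigl(\sum_{i=1}^{s+1}\nu_iE_i\bigr)\cdot E_j=0$ for every $j$. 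Putting $d_i=-1+\nu_i/\nu_{s+1}$ and using that each component of the cycle meets the union of the others in exactly two points, one computes for $j=1,\dotsc,s$
\begin{equation*}
\Bigl(\sum_{i=1}^{s}d_iE_i\Bigr)\cdot E_j \;=\; e_j - 2 \;=\; K_{\widetilde{Z}}\cdot E_j,
\end{equation*}
so $K-\sum_i d_iE_i$ is numerically trivial on the chain; negative definiteness of the intersection matrix then gives uniqueness, proving the discrepancy formula without any bookkeeping. With that replacement, and the base case corrected, your induction for $n$ and $a$ goes through.
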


\begin{proof}
See Urz\'ua~\cite[Lemma 4.1]{Urzua-2013}.
\end{proof}

\begin{lemma}\label{lemma:keyInitial}
Let $f^+ \colon C^+ \subset W^+ \to 0 \in Z$ be an extremal $M$-resolution (i.e., an extremal $P$-resolution as in Definition~\ref{definition:extremalPres} but with nef canonical class only) of a cyclic quotient surface singularity $(Z,0)$. Let $\widetilde{W}^+ \to W^+$ be the minimal resolution of the Wahl singularities, if any, and let $\widetilde{Z} \to Z$ be the minimal resolution of $(Z,0)$. Finally, let $g \colon \widetilde{W}^+ \to \widetilde{Z}$ be the corresponding induced map. Then $g$ does not contract the initial curves of the Wahl singularities.
\end{lemma}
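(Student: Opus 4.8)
The plan is to reduce the statement to the numerical description of discrepancies in Lemma~\ref{lemma:numerics}. Recall that the initial curve $E_i$ of a Wahl singularity $\frac{1}{n^2}(1,na-1)=[e_1,\dots,e_s]$ is exactly the curve with coefficient $\nu_i=1$ in $\sigma^*(I_1)=\sum_{j=1}^{s+1}\nu_jE_j$, equivalently (by Lemma~\ref{lemma:numerics}) the curve whose discrepancy $-1+\nu_i/n$ equals $-1+1/n$, which is the \emph{smallest possible} discrepancy among the exceptional curves of that singularity, and in fact strictly smaller than the discrepancy of any other exceptional curve there (since $\nu_j\ge 2$ for $j\ne i$). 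So "initial curve" = "curve of minimal discrepancy". The key point I want to exploit: if $g\colon\widetilde{W}^+\to\widetilde{Z}$ contracts some curve of $\widetilde{W}^+$, then the discrepancy of that curve, computed in $\widetilde{Z}\to Z$, must be one of the discrepancies $-1+\alpha_k$ of the minimal resolution of $(Z,0)$ — but those discrepancies lie strictly above what an initial curve would need.

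The first step is to set up discrepancies carefully on both sides relative to $Z$. Write $K_{\widetilde{Z}}=\rho^*K_Z+\sum_k(-1+\alpha_k)F_k$ for the minimal resolution $\rho\colon\widetilde{Z}\to Z$, with $0<\alpha_k<1$. On the $W^+$ side, since $f^+\colon W^+\to Z$ is an extremal $M$-resolution, $K_{W^+}$ is nef over $Z$, so for the blow-down $f^+$ we have $K_{W^+}=(f^+)^*K_Z + \sum(\text{nonpositive coefficients on }f^+\text{-exceptional divisors of }W^+)$; pulling back to the minimal resolution $\widetilde{W}^+\to W^+$ (which is crepant over each Wahl point by definition of Wahl singularities) and using the classification in Lemma~\ref{lemma:numerics}, the initial curve of a Wahl singularity of $W^+$ acquires discrepancy $-1+\frac1n$ over $Z$ where $n\ge 2$, i.e.\ a value in the range $[-\tfrac12,0)$, and in fact of the special form $-1+\tfrac1n$. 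Now suppose toward a contradiction that $g$ contracts the initial curve $E$ of some Wahl singularity of $W^+$. Then $E$ is $g$-exceptional, hence its image is a point of $\widetilde{Z}$ (not a point of $Z$, since $\widetilde{Z}\to Z$ is already a resolution and $g$ factors the composite $\widetilde{W}^+\to\widetilde Z\to Z$). Comparing $K_{\widetilde W^+}=g^*K_{\widetilde Z}+\sum(\text{discrepancies over }\widetilde Z)$ with the two expressions for $K_{\widetilde W^+}$ over $Z$ gives: $\mathrm{discrep}_Z(E) = \mathrm{discrep}_{\widetilde Z}(E) + (\text{a nonnegative combination of the }{-1+\alpha_k}) $... — the exact bookkeeping is the routine part I would grind through. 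The upshot should be that $\mathrm{discrep}_Z(E)\ge -1+\alpha_{k_0}$ for the $F_{k_0}$ lying under $E$, whereas the initial curve forces $\mathrm{discrep}_Z(E)=-1+\tfrac1n$; and since the minimal resolution of a cyclic quotient singularity has all $\alpha_k$ bounded below in a way that beats $\tfrac1n$ once one tracks which $F_{k_0}$ sits under the Wahl chain, we get a contradiction. An alternative, possibly cleaner route: argue directly on the $\nu$-coefficients. The map $g$ contracts a curve only if it can be blown down inside $\widetilde W^+$ while staying dominant over $\widetilde Z$; but the initial curve is the proper transform of the $I_1$ fiber (coefficient $1$), and blowing it down in the $I_1$-picture is impossible because it meets the Wahl chain in a node-type configuration whose contraction would change the intersection pattern of $\sigma^*(I_1)$ incompatibly with $\widetilde Z$ being the \emph{minimal} resolution — I would make this precise via Lemma~\ref{lemma:numerics} and Proposition~\ref{proposition:usualflip}, which identifies exactly which curves survive a flip.

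The cleanest formulation I would actually write: since $f^+$ is an $M$-resolution, $K_{W^+}$ is nef over $Z$, so $g^*K_{\widetilde Z}\ge K_{\widetilde W^+}$ fails to have any negative discrepancy only on curves that are \emph{not} initial; more precisely, by Lemma~\ref{lemma:numerics} the initial curve of each Wahl singularity of $W^+$ has discrepancy $-1+\frac{1}{n}$ over $Z$ with $n\ge 2$, while every $g$-exceptional curve must have discrepancy over $Z$ at least as large as the discrepancy, over $Z$, of the (unique) minimal-resolution curve $F_{k_0}$ of $(Z,0)$ it dominates — and a direct check using the continued fraction $\frac{\Delta}{\Omega}=[e_1,\dots,e_i-1,\dots,e_s]$ governing $(Z,0)$ shows $-1+\alpha_{k_0}>-1+\frac1n$, contradiction. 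So $g$ contracts no initial curve.

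\textbf{Main obstacle.} The genuinely delicate point is the discrepancy comparison $-1+\alpha_{k_0}>-1+\tfrac1n$: one must identify \emph{which} exceptional curve $F_{k_0}$ of the minimal resolution of $(Z,0)$ lies beneath the initial curve of a given Wahl singularity of $W^+$, and then bound its discrepancy $\alpha_{k_0}$ away from $\tfrac1n$. This requires unwinding the relation between the extremal $M$-resolution $W^+\to Z$, its minimal resolution $\widetilde W^+$, and the minimal resolution $\widetilde Z$ — i.e.\ understanding how the Wahl chains of $W^+$ sit inside the minimal resolution of $Z$. I expect this to follow from Lemma~\ref{lemma:numerics} together with the explicit continued-fraction combinatorics of extremal $P$-resolutions from HTU~\cite{Hacking-Tevelev-Urzau}, but it is the step that needs care rather than a one-line invocation.
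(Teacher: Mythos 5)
Your proposal does not close the lemma: the step you yourself flag as the ``main obstacle'' is precisely where the whole content of the statement lies, and in the naive form you state it the comparison is false. First, two inaccuracies in the setup: a curve contracted by $g$ is by definition \emph{not} one of the minimal-resolution curves of $(Z,0)$, so its discrepancy over $Z$ is not ``one of the $-1+\alpha_k$''; and $-1+\tfrac1n$ is the discrepancy of the initial curve over the Wahl singularity of $W^+$, not over $Z$ --- over $Z$ one picks up an extra term $\lambda\,\mathrm{mult}(\mu^*C^+)$ with $\lambda=(K_{W^+}\cdot C^+)/(C^+)^2\le 0$, so the correct statement is an inequality $\le -1+\tfrac1n$ (which at least points in the direction you want). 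More seriously, the contradiction you aim for requires $1+a_Z(D_1)+a_Z(D_2)>-1+\tfrac1n$, where $D_1,D_2$ are the divisors over $Z$ through whose intersection the putative contracted initial curve is created; but the discrepancies of the minimal-resolution curves of $(Z,0)$ lying under the Wahl chain can be arbitrarily close to $-1$, so ``a $g$-exceptional curve has discrepancy at least that of the curve it dominates'' does not beat $-1+\tfrac1n$ without a genuinely finer analysis of which curves sit beneath the initial curve and how nefness constrains them. You have not supplied that analysis, and your alternative ``argue on the $\nu$-coefficients'' route is only a gesture.

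For comparison, the paper does not make a single global discrepancy comparison between $\widetilde W^+$ and $\widetilde Z$. It writes the configuration as $[c_1,\dotsc,c_r]-[e_1,\dotsc,e_s]$, expresses nefness as $-1-d(c_r)-d(e_1)\ge 0$ via Lemma~\ref{lemma:numerics}, and then runs an induction on the length $s$ of the Wahl chain: the base case $[4]$ forces $c_r>2$ (so the first contraction already lands on the minimal resolution and the initial curve survives), and in the inductive step the configuration $[2,g_2,\dotsc,g_r]-[2,e_1,\dotsc,e_s+1]$ is reduced to $[g_2,\dotsc,g_r-1]-[e_1,\dotsc,e_s]$, where a short computation with the $\nu$-data ($\alpha\delta\le\gamma\beta$) shows the reduced configuration is again an extremal $M$-resolution, so induction applies. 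In other words, nefness is propagated along the recursive structure of Wahl continued fractions rather than used once; that recursive use is exactly the ingredient missing from your outline, so as it stands the proposal has a genuine gap.
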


\begin{proof}
If $C^+$ is not a $(-1)$-curve, then the claim is trivially true. Therefore, let $C^+$ be a $(-1)$-curve. We will represent the $P$-resolution by
\begin{equation*}
[c_1,\dotsc,c_r]-[e_1,\dotsc,e_s]
\end{equation*}
which gives the continued fractions of both Wahl singularities, and the dash says that $C^+$ is intersecting the exceptional curves corresponding to $c_r$ and $e_1$; is the notation in Urz\'ua~\cite[\S2.4]{Urzua-2013}. We notice that
\begin{equation*}
K_{W^+} \cdot C^+ = -1 - d(c_r) - d(e_1) \geq 0
\end{equation*}
where $d(c_r)$ and $d(e_1)$ are the discrepancies of the curves corresponding to $c_r$ and $e_1$ respectively (computed as in Lemma~\ref{lemma:numerics}).

We are going to use induction on the length $s$ of $[e_1,\dotsc,e_s]$. If $s=1$, then $e_1=4$, and so $c_r>2$. This is because by Lemma~\ref{lemma:numerics}, the discrepancy corresponding to an end curve of self-intersection $-2$ is $> -\frac{1}{2}$. In this way, since the discrepancy of $[4]$ is $-\frac{1}{2}$, we must have $c_r>2$ because of the nonnegativity of $K_{W^+} \cdot C^+$. Hence, no initial curve of the Wahl singularities is contracted by $g$.

We now assume that it is true for a fixed $s$ and all Wahl continued fractions $[c_1,\dotsc,c_r]$. For length $s+1$ we have two possibilities:

\medskip

\textit{Case~1}. The $P$-resolution looks like $[2,g_2,\dotsc,g_r]-[2,e_1,\dotsc,e_s +1]$.

\medskip

We want to know if the contraction kills the initial curve of $[2,e_1,\dotsc,e_s +1]$. This is the same as asking if the contraction of $[g_2,\dotsc,g_r -1]-[e_1,\dotsc,e_s]$ kills the initial curve of $[e_1,\dotsc,e_s]$, since the curve corresponding to $g_1=2$ is not necessary ($P$-resolutions with a central $(-1)$-curve are formed by blowing-up nodes in the minimal resolution). By Lemma~\ref{lemma:numerics}, the discrepancies for $g_r$ and $e_0:=2$ in $[2,g_2,\dotsc,g_r]-[2,e_1,\dotsc,e_s +1]$ can be written as
\begin{equation*}
\text{$d(g_r)= -\dfrac{\gamma + \delta}{\gamma + 2\delta}$ and $d(e_0)=-\dfrac{\beta}{\alpha+2\beta}$},
\end{equation*}
where $\alpha,\beta,\gamma,\delta$ are positive integers. The nonnegativity of $-1-d(g_r)-d(e_0)$ implies that
\begin{equation*}
\alpha \delta \leq \gamma \beta.
\end{equation*}
On the other hand and by Lemma~\ref{lemma:numerics}, the discrepancies for $g_r-1$ and $e_1$ in $[g_2,\dotsc,g_r-1]-[e_1,\dotsc,e_s]$ are $d(g_r-1)=-\frac{\gamma}{\gamma+\delta}$ and $d(e_1)=-\frac{\beta}{\alpha+\beta}$, and $\alpha \delta \leq \gamma \beta$ implies that $-1-d(g_r-1)-d(e_1)\geq 0$, and so $[g_2,\dotsc,g_r-1]-[e_1,\dotsc,e_s]$ is an extremal $M$-resolution. By induction, it does not contract the initial curves.

\medskip

\textit{Case~2}. The $P$-resolution looks like $[g_1,\dotsc,g_{r-1},2]-[e_1+1,\dotsc,e_s,2]$.

\medskip

We use the same strategy as above, the analogous computations show that the hypothesis of induction works to show that the initial curves are not contracted.
\end{proof}

We now return to our situation. We have a $\mathbb{Q}$-Gorenstein smoothing $\widehat{\smash[b]{\mathcal{Y}}} \to \Delta$ of the natural compactification $\widehat{Y}$ of $Y$ by $E_{\infty}$. The minimal resolution $\widetilde{Y} \to \widehat{Y}$ of all Wahl singularities in $\widehat{Y}$ has the natural fibration (defined in Section~\ref{section:Compactifying-divisor}) $\widetilde{Y} \to \mathbb{CP}^1$, such that the exceptional divisors of the Wahl singularities lie on some finite fibers (in our case at most three) together with the section $E_0$. These fibers will be called \emph{arms} (following Stevens~\cite[\S2]{Stevens-1993}). If $E_0$ is part of the exceptional divisor of a Wahl singularity, then any arm not containing a curve of that exceptional divisor will be a \emph{noncentral arm}.

The following theorem is a particular case of Urz\'ua~\cite[Thm.3.4]{Urzua-2013b}, but it includes an explicit and controlled way to run MMP. It also generalizes to any normal $M$-resolution of a rational singularity with star shaped resolution, as in Pinkham~\cite{Pinkham-1977}.

\begin{theorem}\label{theorem:smoothable-by-flips}
By applying only Iitaka-Kodaira divisorial contractions and usual flips to curves coming from the arms of $\widetilde{Y}$, we can run MMP to $\widetilde{Y} \subset \widehat{{\mathcal{Y}}} \to 0 \in \Delta$ until we obtain a deformation $W \subset \mathcal{W} \to 0 \in \Delta$ whose central fiber $W$ is smooth.
\end{theorem}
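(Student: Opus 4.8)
The plan is to induct on the number of Wahl singularities appearing on the arms of $\widetilde{Y}$, reducing the complexity of the central fiber one step at a time by a carefully chosen $3$-fold birational operation, and to guarantee that this process terminates by exhibiting a monovariant. First I would set up the bookkeeping: the minimal resolution $\widetilde{Y} \to \widehat{Y}$ carries the $\mathbb{CP}^1$-fibration $\widetilde{Y} \to \mathbb{CP}^1$ from Section~\ref{section:Compactifying-divisor}, with section $E_0$ and with the exceptional configurations of the Wahl singularities of $\widehat{Y}$ sitting inside at most three fibers (the arms), possibly together with $E_0$ when $E_0$ is part of one of those configurations. On each arm we have a chain of $\mathbb{P}^1$'s, some contracted in $\widehat{Y}$ to Wahl points and some remaining as $(-1)$-curves in $\widetilde{Y}$. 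A $(-1)$-curve $C$ in $\widehat{Y}$ meeting the exceptional locus of one Wahl singularity in a single point gives, after passing to the $\mathbb{Q}$-Gorenstein family $\widehat{\mathcal{Y}} \to \Delta$, an extremal neighborhood of type mk1A in the sense of the preceding section; a $(-1)$-curve disjoint from all singularities gives an Iitaka--Kodaira contraction as in Proposition~\ref{proposition:IitakaKodaira}.

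The core of the argument is then: as long as $W = \widehat{Y}$ (or its current birational model) is not yet smooth, there is a $(-1)$-curve on an arm to which we may apply either an Iitaka--Kodaira divisorial contraction (Proposition~\ref{proposition:IitakaKodaira}) or a usual flip (Proposition~\ref{proposition:usualflip}). To see such a curve exists, note that $K_{\widehat{Y}}$ is negative on each arm (since $\widehat{Y}$ is obtained from the $\mathbb{C}^\ast$-compactification which has a star-shaped rational resolution, and the canonical class of the central fibration, away from $E_0$ and $E_\infty$, is anti-effective on the arm fibers); concretely one locates a curve $C$ on an arm with $K \cdot C < 0$, which after contracting down the arm chain realizes an extremal neighborhood of type mk1A over a two-dimensional cyclic quotient germ. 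If this extremal neighborhood is divisorial, Proposition~\ref{proposition:IitakaKodaira} (or the divisorial case of the mk1A classification) replaces $\widehat{Y}$ by a new surface with $K$ still nef on the remaining arms and with strictly fewer exceptional curves over that germ, the general fiber changing only by a blow-down of a $(-1)$-curve. If it is flipping, Proposition~\ref{proposition:usualflip} produces the flip: the flipped central fiber $W^+$ has an extremal $M$-resolution $C^+ \subset W^+$ attached, but — and this is where Lemma~\ref{lemma:keyInitial} enters — the induced birational map on minimal resolutions does not contract the initial curves of the Wahl singularities, so each flip strictly decreases the total length $\sum_j s_j$ of the Wahl continued fractions appearing on the arms. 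In either case the quantity ``(number of Wahl singularities on arms, total length of their continued fractions)'' strictly drops in the lexicographic order, and the noncentral arms and the section $E_0$ are left untouched by Proposition~\ref{proposition:degenerationCurves} and its divisorial analogue, so $E_\infty$ and the compactifying data survive.

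The termination of this procedure is then immediate from the strict descent of the monovariant, and when it halts there are no Wahl singularities on any arm; since by construction all singularities of the intermediate models are Wahl singularities supported on the arms (the section $E_0$ lives inside at most one arm's configuration and is removed with it, and the $\mathbb{C}^\ast$-structure keeps the total space normal with only Wahl singularities throughout — this is the general HTU picture), the final surface $W$ is smooth, and $\mathcal{W} \to \Delta$ is a $\mathbb{Q}$-Gorenstein, hence (once $W$ is smooth) an honest, smooth deformation. I expect the main obstacle to be the verification that at every stage a suitable $(-1)$-curve on an arm really is available to flip or contract, i.e.\ that the canonical class genuinely forces an extremal ray supported on the arms and that the relevant extremal neighborhood is always of the tame type mk1A covered by Propositions~\ref{proposition:usualflip} and~\ref{proposition:IitakaKodaira} (never mk2A, as asserted), together with checking that none of these operations ever touches $E_\infty$; this is exactly the content that must be extracted from Urz\'ua~\cite[Thm.3.4]{Urzua-2013b} and Lemma~\ref{lemma:keyInitial}, and adapting it to the star-shaped $M$-resolution setting of Pinkham~\cite{Pinkham-1977} is the delicate point.
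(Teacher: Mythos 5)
Your framework matches the paper's (arms of the fibration $\widetilde{Y}\to\mathbb{CP}^1$, Iitaka--Kodaira contractions, usual flips, Lemma~\ref{lemma:keyInitial}, tracking the general fiber), but there is a genuine gap at exactly the point you yourself defer at the end: you never establish that at every stage there is a $(-1)$-curve on an arm giving an extremal neighborhood of the tame type covered by Propositions~\ref{proposition:usualflip} and~\ref{proposition:IitakaKodaira}. The reason you offer --- that $K$ is ``negative on each arm'' --- is false: by the definition of an $M$-resolution, $K$ is nef on the $(-1)$-curves joining two Wahl chains inside an arm (for a configuration $[g_1,\dotsc,g_r]-[e_1,\dotsc,e_s]$ one has $K\cdot C=-1-d(g_r)-d(e_1)\ge 0$), so those curves can never be flipped or contracted, and a blanket ``the canonical class forces an extremal ray supported on the arms'' argument does not get off the ground. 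The paper's proof instead runs an explicitly ordered procedure: in each noncentral arm it works from the $E_\infty$ end inward, always using the $(-1)$-curve adjacent to the outermost remaining Wahl chain (usual flip if it touches the chain, Iitaka--Kodaira contraction otherwise), and Lemma~\ref{lemma:keyInitial} is invoked precisely for the configuration you do not analyze --- two Wahl chains joined by a single $(-1)$-curve --- to show that the contraction of this extremal $M$-resolution never contracts the initial curves, hence the outer Wahl singularity must, and can, be eliminated by $(-1)$-curves coming from the $E_\infty$ side, which always yield usual flips. Your use of the lemma (to conclude that each flip shortens the Wahl continued fraction) is a misattribution: that shortening is immediate from Proposition~\ref{proposition:usualflip} and is not what the lemma is needed for.

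Two further points. Your termination monovariant (number of Wahl singularities, total length of their continued fractions) does not strictly decrease under Iitaka--Kodaira contractions, which change neither entry; one must also account for the number of curves in the arms, as the paper's finite ordered procedure implicitly does. And the endgame is skipped: after the noncentral arms are cleared, a remaining Wahl singularity may contain the section $E_0$ in its exceptional divisor, and the paper handles this by continuing in the (at most two) central arms until only $E_0$ survives, checking in particular that the final step is the flip of $[\overline{2},2,\dotsc,2,m]$ with $E_0^2=-m$, which is smooth by Proposition~\ref{proposition:usualflip}; your closing assertion that all intermediate singularities sit on arms and hence the final surface is smooth does not address this case.
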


\begin{proof}
The procedure is very simple. We first work with noncentral arms, let us consider one of them. Let $[e_1,\dotsc,e_s]$ be the ending exceptional divisor of a Wahl singularity in this arm, i.e. the one closer to $E_{\infty}$, with the curve $E_s$ (corresponding to $e_s$) the closest to $E_{\infty}$. Then, between $E_s$ and $E_{\infty}$ there is a $(-1)$-curve $C$ (in the arm). If $C$ does not touch $E_s$, then we apply an Iitaka-Kodaira contraction. If it touches $E_s$, then we apply an usual flip.

After that, we have two possibilities:

\textit{Case~1}. If there is no other exceptional divisor of Wahl singularity in this arm, then there has to be more $(-1)$-curves in that arm which will be contracted (and so Iitaka-Kodaira applies) until one touches the curve $E_s$. When that happens we have a usual flip, which we do apply. We repeat this process until there is no Wahl singularity to flip in this arm. It is easy to see that there will be always $(-1)$-curves ``over" the proper transform of $[e_1,\dotsc,e_s]$, which allow usual flips or Iitaka-Kodaira divisorial contractions.

\textit{Case~2}. If there is another exceptional divisor of Wahl singularity in this arm, say $[g_1,\dotsc,g_r]$, then $[g_1,\dotsc,g_r]$ and $[e_1,\dotsc,e_s]$ together with the curves in between them, form an $M$-resolution of some cyclic quotient surface singularity. If we have more than one curve in between, then none of them can be a $(-1)$-curve (otherwise intersection with canonical class in the contraction would be negative). So, if there are more than one curve in between (or one curve but no $(-1)$-curve), we can keep contracting and/or flipping as in \textit{Case~1}. (so ``over" $[e_1,\dotsc,e_s]$) until there is no Wahl singularity related to $[e_1,\dotsc,e_s]$. Then we repeat the process with the next Wahl singularity.

If there is one $(-1)$-curve between $[g_1,\dotsc,g_r]$ and $[e_1,\dotsc,e_s]$, then it follows by Lemma~\ref{lemma:keyInitial} that the successive contractions of
\begin{equation*}
[g_1,\dotsc,g_r]-[e_1,\dotsc,e_s]
\end{equation*}
do not contract the initial curves of any of them, in particular, of $[e_1,\dotsc,e_s]$. So, to kill the initial curve we need $(-1)$-curves ``over" $[e_1,\dotsc,e_s]$. The point is that usual flips never kill the initial curve, and so we keep going until there is no Wahl singularity left from $[e_1,\dotsc,e_s]$. Then we repeat the process with the next Wahl singularity.

After we eliminate all Wahl singularities from noncentral arms, we work with the central arms. Notice that there are at most two of them. We take one of them, and do \textit{Case~1}. and \textit{Case~2}. above until a Wahl singularity is not completely contained in the arm, this is, the curve $E_0$ is part of the exceptional divisor. We do the same with the other arm.

After that we keep flipping and/or contracting in each arm, this last Wahl singularity, until the only left part is the curve $E_0$. This is possible because each arm has to end up as a $\mathbb{CP}^1$ fiber of the natural fibration to $\mathbb{CP}^1$. At the end, the only possibility is to have $E_0^2=-4$, but in that case we should have flipped a usual $[\overline{2},2,\dotsc,2,m]$ where $E_0^2=-m$, and the flip of that is smooth by Proposition~\ref{proposition:usualflip}.

Therefore, at the end of running this controlled MMP, we obtain a deformation $W \subset \mathcal{W} \to 0 \in \Delta$ whose central fiber $W$ is smooth.
\end{proof}

\subsection{Procedure for identifying Milnor fibers}
\label{subsection:procedure-for-identifying}

A flip changes only the central fiber. So, in order to track down how a general fiber is changed during the MMP process above, we need to take care of only divisorial contractions, which are just blow-downs of $(-1)$-curves on a general fiber.

\begin{corollary}
In Theorem~\ref{theorem:smoothable-by-flips}, a general fiber $\widehat{Y}_t$ ($t \neq 0$) of the smoothing $\widehat{\smash[b]{\mathcal{Y}}} \to \Delta$ is obtained by blowing up several times a general fiber $W_t$ of the smoothing $\mathcal{W} \to \Delta$.
\end{corollary}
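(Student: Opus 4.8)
The plan is to track, step by step, the effect on a general fiber $F^{-1}(t)$ ($t\neq 0$) of each elementary move of the minimal model program carried out in the proof of Theorem~\ref{theorem:smoothable-by-flips}. That MMP uses only two kinds of moves: usual flips of extremal neighborhoods of type mk1A (Proposition~\ref{proposition:usualflip}) and Iitaka--Kodaira divisorial contractions (Proposition~\ref{proposition:IitakaKodaira}). So I would analyze each kind once and then compose.

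First I would observe that a usual flip $\mathcal{W}\dashrightarrow\mathcal{W}^+$ leaves the general fiber unchanged. Indeed, by the Convention recorded in Section~\ref{section:Semi-stable-MMP}, such a flip modifies only the central fiber $(C\subset W)$, replacing it by $(C^+\subset W^+)$; both $C$ and $C^+$ map to the distinguished point $Q$ of the surrounding germ, which lies over $0\in\Delta$, so $\mathcal{W}$ and $\mathcal{W}^+$ are each isomorphic to $\mathcal{Z}$ away from the flipping curves and hence the flip restricts to an isomorphism of families over $\Delta\setminus\{0\}$. Therefore $\widehat{Y}_t$ is unaffected by a flip. Next I would use Proposition~\ref{proposition:IitakaKodaira}: an Iitaka--Kodaira contraction $\mathcal{C}\subset\mathcal{W}\to\mathcal{Z}$ comes equipped with a divisor $\mathcal{C}$ meeting each fiber $F^{-1}(t)$ in a single $(-1)$-curve, and the resulting $\mathbb{Q}$-Gorenstein smoothing $F'\colon Z\subset\mathcal{Z}\to\Delta$ has general fiber $F'^{-1}(t)$ equal to the blow-down of $F^{-1}(t)$ along $F^{-1}(t)\cap\mathcal{C}$. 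So each divisorial contraction in the MMP blows down exactly one $(-1)$-curve in the general fiber.

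Then I would compose: the controlled MMP of Theorem~\ref{theorem:smoothable-by-flips} transforms $\widehat{\smash[b]{\mathcal{Y}}}\to\Delta$ into $\mathcal{W}\to\Delta$ in finitely many steps (finiteness being part of that theorem), each of which either fixes the general fiber (flips) or blows down a single $(-1)$-curve in it (Iitaka--Kodaira contractions). Hence the sequence of moves induces a birational morphism $\widehat{Y}_t\to W_t$ that is a composition of blow-downs of $(-1)$-curves; equivalently, $\widehat{Y}_t$ is obtained from $W_t$ by finitely many blow-ups, which is the assertion of the corollary.

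The hard part — really the only point requiring care — will be to be certain that the flips genuinely do not disturb the general fiber, i.e. that the indeterminacy locus of $\mathcal{W}\dashrightarrow\mathcal{W}^+$ is contained in the central fiber. For the mk1A flips this is immediate from the explicit description in Proposition~\ref{proposition:usualflip} together with Proposition~\ref{proposition:degenerationCurves}, where both $C$ and its image $C^+=E_1$ are curves in the respective central fibers and the two total spaces agree elsewhere; since the smoothings are $\mathbb{Q}$-Gorenstein and $K$ is $\mathbb{Q}$-Cartier, this upgrades to an honest isomorphism of families over $\Delta\setminus\{0\}$. I would also note, using Propositions~\ref{proposition:usualflip} and~\ref{proposition:IitakaKodaira}, that every intermediate total space still carries a $\mathbb{Q}$-Gorenstein smoothing with smooth general fiber, so that ``blowing down a $(-1)$-curve in the general fiber'' is meaningful at every stage of the process.
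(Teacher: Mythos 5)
Your argument is correct and coincides with the paper's own (very brief) justification: the paper notes immediately before the corollary that flips change only the central fiber, so one only needs to track the Iitaka--Kodaira divisorial contractions, each of which blows down a single $(-1)$-curve in every fiber by Proposition~\ref{proposition:IitakaKodaira}, whence $\widehat{Y}_t \to W_t$ is a composition of blow-downs. Your extra remark that the indeterminacy locus of each flip lies in the central fiber is exactly the content of the paper's Convention and flip diagram, so nothing further is needed.
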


Since $\mathcal{W} \to \Delta$ is a deformation with only smooth fibers, the central fiber $W_0$ is diffeomorphic to a general fiber $W_t$. By comparing $W_0$ and $W_t$, one can get the data of positions of $(-1)$-curves in $W_t$. Finally one can get the data of intersections of $(-1)$-curves with the compactifying divisor $E_{\infty}$ in $\widehat{Y}_t$ by tracking the blow-downs $\widehat{Y}_t \to W_t$ given by flips and divisorial contractions, as explained in Subsection~\ref{subsection:degenerationCurves}.

\begin{example}[continued from Example~\ref{example:cyclic-19/7-$P$-resolution}]
\label{example:cyclic-19/7-MMP}

Let $X$ be a cyclic quotient surface singularity of type $\frac{1}{19}(1,7)$. Let $\widehat{Y} (:=\widehat{Y}_3)$ be the compactified $M$-resolution given by the following dual graph:
\begin{equation*}
\begin{tikzpicture}
\node[] (-30) at (-3,0) {$\widehat{Y}$};

\node[rectangle] (-20) at (-2,0) [label=above:{$-4$},label=above:{}] {};

\node[bullet] (-10) at (-1,0) [label=above:{$-1$}] {};

\node[rectangle] (00) at (0,0) [label=above:{$-5$}] {};

\node[rectangle] (10) at (1,0) [label=above:{$-2$}] {};

\node[bullet] (20) at (2,0) [label=above:{$-1$}] {};

\node[bullet] (30) at (3,0) [label=above:{$-3$}] {};

\node[bullet] (40) at (4,0) [label=above:{$-2$}] {};

\node[bullet] (50) at (5,0) [label=above:{$-3$}] {};

\node[bullet] (60) at (6,0) [label=above:{$-1$}] {};

\node[bullet] (70) at (7,0) [label=above:{$+1$}] {};

\draw [-] (-20)--(-10)--(00)--(10)--(20)--(30)--(40)--(50)--(60)--(70);
\end{tikzpicture}
\end{equation*}
We perform usual flips twice to get a new deformation $\mathcal{W} \to \Delta$ as in Figure~\ref{figure:flips-cyclic}, where $F_i$ is the flipping curve and $F_i^+$ is the flipped curve, and we describe only how the central fiber is changed (because a flip changes only the central fiber).
\begin{figure}
\centering
\begin{tikzpicture}
\node[] (-30) at (-3,0) {$\widehat{Y}$};

\node[rectangle] (-20) at (-2,0) [label=above:{$-4$}] {};

\node[bullet] (-10) at (-1,0) [label=above:{$-1$}] {};

\node[rectangle] (00) at (0,0) [label=above:{$-5$}] {};

\node[rectangle] (10) at (1,0) [label=above:{$-2$}] {};

\node[bullet] (20) at (2,0) [label=above:{$-1$},label=below:{$F_1$}] {};

\node[bullet] (30) at (3,0) [label=above:{$-3$}] {};

\node[bullet] (40) at (4,0) [label=above:{$-2$}] {};

\node[bullet] (50) at (5,0) [label=above:{$-3$}] {};

\node[bullet] (60) at (6,0) [label=above:{$-1$}] {};

\node[bullet] (70) at (7,0) [label=above:{$+1$}] {};

\draw [-] (-20)--(-10)--(00)--(10)--(20)--(30)--(40)--(50)--(60)--(70);

\draw [->] (2.5,-0.5)--(2.5,-1);

\node[rectangle] (-2-15) at (-2,-1.5) [label=above:{$-4$},label=above:{}] {};

\node[bullet] (-1-15) at (-1,-1.5) [label=above:{$-1$},label=below:{$F_2$}] {};

\node[bullet] (0-15) at (0,-1.5) [label=above:{$-4$},label=below:{$F_1^+$}] {};

\node[bullet] (3-15) at (3,-1.5) [label=above:{$-1$}] {};

\node[bullet] (4-15) at (4,-1.5) [label=above:{$-2$}] {};

\node[bullet] (5-15) at (5,-1.5) [label=above:{$-3$}] {};

\node[bullet] (6-15) at (6,-1.5) [label=above:{$-1$}] {};

\node[bullet] (7-15) at (7,-1.5) [label=above:{$+1$}] {};

\draw [-] (-2-15)--(-1-15)--(0-15)--(3-15)--(4-15)--(5-15)--(6-15)--(7-15);

\draw [->] (2.5,-2)--(2.5,-2.5);

\node[] (-3-3) at (-3,-3) {$W_0$};

\node[bullet] (-2-3) at (-2,-3) [label=above:{$-3$},label=below:{$F_2^+$}] {};

\node[bullet] (0-3) at (0,-3) [label=above:{$-3$}] {};

\node[bullet] (3-3) at (3,-3) [label=above:{$-1$},label=below:{$E$}] {};

\node[bullet] (4-3) at (4,-3) [label=above:{$-2$}] {};

\node[bullet] (5-3) at (5,-3) [label=above:{$-3$}] {};

\node[bullet] (6-3) at (6,-3) [label=above:{$-1$}] {};

\node[bullet] (7-3) at (7,-3) [label=above:{$+1$}] {};

\draw [-] (-2-3)--(0-3)--(3-3)--(4-3)--(5-3)--(6-3)--(7-3);
\end{tikzpicture}
\caption{Flips in Example~\ref{example:cyclic-19/7-MMP}}
\label{figure:flips-cyclic}
\end{figure}

We now compare $W_0$ and $W_t = \widehat{Y}_{t}$.
\begin{equation*}
\begin{tikzpicture}
\node[] (20) at (2,0) {$\widehat{Y}_t$};

\node[bullet] (30) at (3,0) [label=above:{$-3$},label=below:{$A_4$}] {};

\node[bullet] (40) at (4,0) [label=above:{$-2$},label=below:{$A_3$}] {};

\node[bullet] (50) at (5,0) [label=above:{$-3$},label=below:{$A_2$}] {};

\node[bullet] (60) at (6,0) [label=above:{$-1$},label=below:{$A_1$}] {};

\node[bullet] (70) at (7,0) [label=above:{$+1$}] {};

\draw [-] (30)--(40)--(50)--(60)--(70);
\end{tikzpicture}
\end{equation*}

Since we do not apply any divisorial contraction, a general fiber $W_t$ is just a rational surface $\widehat{Y}_{t}$ containing the compactifying divisor $E_{\infty}$ whose dual graph is
\begin{equation*}
\begin{tikzpicture}
\node[] (00) at (0,0) {$W_t$};

\node[bullet] (30) at (3,0) [label=above:{$-3$},label=below:{$A_4$}] {};

\node[bullet] (40) at (4,0) [label=above:{$-2$},label=below:{$A_3$}] {};

\node[bullet] (50) at (5,0) [label=above:{$-3$},label=below:{$A_2$}] {};

\node[bullet] (60) at (6,0) [label=above:{$-1$},label=below:{$A_1$}] {};

\node[bullet] (70) at (7,0) [label=above:{$+1$}] {};

\draw [-] (30)--(40)--(50)--(60)--(70);

\node[] (0-1) at (0,-1.5) {$W_0$};

\node[bullet] (1-1) at (1,-1.5) [label=above:{$-3$},label=below:{$B_6$}] {};

\node[bullet] (2-1) at (2,-1.5) [label=above:{$-3$},label=below:{$B_5$}] {};

\node[bullet] (3-1) at (3,-1.5) [label=above:{$-1$},label=below:{$B_4$}] {};

\node[bullet] (4-1) at (4,-1.5) [label=above:{$-2$},label=below:{$B_3$}] {};

\node[bullet] (5-1) at (5,-1.5) [label=above:{$-3$},label=below:{$B_2$}] {};

\node[bullet] (6-1) at (6,-1.5) [label=above:{$-1$},label=below:{$B_1$}] {};

\node[bullet] (7-1) at (7,-1.5) [label=above:{$+1$}] {};

\draw [-] (1-1)--(2-1)--(3-1)--(4-1)--(5-1)--(6-1)--(7-1);
\end{tikzpicture}
\end{equation*}

By comparing $W_0$ and $W_t (=\widehat{Y}_t)$ using Proposition~\ref{proposition:degenerationCurves}, one can conclude that the $(-3)$-curve $A_4$ in $W_t$ breaks into three curves
\begin{tikzpicture}[scale=0.5]
\node[bullet] (00) at (0,0) [label=above:{$-3$}] {};
\node[bullet] (10) at (1,0) [label=above:{$-3$}] {};
\node[bullet] (20) at (2,0) [label=above:{$-1$}] {};

\draw [-] (00)--(10)--(20);
\end{tikzpicture}
containing $B_6$, $B_5$, $B_4$ during the deformation $\mathcal{W} \to \Delta$. Note that the $(-1)$-curve $B_4$ in $W_0$ should survive in $W_t$. Since $B_4$ intersects $B_3$, its image $E_1$ in $W_t$ intersects only $A_3$ at one point.

We now apply Iitaka-Kodaira divisorial contractions twice starting from the divisorial extremal neighborhood $(B_4 \subset \mathcal{W})$ by blowing down $B_4 \subset W_0$ (and the corresponding $E_1 \subset W_t$) as in Figure~\ref{figure:divisorial-contractions-cyclic}. We then obtain a new deformation $\mathcal{W}' \to \Delta$ with central fiber $W_0'$ and general fiber $W_t'$.

\begin{figure}[t]
\centering
\begin{tikzpicture}
\node[] (00) at (0,0) {$W_t$};

\node[bullet] (30) at (3,0) [label=above:{$-3$},label=below:{$A_4$}] {};

\node[bullet] (40) at (4,0) [label=above left:{$-2$},label=below right:{$A_3$}] {};
\node[bullet] (405) at (4,0.5) [label=above:{$-1$}] {};

\node[bullet] (50) at (5,0) [label=above:{$-3$},label=below:{$A_2$}] {};

\node[bullet] (60) at (6,0) [label=above:{$-1$},label=below:{$A_1$}] {};

\node[bullet] (70) at (7,0) [label=above:{$+1$}] {};

\draw [-] (30)--(40)--(50)--(60)--(70);
\draw [-] (40)--(405);

\node[] (0-1) at (0,-1) {$W_0$};

\node[bullet] (1-1) at (1,-1) [label=above:{$-3$},label=below:{$B_6$}] {};

\node[bullet] (2-1) at (2,-1) [label=above:{$-3$},label=below:{$B_5$}] {};

\node[bullet] (3-1) at (3,-1) [label=above:{$-1$},label=below:{$B_4$}] {};

\node[bullet] (4-1) at (4,-1) [label=above:{$-2$},label=below:{$B_3$}] {};

\node[bullet] (5-1) at (5,-1) [label=above:{$-3$},label=below:{$B_2$}] {};

\node[bullet] (6-1) at (6,-1) [label=above:{$-1$},label=below:{$B_1$}] {};

\node[bullet] (7-1) at (7,-1) [label=above:{$+1$}] {};

\draw [-] (1-1)--(2-1)--(3-1)--(4-1)--(5-1)--(6-1)--(7-1);

\draw [->] (4,-1.55)--(4,-1.95);


\node[bullet] (3-25) at (3,-2.5) [label=above:{$-3$},label=below:{$A_4$}] {};

\node[bullet] (4-25) at (4,-2.5) [label=above:{$-1$},label=below:{$A_3$}] {};

\node[bullet] (5-25) at (5,-2.5) [label=above:{$-3$},label=below:{$A_2$}] {};

\node[bullet] (6-25) at (6,-2.5) [label=above:{$-1$},label=below:{$A_1$}] {};

\node[bullet] (7-25) at (7,-2.5) [label=above:{$+1$}] {};

\draw [-] (3-25)--(4-25)--(5-25)--(6-25)--(7-25);


\node[bullet] (1-35) at (1,-3.5) [label=above:{$-3$},label=below:{$B_6$}] {};

\node[bullet] (2-35) at (2,-3.5) [label=above:{$-2$},label=below:{$B_5$}] {};


\node[bullet] (4-35) at (4,-3.5) [label=above:{$-1$},label=below:{$B_3$}] {};

\node[bullet] (5-35) at (5,-3.5) [label=above:{$-3$},label=below:{$B_2$}] {};

\node[bullet] (6-35) at (6,-3.5) [label=above:{$-1$},label=below:{$B_1$}] {};

\node[bullet] (7-35) at (7,-3.5) [label=above:{$+1$}] {};

\draw [-] (1-35)--(2-35)--(4-35)--(5-35)--(6-35)--(7-35);

\draw [->] (4,-4.05)--(4,-4.45);

\node[] (0-5) at (0,-5) {$W_t'$};

\node[bullet] (3-5) at (3,-5) [label=above:{$-2$},label=below:{$A_4'$}] {};


\node[bullet] (5-5) at (5,-5) [label=above:{$-2$},label=below:{$A_2'$}] {};

\node[bullet] (6-5) at (6,-5) [label=above:{$-1$},label=below:{$A_1'$}] {};

\node[bullet] (7-5) at (7,-5) [label=above:{$+1$}] {};

\draw [-] (3-5)--(5-5)--(6-5)--(7-5);

\node[] (0-6) at (0,-6) {$W_0'$};

\node[bullet] (1-6) at (1,-6) [label=above:{$-3$},label=below:{$B_6'$}] {};

\node[bullet] (2-6) at (2,-6) [label=above:{$-1$},label=below:{$B_5'$}] {};



\node[bullet] (5-6) at (5,-6) [label=above:{$-2$},label=below:{$B_2'$}] {};

\node[bullet] (6-6) at (6,-6) [label=above:{$-1$},label=below:{$B_1'$}] {};

\node[bullet] (7-6) at (7,-6) [label=above:{$+1$}] {};

\draw [-] (1-6)--(2-6)--(5-6)--(6-6)--(7-6);
\end{tikzpicture}
\caption{Divisorial contractions in Example~\ref{example:cyclic-19/7-MMP}}
\label{figure:divisorial-contractions-cyclic}
\end{figure}
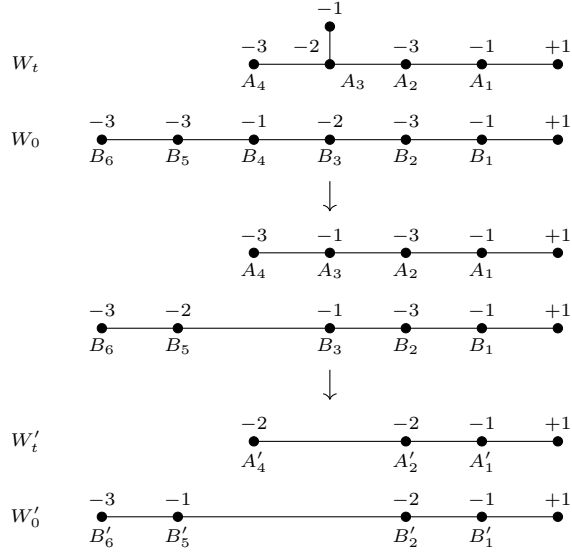

Then the $(-2)$-curve $A_4' \subset W_t'$ is decomposed into two curves
\begin{tikzpicture}[scale=0.5]
\node[bullet] (00) at (0,0) [label=above:{$-3$}] {};
\node[bullet] (10) at (1,0) [label=above:{$-1$}] {};

\draw [-] (00)--(10);
\end{tikzpicture}
in $W_0'$ consisting of $B_6'$ and $B_5'$. As before, the image $E_2'$ of $B_5'$ in $W_t'$ intersects only $A_2'$ at one point. Therefore the data of the intersections of $(-1)$-curves in $\widehat{Y}_t (=W_t)$ with the compactifying divisor $E_{\infty}$ is given as follows:
\begin{equation*}
\begin{tikzpicture}
\node[bullet] (30) at (3,0) [label=below:{$-3$}] {};

\node[bullet] (40) at (4,0) [label=below:{$-2$}] {};
\node[circle] (405) at (4,0.5) [label=left:{$-1$}] {};

\node[bullet] (50) at (5,0) [label=below:{$-3$}] {};
\node[circle] (505) at (5,0.5) [label=left:{$-1$}] {};

\node[bullet] (60) at (6,0) [label=below:{$-1$}] {};

\node[bullet] (70) at (7,0) [label=below:{$+1$}] {};

\draw [-] (30)--(40)--(50)--(60)--(70);
\draw [-] (40)--(405);
\draw [-] (50)--(505);
\end{tikzpicture}
\end{equation*}
Thus, we will see that the $M$-resolution $Y (:=Y_3)$ above corresponds to the sequence $\underline{n}=(2,2,1,3) \in K_4(19/19-7)$ defined in Definition~\ref{definition:K_e(n/n-a)}. Similarly, one can show that the other sequences $(1,2,2,1)$ and $(1,3,1,2)$ in $K_4(19/19-7)$ correspond to the other two $P$-resolutions $Y_1$ and $Y_2$ in Example~\ref{example:cyclic-19/7-$P$-resolution}, respectively.
\end{example}

\begin{example}[Continued from Example~\ref{example:I_30(2-2)+29}]
\label{example:I_30(2-2)+29-I}
Let $(X,0)$ be an icosahedral singularity $I_{30(2-2)+29}$ and let $\widehat{Y}_2$ be the natural compactification of the $M$-resolution $Y_2$ given by the following dual graph:
\begin{equation*}
\begin{tikzpicture}
\node[rectangle] (00) at (0,0) [label=left:{$-2$}] {};

\node[bullet] (11) at (1,1) [label=below:{$-2$}] {};
\node[bullet] (21) at (2,1) [label=below:{$-1$}] {};
\node[bullet] (61) at (6,1) [label=below:{$-2$}] {};

\node[bullet] (10) at (1,0) [label=below:{$-3$}] {};
\node[bullet] (20) at (2,0) [label=below:{$-1$}] {};
\node[bullet] (50) at (5,0) [label=below:{$-2$}] {};
\node[bullet] (60) at (6,0) [label=below:{$-2$}] {};

\node[rectangle] (1-1) at (1,-1) [label=below:{$-5$}] {};
\node[bullet] (2-1) at (2,-1) [label=below:{$-1$}] {};
\node[bullet] (3-1) at (3,-1) [label=below:{$-2$}] {};
\node[bullet] (4-1) at (4,-1) [label=below:{$-2$}] {};
\node[bullet] (5-1) at (5,-1) [label=below:{$-2$}] {};
\node[bullet] (6-1) at (6,-1) [label=below:{$-2$}] {};

\node[bullet] (70) at (7,0) [label=right:{$-1$}] {};

\draw [-] (00)--(11)--(21)--(61)--(70);
\draw [-] (00)--(10)--(20)--(50)--(60)--(70);
\draw [-] (00)--(1-1)--(2-1)--(3-1)--(4-1)--(5-1)--(6-1)--(70);
\end{tikzpicture}
\end{equation*}

We perform two usual flips as in Figure~\ref{figure:flips-I_(30(2-2)+29)-I}, where $F_i$ and $F_i^+$ ($i=1,2$) are the flipping and flipped curves, respectively. As in the example above, applying divisorial contractions as described in Figure~\ref{figure:divisorial-contractions-I_(30(2-2)+29)-I}, we get the information of intersections of $(-1)$-curves with the compactifying divisor given as below.
\begin{equation*}
\begin{tikzpicture}
\node[bullet] (121) at (12,1) [label=below:{$-2$},label=above right:{$D$}] {};

\node[bullet] (110) at (11,0) [label=above right:{$-2$},label=left:{$B$}] {};
\node[bullet] (120) at (12,0) [label=below:{$-2$},label=above:{$B'$}] {};

\node[bullet] (9-1) at (9,-1) [label=below:{$-2$},label=left:{$C_3$}] {};
\node[bullet] (10-1) at (10,-1) [label=below:{$-2$},label=above right:{$C_2$}] {};
\node[bullet] (11-1) at (11,-1) [label=below right:{$-2$},label=above:{$C_1$}] {};
\node[bullet] (12-1) at (12,-1) [label=below:{$-2$},label=above:{$C$}] {};

\node[bullet] (130) at (13,0) [label=right:{$-1$}] {};

\draw [-] (121)--(130);
\draw [-] (110)--(120)--(130);
\draw [-] (9-1)--(10-1)--(11-1)--(12-1)--(130);

\node[circle] (1215) at (12,1.5) [label=above:{$-1$},label=left:{$E_1$}] {};
\node[circle] (1105) at (11,0.5) [label=above:{$-1$},label=left:{$E_2$}] {};
\node[circle] (11-15) at (11,-1.5) [label=below:{$-1$},label=left:{$E_3$}] {};

\draw [-] (121)--(1215);
\draw [-] (110)--(1105);
\draw [-] (11-1)--(11-15);

\node [circle] (91) at (9,1) [label=above:{$-1$},label=left:{$E_4$}] {};
\node [circle] (10-05) at (10,-0.5) [label=above:{$-1$},label=left:{$E_5$}] {};

\draw [-] (9-1)--(91)--(121);
\draw [-] (9-1)--(10-05)--(120);
\end{tikzpicture}
\end{equation*}
The Milnor fiber of the $M$-resolution $Y_2$ corresponds to the minimal symplectic filling \#201 in the list of Bhupal--Ono~\cite{Bhupal-Ono-2012}.

\begin{figure}[tp]
\centering
\begin{tikzpicture}[scale=0.9]
\node[rectangle] (00) at (0,0) [label=left:{$-2$}] {};

\node[bullet] (11) at (1,1) [label=below:{$-2$}] {};
\node[bullet] (21) at (2,1) [label=below:{$-1$}] {};
\node[bullet] (61) at (6,1) [label=below:{$-2$}] {};

\node[bullet] (10) at (1,0) [label=below:{$-3$}] {};
\node[bullet] (20) at (2,0) [label=below:{$-1$}] {};
\node[bullet] (50) at (5,0) [label=below:{$-2$}] {};
\node[bullet] (60) at (6,0) [label=below:{$-2$}] {};

\node[rectangle] (1-1) at (1,-1) [label=below:{$-5$}] {};
\node[bullet] (2-1) at (2,-1) [label=below:{$-1$},label=above:{$F_1$}] {};
\node[bullet] (3-1) at (3,-1) [label=below:{$-2$}] {};
\node[bullet] (4-1) at (4,-1) [label=below:{$-2$}] {};
\node[bullet] (5-1) at (5,-1) [label=below:{$-2$}] {};
\node[bullet] (6-1) at (6,-1) [label=below:{$-2$}] {};

\node[bullet] (70) at (7,0) [label=right:{$-1$}] {};

\draw [-] (00)--(11)--(21)--(61)--(70);
\draw [-] (00)--(10)--(20)--(50)--(60)--(70);
\draw [-] (00)--(1-1)--(2-1)--(3-1)--(4-1)--(5-1)--(6-1)--(70);
\end{tikzpicture}

$\downarrow$ \medskip

\begin{tikzpicture}[scale=0.9]
\node[bullet] (00) at (0,0) [label=left:{$-2$},label=below:{$F_1^+$}] {};

\node[bullet] (11) at (1,1) [label=below:{$-2$}] {};
\node[bullet] (21) at (2,1) [label=below:{$-1$}] {};
\node[bullet] (61) at (6,1) [label=below:{$-2$}] {};

\node[bullet] (10) at (1,0) [label=below:{$-3$}] {};
\node[bullet] (20) at (2,0) [label=below:{$-1$}] {};
\node[bullet] (50) at (5,0) [label=below:{$-2$}] {};
\node[bullet] (60) at (6,0) [label=below:{$-2$}] {};

\node[rectangle] (1-1) at (1,-1) [label=below:{$-4$}] {};
\node[bullet] (3-1) at (3,-1) [label=below:{$-1$},label=above:{$F_2$}] {};
\node[bullet] (4-1) at (4,-1) [label=below:{$-2$}] {};
\node[bullet] (5-1) at (5,-1) [label=below:{$-2$}] {};
\node[bullet] (6-1) at (6,-1) [label=below:{$-2$}] {};

\node[bullet] (70) at (7,0) [label=right:{$-1$}] {};

\draw [-] (00)--(11)--(21)--(61)--(70);
\draw [-] (00)--(10)--(20)--(50)--(60)--(70);
\draw [-] (00)--(1-1)--(3-1)--(4-1)--(5-1)--(6-1)--(70);
\end{tikzpicture}

$\downarrow$ \medskip

\begin{tikzpicture}[scale=0.9]
\node[bullet] (00) at (0,0) [label=left:{$-2$}] {};

\node[bullet] (11) at (1,1) [label=below:{$-2$}] {};
\node[bullet] (21) at (2,1) [label=below:{$-1$}] {};
\node[bullet] (61) at (6,1) [label=below:{$-2$}] {};

\node[bullet] (10) at (1,0) [label=below:{$-3$}] {};
\node[bullet] (20) at (2,0) [label=below:{$-1$}] {};
\node[bullet] (50) at (5,0) [label=below:{$-2$}] {};
\node[bullet] (60) at (6,0) [label=below:{$-2$}] {};

\node[bullet] (1-1) at (1,-1) [label=below:{$-3$},label=above:{$F_2^+$}] {};
\node[bullet] (4-1) at (4,-1) [label=below:{$-1$}] {};
\node[bullet] (5-1) at (5,-1) [label=below:{$-2$}] {};
\node[bullet] (6-1) at (6,-1) [label=below:{$-2$}] {};

\node[bullet] (70) at (7,0) [label=right:{$-1$}] {};

\draw [-] (00)--(11)--(21)--(61)--(70);
\draw [-] (00)--(10)--(20)--(50)--(60)--(70);
\draw [-] (00)--(1-1)--(4-1)--(5-1)--(6-1)--(70);
\end{tikzpicture}

\caption{Flips in Example~\ref{example:I_30(2-2)+29-I}}
\label{figure:flips-I_(30(2-2)+29)-I}
\end{figure}

\begin{figure}[tp]
\begin{tikzpicture}[scale=0.85]
\node[bullet] (00) at (0,0) [label=left:{$-2$},label=above:{$C_3$}] {};

\node[bullet] (11) at (1,1) [label=below:{$-2$}] {};
\node[circle] (21) at (2,1) [label=below:{$-1$},label=above:{$E_1$}] {};
\node[bullet] (61) at (6,1) [label=below:{$-2$},label=above:{$D$}] {};

\node[bullet] (10) at (1,0) [label=below:{$-3$}] {};
\node[circle] (20) at (2,0) [label=below:{$-1$},label=above:{$E_2$}] {};
\node[bullet] (50) at (5,0) [label=below:{$-2$},label=above:{$B$}] {};
\node[bullet] (60) at (6,0) [label=below:{$-2$},label=above:{$B'$}] {};

\node[bullet] (1-1) at (1,-1) [label=below:{$-3$}] {};
\node[circle] (4-1) at (4,-1) [label=below:{$-1$},label=above:{$E_3$}] {};
\node[bullet] (5-1) at (5,-1) [label=below:{$-2$},label=above:{$C_1$}] {};
\node[bullet] (6-1) at (6,-1) [label=below:{$-2$},label=above:{$C$}] {};

\node[bullet] (70) at (7,0) [label=right:{$-1$}] {};

\draw [-] (00)--(11)--(21)--(61)--(70);
\draw [-] (00)--(10)--(20)--(50)--(60)--(70);
\draw [-] (00)--(1-1)--(4-1)--(5-1)--(6-1)--(70);
\draw [thick, decoration={brace,raise=0.5em}, decorate] (1-1) -- (4-1)
node [pos=0.5,anchor=north,yshift=2em] {$C_2$};

\draw [-] (8,-1)--(8,1);

\node[bullet] (121) at (12,1) [label=below:{$-2$},label=left:{$D$}] {};

\node[bullet] (110) at (11,0) [label=below:{$-2$},label=left:{$B$}] {};
\node[bullet] (120) at (12,0) [label=below:{$-2$},label=above:{$B'$}] {};

\node[bullet] (9-1) at (9,-1) [label=below:{$-2$},label=above:{$C_3$}] {};
\node[bullet] (10-1) at (10,-1) [label=below:{$-2$},label=above:{$C_2$}] {};
\node[bullet] (11-1) at (11,-1) [label=below right:{$-2$},label=above:{$C_1$}] {};
\node[bullet] (12-1) at (12,-1) [label=below:{$-2$},label=above:{$C$}] {};

\node[bullet] (130) at (13,0) [label=right:{$-1$}] {};

\node[circle] (1215) at (12,1.5) [label=above:{$-1$},label=left:{$E_1$}] {};
\node[circle] (1105) at (11,0.5) [label=above:{$-1$},label=left:{$E_2$}] {};
\node[circle] (11-15) at (11,-1.5) [label=below:{$-1$},label=left:{$E_3$}] {};

\draw [-] (121)--(130);
\draw [-] (110)--(120)--(130);
\draw [-] (9-1)--(10-1)--(11-1)--(12-1)--(130);

\draw [-] (121)--(1215);
\draw [-] (110)--(1105);
\draw [-] (11-1)--(11-15);
\end{tikzpicture}

$\downarrow$\medskip

\begin{tikzpicture}[scale=0.85]
\node[bullet] (00) at (0,0) [label=left:{$-2$},label=above:{$C_3$}] {};

\node[circle] (11) at (1,1) [label=below:{$-1$},label=above:{$E_4$}] {};
\node[bullet] (61) at (6,1) [label=below:{$-1$},label=above:{$D$}] {};

\node[circle] (10) at (1,0) [label=below:{$-1$},label=above:{$E_5$}] {};
\node[bullet] (60) at (6,0) [label=below:{$-1$},label=above:{$B'$}] {};

\node[bullet] (1-1) at (1,-1) [label=below:{$-2$},label=above:{$C_2$}] {};
\node[bullet] (5-1) at (5,-1) [label=below:{$-1$},label=above:{$C_1$}] {};
\node[bullet] (6-1) at (6,-1) [label=below:{$-2$},label=above:{$C$}] {};

\node[bullet] (70) at (7,0) [label=right:{$-1$}] {};

\draw [-] (00)--(11)--(61)--(70);
\draw [-] (00)--(10)--(60)--(70);
\draw [-] (00)--(1-1)--(5-1)--(6-1)--(70);

\draw [-] (8,-1)--(8,1);

\node[bullet] (121) at (12,1) [label=below:{$-1$},label=above left:{$D$}] {};

\node[bullet] (120) at (12,0) [label=below:{$-1$},label=above:{$B'$}] {};

\node[bullet] (9-1) at (9,-1) [label=below:{$-2$},label=left:{$C_3$}] {};
\node[bullet] (10-1) at (10,-1) [label=below:{$-2$},label=above right:{$C_2$}] {};
\node[bullet] (11-1) at (11,-1) [label=below:{$-1$},label=above:{$C_1$}] {};
\node[bullet] (12-1) at (12,-1) [label=below:{$-2$},label=above:{$C$}] {};

\node[bullet] (130) at (13,0) [label=right:{$-1$}] {};

\node [circle] (91) at (9,1) [label=above:{$-1$},label=left:{$E_4$}] {};
\node [circle] (10-05) at (10,-0.5) [label=above:{$-1$},label=left:{$E_5$}] {};


\draw [-] (121)--(130);
\draw [-] (120)--(130);
\draw [-] (9-1)--(10-1)--(11-1)--(12-1)--(130);


\draw [-] (9-1)--(91)--(121);
\draw [-] (9-1)--(10-05)--(120);
\end{tikzpicture}

\caption{Divisorial contractions in Example~\ref{example:I_30(2-2)+29-I}}
\label{figure:divisorial-contractions-I_(30(2-2)+29)-I}
\end{figure}
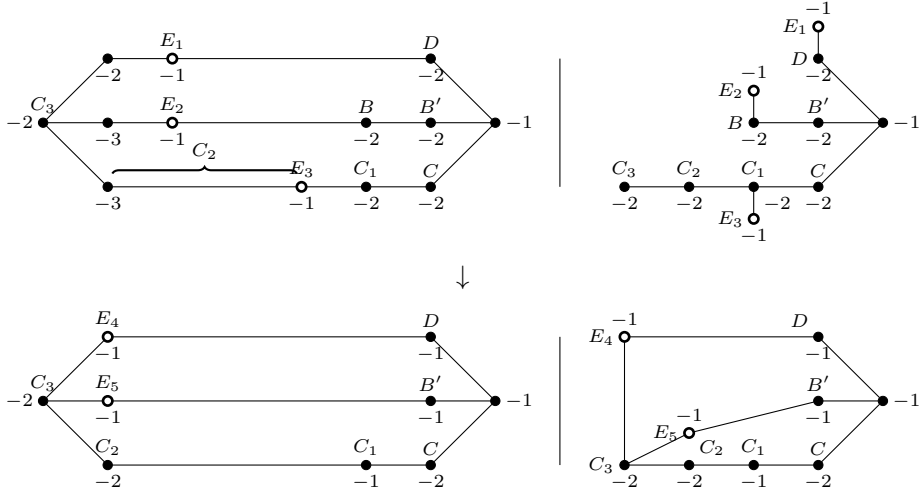
\end{example}

\begin{example}[Continued from Example~\ref{example:I_30(2-2)+29}]
\label{example:I_30(2-2)+29-II}
Let $(X,0)$ be an icosahedral singularity $I_{30(2-2)+29}$ and let $Y_3$ be the $M$-resolution of $X$ given in Example~\ref{example:I_30(2-2)+29}. Applying flips and divisorial contractions to the natural compactification $\widehat{Y}_3$ as described in Figure~\ref{figure:flips-I_(30(2-2)+29)-II}, we can conclude that the Milnor fiber corresponds to the minimal symplectic filling \#254 in the list of Bhupal--Ono~\cite{Bhupal-Ono-2012}.
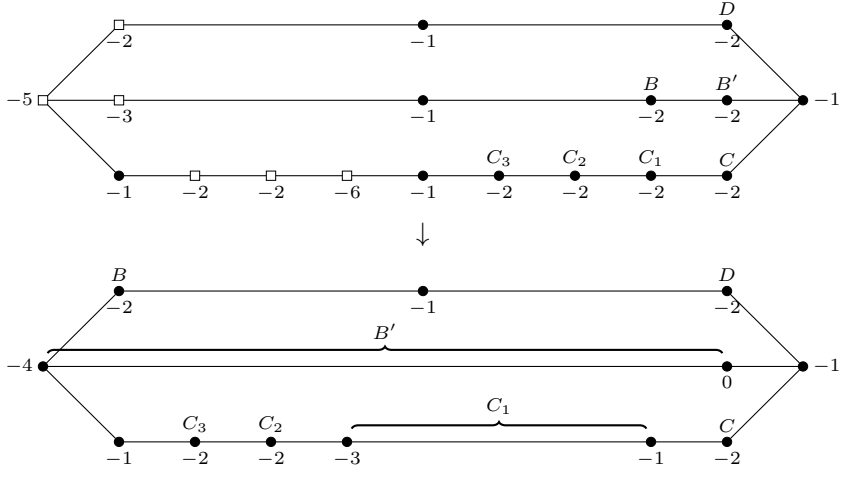
\begin{figure}
\centering
\begin{tikzpicture}
\node[rectangle] (00) at (0,0) [label=left:{$-5$}] {};

\node[rectangle] (11) at (1,1) [label=below:{$-2$}] {};
\node[bullet] (51) at (5,1) [label=below:{$-1$}] {};
\node[bullet] (91) at (9,1) [label=below:{$-2$},label=above:{$D$}] {};

\node[rectangle] (10) at (1,0) [label=below:{$-3$}] {};
\node[bullet] (50) at (5,0) [label=below:{$-1$}] {};
\node[bullet] (80) at (8,0) [label=below:{$-2$},label=above:{$B$}] {};
\node[bullet] (90) at (9,0) [label=below:{$-2$},label=above:{$B'$}] {};

\node[bullet] (1-1) at (1,-1) [label=below:{$-1$}] {};
\node[rectangle] (2-1) at (2,-1) [label=below:{$-2$}] {};
\node[rectangle] (3-1) at (3,-1) [label=below:{$-2$}] {};
\node[rectangle] (4-1) at (4,-1) [label=below:{$-6$}] {};
\node[bullet] (5-1) at (5,-1) [label=below:{$-1$}] {};
\node[bullet] (6-1) at (6,-1) [label=below:{$-2$},label=above:{$C_3$}] {};
\node[bullet] (7-1) at (7,-1) [label=below:{$-2$},label=above:{$C_2$}] {};
\node[bullet] (8-1) at (8,-1) [label=below:{$-2$},label=above:{$C_1$}] {};
\node[bullet] (9-1) at (9,-1) [label=below:{$-2$},label=above:{$C$}] {};

\node[bullet] (100) at (10,0) [label=right:{$-1$}] {};

\draw [-] (00)--(11)--(51)--(91)--(100);
\draw [-] (00)--(10)--(50)--(80)--(90)--(100);
\draw [-] (00)--(1-1)--(2-1)--(3-1)--(4-1)--(5-1)--(6-1)--(7-1)--(8-1)--(9-1)--(100);
\end{tikzpicture}

$\downarrow$\medskip

\begin{tikzpicture}
\node[bullet] (00) at (0,0) [label=left:{$-4$}] {};

\node[bullet] (11) at (1,1) [label=below:{$-2$},label=above:{$B$}] {};
\node[bullet] (51) at (5,1) [label=below:{$-1$}] {};
\node[bullet] (91) at (9,1) [label=below:{$-2$},label=above:{$D$}] {};

\node (10) at (1,0) {};
\node (50) at (5,0) {};
\node (80) at (8,0) {};
\node[bullet] (90) at (9,0) [label=below:{$0$}] {};

\node[bullet] (1-1) at (1,-1) [label=below:{$-1$}] {};
\node[bullet] (2-1) at (2,-1) [label=below:{$-2$},label=above:{$C_3$}] {};
\node[bullet] (3-1) at (3,-1) [label=below:{$-2$},label=above:{$C_2$}] {};
\node[bullet] (4-1) at (4,-1) [label=below:{$-3$}] {};
\node (5-1) at (5,-1)  {};
\node (6-1) at (6,-1)  {};
\node (7-1) at (7,-1)  {};
\node[bullet] (8-1) at (8,-1) [label=below:{$-1$}] {};
\node[bullet] (9-1) at (9,-1) [label=below:{$-2$},label=above:{$C$}] {};

\node[bullet] (100) at (10,0) [label=right:{$-1$}] {};

\draw [-] (00)--(11)--(51)--(91)--(100);
\draw [-] (00)--(1,0)--(5,0)--(8,0)--(90)--(100);
\draw [-] (00)--(1-1)--(2-1)--(3-1)--(4-1)--(5,-1)--(6,-1)--(7,-1)--(8-1)--(9-1)--(100);

\draw [thick, decoration={brace,raise=0.5em}, decorate] (00) -- (90)
node [pos=0.5,anchor=north,yshift=2em] {$B'$};

\draw [thick, decoration={brace,raise=0.5em}, decorate] (4-1) -- (8-1)
node [pos=0.5,anchor=north,yshift=2em] {$C_1$};
\end{tikzpicture}

\caption{Flips and divisorial contractions in Example~\ref{example:I_30(2-2)+29-II}}
\label{figure:flips-I_(30(2-2)+29)-II}
\end{figure}
\end{example}


\section{Applications}\label{section:applications}


\subsection{Cyclic quotient surface singularities}
\label{subsection:algorithm}

We provide another proof of Stevens' result \cite{Stevens-1991} on the one-to-one correspondence between the set of $P$-resolutions of a cyclic quotient surface singularity $(X,0)$ of type $\frac{1}{n}(1,a)$, and the set of zero continued fractions $K_e(n/n-a)$ described in Definition~\ref{definition:K_e(n/n-a)}.

\begin{corollary}\label{corollary:New-proof-of-Stevens-theorem}
Let $(X,0)$ be a cyclic quotient surface singularity of type $\frac{1}{n}(1,a)$. Then there is a one-to-one correspondence between the set of $P$-resolutions of $(X,0)$ and the set $K_e(n/n-a)$.
\end{corollary}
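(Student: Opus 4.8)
The plan is to deduce Corollary~\ref{corollary:New-proof-of-Stevens-theorem} from the machinery developed in Section~\ref{section:identifying-Milnor-fibers}, bypassing Stevens' original combinatorial argument entirely. The starting point is the chain of identifications already established in the excerpt: by KSB~\cite{Kollar-Shepherd-Barron-1988} the $P$-resolutions of $(X,0)$ are in bijection with the irreducible components of $\Def(X)$, each component contains a smoothing (since $(X,0)$ is rational), and by Proposition~\ref{proposition:Milnor-fiber=complement} together with Theorem~\ref{theorem:smoothable-by-flips} the Milnor fiber of the corresponding smoothing is realized as a complement $\widehat{Y}_t - E_{\infty}$, and after running the controlled MMP of Theorem~\ref{theorem:smoothable-by-flips} one obtains an explicit record of the intersection data of the $(-1)$-curves in $\widehat{Y}_t$ with the chain $E_{\infty}$. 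By Lisca's theorem (quoted as Theorem in Section~\ref{section:minimal-symplectic-fillings}), the minimal symplectic fillings $W_{n,a}(\underline{n})$ are parametrized precisely by $\underline{n} \in K_e(n/n-a)$, and the diffeomorphism type is detected by exactly this $(-1)$-curve intersection data (Remark~\ref{remark:the-same--1-data} in the cyclic setting, or directly Lisca's construction). So the heart of the argument is to show that the MMP output attaches to each $P$-resolution a well-defined element of $K_e(n/n-a)$, and that this assignment is a bijection.

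First I would set up the map $\Phi$ from $P$-resolutions to $K_e(n/n-a)$: given a $P$-resolution $Y' \to X$, pass to its crepant $M$-resolution $Y \to X$ (Lemma~\ref{lemma:$M$-resolution-by-blowing-up}), form the natural compactification $\widehat{Y}$, choose a $\mathbb{Q}$-Gorenstein smoothing, run the controlled MMP of Theorem~\ref{theorem:smoothable-by-flips}, and read off from Proposition~\ref{proposition:degenerationCurves} the numbers $n_i = (\text{number of }(-1)\text{-curves in }\widehat{Y}_t\text{ meeting the }i\text{-th curve of }E_{\infty})$, suitably normalized to match the convention $E_{\infty} = [a_1, \dots, a_e]$ with the extra $(+1)$-curve. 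One must check that $\underline{n} = (n_1, \dots, n_e)$ indeed lies in $K_e(n/n-a)$: the constraint $0 < n_i \le a_i$ is immediate from the geometry (the $i$-th curve still has self-intersection $-a_i$ after the MMP since flips and Iitaka--Kodaira contractions in the arms do not touch $E_{\infty}$), and the relation $[n_1, \dots, n_e] = 0$ together with admissibility follows because blowing down the $(-1)$-curves transforms $E_{\infty}$ back to the $[0]$-chain of Subsection~\ref{subsection:HJ-continued-fraction} — this is exactly the blow-down picture recalled in Remark~\ref{remark:representing-0}. Independence of the auxiliary choices (the particular smoothing in the component, the order of flips) follows because the diffeomorphism type of $\widehat{Y}_t - E_{\infty}$ depends only on the component of $\Def(X)$, hence only on $Y'$, and Lisca's part (3) says this type determines $\underline{n}$ uniquely.

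Next I would establish bijectivity. Injectivity: if two $P$-resolutions $Y_1', Y_2'$ give the same $\underline{n}$, then their Milnor fibers are both diffeomorphic to $W_{n,a}(\underline{n})$, hence diffeomorphic to each other; but distinct $P$-resolutions give distinct components of $\Def(X)$, and — invoking Main Theorem~3 (Theorem~\ref{theorem:components-Def(X)}), or more directly the fact that the $(-1)$-data determines the filling — the Milnor fibers of distinct components are non-diffeomorphic, a contradiction. Surjectivity is the real content: given any $\underline{n} \in K_e(n/n-a)$ I must produce a $P$-resolution mapping to it. Here I would argue by a counting/cardinality statement combined with the surjectivity already built into the constructions: the MMP procedure of Theorem~\ref{theorem:smoothable-by-flips} run on the arms of the $M$-resolution is reversible — starting from the $[0]$-chain presentation of $\underline{n}$ one blows up to recover an $E_{\infty}$-chain with $(-1)$-curves in prescribed positions, and reading the flips backwards (Proposition~\ref{proposition:usualflip}, Proposition~\ref{proposition:degenerationCurves}) reconstructs a normal $M$-resolution with only Wahl singularities, whose associated $P$-resolution (obtained by making $K$ ample, i.e. contracting the $(-1)$-curves in the $M$-resolution per Behnke--Christophersen) lands on $\underline{n}$. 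Alternatively, since both sides are finite, injectivity plus the fact that every $P$-resolution produces \emph{some} element of $K_e(n/n-a)$ and every element of $K_e(n/n-a)$ produces \emph{some} minimal symplectic filling which is a Milnor fiber (this uses Theorem~\ref{theorem:fillings-diffeomorphic-to-Milnors}, Main Theorem~4, in the cyclic case) forces $\Phi$ to be onto.

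The main obstacle I anticipate is the bookkeeping in surjectivity: one needs to know that the reverse of the controlled MMP — i.e. given a target configuration $(E_{\infty} \subset Z)$ with prescribed $(-1)$-curve positions, building an $M$-resolution by a sequence of ``anti-flips'' and ``anti-contractions'' staying within the class of surfaces with only Wahl singularities and nef relative canonical class — actually terminates and produces a \emph{normal} $M$-resolution dominated by the maximal resolution. This is where one must lean most heavily on the structural results of HTU~\cite{Hacking-Tevelev-Urzau} and Urz\'ua~\cite{Urzua-2013b} (extremal $P$-resolutions, Lemma~\ref{lemma:keyInitial} guaranteeing initial curves survive); the cleanest route is probably to avoid reconstructing the $M$-resolution by hand and instead invoke Theorem~\ref{theorem:fillings-diffeomorphic-to-Milnors} together with the KSB correspondence to get surjectivity ``for free,'' using the explicit MMP only to pin down that the bijection is the \emph{same} one Stevens wrote down (namely $\underline{n} \leftrightarrow$ the $P$-resolution whose singularities of class $T$ are encoded by the zero continued fraction). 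I would therefore present the proof as: (i) $\Phi$ is well-defined via the MMP identification, (ii) $\Phi$ is injective by the non-diffeomorphism of Milnor fibers of distinct components, (iii) $\Phi$ is surjective by Theorem~\ref{theorem:fillings-diffeomorphic-to-Milnors} (every $\underline{n}$ gives $W_{n,a}(\underline{n})$, a Milnor fiber, hence a component, hence a $P$-resolution) — and remark that $\Phi$ coincides with Stevens' correspondence, recovering \cite{Stevens-1991}.
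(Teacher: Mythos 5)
Your forward map $\Phi$ is essentially the paper's: compactify the crepant $M$-resolution, run the controlled MMP of Theorem~\ref{theorem:smoothable-by-flips}, count the $(-1)$-curves $d_i$ attached to $D_i$, and observe that after contracting everything down to the fiber one gets $[a_e-d_e,\dotsc,a_1-d_1]=0$, hence an element of $K_e(n/n-a)$. But the way you close the bijection has a genuine gap. For surjectivity you invoke Theorem~\ref{theorem:fillings-diffeomorphic-to-Milnors}; as stated and proved in the paper that theorem covers only \emph{non-cyclic} quotient singularities (its proof runs through the dihedral and type $(3,2)$, $(3,1)$ cases), so it simply does not apply here. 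The cyclic analogue is N\'emethi--Popescu-Pampu's theorem, whose proof rests on the Christophersen--Stevens--Arndt description of the versal base space, i.e.\ on (an equivalent form of) the very correspondence the corollary is supposed to re-derive geometrically; using it would make the argument circular in spirit and would in any case not be ``another proof.'' For injectivity you appeal to Theorem~\ref{theorem:components-Def(X)} (or to the claim that Milnor fibers of distinct components are non-diffeomorphic); inside the paper this is proved via Lemma~\ref{lemma:intersection-data}, whose proof explicitly invokes the reconstruction algorithm given \emph{in the proof of this corollary}, so that route is circular within the paper's logical structure, and for the cyclic case the non-diffeomorphism statement is again NPP's result, not something available upstream.

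The step you flag as ``the main obstacle'' --- reversing the process --- is in fact the actual content of the paper's proof, and it cannot be outsourced. The paper does not argue by cardinality or by quoting the symplectic-filling results at all: it gives an explicit combinatorial algorithm that, starting from $\underline{n}\in K_e(n/n-a)$ (equivalently the numbers $d_i=a_i-n_i$ of $(-1)$-curves attached to the chain $D_e,\dotsc,D_1$), reconstructs the $P$-resolution piece by piece --- deciding at each stage whether one gets an $A_{d_e-1}$ point or a $T$-singularity $\frac{1}{d_{e-(r+1)}n'^2}(1,d_{e-(r+1)}n'a'-1)$ with $\frac{n'}{a'}=[a_e,\dotsc,a_{e-r}]$, contracting the attached $(-1)$-curves, and reading off the self-intersection $-(2+m)$ or $-(1+m)$ of the next curve of the $P$-resolution --- and this inverse algorithm is exactly what establishes both injectivity and surjectivity, and also what later feeds Lemma~\ref{lemma:intersection-data} and Theorem~\ref{theorem:components-Def(X)}. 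To repair your proposal you would need to carry out this reverse bookkeeping (your first alternative), using Proposition~\ref{proposition:usualflip}, Proposition~\ref{proposition:degenerationCurves} and Lemma~\ref{lemma:keyInitial}, rather than fall back on Theorem~\ref{theorem:fillings-diffeomorphic-to-Milnors} or Theorem~\ref{theorem:components-Def(X)}; note also that only the explicit two-way algorithm pins down that the bijection obtained is Stevens' correspondence, which a pure counting argument would not give.
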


\begin{proof}
We consider the situation and notation in Subsection~\ref{subsection:compactifying-divisor-cyclic}. In there we start with the Hirzebruch surface $\mathbb{F}_1$, two sections $S_0$ and $S_{\infty}$, and a fiber $F$. We blow up appropriately over $S_0 \cap F$ to obtain a surface $\widetilde{X}$ with the configurations of curves of the exceptional divisors of $\frac{1}{n}(1,a)$ and $\frac{1}{n}(1,n-a)$. The proper transforms of $S_0$ and $S_{\infty}$ are $\overline{S}_0$ and $\overline{S}_{\infty}$, and the relevant chain of $\mathbb{CP}^1$'s is

\begin{equation}\label{equation:blowing-up-F_1-1}
\begin{tikzpicture}
\node[bullet] (10) at (1,0) [label=above:{$-b_1$},label=below:{$\overline{S}_0$}] {};
\node[bullet] (20) at (2,0) [label=above:{$-b_2$}] {};

\node[empty] (250) at (2.5,0) [] {};
\node[empty] (30) at (3,0) [] {};

\node[bullet] (350) at (3.5,0) [label=above:{$-b_{r-1}$}] {};
\node[bullet] (450) at (4.5,0) [label=above:{$-b_r$}] {};

\node[bullet] (550) at (5.5,0) [label=above:{$-1$}] {};

\node[bullet] (650) at (6.5,0) [label=above:{$-a_e$},label=below:{$D_e$}] {};
\node[empty] (70) at (7,0) [] {};
\node[empty] (750) at (7.5,0) [] {};
\node[bullet] (80) at (8,0) [label=above:{$-a_2$},label=below:{$D_2$}] {};
\node[bullet] (90) at (9,0) [label=above:{$1-a_1$},label=below:{$D_1$}] {};
\node[bullet] (100) at (10,0) [label=above:{$+1$},label=below:{$\overline{S}_{\infty}$}] {};

\draw [-] (10)--(20);
\draw [-] (20)--(250);
\draw [dotted] (20)--(350);
\draw [-] (30)--(350);
\draw [-] (350)--(450);
\draw [-] (450)--(550);

\draw [-] (550)--(650);
\draw [-] (650)--(70);
\draw [dotted] (650)--(80);
\draw [-] (750)--(80);
\draw [-] (80)--(90);
\draw [-] (90)--(100);
\end{tikzpicture}
\end{equation}
where
\begin{equation*}
\frac{n}{a} = [b_1, \dotsc, b_r], \quad \frac{n}{n-a}=[a_1, \dotsc, a_e].
\end{equation*}

We blow up one more time at $D_1 \cap \overline{S}_{\infty}$, obtaining a surface $X'$ together with a $(-1)$-curve $D_0$ between $D_1$ and $\overline{S}_{\infty}$. Let $X'$ be the contraction of the exceptional divisor corresponding to $\frac{1}{n}(1,a)$.

We know that there is a (very concrete) one-to-one correspondence between $M$-resolutions and $P$-resolutions of $\frac{1}{n}(1,a)$, and between $P$-resolutions and irreducible components of the reduced versal deformation space of $\frac{1}{n}(1,a)$ by Koll\'ar--Shepherd-Barron \cite{Kollar-Shepherd-Barron-1988}.

Let $Y' \to X'$ be an $M$-resolution of $\frac{1}{n}(1,a)$. We recall that there are $\mathbb{Q}$-Gorenstein smoothings $Y' \subset \mathcal{Y}' \to 0 \in \Delta$ such that the general fiber contains the boundary divisor $E'_{\infty}$ given by trivially deforming $D_e,D_{e-1},\dotsc,D_1,D_0$, and $\overline{S}_{\infty}$. We denote these curves by the same letters in the general fiber, when there is no confusion.

We will apply the controlled MMP in Theorem~\ref{theorem:smoothable-by-flips} to $Y' \subset \mathcal{Y}' \to 0 \in \Delta$, in order to obtain $(-1)$-curves in the general fiber which intersect transversally some curves in $D_e,D_{e-1},\dotsc,D_1$. The number of $(-1)$-curves attached to $D_i$ when we run MMP will be denoted by $d_i$. Each of these $(-1)$-curves will intersect transversally $D_i$ at one point, and no other $D_j$ with $j\neq i$ or $(-1)$-curves which appear in this process. When we arrive to a smooth deformation (Theorem~\ref{theorem:smoothable-by-flips}), we keep Iitaka-Kodaira contracting $(-1)$-curves in the fiber until $D_0$ becomes a $\mathbb{CP}^1$ fiber. On the general fiber, we would have contracted all $(-1)$-curves attached to the $D_i$'s and the ones coming from the $D_i$'s, so that we arrive to $D_1$ with $0$ self-intersection. That says that the continued fraction
\begin{equation*}
[a_e-d_e,a_{e-1}-d_{e-1},\dotsc,a_1-d_1]
\end{equation*}
is equal to zero. This is also saying that in order to obtain $Y' \subset \mathcal{Y}' \to 0 \in \Delta$, we start with a trivial deformation $\mathbb{CP}^1 \times \mathbb{CP}^1 \times \Delta \to \Delta$, and then anti-flip (usual flip) or anti-contract (Iitaka-Kodaira contraction) a finite number of times.

To obtain the $(-1)$-curves attached to the $D_i$, we apply the following algorithm (part of what is done in Theorem~\ref{theorem:smoothable-by-flips}). We recall that an $M$-resolution is formed by a chain of $\mathbb{CP}^1$'s such that at the nodes is either a smooth point of the surface or a Wahl singularity, and the same happens for some point in the two ending $\mathbb{CP}^1$'s.

All $d_1,\dotsc,d_e$ start as $0$. Let $\widetilde{Y'} \to Y'$ be the minimal resolution of the Wahl singularities. Consider the $(-1)$-curve in the special fiber of $Y'$ which touches $D_e$. We have two situations for this curve in $Y'$:

\textit{Case~1}. If it does not touch a Wahl singularity (i.e. the corresponding end curve of the $M$-resolution has a smooth point of $Y'$), then (by Iitaka-Kodaira) the curve $D_e$ in the general fiber intersects at one point the corresponding $(-1)$-curve, and so the $d_e$ adds $1$. We now contract this $(-1)$-curve and continue contracting any $(-1)$-curve appearing from $D_e,D_{e-1},\dotsc,D_1$ until there are no such curves. We do this in the family via Iitaka-Kodaira contractions.

\textit{Case~2}. If it does touch a Wahl singularity, then we apply usual flips several times (as in Theorem~\ref{theorem:smoothable-by-flips}) until that singularity becomes smooth. When that happens, there exists $r\geq 0$ such that the configuration of curves $D_e,D_{e-1},\dotsc,D_1$ in the general fiber deforms to
\begin{equation*}
D'_e,D'_{e-1},\dotsc,D_{e-(r-1)},D'_{e-r},E,D'_{e-(r+1)},\dotsc\ldots,D'_1
\end{equation*}
where $E$ is a $(-1)$-curve, $D_{e-r}$ deforms to $D'_{e-r}+E$, and $D_j$ deforms to $D'_j$ for $j \neq e-r$. This can be verified applying Proposition~\ref{proposition:degenerationCurves} several times. Then, by Iitaka-Kodaira, we have a $(-1)$-curve attached to $D_{e-(r+1)}$ which intersects transversally at one point, and so we add $1$ to $d_{e-(r+1)}$. After that, we Iitaka-Kodaira contract $E$ and all $(-1)$-curves coming from the new configuration
\begin{equation*}
D'_e,D'_{e-1},\dotsc,D_{e-(r-1)},D'_{e-r},E,D'_{e-(r+1)},\dotsc,D'_1,
\end{equation*}
until there are no such curves.

\bigskip

After that, we are in a situation of a new $M$-resolution of a some cyclic quotient surface singularity whose dual exceptional divisor is what is left from either
\begin{equation*}
\text{$D_e, \dotsc, D_1$ or $D'_e, D'_{e-1}, \dotsc, D_{e-(r-1)}, D'_{e-r}, E, D'_{e-(r+1)},\dotsc, D'_1$}.
\end{equation*}
Then we apply the algorithm above again.

Conversely, the following is an explicit algorithm to recover the $P$-resolution from a zero continued fraction. We recall that a $P$-resolution of a cyclic quotient surface singularity is formed by a chain of $\mathbb{CP}^1$'s, such that at the nodes is either a T-singularity ($\frac{1}{dn^2}(1,dna-1)$ or rational double point of type $A_m$, $m \geq 1$) or a smooth point (consistently denoted by $A_0$) of the surface, and the same happens for some point in the two ending $\mathbb{CP}^1$'s.

Let $\frac{1}{n}(1,a)$ be a cyclic quotient surface singularity. Let $(n_1, \dotsc, n_e) \in K_e(n/n-a)$, and $d_i:=a_1-n_i$. Consider a chain of $\mathbb{CP}^1$'s $D_e,D_{e-1},\dotsc,D_1$ corresponding to the $a_i$'s, just as in the proof above, and let us attach $d_i$ disjoint $(-1)$-curves to $D_i$, each transversally at one point.

As before, we explain the $P$-resolution associated to $(n_1,\dotsc,n_e)$ constructing the T-singularities and $\mathbb{CP}^1$'s, starting with the $(-1)$-curve $E$ (in the compactified $P$-resolution) connecting $D_e$ with the first $\mathbb{CP}^1$ of the $P$-resolution.

\begin{enumerate}[I.]
\item

    \begin{enumerate}
    \item[(a)] If $d_e \neq 0$, then we have a $A_{d_e-1}$ singularity in the first $\mathbb{CP}^1$.

    \item[(b)] If $d_e=0$, we find the smallest nonnegative integer $r$ such that $d_{e-(r+1)} \neq 0$. Then we have a T-singularity
        \[\dfrac{1}{d_{e-(r+1)}n'^2}(1,d_{e-(r+1)}n'a'-1)\]
        with
        \[\frac{n'}{a'}=[a_e,\dotsc,a_{e-r}].\]
    \end{enumerate}

\item Now we contract all $(-1)$-curves attached to $D_e$ (if I.a) or to $D_{e-(r+1)}$ (if I(b)), and all $(-1)$-curves after that coming from $D_e, D_{e-1},\dotsc,D_1$, until there are none.

\item In the contraction in II, the curve $D_e$ may become a $(-1)$-curve. Let $m$ be the number of blow-downs starting with the contraction of $D_e$ (after it becomes $(-1)$-curve), or $m=0$ if $D_e$ never becomes $(-1)$-curve. Then the first $\mathbb{CP}^1$ (in the minimal resolution of the $P$-resolution) has self intersection $-(2+m)$ if I.a, or $-(1+m)$ if I.b.
\end{enumerate}

After this, we obtain a $P$-resolution for the new cyclic quotient surface singularity, whose dual exceptional divisor is what is left in II. from $D_e,D_{e-1},\dotsc,D_1$. We now repeat the algorithm.
\end{proof}

\begin{remark}
The above procedure only uses the controlled MMP described in Theorem~\ref{theorem:smoothable-by-flips}, recovering Stevens' result \cite{Stevens-1991} on the one-to-one correspondence between the set of $P$-resolutions (for us, passing through the $M$-resolutions of Behnke--Christophersen \cite{Behnke-Christophersen-1994}) of a cyclic quotient surface singularity $(X,0)$ of type $\frac{1}{n}(1,a)$, and the set of zero continued fractions $K_e(n/n-a)$.
More importantly, Corollary~\ref{corollary:New-proof-of-Stevens-theorem} above shows a geometric way to connect directly Lisca's \cite{Lisca-2008}, and Koll\'ar--Shepherd-Barron's \cite{Kollar-Shepherd-Barron-1988} one-to-one correspondences. By N\'emethi--Popescu-Pampu \cite{Nemethi-PPampu-2010}, it also connects Christophersen--Stevens' \cite{Christophersen-1991,Stevens-1991} correspondence in relation to equations of the versal deformation space of $(X,0)$, and in particular we answer the question raised in N{\'e}methi--Popescu-Pampu~\cite[\S11.2]{Nemethi-PPampu-2010}.
\end{remark}


\subsection{Non-cyclic quotient surface singularities}

We apply the machinery developed in the previous section to prove that the Milnor fibres corresponding to irreducible components of the reduced versal deformation space of a non-cyclic quotient surface singularity are pairwise non-diffeomorphic by orientation-preserving diffeomorphisms.

At first, we show that any $P$-resolution $Y$ of a quotient surface singularity $(X,0)$ is completely determined by the data of $(-1)$-curves in the compactified Milnor fiber $\widehat{Y}_t$ intersecting the compactifying divisor $E_{\infty}$. That is,

\begin{lemma}\label{lemma:intersection-data}
Let $Y$ and $Z$ be $P$-resolutions of $(X,0)$. If the dual graphs of the data of $(-1)$-curves and the compactifying divisor $E_{\infty}$ in the compactified Milnor fibers $\widehat{Y}_t$ and $\widehat{Z}_t$ intersecting $E_{\infty}$ are the same, then $Y$ and $Z$ coincide.
\end{lemma}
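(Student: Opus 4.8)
The plan is to reverse the construction carried out in Section~\ref{section:identifying-Milnor-fibers}: having produced from a $P$-resolution (via its crepant $M$-resolution) the compactified Milnor fiber $\widehat{Y}_t$ together with the explicit intersection data of $(-1)$-curves with $E_{\infty}$, I want to argue that this data, recorded as a decorated dual graph, determines the $P$-resolution uniquely. The key observation is that the controlled MMP of Theorem~\ref{theorem:smoothable-by-flips} is run ``one arm at a time'' and ``one Wahl singularity at a time'', and the birational operations involved (usual flips and Iitaka--Kodaira divisorial contractions) are completely dictated by local numerical data. So the strategy is to show that one can \emph{run the MMP backwards}: starting from the configuration $(E_{\infty} \subset \widehat{Y}_t)$ together with the marked $(-1)$-curves, one anti-contracts and anti-flips in a prescribed order to recover a smooth deformation, and from the pattern of $(-1)$-curves attached to each component of $E_{\infty}$ one reads off, via the algorithm in the proof of Corollary~\ref{corollary:New-proof-of-Stevens-theorem} (suitably applied arm-by-arm for the non-cyclic case), the chains of $T$-singularities in each arm of $Y$, hence $Y$ itself.

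The concrete steps I would carry out: (1) Recall that a $P$-resolution of a non-cyclic quotient surface singularity, via its star-shaped minimal resolution, is determined by the three arms (linear chains with $T$-singularities) attached to the central curve together with the self-intersection of the central curve; by Proposition~\ref{proposition:maximal-resolution-dominating} all of this is constrained by $X$ itself, so only the ``distribution of blow-ups along the arms'' varies. (2) Observe that the natural fibration $\widetilde{Y}\to\mathbb{CP}^1$ and the compactifying divisor $E_{\infty}$ are intrinsic to $X$ (independent of the $P$-resolution), and that $E_{\infty}$ is itself star-shaped dual to the minimal resolution (Figures~\ref{figure:non-cyclic-compactifying-divisor}, \ref{figure:non-cyclic-widetilde(X)}). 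So the ``extra'' data in $\widehat{Y}_t$ beyond $E_{\infty}$ is precisely the collection of $(-1)$-curves and the components of $E_{\infty}$ they hit. (3) Apply the arm-by-arm inverse of the algorithm of Corollary~\ref{corollary:New-proof-of-Stevens-theorem}: for each arm, the numbers $d_i$ (= number of $(-1)$-curves attached to the $i$-th component of that arm of $E_{\infty}$) together with the continued fraction $[a_{i1},\dots,a_{ie_i}]$ of the arm reconstruct, by the part I--III procedure of that proof, the chain of $T$-singularities in the corresponding arm of $Y$; the central curve's self-intersection is then forced by the condition that the total configuration contracts back to $\widehat{X}$. (4) Conclude that two $P$-resolutions with the same decorated dual graph $(E_{\infty} \subset \widehat{Y}_t; (-1)\text{-curves})$ produce, arm-by-arm, the same $T$-singularity chains and the same central curve, hence are isomorphic over $X$.

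I expect the main obstacle to be step~(3): making the arm-by-arm inversion of the MMP rigorous in the non-cyclic (star-shaped) case. In the cyclic case the whole configuration is a single chain and the bookkeeping in Corollary~\ref{corollary:New-proof-of-Stevens-theorem} is linear; for a star-shaped resolution one must check that the controlled MMP of Theorem~\ref{theorem:smoothable-by-flips} genuinely decouples the three arms (it processes noncentral arms first, then the at-most-two central arms), so that the intersection data on each arm of $E_{\infty}$ depends only on that arm's $T$-singularity chain in $Y$ and not on the others, \emph{and} that no information is lost when the last Wahl singularity involving the central curve $E_0$ is flipped/contracted. The delicate point is the interaction at the central curve: one must verify, using Lemma~\ref{lemma:keyInitial} and Lemma~\ref{lemma:numerics}, that the initial curves are never contracted prematurely and that the final self-intersection of the central curve is recoverable from the global contraction to $\widehat{X}$. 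Once this decoupling and the no-loss-of-information claim are established, the reconstruction is an immediate consequence of the explicit algorithm already proved in Corollary~\ref{corollary:New-proof-of-Stevens-theorem}, applied three times.
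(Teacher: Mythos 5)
Your overall strategy is the same as the paper's: record, for each arm of $E_{\infty}$ in $\widehat{Y}_t$, the numbers $d_j$ of $(-1)$-curves meeting only the $j$-th component, and feed these arm-by-arm into the explicit algorithm from the proof of Corollary~\ref{corollary:New-proof-of-Stevens-theorem} to rebuild the chains of singularities of class $T$; that much is sound and corresponds to the first two steps of the paper's argument.

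The genuine gap is exactly the point you flag and then defer: the singularity of the $P$-resolution whose exceptional divisor contains the central curve $E_0$. Your step (1) asserts that a $P$-resolution is ``three arms of $T$-chains plus the self-intersection of the central curve'', and step (3) claims the central datum is ``forced by the condition that the total configuration contracts back to $\widehat{X}$''. This framing is not correct: a class-$T$ configuration in a $P$-resolution may contain $E_0$ together with curves from several arms (see the second $P$-resolution in Example~\ref{example:I_30(7-3)+7}, or $Y_3$ in Example~\ref{example:I_30(2-2)+29}), so arm-by-arm data plus one integer does not a priori determine $Y$, and the contraction condition alone does not single out which straddling configuration occurs. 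The paper closes this gap in two moves that your proposal lacks. First, it separates the two kinds of data: a singularity of $\widehat{Y}$ whose exceptional divisor avoids $E_0$ produces $(-1)$-curves in $\widehat{Y}_t$ meeting a single component of $E_{\infty}$ on the corresponding arm, whereas a singularity containing $E_0$ produces, after the controlled MMP, $(-1)$-curves joining two different components of $E_{\infty}$; hence the $d_j$ recover precisely the singularities away from $E_0$. Second, after contracting in the family all $(-1)$-curves attached to the arms (Iitaka--Kodaira contractions and usual flips), at most one singularity containing $E_0$ survives on the central fiber, and it is recovered from the fibration structure of the natural compactification: in the minimal resolution of the (contracted) $\widehat{Y}$, the chain joining $E_0$ to the node of $E_{\infty}$ must blow down to a $\mathbb{CP}^1$-fiber, hence consists of each arm of $E_{\infty}$ glued to its dual chain by a $(-1)$-curve, and this pins down $\widehat{Y}$, hence $Y$. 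Without an argument of this kind, your ``decoupling and no-loss-of-information claim'' is precisely the unproved content of the lemma in the non-cyclic case, so the proposal as written is not yet a proof.
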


\begin{proof}
As in the proof of Corollary~\ref{corollary:New-proof-of-Stevens-theorem}, we present an explicit algorithm to recover the $P$-resolution from the data of $(-1)$-curves intersecting the compactifying divisor $E_{\infty}$ in the compactified Milnor fibers $\widehat{Y}_t$.

Let
\begin{equation*}
\begin{tikzpicture}
\node[bullet] (20) at (2,0) [label=above:{$-a_e$}, label=below:{$A_e$}] {};

\node[empty] (250) at (2.5,0) [] {};
\node[empty] (30) at (3,0) [] {};

\node[bullet] (350) at (3.5,0) [label=above:{$-a_1$}, label=below:{$A_1$}] {};

\draw [-] (20)--(250);
\draw [dotted] (20)--(350);
\draw [-] (30)--(350);
\end{tikzpicture}
\end{equation*}
be an arm of $E_{\infty}$ in $\widehat{Y}_t$ where $A_i^2 = -a_i$ ($i=1,\dotsc,e$) and $A_1$ is the closest rational curve to the node of $E_{\infty}$. Let $d_j$ be the number of $(-1)$-curves that intersect only $A_j$ and let $j_0$ be the smallest $j$ among $\{1, \dotsc, e\}$ such that $d_{j_0} \neq 0$, that is, $d_{j_0} \neq 0$ but $d_j = 0$ for all $j \le j_0$.

I. We apply the algorithm described in the proof of Corollary~\ref{corollary:New-proof-of-Stevens-theorem} (to recover a $P$-resolution from a zero continued fraction), instead of a zero continued fraction, with the sequence $(d_e, \dotsc, d_{j_0})$ on the chain $A_e, \dotsc, A_{j_0}$ of $\mathbb{CP}^1$'s, for each arms of $E_{\infty}$.

Each singularities of class $T$ or $A_n$-singularities on any arms of a compactified $P$-resolution $\widehat{Y}$ such that the node $E_0$ is not a part of their exceptional divisors produce bunch of $(-1)$-curves that intersect only the rational curves on the corresponding arms of $E_{\infty}$ in the compactified Milnor fiber $\widehat{Y}_t$. On the other hand, if a singularity contains the node $E_0$ as a part of exceptional divisor, then, after usual flips and Iitaka-Kodaira divisorial contractions, we obtain $(-1)$-curves connecting two different rational curves of $E_{\infty}$ in $\widehat{Y}_t$.

So we can recover all singularities on the given compactified $P$-resolution $\widehat{Y}$ such that the node $E_0$ is not a part of the exceptional divisor of $\widehat{Y}$ from the data $(d_e, \dotsc, d_{j_0})$ of each arms of $E_{\infty}$.

II. We contract all $(-1)$-curves attached to $A_e, \dotsc, A_{j_0}$ starting from $A_e$.

Such contractions correspond to sequence of usual flips and Iitaka-Kodaira divisorial contractions of the whole family $\widehat{\mathcal{Y}}$. Therefore, as we saw in the proof of Theorem~\ref{theorem:smoothable-by-flips}, after keep contracting all the $(-1)$-curves, there may be \emph{at most} one singularity on the central fiber whose exceptional divisor contain the node $E_0$. We denote the central fiber (after the usual flips and Iitaka-Kodaira divisorial contraction) again by $\widehat{Y}$ and the general fiber again $\widehat{Y}_t$.

III. Find the duals of each arms of $\widehat{Y}_t$. Then the minimal resolution of $\widehat{Y}$ (if there is a singularity on $\widehat{Y}$) or $\widehat{Y}$ itself (if there is no singularity) consists of the duals.

On the minimal resolution of $\widehat{Y}$ (if there is a singularity on $\widehat{Y}$) or $\widehat{Y}$ itself (if there is no singularity), the chain of $\mathbb{CP}^1$'s connecting $E_0$ and the node of $E_{\infty}$ is blown down to a $\mathbb{CP}^1$ fiber of the natural fibration to $\mathbb{CP}^1$. Therefore the chain should consists of the arms of $E_{\infty}$ connected with their duals by $(-1)$-curves, which implies that we can recover $\widehat{Y}$ from $\widehat{Y}_t$.
\end{proof}

\begin{remark}
The proof of the above theorem says more: A $P$-resolution is completely determined by the data of $(-1)$-curves that intersect only one of the rational curves of $E_{\infty}$.
\end{remark}

\begin{theorem}\label{theorem:components-Def(X)}
The Milnor fibers associated with irreducible components of the reduced versal deformation space of a quotient surface singularity are non-diffeomorphic to each other, with obvious exceptions from the pairs of $P$-resolutions symmetrical to each other.
\end{theorem}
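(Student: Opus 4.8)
The plan is to reduce the statement to what has already been assembled in the paper. There are two cases, cyclic and non-cyclic, and in both the Milnor fibers in question are in bijection with the $P$-resolutions (equivalently, with the $M$-resolutions) of $(X,0)$ by KSB~\cite{Kollar-Shepherd-Barron-1988} and Behnke--Christophersen~\cite{Behnke-Christophersen-1994}, so it suffices to show that distinct $P$-resolutions give non-diffeomorphic Milnor fibers. For the cyclic case, the one-to-one correspondence in Corollary~\ref{corollary:New-proof-of-Stevens-theorem} identifies $P$-resolutions with $K_e(n/n-a)$, and the explicit MMP of Theorem~\ref{theorem:smoothable-by-flips} together with the tracking of $(-1)$-curves in Section~\ref{section:identifying-Milnor-fibers} shows the Milnor fiber of the $P$-resolution parametrized by $\underline{n}$ is diffeomorphic to Lisca's filling $W_{n,a}(\underline{n})$; since Lisca~\cite{Lisca-2008} proved $W_{n,a}(\underline{n}) \cong W_{n,a}(\underline{m})$ (orientation-preservingly) iff $\underline{n}=\underline{m}$, the conclusion follows immediately. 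So the real content is the non-cyclic case.

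For the non-cyclic case, first I would run the controlled MMP of Theorem~\ref{theorem:smoothable-by-flips} on the smoothing $\widehat{\smash[b]{\mathcal{Y}}} \to \Delta$ of the natural compactification of the crepant $M$-resolution $Y$ attached to a given component of $\Def(X)$, and use the procedure of Subsection~\ref{subsection:procedure-for-identifying} to extract, from the compactified Milnor fiber $\widehat{Y}_t$, the dual graph recording how the $(-1)$-curves meet the compactifying divisor $E_{\infty}$. By Theorem~\ref{theorem:diffeomorphism-type} (and Remark~\ref{remark:the-same--1-data}) this intersection data is a complete orientation-preserving diffeomorphism invariant of the minimal symplectic filling $W = \widehat{Y}_t - E_{\infty}$, and by Proposition~\ref{proposition:Milnor-fiber=complement} this $W$ is precisely the Milnor fiber. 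Thus two components of $\Def(X)$ give diffeomorphic Milnor fibers only if the corresponding compactified Milnor fibers carry the same $(-1)$-curve intersection data with $E_{\infty}$. Finally, Lemma~\ref{lemma:intersection-data} shows that this intersection data determines the $P$-resolution $Y$ up to isomorphism over $X$; hence the two components coincide. Combining with the cyclic case finishes the proof.

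The step I expect to be the main obstacle — and the one deserving the most care — is verifying that the output of the controlled MMP genuinely lands among the pairs $(Z, E_\infty)$ in the Bhupal--Ono list, so that Theorem~\ref{theorem:diffeomorphism-type} is applicable; concretely this means checking that the $(-1)$-curves produced by flips and Iitaka--Kodaira contractions are exactly the ones appearing in Figures~\ref{figure:sequence-dihedral}, \ref{figure:sequence-TOI-32}, \ref{figure:sequence-TOI-31}, and that the bookkeeping of broken curves via Proposition~\ref{proposition:degenerationCurves} is consistent across the three families (dihedral, types $(3,2)$ and $(3,1)$). A secondary subtlety is that Lemma~\ref{lemma:intersection-data} only reconstructs the part of $\widehat{Y}$ not involving the central curve $E_0$ directly from the per-arm data $(d_e,\dots,d_{j_0})$, and one must argue that, after the contractions in step II of that lemma, the remaining (at most one) $T$-singularity straddling $E_0$ is pinned down by the duals of the arms; I would present this exactly as in the proof of Lemma~\ref{lemma:intersection-data} without reproving it. All the remaining ingredients are quotations of results established earlier in the paper, so the proof is essentially an assembly argument.

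\begin{proof}
For a quotient surface singularity $(X,0)$, the diffeomorphism types of Milnor fibers are invariants of the irreducible components of $\Def(X)$, and by KSB~\cite{Kollar-Shepherd-Barron-1988} and Behnke--Christophersen~\cite{Behnke-Christophersen-1994} these components are in one-to-one correspondence with the $P$-resolutions of $(X,0)$, equivalently with the crepant $M$-resolutions. It therefore suffices to show that non-isomorphic $P$-resolutions yield Milnor fibers that are not orientation-preserving diffeomorphic.

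\emph{Cyclic case.} Let $(X,0)$ be of type $\frac{1}{n}(1,a)$. By Corollary~\ref{corollary:New-proof-of-Stevens-theorem} the $P$-resolutions are parametrized by $\underline{n} \in K_e(n/n-a)$, and by the explicit MMP of Theorem~\ref{theorem:smoothable-by-flips} together with the identification procedure of Section~\ref{section:identifying-Milnor-fibers}, the Milnor fiber of the $P$-resolution corresponding to $\underline{n}$ is diffeomorphic to $W_{n,a}(\underline{n})$. By Lisca~\cite{Lisca-2008}, $W_{n,a}(\underline{n})$ is orientation-preserving diffeomorphic to $W_{n,a}(\underline{m})$ if and only if $\underline{n}=\underline{m}$. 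Hence distinct $P$-resolutions give non-diffeomorphic Milnor fibers.

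\emph{Non-cyclic case.} Let $Y \to X$ be the crepant $M$-resolution attached to a given component of $\Def(X)$, and let $\widehat{Y}$ be its natural compactification with compactifying divisor $E_{\infty}$. By Proposition~\ref{proposition:Milnor-fiber=complement}, the corresponding Milnor fiber $M$ satisfies $M = \widehat{Y}_t - E_{\infty}$, where $\widehat{Y}_t$ is a general fiber of the smoothing $\widehat{\smash[b]{\mathcal{Y}}} \to \Delta$. Running the controlled MMP of Theorem~\ref{theorem:smoothable-by-flips} and tracking $(-1)$-curves as in Subsection~\ref{subsection:procedure-for-identifying} (using Proposition~\ref{proposition:degenerationCurves}), we obtain the dual graph of the $(-1)$-curves in $\widehat{Y}_t$ together with their intersections with $E_{\infty}$. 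By Proposition~\ref{propsition:Extending-diffeomorphism}, Theorem~\ref{theorem:diffeomorphism-type}, and Remark~\ref{remark:the-same--1-data}, this intersection data is a complete orientation-preserving diffeomorphism invariant of the filling $\widehat{Y}_t - E_{\infty} = M$.

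Now suppose two components of $\Def(X)$ have orientation-preserving diffeomorphic Milnor fibers, and let $Y$, $Z$ be the corresponding $P$-resolutions. Then the dual graphs of the data of $(-1)$-curves and $E_{\infty}$ in the compactified Milnor fibers $\widehat{Y}_t$ and $\widehat{Z}_t$ agree. By Lemma~\ref{lemma:intersection-data}, $Y$ and $Z$ coincide, hence so do the two components. Combining the two cases proves the theorem.
\end{proof}
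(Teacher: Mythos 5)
Your proposal follows the paper's own route: represent the Milnor fiber as $\widehat{Y}_t - E_{\infty}$ (Proposition~\ref{proposition:Milnor-fiber=complement}), extract the intersection data of $(-1)$-curves with $E_{\infty}$ via the controlled MMP, use the classification argument of Section~\ref{section:Diffeomorphism-type} to see that diffeomorphic fillings force equal data, and then apply Lemma~\ref{lemma:intersection-data} to recover the $P$-resolution; your explicit appeal to Lisca for the cyclic case is consistent with the paper's intent. The one point to tighten is your assertion that the intersection data is a \emph{complete} orientation-preserving diffeomorphism invariant: Remark~\ref{remark:the-same--1-data} gives the equivalence only under the hypothesis that the two compactified fibers are obtained from the same minimal rational surface ($\mathbb{CP}^2$ or $\mathbb{CP}^1\times\mathbb{CP}^1$), so your inference ``diffeomorphic $\Rightarrow$ same data'' is not literally covered by the remark when $\widehat{Y}_t$ and $\widehat{Z}_t$ come from different starting surfaces. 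The paper treats this mixed case in a separate paragraph, invoking the exceptional-case argument inside the proof of Theorem~\ref{theorem:diffeomorphism-type} (a putative diffeomorphism would identify the intersection forms of $\mathbb{CP}^2 \sharp \overline{\mathbb{CP}}^2$ and $\mathbb{CP}^1\times\mathbb{CP}^1$, a contradiction), so that no diffeomorphism exists there at all. Since you cite Theorem~\ref{theorem:diffeomorphism-type}, the ingredient is implicitly present, but the case division should be made explicit for the argument to be airtight.
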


\begin{proof}
Let $Y$ and $Z$ be $P$-resolutions of a quotient surface singularity $(X,0)$ corresponding to two different irreducible components of the reduced versal deformation space of $X$, respectively. The Milnor fibers associated to $Y$ and $Z$ are given by $\widehat{Y}_t \setminus E_{\infty}$ and $\widehat{Z}_t \setminus E_{\infty}$.

At first, suppose that the rational surfaces $\widehat{Y}_t$ and $\widehat{Z}_t$ are obtained from $\mathbb{CP}^2$ at the same time (or from $\mathbb{CP}^1 \times \mathbb{CP}^1$) by blowing ups. Then it follows by Remark~\ref{remark:the-same--1-data} that two Milnor fibers are diffeomorphic if and only if the two intersection data of $(-1)$-curves with $E_{\infty}$ in $\widehat{Y}_t$ and $\widehat{Z}_t$ are the same, hence, by Lemma~\ref{lemma:intersection-data}, if and only if two $P$-resolutions $Y$ and $Z$ are the same.

Suppose now that $\widehat{Y}_t$ is obtained from $\mathbb{CP}^2$ but $\widehat{Z}$ is obtained from $\mathbb{CP}^1 \times \mathbb{CP}^1$. Then as we saw in the proof of Theorem~\ref{theorem:diffeomorphism-type} that two Milnor fibers $\widehat{Y}_t \setminus E_{\infty}$ and $\widehat{Z}_t \setminus E_{\infty}$ are not diffeomorphic to each other whether the two intersection data of $(-1)$-curves with $E_{\infty}$ in $\widehat{Y}_t$ and $\widehat{Z}_t$ are the same or not. Hence the assertion follows.
\end{proof}

\section{Minimal symplectic fillings are diffeomorphic to Milnor fibers}
\label{section:minimal-symplectic-fillings-are-Milnor-fibers}

In this section we show that every minimal symplectic filling of a \emph{non-cyclic} quotient surface singularity is diffeomorphic to one of its Milnor fibers; Theorem~\ref{theorem:fillings-diffeomorphic-to-Milnors}.

Let $(X,0)$ be a non-cyclic quotient surface singularity, and let $W$ be a minimal symplectic filling of $(X,0)$. As we saw in Remark~\ref{remark:complex-model}, there is a rational complex surface $Z$ containing a divisor $E$ which is a union of smooth rational complex curves whose intersection dual graph is consistent with that of the compactifying divisor $E_{\infty}$ of $(X,0)$ such that $W$ is diffeomorphic to the complement $Z-\nu(E)$. So we need to show that there is a smoothing of $X$ whose Milnor fiber is diffeomorphic to $Z-\nu(E)$.

Pinkham~\cite[Theorem~6.7]{Pinkham-1978} provides a way to construct a smoothing whose Milnor fiber is diffeomorphic to the complement of $E$ under certain cohomological conditions.

\begin{proposition}[{SSW~\cite[Theorem~8.1]{Stipzicz-Szabo-Wahl-2008} and Fowler~\cite[Theorem~2.2.3]{Fowler-2014}}]
\label{proposition:supporting-ample-divisor}
Let $Z$ be a smooth projective rational surface and let $E \subset Z$ be a union of smooth rational curves whose intersection dual graph is consistent with that of $E_{\infty}$. If $E$ supports an ample divisor, then there is a smoothing of $X$ such that its Milnor fiber is diffeomorphic to $Z-\nu(E)$.
\end{proposition}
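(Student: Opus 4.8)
The plan is to run Pinkham's theory of smoothings of \emph{negative weight} for the weighted homogeneous singularity $X$. Recall from Section~\ref{section:Compactifying-divisor} that, being a quotient singularity, $(X,0)$ carries a good $\mathbb{C}^{\ast}$-action, so $X=\Spec A$ with $A=\bigoplus_{k\ge 0}A_k$ graded, and $X=\overline{X}\setminus\overline{E}_{\infty}$ with $\overline{X}=\Proj A[t]$. The first move is to reduce the statement to an algebraic one: producing a smoothing of $X$ whose Milnor fiber is diffeomorphic to $Z-\nu(E)$ is the same as producing a flat one-parameter family $\mathcal{X}\to\Delta$ with central fiber $X$ and smooth general fiber $V:=Z\setminus E$. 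Indeed, for a negative-weight smoothing the Milnor fiber is diffeomorphic to the general fiber, and $V=Z\setminus E$ is diffeomorphic to $Z-\nu(E)$ since $\nu(E)$ is a regular neighborhood. Thus it suffices to build such a family with general fiber $V$.

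\textbf{Construction from the ample divisor.} Write the supporting ample divisor as $D=\sum_i a_iE_i$ with all $a_i>0$, so that $\Supp(D)=E$. Since the complement of the support of an ample divisor is affine, $V=Z\setminus E$ is a smooth affine surface and
\begin{equation*}
\mathcal{O}(V)=\bigcup_{k\ge 0}H^0\!\left(Z,\mathcal{O}_Z(kD)\right),
\end{equation*}
an increasing union viewed inside the function field, where $F_k:=H^0(Z,\mathcal{O}_Z(kD))$ is the space of regular functions on $V$ with pole order at most $k$ along $E$, and $F_0=\mathbb{C}$ because $Z$ is projective. Forming the Rees algebra $\mathcal{R}=\bigoplus_{k\ge 0}F_k\,t^k\subset\mathcal{O}(V)[t]$ gives a $\mathbb{C}^{\ast}$-equivariant morphism $\Spec\mathcal{R}\to\mathbb{A}^1_t$ whose fiber over $t\neq 0$ is $V$ and whose fiber over $t=0$ is $\Spec\bigl(\operatorname{gr}_F\mathcal{O}(V)\bigr)$, with $\operatorname{gr}_F\mathcal{O}(V)=\bigoplus_k F_k/F_{k-1}$. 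Flatness over $\mathbb{C}[t]$ is automatic, as $\mathcal{R}$ is torsion-free; restricting to a small disk $\Delta$ produces the desired candidate family. (Replacing $D$ by $mD$ for $m\gg 0$, to make the polarization very ample and the section ring well generated, is a harmless normalization.)

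\textbf{Identification of the central fiber.} The crux is to show that the special fiber is $X$ itself, i.e.\ that $\operatorname{gr}_F\mathcal{O}(V)\cong A$ as graded rings. Both are two-dimensional graded rings (Riemann--Roch gives $\dim_{\mathbb{C}}F_k/F_{k-1}\sim kD^2$), so the point is a precise isomorphism, not merely a dimension count. I would obtain it through the common central curve: by the Dolgachev--Pinkham description one has $A_k\cong H^0(\mathcal{C},\mathcal{O}(\lfloor kD_0\rfloor))$, where $\mathcal{C}$ is the central curve of $E_{\infty}$ and $D_0$ is a $\mathbb{Q}$-divisor whose numerical data is read off the dual graph of $E_{\infty}$; since the dual graph of $E$ is consistent with that of $E_{\infty}$, restricting $D$ and the pole-order filtration to the central curve $E^0_{\infty}\subset E$ recovers exactly this $D_0$, yielding the graded isomorphism. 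This is precisely the content of Pinkham~\cite[Theorem~6.7]{Pinkham-1978} (equivalently SSW~\cite[Theorem~8.1]{Stipzicz-Szabo-Wahl-2008} and Fowler~\cite[Theorem~2.2.3]{Fowler-2014}), which I would invoke rather than re-derive in full. Granting it, the Rees family is a smoothing of $X$ with general fiber $V$, and its Milnor fiber is diffeomorphic to $Z-\nu(E)$, as claimed.

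\textbf{Main obstacle.} The difficulty is entirely concentrated in the last step: flatness and smoothness of the general fiber are free, but one must guarantee that the associated-graded degeneration is reduced and normal and is isomorphic to $X$ \emph{itself}, not merely to some singularity sharing the link $L$. This requires matching the filtration grading by pole order along $D$ with the intrinsic $\mathbb{C}^{\ast}$-grading of $A$, and it is exactly the ampleness of $D$ on the star-shaped configuration $E$ that secures this compatibility (forcing $V$ affine, the filtration exhaustive with $F_0=\mathbb{C}$, and the relevant higher cohomology to vanish so that the deformation of $\widehat{X}$ is unobstructed and locally trivial along $E_{\infty}$). All the hypotheses of the proposition feed into this single identification.
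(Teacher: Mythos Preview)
Your overall strategy---invoke Pinkham's negative-weight smoothing theory---is the right one and matches the paper's. But there is a genuine gap in the filtration you choose. You form the Rees algebra for the filtration $F_k=H^0(Z,\mathcal{O}_Z(kD))$ where $D$ is an \emph{arbitrary} ample divisor supported on $E$, and then assert that $\operatorname{gr}_F\mathcal{O}(V)\cong A$ ``is precisely the content of Pinkham~[Theorem~6.7].'' It is not. Pinkham's theorem concerns a very specific sequence of divisors
\[
E^{(k)}=kE_1+\sum_{i=1}^{3}\sum_{j=1}^{e_i}\Bigl\lceil k\,\frac{a_{ij}}{a_{i1}}\Bigr\rceil E_{ij},
\]
determined by the Seifert data of the star-shaped graph, together with the vanishing $H^1(Z,E^{(k)})=0$. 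For this particular filtration one has $R=\bigoplus_k H^0(Z,E^{(k)})$ and $\Spec(R/(t))\cong X$, where $t$ is a section cutting out the central curve $E_1$. For your filtration by multiples of an arbitrary ample $D$, the associated graded is a different graded ring (with a different grading), and there is no reason its spectrum should be $X$; your ``common central curve'' argument does not repair this, because restricting $kD$ to $E_1$ does not recover the Dolgachev--Pinkham $\mathbb{Q}$-divisor $D_0$ unless the coefficients of $D$ are already those prescribed by the Seifert invariants.

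The paper's proof uses ampleness in a different place. It first builds $R$ and the smoothing $\pi\colon\Spec R\to\Delta$ from the Pinkham divisors $E^{(k)}$ (the required $H^1$-vanishing holds because $Z$ is rational, by SSW). Only \emph{afterwards} does ampleness enter: since $E$ supports an ample divisor, $Z\setminus E$ is affine, hence the image $\overline{E}$ of $E_1$ is ample on the contraction $\overline{Y}$ of the arms, and one identifies $\overline{Y}\cong\Proj R$. Then $\Spec R$ is the affine cone over $\overline{Y}$ and the general hyperplane section (avoiding the vertex) is diffeomorphic to $\overline{Y}-\nu(\overline{E})\cong Z-\nu(E)$. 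So the fix is straightforward: replace your $kD$ by Pinkham's $E^{(k)}$ to get the central fiber right, and use the ampleness hypothesis only at the end to identify the Milnor fiber.
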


\begin{proof}
We briefly sketch the proof for the convenience of the reader. Assume, in general, that the dual graph of $E$ (and hence that of $E_{\infty}$) is given as in Figure~\ref{figure:compactifying-divisor-ampleness}, where we denote by $E_1$ the central curve with $E_1 \cdot E_1 = b-3$ and by $E_{ij}$ ($j=1, \dotsc, e_i$) the curve on the $i$-th arm with $E_{ij} \cdot E_{ij} = -a_{ij}$.

\begin{figure}
\centering
\begin{tikzpicture}
\node[bullet] (-250) at (-2.5,0) [label=below:{$-a_{1e_1}$}, label=above:{$E_{1e_1}$}] {};
\node[empty] (-20) at (-2,0) [] {};
\node[empty] (-150) at (-1.5,0) [] {};
\node[bullet] (-10) at (-1,0) [label=below:{$-a_{11}$}, label=above:{$E_{11}$}] {};

\node[bullet] (00) at (0,0) [label=below:{$b-3$},, label=above right:{$E_1$}] {};

\node[bullet] (01) at (0,1) [label=left:{$-a_{21}$}, label=right:{$E_{21}$}] {};
\node[empty] (015) at (0, 1.5) [] {};
\node[empty] (02) at (0,2) [] {};
\node[bullet] (025) at (0,2.5) [label=left:{$-a_{2e_2}$}, label=right:{$E_{2e_2}$}] {};

\node[bullet] (10) at (1,0) [label=below:{$-a_{31}$},, label=above:{$E_{31}$}] {};
\node[empty] (150) at (1.5,0) [] {};
\node[empty] (20) at (2,0) [] {};
\node[bullet] (250) at (2.5,0) [label=below:{$-a_{3e_3}$}, label=above:{$E_{3e_3}$}] {};

\draw [-] (00)--(-10);
\draw [-] (-10)--(-150);
\draw [dotted] (-10)--(-250);
\draw [-] (-20)--(-250);

\draw [-] (00)--(01);
\draw [-] (01)--(015);
\draw [dotted] (01)--(025);
\draw [-] (02)--(025);

\draw [-] (00)--(10);
\draw [-] (10)--(150);
\draw [dotted] (10)--(250);
\draw [-] (20)--(250);
\end{tikzpicture}
\caption{The dual graph of the compactifying divisor $E_{\infty}$}
\label{figure:compactifying-divisor-ampleness}
\end{figure}
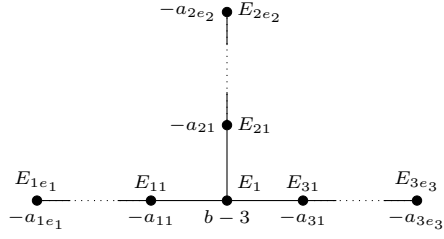

According to Pinkham~\cite[Theorem~6.7]{Pinkham-1978}, in order to prove the existence of such a smoothing, we need to check first that
\begin{equation*}
H^1(Z, E^{(k)})=0
\end{equation*}
for all $k \ge 0$, where
\begin{equation*}
E^{(k)}=kE_1 + \sum_{i=1}^{3} \sum_{j=1}^{e_i} \ceil{k(a_{ij}/a_{i1})} E_{ij}
\end{equation*}
with $E^{(1)}=E_1$. Here $\ceil{x}$ denotes the least integer greater than or equal to $x$. But, by SSW~\cite[Theorem~8.1]{Stipzicz-Szabo-Wahl-2008}, we have $H^1(Z, E^{(k)})=0$ because $Z$ is a rational surface.

Let
\begin{equation*}
R=\bigoplus_{k \ge 0} H^0(Z, E^{(k)}).
\end{equation*}
Set $\mathcal{X}=\Spec{R}$. Let $t$ be a section in $H^0(Z, \sheaf{O(E_1)})$ that defines $E_1$. Then it follows by Pinkham~\cite[Theorem~6.7]{Pinkham-1978} that $t$ defines a flat morphism $\pi \colon \mathcal{X} \to \Delta$ which is a smoothing whose central fiber $\Spec{R/(t)}$ is isomorphic to the singularity $(X,0)$.

It remains to show that the Milnor fiber of the smoothing $\pi$ is diffeomorphic to $Z-\nu(E)$. Contracting the three arms of $E$ on $Z$, we get a singular surface $\overline{Y}$ with three cyclic quotient surface singularities along the curve $\overline{E} \subset \overline{Y}$, where $\overline{E}$ is the image of $E_1$. Since $E$ supports an ample divisor, $Z-E \cong \overline{Y}-\overline{E}$ is affine. Therefore $\overline{E}$ is an ample divisor on $\overline{Y}$. The divisor $E^{(k)}$ is the total transform of $k\overline{E}$ on $\overline{Y}$; cf. Pinkham~\cite[p.80]{Pinkham-1978}. Therefore there is an isomorphism $H^0(\overline{Y}, k\overline{E}) \cong H^0(Z, E^{(k)})$; Pinkham~\cite[Propisition~6.5]{Pinkham-1978}. Thus $\overline{Y}$ is isomorphic to $\Proj{R}$. So the smoothing $\mathcal{X}$ is the affine cone $C(\overline{Y})$ over $\overline{Y}$ embedded by the ample linear series $\abs{\overline{E}}$ into a projective space $\mathbb{CP}^N$, and the map $\pi$ is given by taking hyperplane sections. Therefore a general fiber of $\pi$ is a hyperplane section of the cone $C(\overline{Y})$ with a hyperplane not passing through the vertex of $C(\overline{Y})$. Thus the Milnor fiber is diffeomorphic to $\overline{Y}-\nu(\overline{E}) \cong Z-\nu(E) \cong W$.
\end{proof}

\begin{theorem}\label{theorem:fillings-diffeomorphic-to-Milnors}
Let $(X,0)$ be a non-cyclic quotient surface singularity. For any minimal symplectic filling $W$ of $(X,0)$, there is a smoothing $\mathcal{X} \to \Delta$ of $(X,0)$ such that $W$ is diffeomorphic to a general fiber $X_t$ of $\mathcal{X} \to \Delta$.
\end{theorem}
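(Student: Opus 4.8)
The plan is to reduce the statement to Proposition~\ref{proposition:supporting-ample-divisor} by producing, for each minimal symplectic filling $W$ of a non-cyclic quotient surface singularity $(X,0)$, a smooth projective rational surface $Z$ together with a configuration $E$ of smooth rational curves dual to $E_{\infty}$ such that $E$ supports an ample divisor. By Remark~\ref{remark:complex-model} and Proposition~\ref{proposition:symplectic-filling-is-rational-II}, we already know that $W$ is diffeomorphic to $Z - \nu(E)$ for some rational complex surface $Z$ with a complex-curve configuration $E$ whose dual graph coincides with that of $E_{\infty}$; so the only thing to verify is the ampleness-support condition. Once this is done, Proposition~\ref{proposition:supporting-ample-divisor} immediately yields a smoothing $\pi \colon \mathcal{X} \to \Delta$ of $(X,0)$ whose Milnor fiber is diffeomorphic to $Z - \nu(E) \cong W$, and $W$ is then diffeomorphic to a general fiber $X_t$.

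The main work, therefore, is to check that $E$ supports an ample divisor in $Z$. First I would recall the standard criterion (Artin/Grauert contractibility together with a Nakai--Moishezon argument): $E = \sum E_i$ supports an ample divisor on $Z$ if and only if the complement $Z - E$ is affine, equivalently if and only if the intersection matrix of the three ``arm'' subchains of $E$ is negative definite and there exists an effective divisor $D$ supported on $E$ with $D \cdot E_i > 0$ for the central curve and $D \cdot E_i \geq 0$ (with positivity somewhere) on the arms; this is precisely the setup in Pinkham~\cite{Pinkham-1978} and SSW~\cite{Stipzicz-Szabo-Wahl-2008}. The arms of $E_{\infty}$ are by construction the Hirzebruch--Jung strings dual to the resolution arms of $(X,0)$, hence negative definite; what needs checking is the global positivity coming from the central curve $E_{\infty}^0$ with $(E_{\infty}^0)^2 = b-3$. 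I would run through the three families in Figures~\ref{figure:sequence-dihedral}, \ref{figure:sequence-TOI-32}, \ref{figure:sequence-TOI-31}: in each case the blow-up/blow-down procedure of Bhupal--Ono transforms $E_{\infty}$ into a configuration containing a cuspidal curve $C$ of positive self-intersection, and the affineness of $Z - E_{\infty} \cong Z_2 - \widetilde{E}_{\infty}$ can be read off from that model since the complement of a configuration containing an ample curve is affine. Reversing the (birational, crepant-on-the-complement) modifications $Z \dashrightarrow Z_2$, which are isomorphisms away from $E_{\infty}$ and its transforms, one sees $Z - E_{\infty}$ is affine, hence $E_{\infty}$ supports an ample divisor on $Z$.

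A cleaner alternative, which I expect to be the route actually taken, is to avoid case analysis entirely by exploiting the identification already established in Section~\ref{section:identifying-Milnor-fibers}: every minimal symplectic filling in the Bhupal--Ono list has the same intersection-with-$E_{\infty}$ data as some compactified Milnor fiber $\widehat{Y}_t$ (this is exactly the content of the explicit MMP of Theorem~\ref{theorem:smoothable-by-flips} together with the diffeomorphism classification of Theorem~\ref{theorem:diffeomorphism-type}, and Lemma~\ref{lemma:intersection-data} guarantees the $P$-resolution is recovered). Since the number of minimal symplectic fillings equals the number of $P$-resolutions, and each $P$-resolution produces a Milnor fiber realizing one of those intersection data, the map from Milnor fibers to minimal symplectic fillings is a bijection, so \emph{every} minimal symplectic filling is hit. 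However, to keep the section self-contained and to avoid invoking the full strength of the earlier classification in the opposite direction, I would still give the direct ampleness argument; the genuine obstacle is precisely verifying $H^1(Z, E^{(k)}) = 0$ and the ampleness of $\overline{E}$ on the contracted surface $\overline{Y}$, both of which follow from $Z$ being rational (SSW~\cite[Theorem~8.1]{Stipzicz-Szabo-Wahl-2008}) and from the negative-definiteness of the arm matrices, so in fact no hard computation remains once the setup is in place. The hard part is therefore bookkeeping: checking that the complex model $(Z, E)$ supplied by Bhupal--Ono genuinely realizes the abstract dual graph of $E_{\infty}$ with $E$ a chain of \emph{smooth} rational curves (not the cuspidal model $Z_2$), so that Proposition~\ref{proposition:supporting-ample-divisor} applies verbatim.
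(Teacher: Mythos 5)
Your reduction to Proposition~\ref{proposition:supporting-ample-divisor} is exactly the paper's strategy, and the first condition ($H^1(Z,E^{(k)})=0$) is indeed free from rationality via SSW. But the heart of the matter is not ``bookkeeping'': what must be proved is that some effective divisor $F$ supported on $E$ satisfies $F\cdot F>0$, $F\cdot E_i>0$, \emph{and} $F\cdot D>0$ for \emph{every} irreducible curve $D\subset Z$ not contained in $E$, and your argument for this last condition does not work. The criterion you state (negative definiteness of the arm chains plus positivity against the components of $E$) omits precisely the positivity against curves outside $E$, and your appeal to the cuspidal model is based on a false premise: the cuspidal curve $C\subset Z_2$ has $C\cdot C>0$ but is not ample (in the Bhupal--Ono models there are curves $C_i$ and $(-1)$-curves disjoint from $C$), and even if it were, affineness of $Z_2-\nu(\widetilde{E}_\infty)$ would not ``read off'' from $C$ alone. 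Minimality of $W$ only forces $(-1)$-curves to meet $E$; ruling out \emph{other} curves disjoint from $E$ is where the paper spends essentially all of its effort: it pushes a putative curve $D$ down to $\mathbb{CP}^2$ or $\mathbb{CP}^1\times\mathbb{CP}^1$ and excludes it by explicit geometric arguments (choosing the blow-up point $p$ on the cuspidal cubic to be non-inflectional so no degree-$n$ curve can meet $C$ only at $p$ with multiplicity $3n$; choosing $p,q,r$ non-collinear; and, for types $(3,2)$ and $(3,1)$, replacing the Bhupal--Ono sequence by the modified sequences of Figures~\ref{figure:sequence-TOI-(32)-ampleness} and \ref{figure:sequence-TOI-(31)-ampleness} so that $E$ contains the transform of a line every curve must meet). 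None of this is available from rationality or from the arm matrices. Note also that the paper obtains the numerical part ($F\cdot F>0$, $F\cdot E_i>0$) by a trick you do not use: it transfers the coefficients of an ample-supporting divisor from the compactified Milnor fiber $\widehat{Y}_t$, where affineness of the complement of $E_\infty$ is already known from Proposition~\ref{proposition:Milnor-fiber=complement}.

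Your proposed ``cleaner alternative'' --- matching intersection data via the MMP of Section~\ref{section:identifying-Milnor-fibers} and then concluding surjectivity by comparing the count of $P$-resolutions with the count of fillings --- is a genuinely different route and is workable in spirit (the paper itself makes a closely related remark after Corollary~\ref{corollary:Milnor-1-1-minimal-symplectic}), but it depends essentially on the completeness of Stevens' and Bhupal--Ono's lists, including the corrections recorded in Remarks~\ref{remark:erratum-Bhypal-Ono} and \ref{remark:erratum-Stevens}; the paper deliberately avoids this dependence by giving the direct ampleness argument. Since you explicitly defer to the direct argument and that argument, as written, has the gap above, the proof as proposed is incomplete.
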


\begin{proof}
For a given minimal symplectic filling $W$, let $Z$ be the corresponding rational complex surface containing a union $E$ of smooth rational complex curves whose intersection dual graph is consistent with that of $E_{\infty}$ such that $W$ is diffeomorphic to $Z-\nu(E)$. By Proposition~\ref{proposition:supporting-ample-divisor} it is enough to show that $E$ supports an ample divisor in $Z$. That is, we need to prove that there is an effective divisor $F$ supported on $E$ such that $F \cdot F > 0$ and $F \cdot D > 0$ for all irreducible curve $D$ on $Z$.

We will show in Proposition~\ref{proposition:Milnor-fiber=complement} that every Milnor fiber of $X$ is also given as $\widehat{Y}_t - E_{\infty}$, the complement of $E_{\infty}$ from a smooth rational surface $\widehat{Y}_t$. So $\widehat{Y}_t - E_{\infty}$ is affine. Therefore $E_{\infty}$ supports an ample divisor in $\widehat{Y}_t$; hence, there is an effective divisor $E_{\infty}'$ supported on $E_{\infty}$ such that $E_{\infty}' \cdot E_{\infty}' > 0$ and $E_{\infty}' \cdot D > 0$ for any irreducible curve $D$ on $\widehat{Y}_t$; in particular, for any irreducible component, say $E_i$, of the divisor $E_{\infty}$, we have $E_{\infty}' \cdot E_i > 0$.

By assumption, $E$ and $E_{\infty}$ have the same intersection dual graph. So if we take a divisor $F$ on $Z$ supported on $E$ whose coefficients are same with those of $E_{\infty}'$, then we have $F \cdot F > 0$ and $F \cdot E_i > 0$ for any irreducible component $E_i$ of $E$. Hence it remains to show that $F \cdot D > 0$, or equivalently $E \cdot D > 0$ (that is, $E \cap D \neq \varnothing$), for any irreducible curve $D$ on $Z$ which is not a component of $E$.

\medskip

\textit{Case~1. Dihedral singularities.}

\medskip

As we saw in Section~\ref{section:minimal-symplectic-fillings}, the rational surface $Z$ with the divisor $E$ can be obtained from $\mathbb{CP}^2$ or $\mathbb{CP}^1 \times \mathbb{CP}^1$ by a sequence of blow-ups and blow-downs. Refer Figure~\ref{figure:sequence-dihedral}.

Let $D \subset Z$ be an irreducible curve which is not a component of $E$. If $D$ is a $(-1)$-curve then $D$ should intersect with $E$ because $W=Z-E$ is minimal by the assumption. So one may assume that $D$ is not a $(-1)$-curve. Then $D$ is transformed to an irreducible curve $D_0$ in $\mathbb{CP}^2$ or $\mathbb{CP}^1 \times \mathbb{CP}^1$.

\medskip

\textit{Case~1-1. $(Z,E)$ is obtained from $\mathbb{CP}^2$.}

\medskip

If the degree of $D_0$ is greater than one, then $D_0 \cdot L \ge 2$. Then it is easy to see that $D$ intersects with $E$ at $\ell \in Z$, the image of $L$ in $Z$. So we assume that $D_0$ is a line in $\mathbb{CP}^2$. If $q \notin D_0$, then $D$ intersects $E$ at $\ell$. So we assume that $q \in D_0$. We may choose $p \in C$ such that the tangent line at $p$ to $C$ does not pass through $q$. Then the line $D_0$ should intersect $C$ at some points which are not equal to $p, q$. So if we choose the blowing-up points $r$ (if any) on $C$ so that $p,q,r$ are not collinear, then $D$ intersects with $E$ on $Z$.

\medskip

\textit{Case~1-2. $(Z,E)$ is obtained from $\mathbb{CP}^1 \times \mathbb{CP}^1$.}

\medskip

If $D_0$ intersects $L$ in $\mathbb{CP}^1 \times \mathbb{CP}^1$, then $D$ intersects $E$ at $\ell \in E$. So assume that $D_0$ and $L$ are curves with the same type $(1,0)$ in $\mathbb{CP}^1 \times \mathbb{CP}^1$. It is easy to show that there are only finitely many points $r$ on the cuspidal curve $C$ such that a line of type $(1,0)$ is the tangent line to $C$ at $r \in C$. Therefore, if we choose $p \neq r$, then $D_0$ should intersect $C$ at some point $q \neq p$. Since we may assume that there is no point in $C$ other that $p$ where we blow up $C$ (Remark~\ref{remark:no-extra-blowup}), $D$ should intersect $E$.

\medskip

\textit{Case~2. Tetrahedral, octahedral, icosahedral singularities of type $(3,2)$.}

\medskip

In order to obtain the rational surface $Z_1$ from $\mathbb{CP}^2$ in Figure~\ref{figure:sequence-TOI-32}, we blow up the cuspidal curve $C$ at the point $p$ (including infinitely near points over $p$), and, if necessary, we blow up at some extra (smooth) points of $C$ different from $p$. So we divide the proof into two subcases.

\medskip

\textit{Case~2-1.} We need extra blow-ups to obtain $Z_1$ from $\mathbb{CP}^2$. Or $Z_1$ is obtained from $\mathbb{CP}^1 \times \mathbb{CP}^1$ (whether we need extra blow-ups or not).

\medskip

In case of $\mathbb{CP}^2$, let $M$ be the line passing through the cusp singularity of $C$ and one of the centers of the extra blow-ups. And, in case of $\mathbb{CP}^1 \times \mathbb{CP}^1$, let $M$ be a curve of type $(1,0)$ or $(0,1)$ passing through the cusp singularity. Then it is easy to show that the proper transform $m$ of $M$ in $Z$ is a $(-1)$-curve  intersecting the $(-3)$-curve inside $Z$. See Figure~\ref{figure:sequence-TOI-(32)-ampleness}. So, instead of following Figure~\ref{figure:sequence-TOI-32}, we first blow down the $(-1)$-curve $m$. We then apply the same procedure of blow-downs and blow-ups for dihedral singularities (described in Figure~\ref{figure:sequence-dihedral}) to our case as in Figure~\ref{figure:sequence-TOI-(32)-ampleness}. Then one can prove the ampleness of the compactifying divisor $E$ in $Z$ by the same method as in the case of dihedral singularities.

\begin{figure}
\begin{center}
\includegraphics{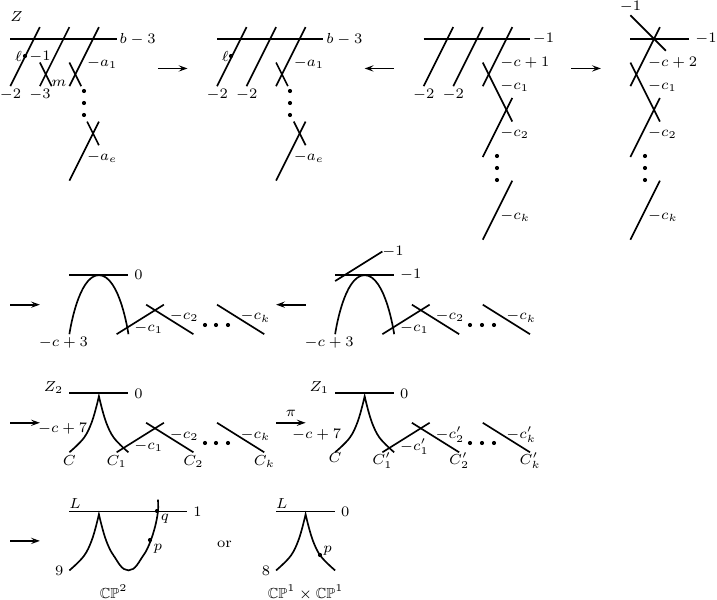}
\caption{New sequence of blow-ups and blow-downs for certain tetrahedral, octahedral, or icosahedral singularities of type $(3,2)$}
\label{figure:sequence-TOI-(32)-ampleness}
\end{center}
\end{figure}

\medskip

\textit{Case~2-2.} We do not need extra blow-ups to obtain $Z_1$ from $\mathbb{CP}^2$.

\medskip

Suppose that there is an irreducible curve $D$ in $Z$ which does not intersect $E$. Then its image $D_0$ in $\mathbb{CP}^2$ should be an irreducible curve which intersects the cuspidal curve $C$ only at $p \in C$. Assume that $D_0$ is a curve of degree $n$. Then the intersection multiplicity of $D_0$ and $C$ at $p$ is $3n$. But, if we choose a general point $p$ of $C$ so that $p$ is not a inflection point of the cuspidal curve $C$, then it is impossible that the intersection multiplicity of an irreducible plane curve of degree $n$ and the cuspidal curve at $p$ is $3n$. For instance, if we parametrize an affine part of the cuspidal curve $C$ by $\{(t^2, t^3) : t \in \mathbb{C}\}$ and if $p=(t_0^2, t_0^3)$ for some $t_0 \neq 0$, then, an irreducible curve $D_0 = \{(x,y) \in \mathbb{C}^2 : f(x,y) = 0\}$ of degree $n$ intersects with $C$ at $p$ with multiplicity $3n$ if and only if $f(t^2,t^3)=a(t-t_0)^{3n}$ for some $a \in \mathbb{C}$. But $f(t^2,t^3)$ does not have a linear term in $t$, while $a(t-t_0)^{3n}$ has a linear term in $t$, which is a contradiction.

Therefore any irreducible curve $D_0$ in $\mathbb{CP}^2$ should intersect the cuspidal curve $C$ at a point different from $p$. Then its image $D$ in $Z$ intersects the compactifying divisor $E$.

\medskip

\textit{Case~3. Tetrahedral, octahedral, icosahedral singularities of type $(3,1)$.}

\medskip

Instead of following the sequence of blowing ups and blowing downs described in Figure~\ref{figure:sequence-TOI-31}, we blow up and blow down as described in Figure~\ref{figure:sequence-TOI-(31)-ampleness}. Then the divisor $E \subset Z$ is the total transform of a line $L$ and a smooth cubic curve in $\mathbb{CP}^2$ intersecting as in Figure~\ref{figure:sequence-TOI-(31)-ampleness}. Since every irreducible curve in $\mathbb{CP}^2$ intersects with $L$, the assertion follows.\qedhere

\begin{figure}[tp]
\centering
\includegraphics{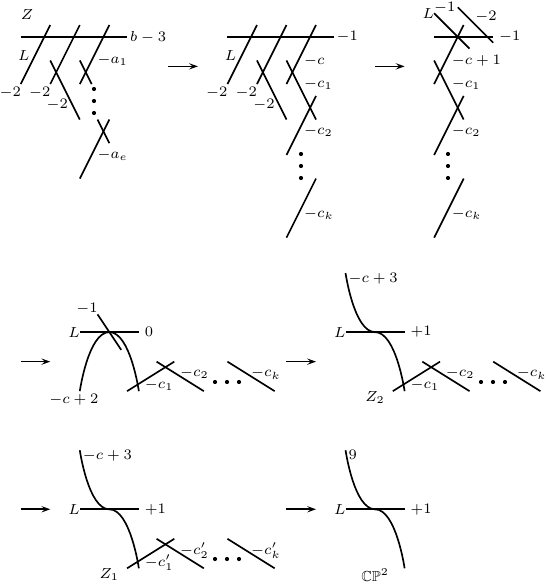}
\caption{New sequence of blow-ups and blow-downs for tetrahedral, octahedral, icosahedral singularities of type $(3,1)$}
\label{figure:sequence-TOI-(31)-ampleness}
\end{figure}
\end{proof}

According to the result on the one-to-one correspondence between $P$-resolutions and the irreducible components of the reduced semi-universal deformation space in KSB~\cite{Kollar-Shepherd-Barron-1988}, any Milnor fibers of a quotient surface singularity can be obtained by smoothing the corresponding $P$-resolution. Combined with Theorem~\ref{theorem:fillings-diffeomorphic-to-Milnors}:

\begin{theorem}\label{theorem:minimal-symplectic-filling-maximal-resolution}
Any minimal symplectic filling of a quotient surface singularity is obtained by a sequence of rational blow-downs from its unique maximal resolution.
\end{theorem}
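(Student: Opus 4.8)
The plan is to combine Theorem~\ref{theorem:fillings-diffeomorphic-to-Milnors} with the structural results of Section~\ref{section:$P$-resolution} and the topological interpretation of $\mathbb{Q}$-Gorenstein smoothings as rational blow-downs. First I would reduce to the case already handled for cyclic quotient surface singularities: for cyclic singularities the statement follows from the work of N\'emethi--Popescu-Pampu~\cite{Nemethi-PPampu-2010} together with Corollary~\ref{corollary:Milnor-fiber=Rational-blow-down} and Proposition~\ref{proposition:maximal-resolution-dominating}, so I focus on the non-cyclic case and let Theorem~\ref{theorem:fillings-diffeomorphic-to-Milnors} do the main work there.

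The argument proceeds as follows. Let $(X,0)$ be a quotient surface singularity and $W$ a minimal symplectic filling. By Theorem~\ref{theorem:fillings-diffeomorphic-to-Milnors}, there is a smoothing $\pi\colon\mathcal{X}\to\Delta$ of $(X,0)$ whose Milnor fiber is diffeomorphic to $W$. By the one-to-one correspondence of KSB~\cite{Kollar-Shepherd-Barron-1988} (recalled in Section~\ref{section:$P$-resolution}), $\pi$ lies in a well-defined irreducible component of $\Def(X)$, hence corresponds to a $P$-resolution $f\colon Y\to X$; by Behnke--Christophersen~\cite{Behnke-Christophersen-1994} one may pass to the dominating $M$-resolution $Y_M\to X$ (crepant over $Y$), and by Proposition~\ref{proposition:Milnor-fiber=rationally-blown-down} the Milnor fiber of $\pi$ is diffeomorphic to a general fiber $Y_t$ of a $\mathbb{Q}$-Gorenstein smoothing $\mathcal{Y}\to\Delta$ of $Y_M$. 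By Corollary~\ref{corollary:Milnor-fiber=Rational-blow-down}, $Y_t$ is obtained from $Y_M$ by rationally blowing down each Wahl singularity, using the Fintushel--Stern~\cite{Fintushel-Stern-1997}/J.~Park~\cite{JPark-1977} surgery interpretation of the one-parameter $\mathbb{Q}$-Gorenstein smoothing of a singularity of class $T$. Finally, Proposition~\ref{proposition:maximal-resolution-dominating} shows that the $P$-resolution $Y$ (and hence its $M$-resolution $Y_M$) is dominated by the unique maximal resolution $X_m$ of $(X,0)$; moreover $X_m$ dominates $Y_M$ by a sequence of blow-downs of $(-1)$-curves, i.e. ordinary rational blow-downs of $(-2)$-configurations of length one. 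Concatenating the blow-downs $X_m\rightsquigarrow Y_M$ with the Wahl rational blow-downs $Y_M\rightsquigarrow Y_t$ exhibits $W\cong Y_t$ as obtained from $X_m$ by a sequence of rational blow-downs.

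The bookkeeping I would be most careful about is ensuring the chain $X_m\to Y_M$ really is realized by \emph{rational} blow-downs in the topological sense and not merely by a birational morphism of singular surfaces. Concretely, since $Y_M$ has only Wahl singularities and $X_m$ is a resolution dominating it, the minimal resolution $\widetilde{Y_M}$ of $Y_M$ is obtained from (a blow-up of) $\widetilde{X_m}$; each Wahl configuration $[e_1,\dots,e_s]$ in $\widetilde{Y_M}$ sits inside $\widetilde{X_m}$, and collapsing it (together with the corresponding $\mathbb{Q}$-Gorenstein smoothing) is precisely a rational blow-down, while any purely exceptional $(-1)$-curves in between are removed by ordinary blow-downs, which are the simplest rational blow-downs. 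Lemma~\ref{lemma:$M$-resolution-by-blowing-up} and Proposition~\ref{proposition:maximal-resolution-dominating} are exactly what is needed to make this precise.

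The main obstacle, then, is not conceptual but organizational: one must check that the two sources of ``rational blow-down'' — the exceptional $(-1)$-curves arising from $X_m\dashrightarrow Y_M$ and the Wahl singularities of $Y_M$ smoothed by $\mathcal{Y}\to\Delta$ — can be performed in a single sequence on $X_m$ without interference, i.e. that the relevant configurations are disjoint in $\widetilde{X_m}$ and that rationally blowing one down does not obstruct blowing down the next. This disjointness is immediate because the Wahl configurations in an $M$-resolution lie on distinct arms (or are separated by the curves dominated from $X_m$), as described in the proof of Theorem~\ref{theorem:smoothable-by-flips}. Once that is observed, the theorem follows by simply concatenating the surgeries, and I would close by noting that the cyclic case is subsumed since there too every $P$-resolution is dominated by the maximal resolution (Proposition~\ref{proposition:maximal-resolution-dominating}) and Theorem~\ref{theorem:fillings-diffeomorphic-to-Milnors} applies verbatim, the non-cyclic hypothesis there being used only to invoke the Bhupal--Ono list rather than Lisca's.
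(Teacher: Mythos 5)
Your proposal is correct and follows essentially the same route as the paper: the paper's proof is simply the concatenation of Theorem~\ref{theorem:fillings-diffeomorphic-to-Milnors} (every minimal symplectic filling is a Milnor fiber) with Corollary~\ref{corollary:Milnor-fiber=Rational-blow-down} (every Milnor fiber is a rationally blown-down $M$-resolution) and Proposition~\ref{proposition:maximal-resolution-dominating} (the maximal resolution dominates every $P$-resolution). Your extra bookkeeping about realizing $X_m \rightsquigarrow Y_M$ topologically is sound, though note that contracting a $(-1)$-curve is the degenerate case of a rational blow-down (replacing a $(-1)$-disk bundle by $B^4$), not a blow-down of a $(-2)$-configuration.
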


\begin{proof}
By Theorem~\ref{theorem:fillings-diffeomorphic-to-Milnors}, every minimal symplectic filling is diffeomorphic to a Milnor fiber. On the other hand, by Corollary~\ref{corollary:Milnor-fiber=Rational-blow-down} above, any Milnor fiber is topologically a rationally blown-down $P$-resolution. Therefore the assertion follows.
\end{proof}

\begin{remark}
As mentioned in Introduction, Bhupal--Ozbagci~\cite{Bhupal-Ozbagci-2013} also proves a similar result for cyclic quotient surface singularities by using a Lefschetz fibration technique.
\end{remark}

\begin{corollary}\label{corollary:Milnor-1-1-minimal-symplectic}
For any quotient surface singularities $(X,0)$, there is a one-to-one correspondence (up to diffeomorphism type) between minimal symplectic fillings and Milnor fibers of irreducible components of the versal deformation space of $(X,0)$
\end{corollary}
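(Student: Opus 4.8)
The plan is to combine two facts that have been essentially established by this point in the paper. First, Theorem~\ref{theorem:fillings-diffeomorphic-to-Milnors} (proved just above) shows that every minimal symplectic filling $W$ of a quotient surface singularity $(X,0)$ is diffeomorphic to the Milnor fiber of \emph{some} smoothing of $(X,0)$; and since each smoothing lies in a unique irreducible component of $\Def(X)$, whose Milnor fiber is well-defined up to diffeomorphism (Looijenga~\cite{Looijenga-1984}), this gives a map from the set of (diffeomorphism types of) minimal symplectic fillings to the set of irreducible components of the reduced versal deformation space. Second, the Milnor fiber of each component is itself a minimal symplectic filling (it carries a natural Stein structure, as recalled in the Introduction), so there is a map in the reverse direction sending a component to the diffeomorphism type of its Milnor fiber. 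The claim is that these two maps are mutually inverse bijections.

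First I would observe that the composite \emph{component $\to$ filling $\to$ component} is the identity: starting from a component, take its Milnor fiber $M$; Theorem~\ref{theorem:fillings-diffeomorphic-to-Milnors} realizes $M$ as the Milnor fiber of a smoothing, but that smoothing necessarily lies in the original component since $M$ is already the Milnor fiber there and Milnor fibers distinguish components (this last point being exactly Theorem~\ref{theorem:components-Def(X)}). Conversely, the composite \emph{filling $\to$ component $\to$ filling} is the identity up to diffeomorphism by construction: the component assigned to $W$ has Milnor fiber diffeomorphic to $W$. So injectivity of the filling-to-component map reduces to: two minimal symplectic fillings that are diffeomorphic to Milnor fibers of the \emph{same} component are diffeomorphic to each other — which is immediate since both are diffeomorphic to that component's Milnor fiber. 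Surjectivity onto components is the statement that every component's Milnor fiber occurs, which is trivially true since the Milnor fiber is a minimal symplectic filling.

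The one genuine subtlety, which I would address explicitly, is well-definedness on the symplectic side: a priori the correspondence should be between diffeomorphism types (or symplectic deformation types) of minimal symplectic fillings and components. Here I would invoke the classification results of Lisca~\cite{Lisca-2008} (cyclic case) and Bhupal--Ono~\cite{Bhupal-Ono-2012} together with Theorem~\ref{theorem:diffeomorphism-type} and Corollary~\ref{corollary:no-exotic-fillings} of this paper: minimal symplectic fillings of a quotient surface singularity are classified up to orientation-preserving diffeomorphism, and distinct ones in the Bhupal--Ono / Lisca lists are non-diffeomorphic, so ``diffeomorphism type of minimal symplectic filling'' is an unambiguous finite set. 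Thus the count of minimal symplectic fillings equals the count of components, and the surjective map filling-to-component between equinumerous finite sets is automatically a bijection.

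The main obstacle is not really in the logic of this corollary — it is a formal consequence of Theorems~\ref{theorem:fillings-diffeomorphic-to-Milnors} and~\ref{theorem:components-Def(X)} — but rather in making sure that the two hard inputs are genuinely in place: that \emph{every} minimal symplectic filling (not just those realizable as $Z - \nu(E_\infty)$ for some convenient $Z$) is covered by Theorem~\ref{theorem:fillings-diffeomorphic-to-Milnors}, which rests on the ampleness argument of the previous section, and that Milnor fibers of distinct components are never diffeomorphic, which is Theorem~\ref{theorem:components-Def(X)}. Given those, the proof of Corollary~\ref{corollary:Milnor-1-1-minimal-symplectic} is a short diagram-chase of the form: inverse maps between finite sets, hence a bijection, as sketched above.
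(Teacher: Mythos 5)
Your argument is correct and matches the paper's intended derivation: the corollary is left as the formal consequence of Theorem~\ref{theorem:fillings-diffeomorphic-to-Milnors} (every minimal symplectic filling is a Milnor fiber, giving surjectivity), Theorem~\ref{theorem:components-Def(X)} (Milnor fibers of distinct components are non-diffeomorphic, giving injectivity/well-definedness), and the classification of fillings up to diffeomorphism via Lisca and Theorem~\ref{theorem:diffeomorphism-type}, exactly the ingredients you assemble. The only quibble is your closing counting remark --- equinumerosity of fillings and components does not follow from the classification alone but from comparing Stevens' and Bhupal--Ono's lists (or from the bijection itself) --- yet since your mutually-inverse-maps argument is already complete, that aside is harmless.
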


\begin{remark}
The number of $P$-resolutions of a given quotient surface singularity given in Stevens~\cite{Stevens-1993} and that of minimal symplectic fillings of the singularity in Bhupal--Ono~\cite{Bhupal-Ono-2012} coincide; (cf.~Remarks~\ref{remark:erratum-Bhypal-Ono}, \ref{remark:erratum-Stevens}). So, combined with the classification of minimal symplectic fillings in Theorem~\ref{theorem:diffeomorphism-type}, one-to-one correspondence in Corollary~\ref{corollary:Milnor-1-1-minimal-symplectic} provides another proof of Theorem~\ref{theorem:components-Def(X)} on the fact that two Milnor fibers associated to different components of $\Def(X)$ are non-diffeomorphic given, which, however, depends heavily on the lists of Stevens~\cite{Stevens-1993} and Bhupal--Ono~\cite{Bhupal-Ono-2012}.
\end{remark}

\begin{remark}\label{remark:Ohta-Ono}
Ohta and Ono prove similar results not only for ADE singularities but also simple elliptic singularities. They show in \cite{Ohta-Ono-2005} that the diffeomorphism type of a minimal symplectic filling $W$ of an ADE singularity is unique. So $W$ is diffeomorphic to its unique Milnor fiber, hence to its minimal resolution. Furthermore, for any simple elliptic singularity (which is not a quotient surface singularity), they also show in \cite{Ohta-Ono-2003} that a minimal symplectic filling $W$ is diffeomorphic to its Milnor fiber or its minimal resolution. Hence it would be an interesting problem to characterize normal surface singularities for which such correspondence holds.
\end{remark}



\begin{thebibliography}{99}
\bibitem{Arndt-1988} J. Arndt, \textit{Verselle deformationen zyklischer quotientensingularit{\"a}ten}. Dissertation Universit{\"a}t Hamburg, 1988.

\bibitem{BHPV-2004} W. P. Barth, K. Hulek, C. A. M. Peters, A. Van de Ven, \textit{Compact complex surfaces}. Ergebnisse der Mathematik und ihrer Grenzgebiete. 3. Folge., second edition, vol. 4, Springer-Verlag, Berlin, 2004.

\bibitem{Behnke-Christophersen-1994} K. Behnke, J. A. Christophersen, \textit{$M$-resolutions and deformations of quotient surface singularities}. Amer. J. Math. 116 (1994),  no. 4, 881--903.

\bibitem{Bhupal-Ono-2012} M. Bhupal, K. Ono, \textit{Symplectic fillings of links of quotient surface singularities}. Nagoya Math. J. \textbf{207} (2012), 1--45.

\bibitem{Bhupal-Ono-2015} M. Bhupal, K. Ono, \textit{A personal communication}.

\bibitem{Bhupal-Ono-2017} M. Bhupal, K. Ono, \textit{Symplectic fillings of links of quotient surface singularities – Corrigendum}. Nagoya Math. J. \textbf{225} (2017), 207--212.

\bibitem{Bhupal-Ozbagci-2013} M. Bhupak, B. Ozbagci, \textit{Symplectic fillings of lens spaces as Lefschetz fibrations}. J. Eur. Math. Soc. \textbf{18} (2016), no. 7, 1515--1535.

\bibitem{Birman-Rubinstein-1984} J. S. Birman, J. H. Rubinstein, \textit{One-sided Heegaard splittings and homeotopy groups of some 3-manifolds}. Proc. London Math. Soc. (3) \textbf{49} (1984), no. 3, 517--536.

\bibitem{Boileau-Otal-1991} M. Boileau, J.-P. Otal, \textit{Scindements de Heegaard et groupe des hom\'eotopies des petites vari\'et\'es de Seifert}. Invent. Math. \textbf{106} (1991),  no. 1, 85--107.

\bibitem{Bonahon-1983} F. Bonahon, \textit{Diff{\'e}otopies des espaces lenticulaires}. Topology \textbf{22} (1983), no. 3, 305--314.

\bibitem{Christophersen-1991} J. A. Christophersen, \textit{On the components and discriminant of the versal base space of cyclic quotient surface singularities}.  Singularity theory and its applications, Part I (Coventry, 1988/1989),  81--92, Lecture Notes in Math. \textbf{1462}, Springer, Berlin, 1991.

\bibitem{deJong-vanStraten-1998} T. de Jong, D. van Straten, \textit{Deformation theory of sandwiched singularities}. Duke Math. J.  \textbf{95} (1998),  no. 3, 451--522.

\bibitem{Fintushel-Stern-1997} R. Fintushel, R. Stern, \textit{Rational blowdowns of smooth 4-manifolds}. J. Differential Geom. \textbf{46}  (1997), no. 2, 181--235.

\bibitem{Flenner-Zaidenberg-1994} H. Flenner, M. Zaidenberg, \textit{$\mathbb{Q}$-acyclic surfaces and their deformations}. Contemp. Math. \textbf{162} (1994), 143--208.

\bibitem{Fowler-2014} J. Fowler, \textit{Rational homology disk smoothing     components of weighted-homogeneous surface singularities}. PhD Thesis 2014.

\bibitem{Gay-Stipsicz-2007} D. T. Gay, A. I. Stipsicz, \textit{Symplectic rational blow-down along Seifert fibered 3-manifolds}. Int. Math. Res. Not. IMRN 2007, no. 22, Art. ID rnm084, 20 pp.

\bibitem{Gay-Stipzicz-2009} D. T. Gay, A. I. Stipsicz, \textit{Symplectic surgeries and normal surface singularities}. Algebr. Geom. Topol. \textbf{9} (2009), no. 4, 2203--2223.

\bibitem{Gorsky-Nemethi-2014} E. Gorsky, A. N{\'e}methi, \textit{Links of plane curve singularities are $L$-space links}. Algebr. Geom. Topol. \textbf{16} (2016), no. 4, 1905--1912.

\bibitem{Hacking-Tevelev-Urzua-2013} P. Hacking, J. Tevelev, G. Urz{\'u}a, \textit{Flipping surfaces}. J. Algebraic Geom. \textbf{26} (2017), no. 2, 279--345.

\bibitem{HJS-2018} B. Han, J. Jeon, D. Shin, \textit{Invatiants of deformations of quotient surface singularities}, preprint.

\bibitem{Jankins-Neumann-1983} M. Jankins, W. D. Neumann, \textit{Lectures on Seifert manifolds}. Brandeis Lecture Notes \textbf{2}. Brandeis University, 1983.

\bibitem{Kollar-Mori-1992} J. Koll{\'a}r, S. Mori, \textit{Classification of three-dimensional flips}. J. Amer. Math. Soc. \textbf{5} (1992),  no. 3, 533--703.

\bibitem{Kollar-Mori-1998} J. Koll{\'a}r, S. Mori, \textit{Birational geometry of algebraic varieties}. Cambridge Tracts in Mathematics, \textbf{134}. Cambridge University Press, Cambridge, 1998.

\bibitem{Kollar-Shepherd-Barron-1988} J. Koll{\'a}r, N. I. Shepherd-Barron, \textit{Threefolds and deformations of surface singularities}. Invent. Math. \textbf{91} (1988), no. 2, 299--338.

\bibitem{Lee-Park-K^2=2} Y. Lee, J. Park. \textit{A simply connected surface of general type with $p_g=0$ and $K^2=2$}. Invent. Math. \textbf{170} (2007), 483--505.

\bibitem{Lisca-2008} P. Lisca, \textit{On symplectic fillings of lens spaces}. Trans. Amer. Math. Soc. \textbf{360} (2008),  no. 2, 765--799.

\bibitem{Looijenga-1984} E. Looijenga. \textit{Isolated singular points on complete intersections}. London Mathematical Society Lecture Note Series \textbf{77}. Cambridge University Press, Cambridge, 1984.

\bibitem{McDuff-1990} D. McDuff, \textit{The structure of rational and ruled symplectic 4-manifolds}. J. Amer. Math. Soc. \textbf{3} (1990), no. 3, 679--712.

\bibitem{Mori-1988} S. Mori, \textit{Flip theorem and the existence of minimal models for 3-folds}. J. Amer. Math. Soc. \textbf{1} (1988),  no. 1, 117--253.

\bibitem{Mori-2002} S. Mori, \textit{On semistable extremal neighborhoods}. Higher dimensional birational geometry (Kyoto 1997), Adv. Stud. Pure Math. 35, Math. Soc. Japan, Tokyo, 157--184 (2002).

\bibitem{Nemethi-2005} A. N{\'e}methi, \textit{On the Ozsv{\'a}th-Szab{\'o} invariant of negative definite plumbed 3-manifolds}. Geom. Topol. \textbf{9} (2005), 991--1042.

\bibitem{Nemethi-PPampu-2010} A. N{\'e}methi, P. Popescu-Pampu, \textit{On the Milnor fibres of cyclic quotient surface singularities}. Proc. Lond. Math. Soc. (3) \textbf{101} (2010),  no. 2, 554--588.

\bibitem{Ohta-Ono-2003} H. Ohta, K. Ono, \textit{Symplectic fillings of the link of simple elliptic singularities}. J. Reine Angew. Math. \textbf{565} (2003), 183--205.

\bibitem{Ohta-Ono-2005} H. Ohta, K. Ono, \textit{Simple singularities and symplectic fillings}. J. Differential Geom. \textbf{69} (2005),  no. 1, 1--42.

\bibitem{Ohta-Ono-2005-II} H. Ohta, K. Ono, \textit{Symplectic 4-manifolds containing singular rational curves with $(2,3)$-cusp}. Singularit{\'e}s Franco-Japonaises, 233--241, S{\'e}min. Congr., \textbf{10}, Soc. Math. France, Paris, 2005.

\bibitem{Orlik-1972} P. Orlik, \textit{Seifert manifolds}. Lecture Notes in Mathematics, Vol. \textbf{291}. Springer-Verlag, 1972.

\bibitem{Orlik-Wagreich-1971} P. Orlik, P. Wagreich, \textit{Isolated singularities of algebraic surfaces with $\mathbb{C}^{\ast}$ action}. Ann. of Math. (2) \textbf{93} (1971), 205--228.

\bibitem{HPark-Stipsicz-2014} H. Park, A. I. Stipsicz, \textit{Smoothings of singularities and symplectic surgery}. J. Symplectic Geom. \textbf{12}  (2014),  no. 3, 585--597.

\bibitem{JPark-1977} J. Park, \textit{Seiberg-Witten invariants of generalised rational blow-downs}. Bull. Austral. Math. Soc. \textbf{56} (1997),  no. 3, 363--384.

\bibitem{Pinkham-1977} H. Pinkham, \textit{Normal surface singularities with $C^*$ action}. Math. Ann. \textbf{227} (1977), no. 2, 183--193.

\bibitem{Pinkham-1978} H. Pinkham, \textit{Deformations of normal surface singularities with $C^*$ action}. Math. Ann. \textbf{232} (1978), no. 1, 65--84.

\bibitem{Riemenschneider-1974} O. Riemenschneider, \textit{Deformationen von Quotientensingularit{\"a}ten (nach zyklischen Gruppen)}. Math. Ann. \textbf{209} (1974), 211--248.

\bibitem{Riemenschneider-1981} O. Riemenschneider, \textit{Zweidimensionale Quotientensingularitäten: Gleichungen und Syzygien}. Arch. Math. (Basel)  \textbf{37} (1981), no. 5, 406--417.

\bibitem{Rubinstein-1979} J. H. Rubinstein, \textit{On 3-manifolds that have finite fundamental group and contain Klein bottles}. Trans. Amer. Math. Soc. \textbf{251} (1979), 129--137.

\bibitem{Stevens-1991} J. Stevens, \textit{On the versal deformation of cyclic quotient surface singularities}. Singularity theory and its applications, Part I (Coventry, 1988/1989), 302--319, Lecture Notes in Math. \textbf{1462}, Springer, Berlin, 1991.

\bibitem{Stevens-1993} J. Stevens, \textit{Partial resolutions of quotient surface singularities}. Manuscripta Math. \textbf{79} (1993),  no. 1, 7--11.

\bibitem{Stevens-2015} J. Stevens, \textit{A personal communication}.

\bibitem{Stipzicz-Szabo-Wahl-2008} A. I. Stipsicz, Z. Szab{\'o}, J. Wahl, \textit{Rational blowdowns and smoothings of surface singularities}. J. Topol. \textbf{1} (2008),  no. 2, 477--517.

\bibitem{Urzua-2013} G. Urz{\'u}a, \textit{Identifying neighbors of stable surfaces}. Ann. Sc. Norm. Super. Pisa Cl. Sci. (5) \textbf{16} (2016), no. 4, 1093--1122.

\bibitem{Urzua-2013b} G. Urz{\'u}a, \textit{$\mathbb{Q}$-Gorenstein smoothings of surfaces and degenerations of curves}. Rend. Semin. Mat. Univ. Padova \textbf{136} (2016), 111--136.

\bibitem{Wahl-1976} J. Wahl, \textit{Equisingular deformations of normal surface singularities. I}. Ann. of Math. (2) \textbf{104} (1976), no. 2, 325--356.

\bibitem{Wahl-1981} J. Wahl, \textit{Smoothing of normal surface singularities}. Topology \textbf{20} (1981), 219--246.

\bibitem{Waldhausen-1968} F. Waldhausen, \textit{On irreducible $3$-manifolds which are sufficiently large}. Ann. of Math. (2) \textbf{87} (1968), 56--88.
\end{thebibliography}
\end{document}